\newwrite\@unused
\def\typeout#1{{\let\protect\string\immediate\write\@unused{#1}}}
\def\@nnil{\@nil}
\def\@empty{}
\def\@psdonoop#1\@@#2#3{}
\def\@psdo#1:=#2\do#3{\edef\@psdotmp{#2}\ifx\@psdotmp\@empty \else
    \expandafter\@psdoloop#2,\@nil,\@nil\@@#1{#3}\fi}
\def\@psdoloop#1,#2,#3\@@#4#5{\def#4{#1}\ifx #4\@nnil \else
       #5\def#4{#2}\ifx #4\@nnil \else#5\@ipsdoloop #3\@@#4{#5}\fi\fi}
\def\@ipsdoloop#1,#2\@@#3#4{\def#3{#1}\ifx #3\@nnil 
       \let\@nextwhile=\@psdonoop \else
      #4\relax\let\@nextwhile=\@ipsdoloop\fi\@nextwhile#2\@@#3{#4}}
\def\@tpsdo#1:=#2\do#3{\xdef\@psdotmp{#2}\ifx\@psdotmp\@empty \else
    \@tpsdoloop#2\@nil\@nil\@@#1{#3}\fi}
\def\@tpsdoloop#1#2\@@#3#4{\def#3{#1}\ifx #3\@nnil 
       \let\@nextwhile=\@psdonoop \else
      #4\relax\let\@nextwhile=\@tpsdoloop\fi\@nextwhile#2\@@#3{#4}}
\def\psdraft{
	\def\@psdraft{0}
}
\def\psfull{
	\def\@psdraft{100}
}
\newif\if@prologfile
\newif\if@postlogfile
\newif\if@noisy
\def\pssilent{
	\@noisyfalse
}
\def\psnoisy{
	\@noisytrue
}
\newif\if@bbllx
\newif\if@bblly
\newif\if@bburx
\newif\if@bbury
\newif\if@height
\newif\if@width
\newif\if@rheight
\newif\if@rwidth
\newif\if@clip
\newif\if@verbose
\def\@p@@sclip#1{\@cliptrue}
\def\@p@@sfile#1{
		   \def\@p@sfile{#1}
}
\def\@p@@sfigure#1{\def\@p@sfile{#1}}
\def\@p@@sbbllx#1{
		\@bbllxtrue
		\dimen100=#1
		\edef\@p@sbbllx{\number\dimen100}
}
\def\@p@@sbblly#1{
		\@bbllytrue
		\dimen100=#1
		\edef\@p@sbblly{\number\dimen100}
}
\def\@p@@sbburx#1{
		\@bburxtrue
		\dimen100=#1
		\edef\@p@sbburx{\number\dimen100}
}
\def\@p@@sbbury#1{
		\@bburytrue
		\dimen100=#1
		\edef\@p@sbbury{\number\dimen100}
}
\def\@p@@sheight#1{
		\@heighttrue
		\dimen100=#1
   		\edef\@p@sheight{\number\dimen100}
}
\def\@p@@swidth#1{
		\@widthtrue
		\dimen100=#1
		\edef\@p@swidth{\number\dimen100}
}
\def\@p@@srheight#1{
		\@rheighttrue
		\dimen100=#1
		\edef\@p@srheight{\number\dimen100}
}
\def\@p@@srwidth#1{
		\@rwidthtrue
		\dimen100=#1
		\edef\@p@srwidth{\number\dimen100}
}
\def\@p@@ssilent#1{ 
		\@verbosefalse
}
\def\@p@@sprolog#1{\@prologfiletrue\def\@prologfileval{#1}}
\def\@p@@spostlog#1{\@postlogfiletrue\def\@postlogfileval{#1}}
\def\@cs@name#1{\csname #1\endcsname}
\def\@setparms#1=#2,{\@cs@name{@p@@s#1}{#2}}
\def\ps@init@parms{
		\@bbllxfalse \@bbllyfalse
		\@bburxfalse \@bburyfalse
		\@heightfalse \@widthfalse
		\@rheightfalse \@rwidthfalse
		\def\@p@sbbllx{}\def\@p@sbblly{}
		\def\@p@sbburx{}\def\@p@sbbury{}
		\def\@p@sheight{}\def\@p@swidth{}
		\def\@p@srheight{}\def\@p@srwidth{}
		\def\@p@sfile{}
		\def\@p@scost{10}
		\def\@sc{}
		\@prologfilefalse
		\@postlogfilefalse
		\@clipfalse
		\if@noisy
			\@verbosetrue
		\else
			\@verbosefalse
		\fi
}
\def\parse@ps@parms#1{
	 	\@psdo\@psfiga:=#1\do
		   {\expandafter\@setparms\@psfiga,}}
\newif\ifno@bb
\newif\ifnot@eof
\newread\ps@stream
\def\bb@missing{
	\if@verbose{
		\typeout{psfig: searching \@p@sfile \space  for bounding box}
	}\fi
	\openin\ps@stream=\@p@sfile
	\no@bbtrue
	\not@eoftrue
	\catcode`\%=12
	\loop
		\read\ps@stream to \line@in
		\global\toks200=\expandafter{\line@in}
		\ifeof\ps@stream \not@eoffalse \fi
		\@bbtest{\toks200}
		\if@bbmatch\not@eoffalse\expandafter\bb@cull\the\toks200\fi
	\ifnot@eof \repeat
	\catcode`\%=14
}	
\newif\if@bbmatch
\def\@bbtest#1{\expandafter\@a@\the#1
\long\def\@a@#1
\long\def\bb@cull#1 #2 #3 #4 #5 {
	\dimen100=#2 bp\edef\@p@sbbllx{\number\dimen100}
	\dimen100=#3 bp\edef\@p@sbblly{\number\dimen100}
	\dimen100=#4 bp\edef\@p@sbburx{\number\dimen100}
	\dimen100=#5 bp\edef\@p@sbbury{\number\dimen100}
	\no@bbfalse
}
\catcode`\%=14
\def\compute@bb{
		\no@bbfalse
		\if@bbllx \else \no@bbtrue \fi
		\if@bblly \else \no@bbtrue \fi
		\if@bburx \else \no@bbtrue \fi
		\if@bbury \else \no@bbtrue \fi
		\ifno@bb \bb@missing \fi
		\ifno@bb \typeout{FATAL ERROR: no bb supplied or found}
			\no-bb-error
		\fi
		\count203=\@p@sbburx
		\count204=\@p@sbbury
		\advance\count203 by -\@p@sbbllx
		\advance\count204 by -\@p@sbblly
		\edef\@bbw{\number\count203}
		\edef\@bbh{\number\count204}
}
%
%
\def\in@hundreds#1#2#3{\count240=#2 \count241=#3
		     \count100=\count240	
		     \divide\count100 by \count241
		     \count101=\count100
		     \multiply\count101 by \count241
		     \advance\count240 by -\count101
		     \multiply\count240 by 10
		     \count101=\count240	
		     \divide\count101 by \count241
		     \count102=\count101
		     \multiply\count102 by \count241
		     \advance\count240 by -\count102
		     \multiply\count240 by 10
		     \count102=\count240	
		     \divide\count102 by \count241
		     \count200=#1\count205=0
		     \count201=\count200
			\multiply\count201 by \count100
		 	\advance\count205 by \count201
		     \count201=\count200
			\divide\count201 by 10
			\multiply\count201 by \count101
			\advance\count205 by \count201
		     \count201=\count200
			\divide\count201 by 100
			\multiply\count201 by \count102
			\advance\count205 by \count201
		     \edef\@result{\number\count205}
}
\def\compute@wfromh{
		\in@hundreds{\@p@sheight}{\@bbw}{\@bbh}
		\edef\@p@swidth{\@result}
}
\def\compute@hfromw{
		\in@hundreds{\@p@swidth}{\@bbh}{\@bbw}
		\edef\@p@sheight{\@result}
}
\def\compute@handw{
		\if@height 
			\if@width
			\else
				\compute@wfromh
			\fi
		\else 
			\if@width
				\compute@hfromw
			\else
				\edef\@p@sheight{\@bbh}
				\edef\@p@swidth{\@bbw}
			\fi
		\fi
}
\def\compute@resv{
		\if@rheight \else \edef\@p@srheight{\@p@sheight} \fi
		\if@rwidth \else \edef\@p@srwidth{\@p@swidth} \fi
}
%
\def\compute@sizes{
	\compute@bb
	\compute@handw
	\compute@resv
}
%
%
\def\psfig#1{\vbox {
	%
	\ps@init@parms
	\parse@ps@parms{#1}
	\compute@sizes
	\ifnum\@p@scost<\@psdraft{
		\if@verbose{
			\typeout{psfig: including \@p@sfile \space }
		}\fi
		\special{ps::[begin] 	\@p@swidth \space \@p@sheight \space
				\@p@sbbllx \space \@p@sbblly \space
				\@p@sbburx \space \@p@sbbury \space
				startTexFig \space }
		\if@clip{
			\if@verbose{
				\typeout{(clip)}
			}\fi
			\special{ps:: doclip \space }
		}\fi
		\if@prologfile
		    \special{ps: plotfile \@prologfileval \space } \fi
		\special{ps: plotfile \@p@sfile \space }
		\if@postlogfile
		    \special{ps: plotfile \@postlogfileval \space } \fi
		\special{ps::[end] endTexFig \space }
		\vbox to \@p@srheight true sp{
			\hbox to \@p@srwidth true sp{
				\hss
			}
		\vss
		}
	}\else{
		\vbox to \@p@srheight true sp{
		\vss
			\hbox to \@p@srwidth true sp{
				\hss
				\if@verbose{
					\@p@sfile
				}\fi
				\hss
			}
		\vss
		}
	}\fi
}}
\catcode`\@=12\relax



\sloppy


\begin{document}

\newtheorem{cor}{Corollary}[section]
\newtheorem{theorem}[cor]{Theorem}
\newtheorem{prop}[cor]{Proposition}
\newtheorem{lemma}[cor]{Lemma}
\theoremstyle{definition}
\newtheorem{defi}[cor]{Definition}
\theoremstyle{remark}
\newtheorem{remark}[cor]{Remark}
\newtheorem{example}[cor]{Example}

\newcommand{\cD}{{\mathcal D}}
\newcommand{\FF}{{\mathcal F}}
\newcommand{\cH}{{\mathcal H}}
\newcommand{\cL}{{\mathcal L}}
\newcommand{\cM}{{\mathcal M}}
\newcommand{\cT}{{\mathcal T}}
\newcommand{\cML}{{\mathcal M\mathcal L}}
\newcommand{\cGH}{{\mathcal G\mathcal H}}
\newcommand{\C}{{\mathbb C}}
\newcommand{\N}{{\mathbb N}}
\newcommand{\R}{{\mathbb R}}
\newcommand{\Z}{{\mathbb Z}}
\newcommand{\Kt}{\tilde{K}}
\newcommand{\Mt}{\tilde{M}}
\newcommand{\dr}{{\partial}}
\newcommand{\tr}{\mbox{tr}}
\newcommand{\isom}{\mbox{Isom}}
\newcommand{\isomz}{\mbox{Isom}_{0,+}}
\newcommand{\vect}{\mbox{Vect}}
\newcommand{\kappab}{\overline{\kappa}}
\newcommand{\pib}{\overline{\pi}}
\newcommand{\Sigmab}{\overline{\Sigma}}
\newcommand{\gd}{\dot{g}}
\newcommand{\diff}{\mbox{Diff}}
\newcommand{\dev}{\mbox{dev}}
\newcommand{\devb}{\overline{\mbox{dev}}}
\newcommand{\devt}{\tilde{\mbox{dev}}}
\newcommand{\vol}{\mbox{Vol}}
\newcommand{\hess}{\mbox{Hess}}
\newcommand{\db}{\overline{\partial}}
\newcommand{\gammab}{\overline{\gamma}}
\newcommand{\Sigmat}{\tilde{\Sigma}}
\newcommand{\mut}{\tilde{\mu}}
\newcommand{\phit}{\tilde{\phi}}

\newcommand{\cunc}{{\mathcal C}^\infty_c}
\newcommand{\cun}{{\mathcal C}^\infty}
\newcommand{\dd}{d_D}
\newcommand{\dmin}{d_{\mathrm{min}}}
\newcommand{\dmax}{d_{\mathrm{max}}}
\newcommand{\Dom}{\mathrm{Dom}}
\newcommand{\dn}{d_\nabla}
\newcommand{\ded}{\delta_D}
\newcommand{\delmin}{\delta_{\mathrm{min}}}
\newcommand{\delmax}{\delta_{\mathrm{max}}}
\newcommand{\hmin}{H_{\mathrm{min}}}
\newcommand{\maxi}{\mathrm{max}}
\newcommand{\oL}{\overline{L}}
\newcommand{\oP}{{\overline{P}}}
\newcommand{\Ran}{\mathrm{Ran}}
\newcommand{\tgamma}{\tilde{\gamma}}
\newcommand{\cotan}{\mbox{cotan}}
\newcommand{\lambdat}{\tilde\lambda}
\newcommand{\St}{\tilde S}

\newcommand{\II}{I\hspace{-0.1cm}I}
\newcommand{\III}{I\hspace{-0.1cm}I\hspace{-0.1cm}I}
\newcommand{\HSt}{\tilde{\operatorname{HS}}}
\newcommand{\note}[1]{\marginpar{\tiny #1}}

\newcommand{\op}{\operatorname}

\newcommand{\AdS}{\operatorname{AdS}}
\newcommand{\uAdS}{\widetilde{\operatorname{AdS}}}
\newcommand{\dS}{\operatorname{dS}}
\newcommand{\HH}{\mathbb H}
\newcommand{\PP}{\mathbb P}
\newcommand{\RR}{\mathbb R}
\newcommand{\uRP}{\widetilde{\R\PP}^1}
\newcommand{\HS}{\operatorname{HS}}
\newcommand{\SO}{\operatorname{SO}}
\newcommand{\cF}{\operatorname{\mathcal{F}}}
\newcommand{\kD}{\mathfrak{D}}

\title[Collisions of particles]{Collisions of particles in locally AdS spacetimes}

\author[]{Thierry Barbot}
\address{Laboratoire d'analyse non lin\'eaire et g\'eom\'etrie\\
Universit\'e d'Avignon et des pays de Vaucluse\\
33, rue Louis Pasteur\\
F-84 018 AVIGNON
}
\email{thierry.barbot@univ-avignon.fr}
\author[]{Francesco Bonsante}
\address{Dipartimento di Matematica dell'Universit\`a degli Studi di Pavia,
via Ferrata 1, 27100 Pavia (ITALY)}
\email{francesco.bonsante@unipv.it}
\author[]{Jean-Marc Schlenker}
\address{Institut de Math\'ematiques de Toulouse, 
UMR CNRS 5219 \\
Universit\'e Paul Sabatier\\
31062 Toulouse Cedex 9}
\email{schlenker@math.univ-toulouse.fr}
\thanks{T. B. and F. B. were partially supported by CNRS, ANR GEODYCOS. J.-M. S. was
partially supported by the A.N.R. programs RepSurf,
ANR-06-BLAN-0311, GeomEinstein, 06-BLAN-0154, and ETTT, 2009-2013}

\keywords{Anti-de Sitter space, singular spacetimes, BTZ black hole}
\subjclass{83C80 (83C57), 57S25}
\date{\today}

\begin{abstract}
We investigate 3-dimensional globally hyperbolic AdS manifolds containing ``particles'',
i.e., cone singularities along a graph $\Gamma$. We impose physically relevant conditions on
the cone singularities, e.g. positivity of mass (angle less than $2\pi$ on time-like
singular segments). We construct examples of such manifolds, describe the cone singularities
that can arise and the way they can interact (the local geometry near the vertices of $\Gamma$).
The local geometry
near an ``interaction point'' (a vertex of the singular locus) has a simple
geometric description in
terms of polyhedra in the extension of hyperbolic 3-space by the de Sitter space.

We then concentrate on spaces containing only (interacting) massive particles.
To each such space we associate a graph and a finite family of pairs of hyperbolic
surfaces with cone singularities. We show that this data is sufficient to recover
the space locally (i.e., in the neighborhood of a fixed metric). This is a partial
extension of a result of Mess for non-singular globally hyperbolic AdS manifolds.
\end{abstract}

\maketitle

\tableofcontents

\section{Introduction}

\subsection{AdS geometry}

The 3-dimensional anti-de Sitter (AdS) space can be defined as a quadric in the
4-dimensional flat space of signature $(2,2)$:
$$ AdS_3 = \{ x\in \R^{2,2}~|~ \langle x,x\rangle =-1\}~. $$
It is a complete Lorentz manifold of constant curvature $-1$. It is however
not simply connected, its fundamental group is $\Z$. One is sometimes 
lead to consider its universal cover, $\widetilde{AdS_3}$, or its quotient
by the antipodal map, $AdS_{3,+}$.

\subsection{GHMC AdS manifolds}

A Lorentz $3$-manifold $M$ is AdS if it is locally modeled on $AdS_3$. 
Such a manifold is {\it globally hyperbolic maximal compact} (GHMC) if 
it contains a closed, space-like surface $S$, any inextendible time-like
curve in $M$ intersects $S$ exactly once, and $M$ is maximal under this
condition (any isometric embedding of $M$ in an AdS 3-manifold satisfying
the same conditions is actually an isometry). Those manifolds can in some
respects be considered as Lorentz analogs of quasifuchsian hyperbolic 
3-manifolds.

Let $S$ be a closed surface of genus at least $2$. A well-known theorem 
of Bers \cite{bers} asserts that the space of quasifuchsian hyperbolic metrics on 
$S\times \R$ (considered up to isotopy) is in one-to-one correspondence 
with $\cT_S\times \cT_S$, where $\cT_S$ is the Teichm\"uller space of $S$.

G. Mess \cite{mess,mess-notes} discovered a remarkable analog of this 
theorem for globally hyperbolic maximal anti-de Sitter manifolds:
the space of GHM AdS metrics on $S\times \R$ is also parameterized by 
$\cT_S\times \cT_S$. Both the Bers and the Mess results can be described 
as describing ``stereographic pictures'': the full structure of a 3-dimensional
constant curvature manifold is encoded in two hyperbolic metrics.

\subsection{AdS manifolds and particles}

3-dimensional AdS manifolds were first studied as a lower-dimensional toy model
of gravity: they are solutions of Einstein's equation, with negative cosmological constant
but without matter. A standard way to add physical realism to this model is to
consider in those AdS manifolds some point particles, modeled by cone singularities
along time-like lines (see e.g. \cite{thooft1,thooft2}). 

Here we will call ``massive particle'' such a cone singularity, of
angle less than $2\pi$, along a time-like line. Globally hyperbolic AdS spaces
with such particles were considered in \cite{cone}, when the cone angles are less than
$\pi$. It was shown that an extension of the Mess theorem exists in this setting, 
with elements of the Teichm\"uller space of $S$ replaced by hyperbolic metrics
with cone singularities, with cone angles equal to the angles at the 
``massive particles''. 

\subsection{Different particles}

The main goal here is to extend this study to more general ``particles'', where 
a ``particle'' is a cone singularity along a line which is not necessarily
time-like. The situations considered here are by definition much richer than those
in \cite{cone}, where the cone angles were restricted to be less than $\pi$. 
One key feature, which was absent from \cite{cone}, is that
those particles can ``interact'': the singular set of the manifolds under consideration
is a graph, with vertices where more than 2 particles ``interact''. 

The first point of course is to understand the possible particles. This study is done
in section 2. Massive particles (cone singularities along time-like lines) 
are considered, under the physically natural and
mathematically relevant condition that the cone angle is less than $2\pi$ (this
is physically interpreted as a positive mass condition). Among the other
particles are tachyons (cone singularities along space-like lines), gravitons
(which are along light-like lines) and BTZ black holes (space-like lines with a past but no
future) as well as extremal BTZ black holes (same but along a light-like line). 

\subsection{Interacting singularities}

Still in section 2 we consider some local examples of cone singularities
that can arise in this manner, and in particular as the vertices of
the singular locus, that is, the ``interaction points'' of the particles. 
It is pointed out that the link of an interaction point is what we
call an ``HS-surface'' as defined in section 2, an object already appearing elsewhere 
\cite{shu,cpt}
in relation to convex polyhedra in Lorentzian space-forms. 

In section 3 a systematic study of the HS-surfaces that can occur as
the link of a particle interaction is started.

\subsection{Particle interactions -- the Riemannian case}

There is an intimate but not completely obvious relationship between 
particle interactions -- vertices of the singular set of a manifold
with particles -- and convex polyhedra. 

It is easier to understand this first in the Riemannian case, that is, 
for a hyperbolic 3-manifold $M$ with cone singularities along a graph, 
when the angles at the singular lines are less than $2\pi$. Let
$v$ be a vertex of the singular set of $M$, and let $e_1, \cdots,
e_n$ be the edges of the singular set adjacent to $v$, that is,
the singular segments having $v$ as an endpoint. The {\it link}
of $M$ at $v$ is the space of geodesic segments in $M$ starting from
$v$, with the natural angle distance. With this distance it is
a spherical cone-manifold, with cone points corresponding to the
edges $e_i$, and with cone angle at each cone point equal to the
angle of $M$ at the corresponding singular segment, so that all
cone angles are less than $2\pi$.

According to a classical theorem of Alexandrov \cite{alex}, spherical
metrics with cone singularities on $S^2$, with cone angles less
than $2\pi$, are exactly the induced metrics on convex polyhedra
in $S^3$, and each metric is obtained on a unique polyhedron. 
Therefore, the possible links of interaction points in a hyperbolic
(or for that matter spherical or Euclidean) cone-manifold are exactly
the induced metrics on convex polyhedra in $S^3$. This provides a 
convenient way to understand spherical cone-metrics on $S^2$.

\subsection{Particles and polyhedra -- the Lorentzian case}

In the Lorentzian context considered here, a similar idea can be
followed, with some twists. The link of a point in $AdS^3$ (actually
in any Lorentz 3-manifold) is more interesting than in a Riemannian
3-manifolds: one part, corresponding to the time-like geodesic 
segments, is locally modeled on the hyperbolic 3-space, while
the part corresponding to the space-like geodesic segments is
locally modeled on the de Sitter space. Together the link of
a regular point in an AdS 3-manifold is modeled on the ``extended
hyperbolic space'', denoted here by $\HSt^3$, which has two
parts each isometric to the hyperbolic plane and one part isometric
to the de Sitter plane. The basic properties of the link can be 
found in section 2. A classification of the surfaces which can
occur as links, under natural conditions, can be found in section 3. 

Let now $M$ be an AdS manifold with particles, and let $v$ be
an interaction point, that is, a vertex of the singular locus. 
Its link -- defined as above -- is locally modeled on the
extended hyperbolic space, with cone singularities corresponding
to the singular segments adjacent to $v$. The description of 
the kind of metrics that can be obtained is richer than in the
Riemannian case, since for instance the cone singularities
can be on ``hyperbolic'' or ``de Sitter'' points, or on the
boundary ``at infinity'' between the two. The ``curvature''
conditions at the vertices (analog to the condition that the
cone angle is less than $2\pi$ in the Riemannian case) are
also more interesting, and depend on the conditions imposed
on the singular segments in $M$. Here for instance we
suppose that for time-like singular segments the cone angle
is less than $2\pi$.

In view of the Riemannian example and of its relation to the
Alexandrov theorem on polyhedra, it would be desirable to
have a similar result describing the induced metrics on 
polyhedra in the extended hyperbolic space $HS^3$. Again, the
statement itself is necessarily more complicated than in the
spherical case considered by Alexandrov since the vertices, 
edges and faces of the polyhedra can be of different type.
Nonetheless, an analog of Alexandrov's theorem seems to be
within reach, and large parts of its can be found in \cite{shu,cpt}.
This is described in section 4, and leads to the construction of
many local examples of particle interactions.

\subsection{The mass of an interaction point}

An interesting consequence of the relationship between interaction
points and polyhedra in $HS^3$ is that a new and apparently natural
condition occurs on the types of interactions that should be allowed:
each interaction should have ``positive mass'' in a sense which
extends the notion of positive mass for massive particles. 
This corresponds to a length condition found in \cite{cpt} as
a necessary length condition for the metrics induced on some types
of polyhedra in $HS^3$. Although this condition remained rather
mysterious in the context of \cite{cpt}, it has a straightforward
``physical'' interpretation when considered in light of its 
relation with particle interactions.

\subsection{Some global examples}

In section 5 we recall some important properties of Lorentz manifolds,
extended to the singular context considered here. Section 6 contains some global
examples of AdS spaces with interacting particles, obtained by gluing
constructions from polyhedra and by surgeries from simpler spaces.

It is proved later, in section 7, that those examples can be constructed
so as to be ``good space-times'' (according to Definition \ref{df:good}).
As a consequence, Theorem \ref{tm:homeo} can be applied, showing that the
(quite special) examples constructed in section 6 can be deformed to yield
many more generic cases.

\subsection{Stereographic picture of manifolds with colliding particles}

In section 7 we concentrate on manifolds with only massive particles
(cone singularities along time-like curves),
still under the condition that the angles are less than $2\pi$. 
We show how the ideas of Mess \cite{mess} extend in a richer way
in this setting. Recall that it was shown in \cite{mess} that a
globally hyperbolic AdS manifold $M$ gives rises to two hyperbolic
metrics on a surface (associated to its left and right representations)
and that $M$ is uniquely determined by those two metrics. 
This was extended in \cite{cone} (using some ideas from \cite{minsurf})
to globally hyperbolic AdS manifolds with massive particles of angle
less than $\pi$, a condition that prevents interactions. 

With angles less than $2\pi$, interactions can and actually do occur.
So to each space is associated a sequence (or more precisely 
a graph) of ``spacial slices'', each corresponding to a domain where
no interaction occurs. To each slice we associate a ``stereographic
picture'': a ``left'' and a ``right'' hyperbolic metric, both with
cone singularities of the same angles, which together are sufficient
to reconstruct the spacial slice. This is described in section 7.

Here too a new but apparently natural notion occurs, that
of a ``good'' spacial slice: one containing space-like surfaces which
are not too ``bent'', or more generally one on which there is a 
time-like, unit vector field which does not vary too much locally. 
There are examples of space-times containing a ``good'' spacial slice 
which stops being ``good'' after a particle interaction. A GHMC
AdS manifolds with particles is ``good'' if it is made of good
space-like slices, see Definition \ref{df:good}.

Adjacent spacial slices are ``related'' by a particle interaction. 
Still in section 7 we show that the left and right hyperbolic
metrics before and after the interaction in a good space-time are related by a 
simple surgery involving, for both the left and right metrics, 
the link of the interaction point. 

\subsection{The stereographic picture is a complete description (locally)}

So, to an AdS space with interacting massive particles, we associate
a topological data (a graph describing the spacial slices and the
way they are related by interactions) as well as a geometric data
(two hyperbolic metrics with cone singularities on a surface for each
spacial slice, along with simple surgeries for the interactions). 

In section 8 we show that this locally provides a complete description of
possible AdS spaces with interacting massive particles, i.e., 
given an AdS metric $g$ with interacting particles, a small neighborhood 
$g$ in the space of AdS metric with interacting particles is parameterized
by the admissible deformations of the ``stereographic pictures'' associated
to the spacial slices. This is Theorem \ref{tm:homeo}.

In other terms, locally at least, the topological data along with the
stereographic picture contain all the information necessary to reconstruct
an AdS manifold with interacting massive particles.

\section{Local examples}

\subsection{Singular lines}
\label{sub.singular_lines}

A \textit{singular $\AdS$-spacetime} (without interaction) is a manifold locally
modeled on $\AdS$ with cone singularities, i.e. with an atlas made of chards
taking value in one of the model spacetimes $\mathcal{P}_\theta$, $\mathcal{T}_m$,
$\mathcal{B}_m$, $\mathcal{G}_\pm$ or $\mathcal{E}$ described below, and such that the coordinate
changes are isometries on the regular part. Recall that we always assume the ambient manifold to be oriented
and time oriented.

\subsubsection{Massive particles}

Let $D$ be a domain in
$\uAdS$ bounded by two timelike totally geodesic half-planes $P_1$, $P_2$
sharing as common boundary a timelike geodesic $c$. The angle $\theta$ of $D$ is
the angle between the two geodesic rays $H \cap P_1$, $H \cap P_2$
issued from $c \cap H$, where $H$ is a totally geodesic hyperbolic plane
orthogonal to $c$. Glue $P_1$ to $P_2$ by the elliptic isometry of
$\uAdS$ fixing $c$ pointwise. The
resulting space, up to isometry,
only depends on $\theta$, and not on the choices of $c$
and of $D$ with angle $\theta$. We denote it $\mathcal{P}_\theta$.
The complement of $c$ in $\mathcal{P}_\theta$ is locally modeled on $\AdS$, while
$c$ corresponds to a cone singularity with angle $\theta$.

We can also consider a domain $D$ still bounded by two timelike planes,
but not embedded in $\uAdS$, wrapping around $c$, maybe several time, by an angle $\theta > 2\pi$.
Glueing as above, we obtain a singular spacetime $\mathcal{P}_\theta$ with angle $\theta > 2\pi$.

In these examples, the singular line is called \textit{massive particle.}
We define the mass as $m := 1 - \theta/2\pi$. Hence $\theta > 2\pi$
corresponds to particles with negative mass.





\subsubsection{Tachyons}
\label{sub.tachyon}
Consider now a spacelike geodesic $c$ in $\uAdS$, and two timelike
totally geodesic planes $Q_1$, $Q_2$ containing $c$. Let $x$ be
a point in $c$ and consider the 2-plane $P$ in the tangent space $T_x\uAdS$
orthogonal to $c$. Let $l_1$, $l_2$ be the two isotropic lines in
$P$, and let $d_1$, $d_2$ be the intersection between $P$ and the planes
tangent to respectively $Q_1$, $Q_2$. We choose the indexation so that
the cross-ratio $[l_1 : d_1 : d_2 : l_2]$ is $\geq 1$. We define
the angle between $Q_1$ and $Q_2$
along $c$ as the logarithm of this cross-ratio. It is a positive real number,
not depending on the choice of $x$. We denote it by $m$.

$Q_1$ and $Q_2$ intersect each other
along infinitely many spacelike geodesics, always under the same angle.
In each of these planes,
there is an open domain $P_i$ bounded by $c$ and another component $c_-$ of
$Q_1 \cap Q_2$ in the past of $c$ and which does not intersect another component
of $Q_1 \cap Q_2$.
The union $c \cup c_- \cup P_1 \cup P_2$ disconnects
$\uAdS$. One of these components, denoted $W$, is contained in the future of $c_-$
and the past of $c$. Let $D$ be the other component, containing the future of
$c$ and the past of $c_{-}$. Consider the closure of $D$, and
glue $P_1$ to $P_2$ by an hyperbolic isometry of $\uAdS$ fixing every point
in $c$ and $c_-$. The resulting spacetime, denoted $\mathcal{T}_m$, contains two spacelike singular
lines, and is locally modeled on $\AdS$ on the complement of these lines.
These singular lines are called \textit{tachyon of mass $m$}.

We can also define tachyons with negative mass: cut $\uAdS$ along a timelike totally geodesic
annulus with boundary
$c \cup c_{-}$, and insert a domain isometric to $W$, with angle $m$. The resulting singular space
contains then two spacelike singular lines, that we call tachyons of mass $-m$.

There is an alternative description of tachyons: still start from a spacelike geodesic $c$ in $\uAdS$,
but now consider two spacelike half-planes $S_{1}$, $S_{2}$ with common boundary $c$, such that
$S_{2}$ lies above $S_{1}$, i.e. in the future of $S_{1}$. Then remove the intersection $V$ between
the past of $S_{2}$ and the future
of $S_{1}$, and glue $S_{1}$ to $S_{2}$ by a hyperbolic isometry fixing every point in $c$.
The resulting singular
spacetime, denoted $\mathcal{T}^0_m$, contains a singular spacelike line.
As we will see in \S~\ref{sec.link} this singular line is a tachyon of negative mass.
If instead of removing a wedge $V$ we insert it in the spacetime obtained by cutting
$\uAdS$ along a spacelike half-plane $S$,
we obtain a spacetime with a tachyon of positive mass.

\subsubsection{Future singularity of a black hole}
Consider the same data $(c, c_{-}, P_{1}, P_{2})$ used for the description of tachyons, but now remove $D$,
and glue the boundaries $P_{1}$, $P_{2}$ of $W$ by a hyperbolic element $\gamma_{0}$ fixing every point in $c$.
The resulting space is a manifold $\mathcal{B}_m$ containing two singular lines, that we abusively
still denote $c$ and $c_-$, and is locally $\AdS$ outside $c$, $c_-$.

Let $E$ be the open domain in $\uAdS$, intersection between the past of $c$ and the future of $c_{-}$. Observe that
$W$ is a fundamental domain for the action on $E$ of the group $\langle\gamma_{0}\rangle$ generated
by $\gamma_0$. In other words, the regular part of $\mathcal{B}_m$ is isometric
to the quotient $E/\langle\gamma_{0}\rangle$. This quotient is precisely a \textit{static BTZ black-hole}
as described in \cite{BTZ, barbtz1, barbtz2}. It is homeomorphic to the product of the annulus by the real
line. The singular spacetime $\mathcal{B}_m$ is obtained by adjoining to this BTZ black-hole
two singular lines: it follows that $\mathcal{B}_m$ is homeomorphic to the product of a 2-sphere with the real line.
Every point in the BTZ black-hole lies in the past of the singular line corresponding
to $c$ and in the future of the singular line corresponding to $c_-$. Therefore,
these singular lines are called respectively future singularity and past singularity.
More details will be given in \S~\ref{sub.bhfrancesco}.

\begin{remark}
In \S~\ref{sec.link}, we will see that, contrary to particles or tachyons, there is no dichotomy
positive mass/negative mass for black-holes.
\end{remark}

\begin{remark}
Truely speaking, the black-hole is a open domain in $\mathcal{B}_{m}$ (the region invisible from the "conformal boundary at infinity).
But it is useful for notational convenience to call the future singular line $c$ a black-hole singularity, and the past singular line
a white-hole singularity.
\end{remark}


\subsubsection{Gravitons and extreme black holes}
\label{sub.graviton}
Graviton is a limit case of tachyon: the definition is similar to that of tachyons,
but starts with the selection of a lightlike geodesic $c$ in $\uAdS$. Given such
a geodesic, we consider another lightlike geodesic $c_-$ in the past of $c$, and two timelike
totally geodesic annuli $P_1$, $P_2$, disjoint one from the other and with boundary $c \cup c_-$.
More precisely, consider pairs of spacelike geodesics $(c^n, c^n_-)$ as the one appearing in the description of tachyons, contained in timelike planes $Q^n_1$, $Q^n_2$, so that $c^n$ converge to the lightlike geodesic $c$. Then, $c^n_-$ converge to a lightlike geodesic $c_-$, whose future extremity
in the boundary of $\uAdS$ coincide with the past extremity of $c$. The timelike planes
$Q^n_1$, $Q^n_2$ converge to timelike planes $Q_1$, $Q_2$ containing $c$ and $c_1$. Then
$P_i$ is the annulus bounded in $Q_i$ by $c$ and $c_-$.
Glue the boundaries $P_1$ to $P_2$ of the component $D$ of $\uAdS \setminus (P_1 \cup P_2)$
containing the future of $c$ by an isometry of $\uAdS$ fixing every point in $c$ (and in $c_-$): the
resulting space is denoted $\mathcal{G}_+$. As we will see later,
it does not depend up to isometry on the choice of $P_1$, $P_2$: it follows from
the fact that, given two unipotent elements $u$, $u'$ of $\operatorname{PSL}(2, \R)$,
$u'$ is conjugate to $u$ or to $u^{-1}$.
Nevertheless, there is a reverse procedure, consisting in inserting a wedge $W$ instead of
removing, leading to a singular space $\mathcal{G}_-$.

Gravitons are singular lines in $\mathcal{G}_\pm$. As we will see in \S~\ref{sec.link}, there are two
non-isometric types of graviton: one corresponding to $c$ in $\mathcal{G}_{+}$ and to $c_{-}$ in
$\mathcal{G}_{-}$, called \textit{positive}, the other, called \textit{negative},
corresponding to $c$ in $\mathcal{G}_{-}$ and to $c_{-}$ in
$\mathcal{G}_{+}$.
One can also be consider gravitons as limit cases of massive particles.

There is an alternative way to construct gravitons: let $P$ be one of the two half-planes  bounded by $c$ inside the totally geodesic lightlike plane containing $c$. Cut $\uAdS$ along this half-plane, and glue back by an unipotent element fixing $c$. We will see in \ref{sec.link} that the singular line in the resulting spacetime is a graviton.

Finally, extreme black-holes $\mathcal{E}$ are similar to (static) BTZ black-holes - there are a limit case: they consist in
glueing the other component $W$ of $\uAdS \setminus (P_1 \cup P_2)$ along $P_1$, $P_2$ by an
unipotent element. Further comments and details are left to the reader (see also
\cite[\S~3.2, \S~10.3]{barbtz2}).

\subsubsection{Singular lines in singular $\AdS$ spacetimes}
\label{sub.lines}
The notions of particles, tachyons, future or past singularity of black-holes (extreme or not), gravitons,
extend to the general context of
singular $\AdS$ spacetimes without interaction: there are one-dimensional
submanifolds made of points admitting neighborhoods isometric to neighborhoods of singular points
in $\mathcal{P}_\theta$, $\mathcal{T}_m$, $\mathcal{B}_m$, $\mathcal{E}$ or $\mathcal{G}_\pm$.

\subsection{HS geometry}
In this {\S} we present another way to define singular spacetimes, maybe less visualizable,
but presenting several advantages: it clarifies the equivalence between the two
definitions of tachyons in \S~\ref{sub.tachyon}, and allows to define
interactions.

Given a point $p$ in $\uAdS$, let $L(p)$ be the link of $p$, i.e. the set of (non-parametrized)
oriented geodesic rays based at $p$. Since these rays are determined by their initial tangent vector at $p$
up to rescaling, $L(p)$ is naturally identified with the set of rays in $T_p\uAdS$.
Geometrically, $T_p\uAdS$ is a copy of Minkowski space $\R^{1,2}$. Denote by $\tilde{\operatorname{HS}}^2$
the set of geodesic rays in $\R^{1,2}$. It admits a natural decomposition in five subsets:

-- the domains $\HH^2_+$ and $\HH^2_-$ comprising respectively future oriented and past oriented
timelike rays,

-- the domain $\dS^2$ comprising spacelike rays,

-- the two circles $\partial\HH^2_+$ and $\partial\HH^2$, boundaries of $\HH^2_\pm$ in $\HS^2$.

The domains $\HH^2_\pm$ are notoriously Klein models of the hyperbolic plane,
and $\dS$ is the Klein model of de Sitter space of dimension $2$. The group $\SO_0(1,2)$,
i.e. the group of of time-orientation preserving and orientation preserving isometries of
$\R^{1,2}$, acts naturally (and projectively) on $\HS^2$, preserving this decomposition.

\begin{defi}
A HS-surface is a topological surface endowed with a $(\SO_0(1,2), \HS^2)$-structure,
i.e. an atlas with chards taking value in $\HS^2$ and coordinate changes made of restrictions of
elements of $\SO_0(1,2)$.
\end{defi}

The $\SO_0(1,2)$-invariant orientation on $\HS^2$ induces an orientation on
every HS-surface. Similarly, the $\dS^2$ regions admits a canonical time orientation.
Hence any HS-surface is oriented, and its de Sitter regions are time oriented.

Given a HS-surface $\Sigma$, and once fixed a point $p$ in $\uAdS$,
we can compose a locally $\AdS$ manifold $e(\Sigma)$, called the suspension
of $\Sigma$, defined as follows:

-- for any $v$ in $\HS^2 \approx L(p)$, let $r(v)$ be the geodesic ray issued from $p$
tangent to $v$. If $v$ lies in the closure of $\dS^2$, defines $e(v) := r(v)$; if
$v$ lies in $\HH^2_\pm$, let $e(v)$ be the portion of $r(v)$ containing $p$ and of
proper time $\pi$.

-- for any open subset $U$ in $\HS^2$, let $e(U)$ be the union of all $e(v)$ for $v$ in $U$.

Observe that $e(U) \setminus \{ p \}$ is an open domain in $\uAdS$,
and that $e(\HS^2)$ is the intersection $I^-(p^+) \cap I^+(p^-)$,
where $p^+$ is the first conjugate point in $\uAdS$ to $p$ in the future of $p$,
and where $p^-$ is the first conjugate point in $\uAdS$ to $p$ in the past of $p$.

The HS-surface $\Sigma$ can be understood as the disjoint union of open domains
$U_i$ in $\HS^2$, glued one to the other by coordinate change maps $g_{ij}$ given
by restrictions of elements of $\SO_0(1,2)$:

\[ g_{ij}: U_{ij} \subset U_j \to U_{ji} \subset U_i \]

But $\SO_0(1,2)$ can be considered
as the group of isometries of $\AdS$ fixing $p$. Hence every $g_{ij}$ induces
an identification between $e(U_{ij})$ and $e(U_{ji})$. Define $e(\Sigma)$
as the disjoint union of the $e(U_i)$, quotiented by the relation
identifying $x$ in $e(U_{ij})$ with $g_{ij}(x)$ in $e(U_{ji})$.
This quotient space contains a special point $\bar{p}$, represented in every $e(U_i)$
by $p$, and called the \textit{vertex}. The fact that $\Sigma$ is a surface implies that $e(\Sigma) \setminus \bar{p}$ is
a three-dimensional manifold, homeomorphic to $\Sigma \times \R$. The topological space
$e(\Sigma)$ itself is homeomorphic to the cone over $\Sigma$.
It is a manifold if and only if $\Sigma$ is homeomorphic to the 2-sphere.
But it is easy to see that every HS-structure on the 2-sphere is isomorphic
to $\HS^2$ itself. In order to obtain (singular) $\AdS$-manifolds, we need to
consider singular HS-surfaces. We could define this notion with the same method
we used to define singular lines in $\AdS$. But it is more convenient
to pursue further the present approach by considering links of points in the link $L(p)$.
For that purpose, we need some preliminary considerations on $\R\PP^1$-structures
on the circle.

\subsubsection{Real projective structures on the circle}
\label{sub.RP1circle}
Let $\R\PP^1$ be the real projective line, and let $\widetilde{\R\PP}^1$ be its universal
covering. Observe that $\widetilde{\R\PP}^1$ is homeomorphic to the real line.
Let $G$ be the group $\operatorname{PSL}(2, \R)$ of projective transformations
of $\R\PP^1$, and let $\tilde{G}$ be its universal covering: it is the group of
projective transformations of $\widetilde{\R\PP^1}$. We have an exact sequence:

\[ 0 \rightarrow \Z \rightarrow \tilde{G} \rightarrow G \rightarrow 0\]

Let $\delta$ be a generator of the center $\Z$. It acts on $\widetilde{\R\PP}^1$
as a translation on the real line. We orient $\widetilde{\R\PP}^1$, i.e. we fix a
total archimedian order on it, so that for every $x$ in $\widetilde{\R\PP}^1$ the
inequality $\delta x > x$ holds. The quotient of $\widetilde{\R\PP}^1$ by $\Z$ is projectively
isomorphic to $\R\PP^1$.

The elliptic-parabolic-hyperbolic classification of elements of $G$
is well-known. It induces a similar classification for elements in $\tilde{G}$, according
to the nature of their projection in $G$. Observe that non-trivial elliptic elements acts on
$\widetilde{\R\PP}^1$ as translations, i.e. freely and properly discontinuously.
Hence the quotient space of their action is naturally a real projective structure on the circle.
We call these quotient spaces \textit{elliptic circles.} Observe that it includes
the usual real projective structure on $\R\PP^1$.

Parabolic and hyperbolic elements can all be decomposed as a product $\tilde{g}=\delta^kg$
where $g$ has the same nature (parabolic or hyperbolic) than $\tilde{g}$, but admits fixed
points in $\widetilde{\R\PP}^1$. The integer $k \in \Z$ is uniquely defined. Observe
that if $k \neq 0$, the action of $\tilde{g}$ on $\widetilde{\R\PP}^1$ is free and properly
discontinuous. Hence the associated quotient space, which is naturally equipped with a real
projective structure, is homeomorphic to the circle. We call it \textit{parabolic} or
\textit{hyperbolic circle,} according to the nature of $g$, \textit{of degree $k$.} Reversing
$\tilde{g}$ if necessary, we can always assume $k \geq 1$.

Finally, let $g$ be a parabolic or hyperbolic element of $\tilde{G}$ fixing a point $x_0$
in $\widetilde{\R\PP}^1$. Let $x_1$ be another fixed point of $g$, with $x_1 > x_0$ (it
exists since $\delta x_0 > x_0$). More precisely, we take the unique such a fixed point
so that $g$ admits no fixed point between $x_0$ and $x_1$ (if $g$ is parabolic, $x_1 = \delta x_0$;
if $g$ is hyperbolic, $x_1$ is the unique $g$-fixed point in $] x_0, \delta x_0 [$).
Then the action of $g$ on $]x_0, x_1[$ is free and properly discontinuous, the quotient space
is a \textit{parabolic} or \textit{hyperbolic circle of degree $0$.}

These examples exhaust the list of real projective structures on the circle up to
real projective isomorphism. We briefly recall the proof: the developping map
$d: \R \to \widetilde{\R\PP}^1$ of a real projective structure on $\R/\Z$ is a local
homeomorphism from the real line into the real line, hence a homeomorphism onto its image $I$.
Let $\rho: \Z \to \tilde{G}$ be the holonomy morphism: being a homeomorphism, $d$ induces
a real projective isomorphism between the initial projective circle and $I/\rho(\Z)$.
In particular, $\rho(1)$ is non-trivial, preserves $I$, and acts freely and properly discontinuously on
$I$. An easy case-by-case study leads to a proof of our claim.

It follows that every cyclic subgroup of $\tilde{G}$ is the holonomy group of a real projective
circle, and that two such real projective circles are projectively isomorphic if and only if
their holonomy groups are conjugate one to the other. But some subtlety appears when one consider orientations:
usually, by real projective structure we mean a $(\operatorname{PGL}(2,\R), \R\PP^1)$-structure,
ie coordinate changes might reverse the orientation. In particular, two such structures are isomorphic
if there is a real projective transformation conjugating the holonomy groups, even if this transformation
reverses the orientation. But here, by $\R\PP^1$-circle we mean
a $(G, \R\PP^1)$-structure on the circle. In particular, it admits a canonical orientation: the one whose lifting
to $\R$ is such that the developping map is orientation preserving. To be a $\R\PP^1$-isomorphism, a real projective conjugacy needs to preserve the orientation.

Let $L$ be a $\R\PP^1$-circle. The canonical orientation above allow us to distinguish a generator $\gamma_0$ of its fundamental group:
the one for which $\rho(\gamma) = \tilde{g} =  \delta^kg$ satisfies $\tilde{g}x > x$ for every element $x$ in the image of the developping map. It follows that the degree $k$ cannot be negative. Moreover:

\textbf{The elliptic case:} Elliptic $\R\PP^1$-circles are uniquely parametrized by a positive real number (the angle).

\textbf{The case $k \geq 1$:} Non-elliptic $\R\PP^1$-circles of degree $k \geq 1$ are uniquely parametrized by
the pair $(k, [g])$, where $[g]$ is a conjugacy class in $G$. Hyperbolic conjugacy classes are uniquely parametrized by
a positive real number: the absolute value of their trace. There are exactly two parabolic conjugacy classes: the \textit{positive parabolic class,} comprising the parabolic elements $g$ such that $gx \leq x$ for every $x$ in $\widetilde{\R\PP}^1$, and the
\textit{negative parabolic class,} comprising the parabolic elements $g$ such that $gx \geq x$ for every $x$ in $\widetilde{\R\PP}^1$ ({\S}~\ref{sec.link} will
justify this terminology).

\textbf{The case $k = 0$:} In this case, $L$ is isomorphic to the quotient by $g$ of a segment $]x_0, x_1[$
admitting as extremities two successive fixed points of $g$. Since we must have $gx > x$ for every $x$ in this segment,
$g$ cannot belong to the positive parabolic class: \textit{Every parabolic $\R\PP^1$-circle of degree $0$ is
negative.} Concerning the hyperbolic $\R\PP^1$-circles, the conclusion is the same than in the case $k \geq 1$:
they are uniquely parametrized by a positive real number. Indeed, given an hyperbolic element $g$ in
$\tilde{G}$, any $\R\PP^1$-circle of degree $0$ with holonomy $g$ is a quotient of a segment $]x_0, x_1[$ where
the left extremity $x_0$ is a repelling fixed point of $g$, and the right extremity an attractive fixed point.

\subsubsection{Singular HS-surfaces}
For every $p$ in $\HS^2$, let $\Gamma_p$ be the stabilizer in $\SO_0(1,2)\approx \op{PSL}(2, \R)$ of $p$, and let $L_p$
be its link, i.e. the space of oriented half-projective lines starting from $p$. Since $\HS^2$ is oriented and
admits an underlying projective structure, $L_p$ admits a natural
$\R\PP^1$-structure, and thus any $(\Gamma_p, L_p)$-structure on the circle admits a natural underlying $\R\PP^1$-structure.
We define HS-singularities from $(\Gamma_p, L_p)$-circles:
given $p$ and such a $(\Gamma_p, L_p)$-circle $L$, we can construct a singular HS-surface $\mathfrak{e}(L)$: for every
element $v$ in the link of $p$, define $\mathfrak{e}(v)$ as the closed segment $[-p, p]$ contained
in the projective ray defined by $v$, where $-p$ is the antipodal point of $p$ in $\HS^2$,
and then operate as we did for defining the AdS space $e(\Sigma)$ associated to
a HS-surface. The resulting space $\mathfrak{e}(L)$ is topologically a sphere, locally modeled on
$\HS^2$ in the complement of two singular points corresponding to $p$ and $-p$.

There are several types of singularity, mutually non isomorphic:

\begin{itemize}

\item \textit{Elliptic singularities:} they correspond to the case where $p$ lies
in $\HH^\pm$. Then, $\Gamma_p$ is a 1-parameter elliptic subgroup of $G$, and
$L$ is an elliptic $\R\PP^1$-circle. We insist on the fact that we restrict to
orientation and time orientation preserving elements of $\op{O}(1,2)$, hence
we must distinguish past elliptic singularities ($p \in \HH^-$) from future
elliptic singularities ($p \in \HH^+$).

\item \textit{Parabolic singularities:} it is the case where $p$ lies in $\partial\HH^\pm$.
The stabilizer $\Gamma_p$ is parabolic, and the link $L$ is a parabolic $\R\PP^1$-circle. We still have to
distinguish between past and future parabolic singularities.

\item \textit{Hyperbolic singularities:} when $p$ lies in $\dS^2$, $\Gamma_p$
is made of hyperbolic elements, and $L$ is a hyperbolic $\R\PP^1$-circle.
\end{itemize}

\begin{defi}
A singular HS-surface $\Sigma$ is an oriented surface containing a discrete subset $\mathcal{S}$
such that $\Sigma \setminus \mathcal{S}$ is a regular HS-surface, and such that
every $p$ in $\mathcal{S}$ admits a neighborhood HS-isomorphic to the neighborhood
of a singularity $\mathfrak{e}(L)$ constructed above.
\end{defi}

Ther underlying $\R\PP^1$-structure almost define the $(\Gamma_p, L_p)$-structure
of links, but not totally. It is due to the fact that
half-real projective lines through $p$ may have several different type: near $p$,
if $p$ lies on $\partial\HH^2_\pm$, they can be
timelike or lightlike rays, and if $p$ lies in $\dS^2$, they can be also spacelike rays.

\begin{defi}
Let $L$ be a $(\Gamma_p, L_p)$-circle. We denote by $i^+(L)$ (resp. $i^-(L)$) the open subset of
$L$ comprising future oriented (resp. past oriented) timelike rays.
\end{defi}

In the case
where $p$ lies on $\partial\HH^2_+$ (resp. $\partial\HH^2_-$) future oriented timelike rays are contained, near $p$,
in $\HH^2_+$ (resp. $\dS^2$) whereas past oriented rays are contained near $p$ in $\dS^2$ (resp. $\HH^2_-$).

We invite the reader to convince himself that
the $\R\PP^1$-structure and the additional data of future oriented and past oriented arcs
determine the $(\Gamma_p, L_p)$-structure on the link,
hence the HS-singular point up to HS-isomorphism.

For hyperbolic singularities of degree $0$ the $(\Gamma_p, \R\PP^1)$-structure has a holonomy
group generated by a hyperbolic element $g$, and is a quotient
by $\langle g \rangle$ of one component $I$ of the complement in $\R\PP^1$ of
the set of $g$-fixed points.
Either $I$ comprises only future-oriented timelike rays,
or only past-oriented timelike rays, or only spacelike rays. In the two former cases,
we say that the singularity is a \textit{timelike hyperbolic singularity}; we furthermore
distinguish between \textit{future hyperbolic singularity} (i.e. the case $L = i^+(L)$) and  \textit{past hyperbolic singularity}
($L = i^-(L)$).
In the latter case $i^+(L) = i^-(L) = \emptyset$, the singularity is a \textit{spacelike hyperbolic singularity.}

Similarly, a parabolic singularity of degree $0$ is the quotient of an interval
$I=]x_{0}, \delta x_{0}[$.
Either every timelike element in $I$ is future oriented, or every timelike element of $I$
are past oriented. If the singularity itself corresponds to a point $p$ in $\partial\HH^2_+$,
and if all elements of $I$ are future oriented, then $\mathfrak{e}(L)$ contains a neighborhood
of the singularity made of future timelike elements; the complement of the singular point in this neighborhood
is an annulus locally modelled on the quotient of $\HH^2_+$ by a parabolic isometry, i.e., a cusp.
In that case, we say that the singularity is a \textit{future cuspidal parabolic singularity.} Still in
the case where $p$ lies in $\partial\HH^2_+$, but where elements of $I$ are past oriented,
the complement of the singular point in sufficiently small neighborhoods is locally modelled on $\dS^2$:
such singular points are \textit{future extreme parabolic singularities.} A similar situation holds
in the case where $p$ lies in $\partial\HH^2_-$: we get \textit{past cuspidal} and \textit{past extreme parabolic
singularities.}

The situation is slightly more delicate for $(\Gamma_p,L_p)$-hyperbolic circles of degree $k \geq 1$, i.e.
$(\Gamma_p, L_p)$-circles for which the
underlying $\R\PP^1$ structure is the quotient of
$\widetilde{\R\PP}^1$ by $\langle \delta^kg \rangle$ where $g$ is hyperbolic. Let $x_{0}$ be fixed
point of $g$ which is a left extremity of a future timelike component: this component is of the form $]x_{0}, x_{1}[$
where $x_{1}$ is another $g$-fixed point. All the other $g$-fixed points are the $x_{2i}=\delta^{i}x_{0}$
and $x_{2i+1}=\delta^{i}x_{1}$. Then future timelike components are the intervals $\delta^{2i}]x_{0}, x_{1}[$
and the past timelike components are $\delta^{2i+1}]x_{0}, x_{1}[$. It follows that the degree $k$ is an even integer.
In the previous {\S} we observed that there is only one $\R\PP^1$ hyperbolic circle of holonomy $\langle g \rangle$
up to $\R\PP^1$-isomorphism, but this remark does not extend to hyperbolic $(\Gamma_p, L_p)$-circles since
a real projective conjugacy between $g$ and $g^{-1}$, if preserving the orientation, must permute timelike and spacelike
components. Hence we must distinguish between positive hyperbolic $(\Gamma_p, L_p)$-circles and negative
ones: the former are characterized by the property that every $x_{2i}$ is an attracting $g$-fixed point,
whereas for the latter
the attracting $g$-fixed points are the $x_{2i+1}$.

Similarly, parabolic $(\Gamma_p, L_p)$-circles have even degree, and dichotomy past/future among parabolic
$(\Gamma_p, L_p)$-circles of degree 2 splits into two subcases: the positive case for which the parabolic
element $g$ satisfies $gx \leq x$
on $\uRP$, and the negative case satisfying the reverse inequality (this positive/negative dichotomy is
inherent of the structure of $\uRP$-circle data, cf. the end of {\S}~\ref{sub.RP1circle}).


\subsection{Link of singular lines}
\label{sec.link}

It should be clear to the reader that the construction of AdS-manifolds $e(\Sigma)$
extends to singular HS-surfaces. Moreover, let $L$ be the link
of a HS-singularity, and let $\Sigma = \mathfrak{e}(L)$:

\begin{itemize}

\item  if $L$ is elliptic, the singular line in $e(\Sigma)$ is
a massive particle,

\item  if $L$ is a parabolic or hyperbolic
circle of degree $2$, the singular line in $e(\Sigma)$ is a graviton or a tachyon.

\item if $L$ is a future (resp. past) hyperbolic singularity of degree $0$, the singular line is the past
(resp. future) singularity of a white-hole (resp. black-hole),

\item if $L$ is a future extreme parabolic singularity, or a past cupidal parabolic singularity, the singular
line is the future singularity of an extreme black-hole.

\item if $L$ is a future cuspidal, or past extreme parabolic singularity, the singular line is the past
singularity of an extreme white-hole.

\end{itemize}

One way to convince oneself the validity of this claim is to observe that singular lines of singular
spacetimes defined in \S~\ref{sub.singular_lines} locally coincide with $e(\mathfrak{e}(L))$,
where $L$ is the link of the singularity, ie. the space of totally geodesic half-planes bounded by the singular line.
In every case, the holonomy is easy to identify (its projection in $G$ corresponds to the map glueing $P_{1}$ to
$P_{2}$ or $S_{1}$ to $S_{2}$). The claim follows from our previous study of characterization
of $(\Gamma_p, L_p)$-circles of given degree and holonomy.

In the list above extreme black-holes or white-holes appears as suspensions of either
cuspidal parabolic singularities, or extreme parabolic singularities. It is due to the fact
that for a parabolic $(\Gamma_p, L_p)$-circle of degree $0$, the singular point corresponding to $-p$
has not the same type than $p$: it is cuspidal if $p$ is extreme, and extreme if $p$ is cuspidal.

The hyperbolic case (tachyons) deserves more discussion.
We need to prove that the different constructions of tachyons in \S~\ref{sub.tachyon}
leads indeed to the isometric singular spacetimes, and moreover that, if
$L$ is a hyperbolic $(\Gamma_p, \R\PP^1)$-circle of degree 2, the mass
of the tachyon in $e(\mathfrak{e}(L))$ is positive (resp. negative) if $L$ is
of positive (resp. negative) type.

A way to determine the sign of the mass of tachyons is to consider AdS half-spaces with boundary
consisting of a timelike totally geodesic plane containing the tachyon. These half-spaces
correspond to intervals in the link $L$ projectively isomorphic to the affine real line
and with extremities lying in the timelike regions of $L$. Clearly, if the tachyon has positive mass,
one can cover the spacetime by two such half-spaces, since we removed a wedge in the past of
the spacelike geodesic $c$.
Similarly, such a covering is impossible if we inserted a wedge, i.e. if the mass is negative.
Complete affine real lines in $L$ are quotients of segments in $\uRP$ of the form $]x, \delta x[$.
If the associated tachyon has positive mass, there are two such segments $]x, \delta x[$,
$]y, \delta y[$, where $x$ and $y$ are timelike, $y$ lying in $]x, \delta x[$, and with projections
in $L$ covering the entire $L$. It means:

\[ x < y < \delta x <  g{\delta}^2 x < \delta y < \delta^2 x \]

Hence, $gx < x$: it follows as we claimed that $L$ is positive. The proof in the negative case
is similar. The case of gravitons as well: positive gravitons correspond to positive parabolic singularities
of degree 2.

\begin{remark}
From now, we qualify HS-singularities according to the nature of the associated
AdS-singular lines: an elliptic HS-singularity is a (massive) particle, a
parabolic of degree two is a graviton, positive or negative, etc...
\end{remark}

\begin{remark}
\label{rk.cuttachyon}
The same argument implies that there is a third way to build tachyons, similar
to the second construction of gravitons in \S~\ref{sub.graviton}:
given a spacelike geodesic $c$, let $P$ be one of the four lightlike totally
geodesic half-planes with boundary $c$. Cut along it, and glue back by a hyperbolic
isometry fixing every point in $c$.

More precisely, let $P_1$, $P_2$ be the two totally geodesic lightlike half-planes
appearing in the boundary of $D = \uAdS \setminus P$.
Up to time reversal, we can assume that $P$ is contained in the causal
future of $c$. Select the indexation so that
$P_1$ is contained in the boundary of the causal future of the geodesic $P_1 \cap P_2$.
Then $P_2$ is disjoint from this causal boundary.

Let $\gamma$ be the hyperbolic isometry preserving $P$ used to glue
$P_1$ onto $P_2$. As a transformation of $P$, $\gamma$ preserves every
lightlike geodesic ray issued from $c$. We leave to the reader the proof of the following fact:
\textit{the tachyon has positive mass if and only if for every $x$ in $P$ the lightlike
segment $[x, \gamma x]$ is future-oriented,\/} i.e. $\gamma$ sends every point in $P$ in
its own causal future.

The same criterion distinguishes between positive and negative gravitons:
\textit{a graviton is positive if and only if the map glueing the side
of $D$ contained in the causal future of the singular line to the other side
of $D$ sends every point in its causal future.}
\end{remark}

\begin{remark}
\label{rk.cuttachyon2}
As a corollary we get the following description HS-singularities corresponding to tachyons:
consider a small disk $U$ in $\dS^2$ and a point $x$ in $U$. Let $r$ be one lightlike geodesic ray
contained in $U$ issued from $x$, cut along it and glue back by a hyperbolic $\dS^2$-isometry $\gamma$
- observe that one cannot match one side on the other, but the resulting space
is still homeomorphic to the disk. The resulting HS-singularity is a tachyon. If $r$ is future
oriented, this tachyon has positive mass if and only if for every $y$ in $r$ the image $\gamma y$ is
in the future of $x$, see figure~\ref{fig.tachyon}. If $r$ is past oriented, the mass is positive
if and only if $\gamma y$ lies in the past of $y$ for evey $y$ in $r$.

\begin{figure}[ht]
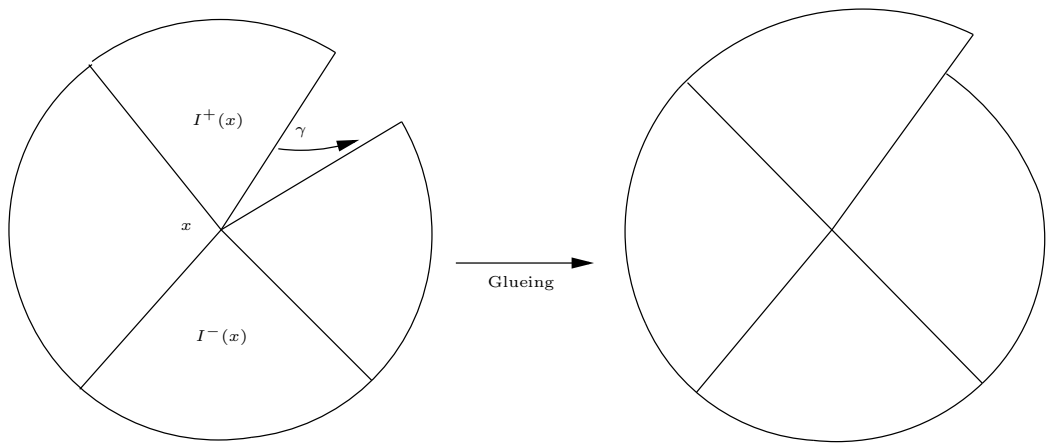

\begin{center}
\input Chirtachyon.pstex_t
\end{center}
\caption{Construction of a tachyon of positive mass.}
\label{fig.tachyon}
\end{figure}

A similar description holds for gravitons.

\end{remark}

\begin{remark}
\label{rk.cheminsing}
Let $[p_1, p_2]$ be an oriented arc in $\partial\HH^2_+$, and for every $x$ in $\HH^2$ consider the elliptic singularity
(with positive mass) obtained by removing the wedge comprising geodesic rays issued from $x$ and with extremity in $[p_1, p_2]$ and
glueing back by an elliptic isometry. Move $x$ until it reaches a point $x_\infty$ in $\partial\HH^2 \setminus [p_1, p_2]$.
It provides a continuous deformation of an elliptic singularity to a graviton, which can be continued further into $\dS^2$
by a continuous sequence of tachyons.
Observe that the gravitons (resp. tachyons) appearing in this continuous family are positive (resp. have positive mass).
\end{remark}

\subsubsection{Local future and past of singular points}
\label{sub.futpast}
In our list of singular lines in singular AdS spacetimes, we didn't consider singular lines in $e(\mathfrak{e}(L))$
where the $\R\PP^1$-circle underlying $L$ is of (even) degree $k \geq 4$ or of spacelike hyperbolic type. The reason for
this omission is that, as we will see now, they present pathological causal behavior.

In the singular AdS spacetimes we constructed, one can define timelike or causal curves, future or past
oriented: they are continuous paths, timelike or causal in the usual meaning outside the singularity, authorized
to go for a while along a massive particle or a graviton, and to cross a tachyon. Once introduced this notion
one can define the future $I^+(x)$ of a point $x$ as the set of final extremities of future oriented timelike
curves starting from $x$. Similarly, one defines the past $I^-(x)$, and the causal past/future $J^\pm(x)$.

Let $x$ be a point in a singular AdS-manifold $M$, and let $\Sigma_{x}$ be its link.
Let $\HH^+_{x}$ (resp. $\HH^-_{x}$) be the set of future (resp. past) timelike elements of the HS-surface $\Sigma_{x}$.
It is easy to see that the local future of $x$ in $e(\Sigma_{x})$, which is locally isometric to $M$, is the open domain
$e(\HH^+_{x}) \subset e(\Sigma_{x})$. Similarly, the past of $x$ in $e(\Sigma_{x})$ is $e(\HH^-_{x})$.
It follows that the causality relation in the neighborhood of point in a massive particle, a tachyon or a graviton presents
the same characteristic than in the neighborhood of a regular point: the local past and the local future are non-empty
connected open subset, bounded by lightlike geodesics, and contains no periodic causal curve. Furthermore, as
AdS-manifolds, the local future or past of a point lying on a tachyon or a graviton
are isometric to the local future or past of a regular point in $\uAdS$.

Points in the future singularity of a BTZ black-hole, extreme or not, have no future, and only one past
component. This past component is moreover isometric to the quotient of the past of a point in
$\uAdS$ by a hyperbolic (parabolic in the extreme case) isometry fixing the point. Hence, it is
homeomorphic to the product of an annulus by the real line.

If $L$ has degree $k \geq 4$, then the local future of a singular point in $e(\mathfrak{e}(L))$ admits $k/2$ components,
hence at least $2$, and the local past as well. This situation sounds very unphysical, we exclude it.

Points in spacelike hyperbolic singularities have no future, and no past. Besides, any neighborhood of such a point
contains closed timelike curves (CTC in short). Indeed, in that case, $\mathfrak{e}(L)$ is obtained by glueing the
two spacelike sides of a bigon entirely contained in the de Sitter region $\dS^2$ by some isometry $g$, and for every point $x$
in the past side $gx$ lies in the future of $x$: any timelike curve joining $x$ to $gx$ induces a CTC in $\mathfrak{e}(L)$.
But:

\begin{lemma}
\label{le.CTC}
Let $\Sigma$ be a singular HS-surface. Then the singular AdS-manifold $e(\Sigma)$ contains CTC if and only if
the de Sitter region of $\Sigma$ contains CTC. Moreover, if it is the case, every neighborhood of the vertex of $e(\Sigma)$
contains a CTC of arbitrarly small length.
\end{lemma}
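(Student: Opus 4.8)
Let me sketch how I would prove this. The key is that $e(\Sigma)$ is built by suspending $\Sigma$, with the five HS-regions each inducing a recognizable piece of AdS geometry on the suspension.

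First, for the "if" direction, I would argue that a CTC in the de Sitter region of $\Sigma$ directly produces a CTC in $e(\Sigma)$. Recall that $\dS^2 \subset \HS^2$ is the projectivization of the spacelike rays in $T_p\uAdS$, and for $v$ in $\dS^2$ the suspension ray $e(v)$ is just the geodesic ray $r(v)$. The point is that a spacelike geodesic segment in $\dS^2$, traversed as a curve, lifts under the suspension construction to a causal (indeed timelike) curve arbitrarily close to the vertex $\bar p$: scaling the endpoint of the ray down toward $p$ shrinks the curve while preserving its causal character, because AdS geodesics through $p$ that are spacelike at $p$ remain spacelike, and the second-order correction to the metric near $p$ does not change the sign of $\langle \dot c, \dot c\rangle$ for a curve whose tangent stays a controlled distance from the light cone. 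So a closed de Sitter curve in $\Sigma$ that is timelike (in the Lorentzian metric of the de Sitter region) gives, at scale $\varepsilon$, a CTC in $e(\Sigma)$ of length $O(\varepsilon)$. This simultaneously proves the "if" direction and the "moreover" clause about arbitrarily small length.

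For the "only if" direction, suppose $e(\Sigma)$ contains a CTC $\gamma$; I want a CTC in the de Sitter region of $\Sigma$. The plan is a shadow/projection argument: radially project $\gamma$ (away from the vertex $\bar p$, or rather from $p$ in each chart $e(U_i)$) to a closed curve $\bar\gamma$ in $\Sigma$. A causal curve in $\uAdS$ that avoids its own pair of conjugate points $p^\pm$ — which is the situation inside $e(\HS^2) = I^-(p^+)\cap I^+(p^-)$ — projects radially to a curve in the link $L(p) \cong \HS^2$ whose image, where it meets $\dS^2$, is locally timelike there; where $\gamma$ passes through the hyperbolic regions $\HH^2_\pm$ or their boundary circles, the shadow can only move "forward" in a way compatible with the time orientation. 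Here I would use the description in \S~\ref{sub.futpast}: the local future $I^+(x) = e(\HH^+_x)$ and local past $I^-(x) = e(\HH^-_x)$, so a future-oriented causal curve through a point of $\Sigma$ lying in $\HH^2_+$ immediately re-enters a region whose shadow has not "gone backward". The crux is that the hyperbolic regions contribute no closed causal behavior on their own (they are quotients of convex subsets of genuine $\uAdS$, or of $\uAdS$ modulo an elliptic/parabolic/hyperbolic isometry fixing $p$, none of which have CTC through the relevant cone), so any closed causal loop in $e(\Sigma)$ must spend "essential" time in the de Sitter region, and its shadow there is a closed timelike curve of the de Sitter metric on $\Sigma$ — possibly after pushing the hyperbolic excursions of the shadow to the boundary $\partial\HH^2_\pm$ and using the time orientation to close it up.

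The main obstacle, I expect, is making the shadow argument rigorous when $\gamma$ passes through the singular points of $\Sigma$ (the vertices of the HS-singularities, corresponding to the singular AdS lines) or through the lightlike circles $\partial\HH^2_\pm$: there the radial projection is not smooth and one must check that causal curves are "allowed" to cross tachyons/gravitons in exactly the way that does not let the shadow jump backward, using the explicit isometric models of \S~\ref{sub.futpast} (local future/past of a point on a tachyon or graviton is isometric to that of a regular point). Once one knows the shadow is a well-defined closed curve and is causal-forward in $\Sigma$ with genuinely timelike arcs in $\dS^2$ and no "backward" arcs elsewhere, a standard argument — perturb within $\dS^2$ and use that the de Sitter metric's time orientation is the one induced from $\HS^2$ — upgrades it to an honest CTC contained in the de Sitter region. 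I would organize the write-up as: (1) the suspension-scaling lemma giving "if" + small length; (2) the shadow construction and its causal monotonicity, region by region; (3) closing up the shadow inside $\dS^2$.
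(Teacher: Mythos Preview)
Your proposal is in the right direction, but it is substantially more complicated than the paper's argument, and the extra complication introduces a genuine soft spot.

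The paper's entire proof rests on one clean observation: for each $\epsilon>0$, the map $f_\epsilon$ sending $v$ in the de Sitter region of $\Sigma$ to the point at spacelike distance $\epsilon$ from the vertex along $r(v)$ is a \emph{conformal isometry} onto its image, with conformal factor $\lambda(\epsilon)\to 0$ as $\epsilon\to 0$. (Concretely, the metric on $e(\dS^2(\Sigma))$ is the warped product $dr^2+\sinh^2(r)\,g_{\dS^2}$.) This gives the ``if'' direction and the arbitrarily-small-length claim in one stroke: a CTC $c$ in the de Sitter region of $\Sigma$ maps under $f_\epsilon$ to a CTC in $e(\Sigma)$ of length $\lambda(\epsilon)\cdot\ell(c)$. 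Your Taylor-expansion phrasing (``second-order correction does not change the sign'') is not wrong, but it hides this exact conformal structure.

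For the ``only if'' direction your shadow idea is right, but you make life hard by allowing the CTC $\gamma$ to wander into $e(\HH^2_\pm(\Sigma))$ and then attempting to ``push the hyperbolic excursions of the shadow to the boundary $\partial\HH^2_\pm$ and use the time orientation to close it up''. This step is where your argument becomes vague --- there is no time orientation on the hyperbolic region for the shadow to be ``forward'' with respect to, and it is not clear how the pieces of shadow in $\dS^2(\Sigma)$ reassemble into a closed curve. The simpler observation, which dissolves the problem, is that a CTC in $e(\Sigma)$ \emph{cannot} meet $e(\HH^2_\pm(\Sigma))$ at all. In each chart, $e(\HH^2_+(\Sigma))$ is $I^+(p)$, and the coordinate changes are isometries fixing $p$, hence preserve $I^+(p)$; so a future-oriented causal curve entering $e(\HH^2_+(\Sigma))$ is trapped there. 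But on $e(\HH^2_+(\Sigma))$ the metric is the warped product $-d\tau^2+\sin^2(\tau)\,g_{\HH^2}$, and the proper time $\tau$ from the vertex is a global time function (its level sets are spacelike), so no CTC lives there. By time reversal the same holds for $e(\HH^2_-(\Sigma))$. Hence $\gamma\subset e(\dS^2(\Sigma))$, and then the warped-product form $dr^2+\lambda(r)^2 g_{\dS^2}$ immediately shows that the radial projection of $\gamma$ to $\dS^2(\Sigma)$ is timelike and closed. Once you insert this trapping argument, your worries about crossings of $\partial\HH^2_\pm$ and about singular points largely evaporate.
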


\begin{proof}
Let $\bar{p}$ be the vertex of $e(\Sigma)$. For any real number $\epsilon$, let $f$ be the map associating to $v$ in
the de Sitter region of $\Sigma$
the point at distance $\epsilon$ to $p$ on the spacelike geodesic $r(v)$. Then the image of $f$ is a singular Lorentzian submanifold
locally isometric to the de Sitter space rescaled by a factor $\lambda(\epsilon)$. Moreover, $f$ is a conformal isometry: its
differential multiply by $\lambda(\epsilon)$ the norms of tangent vectors. The lemma follows by observing that
$\lambda(\epsilon)$ tends to $0$ with $\epsilon$.
\end{proof}

The definition of BTZ black-holes themselves is based on the prohibition of CTC (\cite{BTZ, barbtz1, barbtz2}).
Similarly, \textit{$e(\Sigma)$ contains closed causal curve (abbrev. CCC ) if and only if the de Sitter region of $\Sigma$
contains CCC.\/}
From now, we keep this convention and will restrict to HS-surfaces without CCC. Hence, we don't allow hyperbolic spacelike singularities.

\begin{defi}
A singular HS-surface is causal if it admits no singularity of degree $\geq 4$, no CTC and no CCC.
\end{defi}

\subsection{Link of an interaction}

\begin{defi}
An interaction is the suspension of a causal singular HS-surface homeomorphic to the 2-sphere with at least three singularities.
It is \textit{positive} if all particles and tachyons have positive mass, and all gravitons are positive.
\end{defi}

Let $\bar{p}$ be the vertex of an interaction; as we will see later the singular surface admits at least one timelike component,
let say a past component. The singularities in the past timelike region are massive particles, colliding at $\bar{p}$, and emitting
other massive particles (singularities in the future timelike region, if any), tachyons, gravitons, and/or creating black-holes (more precisely,
future singularities). Hence the classification of interaction types reduces to the classification of admissible HS-surfaces. It is the matter
of \S~\ref{sec.classificationHS}.

A singular AdS manifold with interaction is a manifold locally modeled on open subsets of interactions and open subsets of
singular AdS manifolds without interactions.


\section{Classification of positive causal HS-surfaces}
\label{sec.classificationHS}
In all this {\S} $\Sigma$ denotes a closed (compact without boundary) positive causal HS-surface. It decomposes
in three regions:

\begin{itemize}
\item \textit{Photons:} a photon is a point corresponding in every
HS-chard to points in $\partial\HH^2_\pm$. Observe that a photon might be singular,
i.e. corresponds to a graviton, or to the singularity of an extreme BTZ black-hole, i.e.
a past or future parabolic singularity. The set of photons, denoted
$\mathcal{P}(\Sigma)$, or simply $\mathcal{P}$ in the non-ambiguous situations,
is the disjoint union of a finite number of extreme black-holes singularities and of a compact embedded
one dimensional manifold, i.e. a finite union of circles.

\item \textit{Hyperbolic regions:} They are the connected components
of  the open subset $\HH^2(\Sigma)$ of $\Sigma$
corresponding to the timelike regions $\HH^2_\pm$ of $\HS^2$.
They are naturally hyperbolic surfaces with cone singularities. There
are two types of hyperbolic regions: the future and the past ones.
The boundary of every hyperbolic region is a finite union of circles of photons
and of cuspidal (parabolic) singularities.

\item \textit{De Sitter regions:} They are the connected components
of  the open subset $\dS^2(\Sigma)$ of $\Sigma$
corresponding to the timelike regions $\dS^2_\pm$ of $\HS^2$.
Alternatively, they are the connected components of $\Sigma \setminus \mathcal{P}$
that are not hyperbolic regions. Every de Sitter region is a singular dS surface,
whose closure is compact and with boundary made of circles of photons and of
a finite number of extreme parabolic singularities.

\end{itemize}

\subsection{Photons}
Let $C$ be a circle of photons. It admits two natural $\R\PP^1$-structures, which may not coincide, if
$C$ contains gravitons.

Consider a closed annulus $A$ in $\Sigma$ containing $C$ so that all HS-singularities in
$A$ lie in $C$.
Consider first the hyperbolic side, i.e. the component $A_{H}$ of $A \setminus C$ comprising timelike elements.
Reducing $A$ if necessary we can assume that $A_{H}$ is contained in a hyperbolic region. Then every path
starting from a point in $C$ has infinite length in $A_{H}$, and inversely every complete geodesic ray in $A_{H}$
accumulates on an unique point in $C$. In other words, $C$ is the conformal boundary at $\infty$ of $A_{H}$. Since
the conformal boundary of $\HH^2$ is naturally $\R\PP^1$ and that hyperbolic isometries extend as real projective transformations,
$C$ inherits, as conformal boundary of $A_{H}$, a $\R\PP^1$-structure that we call \textit{$\R\PP^1$-structure on $C$ from the hyperbolic side.}

Consider now the component $A_{S}$ in the de Sitter region adjacent to $C$. It is
is foliated by the lightlike lines. Actually, there are two such foliations (for more details, see
\ref{sub.dSclass} below).
An adequate selection of this annulus ensures that the leaf space of each of these foliations
is homeomorphic to the circle - actually, there is a natural identification
between this leaf space and $C$: the map associating to a leaf its extremity.
These foliations are transversely projective: hence they induce a $\R\PP^1$-structure
on $C$. This structure is the same for both foliations, we call it \textit{$\R\PP^1$-structure on $C$ from the de Sitter side.}

In order to sustain this
claim, we refer \cite[{\S} 6]{mess}: first observe that $C$ can be slightly pushed inside $A_{H}$
onto a spacelike simple closed curve (take a loop around $C$ following alternatively past oriented lightlike
segments in leaves of one of the foliations, and future oriented segments in the other foliation; and smooth it).
Then apply \cite[Proposition 17]{mess}.

If $C$ contains no graviton, $\R\PP^1$-structures from the hyperbolic and de Sitter sides coincide.
But it is not necessarily true if $C$ contains gravitons. Actually, the holonomy from one side is obtained
by composing the holonomy from the other side by parabolic elements, one for each graviton in $C$.
Observe that in general even the degrees may not coincide.

\subsection{Hyperbolic regions}

Every hyperbolic region is a complete hyperbolic surface with cone singularities (corresponding to massive particles)
and cusps (corresponding to cuspidal parabolic singularities) and of finite type, i.e. homeomorphic
to a compact surface without boundary with a finite set of points removed.

\begin{prop}
\label{pro.hypdegree0}
Let $C$ be a circle of photons in $\Sigma$, and $H$ the hyperbolic region adjacent to $C$.
Let $\bar{H}$ be the open domain in $\Sigma$ comprising $H$ and all cuspidal singularities
contained in the closure of $H$. Assume that $\bar{H}$ is not homeomorphic to the disk.
Then, as a $\R\PP^1$-circle defined by the hyperbolic side, the circle $C$ is hyperbolic of degree 0.
\end{prop}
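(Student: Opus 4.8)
We want to show that if $\bar H$ is not homeomorphic to a disk, then $C$, viewed as an $\R\PP^1$-circle from the hyperbolic side, is a hyperbolic circle of degree $0$. Let me think about what's available.

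So $H$ is a complete hyperbolic surface with cone singularities and cusps, of finite type. $C$ is a circle of photons adjacent to $H$, and $C$ is the conformal boundary at infinity of $A_H$ (an annular neighborhood inside $H$). The $\R\PP^1$-structure on $C$ comes from identifying $C$ with the conformal boundary of this annulus, with holonomy being $\rho(\gamma_0) \in \tilde{G}$ where $\gamma_0$ generates $\pi_1(C)$.

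Key steps:

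1. **Identify the holonomy.** The holonomy of $C$ around its core curve equals the holonomy of $H$ around the end of $H$ corresponding to $C$. Since $H$ is a hyperbolic surface, $\gamma_0$ maps to an element $g \in \mathrm{PSL}(2,\R)$ which is either hyperbolic or parabolic (it's the holonomy of a boundary curve / end of a complete hyperbolic surface). But wait — could it be elliptic? If $\bar H$ is not a disk, then... Actually here's where the topological hypothesis comes in.

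2. **Rule out elliptic holonomy.** An elliptic $\R\PP^1$-circle would mean the holonomy $g$ is elliptic, i.e., $\gamma_0$ bounds in a way where $H$ near $C$ looks like a neighborhood of a cone point. But then $\bar H$ near $C$ would be a disk (cone point neighborhood), forcing... hmm, actually $\bar H$ could still be non-disk globally. Let me reconsider. Actually: $C$ is the conformal boundary at infinity of $A_H$, and a complete end of a hyperbolic surface going "to infinity" (conformal boundary) has holonomy that is either hyperbolic (funnel) or parabolic (cusp) — never elliptic, because elliptic elements don't have limit sets on $\partial \HH^2$ that $A_H$ could limit to. Cusps correspond to parabolic; funnels to hyperbolic. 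Since $C$ is a circle of photons (not a cuspidal singularity — cusps are points, not circles), the end must be of funnel type, hence $g$ is hyperbolic.

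3. **Rule out degree $\geq 1$.** Now I need: the degree $k$ of the hyperbolic $\R\PP^1$-circle is $0$. Recall from the excerpt that a hyperbolic circle of degree $k \geq 1$ has developing image all of $\uRP$ (wrapping around), while degree $0$ has developing image a single interval $]x_0,x_1[$ between successive fixed points of $g$. For the hyperbolic side, $C$ is the boundary of $A_H \subset \HH^2$; the developing map of $A_H$ into $\HH^2$ extends to $C \to \partial\HH^2 = \R\PP^1$ (well, into $\uRP$ via the universal cover of $A_H$). The issue: does this developing map on $C$ have image a proper interval (degree $0$) or does it wrap around (degree $\geq 1$)?

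4. **Use the topology of $\bar H$.** This is the crux. If $C$ had degree $k \geq 1$, then the developing map of $A_H$ would have image dense in / wrapping around $\HH^2$, meaning $A_H$ "spirals around" all of $\HH^2$. This happens exactly when $A_H$ is (homotopic to) a half-open neighborhood of a closed geodesic that is non-separating in a specific sense — more precisely, when the convex core end is a funnel whose core geodesic, combined with the way $A_H$ sits, forces $\bar H$ to retract onto something containing that geodesic in a way that $\bar H$ *is* a half-open annulus, i.e. homeomorphic to a disk... no wait.

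Let me reconsider step 3-4 more carefully. A funnel end of a hyperbolic surface: take $\HH^2 / \langle g\rangle$ with $g$ hyperbolic, this is an infinite annulus; one end is a funnel. The conformal boundary circle of the funnel, as an $\R\PP^1$-circle, is the quotient of one component of $\R\PP^1 \setminus \mathrm{Fix}(g)$ by $g$ — that's degree $0$ by definition (an interval between two successive — in fact the only two — fixed points). So a "standard funnel" gives degree $0$. To get degree $\geq 1$, $C$ would have to develop around $\HH^2$ multiple times, which means $A_H \to \HH^2$ is not the standard funnel inclusion but something whose image is not a half-plane. But $A_H$ is an *embedded* annulus in the hyperbolic surface $H$ — and the developing map of an embedded annular neighborhood of an end...

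Hmm. Actually I think the real point is: if $\bar H$ is *not* a disk, then near $C$ the surface $H$ has a genuine "funnel" structure in which $A_H$ can be chosen to develop injectively into a half-plane, giving degree $0$. Whereas if $\bar H$ *were* a disk, then $C$ could be, e.g., a "wrapped" circle — the kind of thing that bounds a disk with a cone point or a disk that develops around. So:

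**The plan is**: First, identify the holonomy $g$ of $C$ from the hyperbolic side as the holonomy of the corresponding end of the complete hyperbolic surface $H$; since $C$ is a circle (not a cusp point) of photons, this end is a funnel and $g$ is hyperbolic. Next, choose the annulus $A$ (hence $A_H$) small enough that $A_H$ lies in the hyperbolic region $H$ and retracts onto $C$; I claim the developing map $\mathrm{dev}: \widetilde{A_H} \to \HH^2$ then has image inside a half-plane bounded by the axis of $g$, so that its boundary extension $C \to \R\PP^1$ has image a single interval between the two fixed points of $g$ — this is precisely degree $0$. The hypothesis that $\bar H$ is not a disk is used to guarantee that such an $A_H$ exists developing injectively: if instead the developing map wrapped around $\HH^2$ (degree $\geq 1$), then $A_H$ would develop onto a neighborhood of a closed geodesic covering it multiple times, and pulling this back one shows $\bar H$ itself must be the half-open annulus quotient, hence homeomorphic to a disk (or that some compact subsurface of $\bar H$ is an annulus with $C$ boundary-parallel, again forcing the disk conclusion) — contradiction.

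**Main obstacle.** The delicate point is Step 3/4: carefully relating the degree of the $\R\PP^1$-structure on $C$ to the embedding type of $A_H$ in $H$ and thence to the topology of $\bar H$. One must argue that an embedded annular end-neighborhood in a complete hyperbolic surface whose conformal boundary circle has degree $\geq 1$ forces the surface (or the relevant piece $\bar H$) to be a disk. I expect to use: the convex core of $H$, the fact that the funnel end is collared by its core geodesic, and a covering-space argument comparing $\widetilde{A_H}$ with the universal cover of the model funnel $\HH^2/\langle g\rangle$ — degree $k$ would mean $A_H \to \HH^2/\langle g\rangle$ is a $k$-fold-type wrapping, incompatible with $A_H$ being an embedded subsurface of $H$ unless $H$ itself is that quotient annulus, i.e., $\bar H$ is a disk. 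Getting this wrapping/embedding incompatibility precise, including handling the cone points and cusps that may sit in $H$ (they don't affect the end but clutter the picture), is the bulk of the work.
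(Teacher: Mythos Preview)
Your strategy has a genuine gap in ruling out parabolic holonomy. You write: ``Cusps correspond to parabolic; funnels to hyperbolic. Since $C$ is a circle of photons (not a cuspidal singularity --- cusps are points, not circles), the end must be of funnel type, hence $g$ is hyperbolic.'' This overlooks a third possibility: when $g$ is parabolic, the quotient $\HH^2/\langle g\rangle$ has \emph{two} complete ends, a cusp and a wide end, and the wide end also has a circle as conformal boundary --- a parabolic $\R\PP^1$-circle of degree $0$ in the paper's classification. Nothing in ``$C$ is a circle'' excludes the end of $A_H$ toward $C$ from being such a wide parabolic end. Ruling this out is exactly where the hypothesis that $\bar H$ is not a disk must enter, and the paper does it by producing a length-minimizing simple loop in $H$ freely homotopic to $C$: the non-disk hypothesis ensures $C$ is not freely homotopic to a small loop around a cusp or a cone point, so the infimal length is positive and is achieved by a closed geodesic; since cone angles are $<2\pi$ this geodesic avoids all singularities and therefore has hyperbolic holonomy. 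You have misplaced the role of the non-disk hypothesis, invoking it instead to exclude degree $\geq 1$ via a ``wrapping'' argument whose logic (embedding of $A_H$ in $H$ forcing injectivity of the developing map) does not hold up.

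That said, once $g$ is known to be hyperbolic, your instinct to work with a small regular annulus $A_H$ near $C$ (chosen to miss the finitely many cone points) gives a cleaner finish than the paper's. The developing map descends to a local isometry $A_H\to\HH^2/\langle g\rangle$ which is degree one on fundamental groups, hence an embedding near the complete end; thus $C$ is identified with a funnel boundary of $\HH^2/\langle g\rangle$ and is hyperbolic of degree $0$. The paper instead devotes most of its proof to a delicate inductive argument pushing the annulus bounded by $C$ and the minimizing geodesic $c_0$ past any cone points trapped between them, so as to exhibit an explicit singularity-free funnel bounded by $C$ and a closed geodesic --- more than is needed for the bare statement.
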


\begin{proof}
The proposition will be proved if we find
an annulus in $H$ bounded by $C$ and a simple closed geodesic in $H$, and
containing no singularity.

Consider absolutely continuous simple loops in $H$ freely homotopic to $C$ in $H \cup C$.
Let $L$ be the length of one of them. There are two compact subsets $K \subset K' \subset \bar{H}$ such that
every loop of length $\leq 2L$ containing a point in the complement of $K'$ stay outside $K$ and
is homotopically trivial. It follows that every loop freely homotopic to $C$
of length $\leq L$ lies in $K'$:
by Ascoli and semi-continuity of the length, one of them has minimal length $l_0$
(we also use the fact that $C$ is not freely homotopic to a small closed loop around a cusp of $H$,
details are left to the reader).
It is obviously simple, and it contains no singular point, since every path containing a singularity
can be shortened. Hence it is a closed hyperbolic geodesic.

There could be several such closed simple geodesics of minimal length, but they are two-by-two disjoint,
and the annulus bounded by two such minimal closed geodesic must contain at leat one singularity since there
is no closed hyperbolic annulus bounded by geodesics. Hence, there is only a finite number of such
minimal geodesics, and for one of them, $c_0$, the annulus $A_0$ bounded by $C$ and $c_0$
contains no other minimal closed geodesic.

If $A_0$ contains no singularity,
the proposition is proved. If not, for every $r > 0$, let $A(r)$ be the
set of points in $A_0$ at distance $< r$ from $c_0$, and let $A'(r)$ be the complement of
$A(r)$ in $A_0$. For small value of $r$, $A(r)$ contains no singularity. Thus, it is isometric
to the similar annulus in the unique hyperbolic annulus containing a geodesic loop of length
$l_0$. This remarks holds as long as $A(r)$ is regular. Denote by $l(r)$ the length
of the boundary $c(r)$ of $A(r)$.

Let $R$ be the supremum of positive real numbers $r_0$ such that for every $r < r_0$
every essential loop in $A'(r)$ has length $\geq l(r)$. Since $A_0$ contains no closed
geodesic of length $\leq l_0$, this supremum is positive. On the other hand,
let $r_1$ be the distance between $c_0$ and the singularity $x_1$ in $A_0$ nearest to $c_0$.

We claim that $r_1 > R$. Indeed: near $x_1$ the surface is isometric
to a hyperbolic disk $D$ centered at $0$ with a wedge between two geodesic rays $l_1$, $l_2$ issued from $0$
of angle $2\theta$ removed. Let $\Delta$ be the geodesic ray issued from $0$ made of points at equal distance
from $l_1$ and from $l_2$.
Assume by contradiction $r_1 \leq R$. Then, $c(r_1)$ is a closed simple geodesic, containing $x_1$ and
minimizing the length of curves inside $A'(r_1)$. Singularities of cone angle $\theta \geq  \pi/2$ cannot be approached by closed loops
minimizing length, hence $\theta < \pi/2$. Moreover, we can assume without loss of generality that $c(r)$
near $x_1$ is the projection of a $C^1$-curve $\hat{c}$ in $D$ orthogonal to $\Delta$
at $0$, and such that the removed wedge between $l_1$, $l_2$, and the part of $D$ projecting into $A(r)$ are on opposite
sides  of this curve. For every $\epsilon > 0$, let $x_1^\epsilon$, $x^\epsilon_2$ be the points at distance
$\epsilon$ from $x$ in respectively $l_1$, $l_2$. Consider the geodesic $\Delta^\epsilon_i$
at equal distance from $x_i^\epsilon$ and $0$ ($i=1,2$): it is orthogonal to $l_i$, hence not
tangent to $\hat{c}$. It follows that, for $\epsilon$ small enough, $\hat{c}$ contains a point
$p_i$ closer to $x_i^\epsilon$ than to $0$. Hence, $c(r_1)$ can be shortened be replacing
the part between $p_1$ and $p_2$ by the union of the projections of the geodesics $[p_i, x_i^\epsilon]$.
This shorter curve is contained in $A'(r_1)$: contradiction.

Hence $R < r_1$. In particular, $R$ is finite. For arbitrarly small $\epsilon$, the annulus $A'(R+\epsilon)$ contains
an essential closed geodesic $c_\epsilon$ of minimal length $< l(R+\epsilon)$. Since it lies in $A'(R)$, this geodesic has
length $\geq l(R)$. It cannot be tangent to $c(R+\epsilon)$, hence it is disjoint from it. Moreover,
the annulus $A_\epsilon$ bounded by $c_\epsilon$ and $c(R+\epsilon)$ cannot be regular: indeed, if it was,
its union with $A(R+\epsilon)$ would be a regular hyperbolic annulus bounded by two closed geodesics.
Therefore, every $A_\epsilon$ contains a singularity. Up to a subsequence, the geodesics $c_\epsilon$ converges
when $\epsilon \to 0$ towards a closed geodesic $c_1$ of length $l(R)$, disjoint from $c(R)$, and
there is a singular point between $c_1$ and $c_R$.

Let $A_1$ be the annulus bounded by $C$ and $c_1$: every essential loop inside $A_1$ has length $\geq l(R)$
(since it lies in $A'(R)$). It contains strictly less singularities than $A_0$. If we restart the process
from this annulus, we obtain by induction an annulus bounded by $C$ and a closed geodesic inside $T$
with no singularity.
\end{proof}

\subsection{De Sitter regions}
\label{sub.dSclass}

Let $T$ be a de Sitter region of $\Sigma$. Future oriented isotropic directions defines two oriented line fields
on the regular part of $T$, defining two oriented foliations. Since tachyons are hyperbolic singularities of degree $2$,
these foliations extend continuously on tachyons (but not differentially) as regular oriented foliations.
Besides, in the neighborhood of every "black-hole" singularity $x$, the leaves of each of these foliations spiral around
$x$. They thus define two singular oriented foliations $\cF_{1}$, $\cF_{2}$, where the singularities are precisely
the "black-hole singularities", i.e. hyperbolic timelike ones, and have degree $+1$. By Poincar\'e-Hopf index formula
we immediatly get:

\begin{cor}
Every de Sitter region is homeomorphic to the  annulus, the disk or the sphere.
Moreover, it contains at most two timelike hyperbolic singularities: if it contains two singularities,
then it is homeomorphic to the 2-sphere, and if it contains exactly one, it is homeomorphic to the disk.
\end{cor}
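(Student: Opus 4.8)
The plan is to carry out the Poincar\'e-Hopf computation announced just before the statement, using one of the two null foliations, say $\cF_1$. Since $\cF_1$ is oriented, on the regular part of the de Sitter region $T$ it is the integral foliation of a nowhere-zero vector field (well defined up to positive rescaling), and by the description recalled above its only singular behaviour is a spiral at each timelike hyperbolic (``black hole'') singularity; so $\cF_1$ is represented by a vector field $X$ on $T$ whose zeros are exactly the $N$ black holes of $T$, each of index $+1$ (tachyons being regular points of $\cF_1$). I would then run Poincar\'e-Hopf on the closure $\bar T$ of $T$ in $\Sigma$: by the structure of de Sitter regions described above, $\bar T$ is a compact oriented surface whose boundary consists of the circles of photons adjacent to $T$ together with finitely many extreme parabolic singularities, and the count of zeros of $X$ weighted by index is exactly $N$.

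The point that needs care is the behaviour of $X$ near $\partial\bar T$. Along a circle of photons $C$ bounding $T$ on the de Sitter side, the adjacent de Sitter annulus $A_S$ is, as established in the discussion of photons, ruled by the lightlike leaves of $\cF_1$, each leaf meeting $C$ in a single ``extremity'' and the leaf space of $\cF_1|_{A_S}$ being identified with $C$; hence $X$ is transverse to $C$. Near an extreme parabolic singularity, the two boundary arcs of $\bar T$ issuing from it are lightlike geodesics, one of them a leaf of $\cF_1$; sliding that leaf slightly into $T$ makes $\partial\bar T$ everywhere transverse to $X$ without creating any new zero. Poincar\'e-Hopf for a vector field transverse to the boundary of a compact surface (the boundary circles contributing $0$, having vanishing Euler characteristic, whichever way $X$ crosses them) then yields $\chi(\bar T)=\sum\operatorname{ind}(X)=N$.

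It remains to read off the topology from $\chi(\bar T)=N\geq 0$. Since every connected surface has $\chi\leq 2$, we get $N\leq 2$. If $N=2$, then $\chi(\bar T)=2$, so $\bar T$ is a closed oriented surface with $\chi=2$, i.e. a sphere, and $T=\bar T$. If $N=1$, then $\chi(\bar T)=1$ forces $\bar T$ to be a disk (the only compact oriented surface of Euler characteristic $1$) and $T=\operatorname{int}\bar T$. If $N=0$, then $\chi(\bar T)=0$, so $\bar T$ is an annulus or a torus (a M\"obius band or a Klein bottle being excluded by orientability); the torus is ruled out because it would make a connected component of $\Sigma$ a closed de Sitter surface carrying the transverse nonsingular oriented null foliations $\cF_1,\cF_2$, and one produces from them a closed causal curve (already a closed leaf of $\cF_1$ is a closed lightlike curve, and if $\cF_1$ has no closed leaf a rotation-number argument on the pair $\cF_1,\cF_2$ supplies one), contradicting the causality of $\Sigma$. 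Hence $\bar T$ is an annulus and $T=\operatorname{int}\bar T$. In all three cases $T$ is homeomorphic to the sphere, the disk or the annulus, with $N=2,1,0$ respectively, which also gives $N\leq 2$ and the stated correspondence.

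The main obstacle is exactly the boundary analysis of the second paragraph: pinning down the correct local model of $\bar T$ at an extreme parabolic singularity (so as to be sure it is a genuine compact surface and that such points contribute nothing to the index count), and the exclusion of the de Sitter torus in the third paragraph. Once these are settled, the Euler-characteristic bookkeeping is immediate, as the text asserts.
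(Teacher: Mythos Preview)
Your approach is the paper's: apply Poincar\'e--Hopf to the null foliation $\cF_1$, which the paper declares ``immediate''. Most of your boundary analysis is fine, but your treatment of extreme parabolic singularities is wrong. An extreme parabolic singularity is an \emph{isolated} photon (see the description of $\mathcal{P}(\Sigma)$ at the start of the section): a full punctured disk around it lies in the de Sitter region $T$. So in the closure $\bar T$ such a point is an \emph{interior} point, not a boundary corner from which two lightlike arcs emanate. What actually happens there is that the leaves of $\cF_1$ (and $\cF_2$) accumulate to the point---this is exactly what Corollary~\ref{cor.Lclosed} says---so the extended foliation has an index $+1$ singularity at each extreme parabolic point, just as at each black-hole singularity. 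Poincar\'e--Hopf on $\bar T$ (with $X$ transverse along the photon circles, as you correctly argue) therefore gives $\chi(\bar T)=N_{BH}+N_{\mathrm{ext}}$, and since $T=\operatorname{int}\bar T\setminus\{\text{$N_{\mathrm{ext}}$ points}\}$ you obtain $\chi(T)=N_{BH}$. Your case analysis then goes through with $T$ in place of $\bar T$: $N_{BH}=2,1,0$ correspond to $T$ being the sphere, the open disk, and (after excluding the torus) the open annulus.

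For the torus exclusion your sketch is rougher than it needs to be. The cleanest route is to observe that a closed time-orientable Lorentz surface is never causal (a compact spacetime always carries a closed causal curve), which immediately contradicts the standing causality hypothesis on $\Sigma$; alternatively, if $\cF_1$ has a closed leaf you have a CCC, and if not then $\cF_1$ is conjugate to an irrational linear foliation and a short future-directed $\cF_2$-segment followed by a long future-directed $\cF_1$-arc returning near the start yields a closed causal curve.
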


Moreover, since by assumption $T$ contains no CCC, and by Poincar\'e-Bendixson Theorem:

\begin{cor}
\label{cor.Lclosed}
For every leaf $L$ of $\cF_{1}$ or  $\cF_{2}$, oriented by its time orientation, the $\alpha$-limit set
(resp. $\omega$-limit set) of $L$ is either empty or a past (resp. future) timelike hyperbolic singularity.
Moreover, if the $\alpha$-limit set (resp. $\omega$-limit set) is empty, the leaf accumulates in the past (resp. future)
direction to a past (resp. future) boundary component of $T$ that might be a point in a circle of photons,
or a extreme parabolic singularity.
\end{cor}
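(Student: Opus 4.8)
The plan is to apply the Poincar\'e--Bendixson alternative to the foliation $\cF_1$ (the argument for $\cF_2$ being identical) on the compactified de Sitter region $\bar T$, and to eliminate, one at a time, every possibility not permitted by the statement. I would fix a leaf $L$, oriented by its time orientation, view its future half as a curve $\gamma\colon[0,+\infty)\to T$, and let $\omega(L)$ denote the set of accumulation points of $\gamma$ \emph{inside the open region $T$}. The preliminary facts I would record are: by the corollary just proved, $\bar T$ is compact and of genus $0$, so every compact connected subset of $T$ sits in a topological disk and the (continuous) planar Poincar\'e--Bendixson argument is available for $\cF_1$; the foliation $\cF_1$ extends continuously to $\bar T$, regular even across tachyons, with singular points only at the at most two timelike hyperbolic singularities, each of index $+1$ with the leaves spiralling around it; and the leaves meet each boundary component of $\bar T$ transversally in isolated endpoints, so that---the time direction of these crossings being locally constant along a connected boundary component---each boundary component of $\bar T$ is entirely of \emph{future} type or entirely of \emph{past} type. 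Finally, from the description of the foliation near a circle of photons given above (one can pick a photon collar on which $\cF_1$ is a product), a half-leaf that accumulates only on $\partial\bar T$ must converge to a single point of it.

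With these in hand I would split into two cases. If $\omega(L)=\emptyset$, then $\gamma$ eventually leaves every compact subset of $T$; since $\bar T$ is compact, $\gamma$ accumulates only on $\partial\bar T=\bar T\setminus T$, hence converges to a single point $p^\ast$ of $\partial\bar T$---a point of a circle of photons, or an extreme parabolic singularity---and, $\gamma$ being future-oriented, the boundary component through $p^\ast$ is of future type. This is the ``empty'' alternative of the statement for $\omega(L)$. If instead $\omega(L)\neq\emptyset$, then $\omega(L)$ is a nonempty compact connected $\cF_1$-invariant subset of $T$, so Poincar\'e--Bendixson leaves three options: a periodic orbit, a single singular point, or a polycycle. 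A periodic orbit of $\cF_1$ would be a closed future-oriented lightlike curve, i.e.\ a CCC, which $\Sigma$ does not admit; and a polycycle would need vertices carrying both an incoming and an outgoing orbit, whereas a timelike hyperbolic singularity is a spiral source or sink (not a centre, since periodic orbits around a centre would again be CCC), which carries no such pair. Hence $\omega(L)$ is a single timelike hyperbolic singularity onto which $\gamma$ spirals forward in time, and comparison with the local model---where the nearby lightlike leaves spiral onto the singular point, in the future direction, precisely when it is of future type---identifies it as a future timelike hyperbolic singularity.

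Running the same argument on the past half-leaf of $L$ gives the corresponding statement for $\alpha(L)$, with ``future'' replaced by ``past'' everywhere. The step I expect to be the main obstacle is the control near the boundary: making rigorous that a leaf which runs off to infinity genuinely converges to a single point of one, well-defined, boundary component rather than winding forever near a circle of photons; this is where one must invoke the product structure of the photon collar established in the discussion of photons above, and it is also what pins down whether that boundary component is of future or of past type. By comparison the exclusion of periodic orbits and of polycycles is immediate, the former from the no-CCC hypothesis and the latter from the index-$+1$, spiral (non-saddle) nature of the singularities, and the future/past labelling of a limiting singularity is just a matter of tracing through the local model.
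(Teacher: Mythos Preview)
Your proposal is correct and follows precisely the route the paper indicates: the paper's own proof is the single clause ``since by assumption $T$ contains no CCC, and by Poincar\'e--Bendixson Theorem'', and what you have written is a careful unpacking of that sentence---the genus-zero setting from the preceding corollary, exclusion of periodic orbits by the no-CCC hypothesis, exclusion of polycycles because the only singularities are index-$+1$ spirals, and the collar/product structure near circles of photons to handle the boundary case. One small point worth making explicit in your write-up: your dichotomy ``$\omega(L)=\emptyset$ in $T$'' versus ``$\omega(L)\neq\emptyset$ in $T$'' tacitly excludes the mixed case where $\bar\omega(L)$ meets both $T$ and $\partial\bar T$; this is indeed impossible (once the future half-leaf enters a product collar of a future boundary component it converges and never returns, and it cannot accumulate in the future on a past boundary component), but since you already flag the boundary behaviour as the delicate step, it would be cleaner to state this exclusion outright rather than leave it implicit.
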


\begin{prop}
\label{pro.hyphyp}
Let $T$ be a de Sitter region adjacent to a hyperbolic region $H$ along a circle of photons $C$. If
the completion $\bar{H}$ of $H$ is
not homeomorphic to the disk, then
either $T$ is a disk containing a black-hole singularity, or the closure of $T$ in $\Sigma$ is the disjoint union
of $C$ and an extreme parabolic singularity.
\end{prop}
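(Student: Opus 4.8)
The plan is to combine the classification of de Sitter regions (the corollary before the statement, saying $T$ is an annulus, a disk or a sphere, with at most two timelike hyperbolic singularities) with Proposition \ref{pro.hypdegree0}, which tells us that $C$ is a hyperbolic $\R\PP^1$-circle of degree $0$ as seen \emph{from the hyperbolic side}. The first step is to transfer this information to the de Sitter side. Since $\bar H$ is not a disk, $C$ is hyperbolic of degree $0$ from the hyperbolic side; the $\R\PP^1$-structure from the de Sitter side differs from the one from the hyperbolic side only by the parabolic holonomy contributions of the gravitons lying on $C$. So first I would argue that if $C$ carried a graviton, the de Sitter side holonomy would be a product of a hyperbolic element of degree $0$ with nontrivial parabolics, and analyze the resulting $\R\PP^1$-circle type; more efficiently, I would rule out gravitons on $C$ directly from the foliation picture, or else simply carry both cases (degree $0$ hyperbolic, or a parabolic/hyperbolic circle obtained by a parabolic twist) through the foliation argument below. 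The cleanest route is: from the hyperbolic side $C$ bounds a genuine half-open hyperbolic annulus collar (the proof of \ref{pro.hypdegree0} produces an annulus between $C$ and a closed geodesic with no singularity), so the holonomy around $C$ is a nontrivial hyperbolic element $h$ of $\op{PSL}(2,\R)$ of degree $0$.

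The second step is to exploit the two singular foliations $\cF_1,\cF_2$ of $T$ by future isotropic directions and Corollary \ref{cor.Lclosed}. Pick the boundary circle $C$ of $T$; it is a past or a future boundary component — say a past one (the other case is symmetric under time reversal). Consider the leaves of $\cF_1$ that accumulate on $C$ in their $\alpha$-direction. By Corollary \ref{cor.Lclosed}, following such a leaf forward, its $\omega$-limit set is either a future timelike hyperbolic singularity of $T$, or empty, in which case it runs out to a future boundary component of $T$ (a photon circle or an extreme parabolic singularity). Now use the topological classification: $T$ is a disk, an annulus or a sphere. If $T$ is a sphere it has no boundary at all, contradicting that $C \subset \partial T$; so $T$ is a disk or an annulus. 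If $T$ is an annulus, its boundary has two components, both of which must then be photon circles or extreme parabolic singularities, and $T$ contains no timelike hyperbolic singularity; I would then show this contradicts the degree-$0$ hyperbolic structure that $C$ inherits from the hyperbolic side — concretely, in the annulus case the de Sitter holonomy around $C$ would be the identity (an annulus between two photon circles in $\dS^2$, developing onto a "square" region of $\dS^2$, has trivial holonomy, exactly as in the nonsingular Mess picture), whereas from the hyperbolic side it is the nontrivial hyperbolic element $h$, and these two holonomies differ only by graviton parabolics, which cannot turn the identity into a hyperbolic element. Hence $T$ is a disk.

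The third step handles the disk case. A disk $T$ with boundary $C$ has, by the corollary, at most one timelike hyperbolic singularity, and in fact exactly one if it has any — so there are two sub-cases. If $T$ contains a (necessarily future, since $C$ is a past boundary) timelike hyperbolic singularity, we are in the first alternative of the statement: $T$ is a disk containing a black-hole singularity. If $T$ contains no timelike hyperbolic singularity, then the leaves of $\cF_1$ accumulating on $C$ in the past must, by Corollary \ref{cor.Lclosed}, have empty $\omega$-limit set and hence run out to a future boundary component of $T$; but $\partial T = C$ is a single circle which is a \emph{past} boundary component, so the only possibility consistent with the corollary is that this future "boundary component" degenerates to a single point — an extreme parabolic singularity — and the closure of $T$ is then $C$ together with that one extreme parabolic singularity, which is the second alternative. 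I would make this last degeneration precise by noting that a $\dS^2$-region which is a disk with no timelike hyperbolic singularity, with all leaves spiralling from $C$ to a single future point, is exactly the model $\dS^2$-neighborhood of an extreme parabolic singularity (the quotient picture described in \S\ref{sub.futpast}).

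The main obstacle I expect is the bookkeeping around gravitons on $C$ and the precise comparison of the de Sitter-side holonomy with the hyperbolic-side holonomy: one must be careful that inserting graviton parabolics cannot convert the trivial holonomy of an annular de Sitter region into the nontrivial hyperbolic holonomy forced by Proposition \ref{pro.hypdegree0}, and dually cannot spoil the identification of the no-singularity disk case with the extreme parabolic model. Everything else is a direct application of the Poincaré–Hopf count already recorded in the corollaries together with Poincaré–Bendixson via Corollary \ref{cor.Lclosed}.
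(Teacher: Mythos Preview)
Your central claim in the second step is wrong, and this breaks the argument. You assert that if $T$ is an annulus bounded by two photon circles, then ``the de Sitter holonomy around $C$ would be the identity.'' This is false: the quotient of a strip in $\dS^2$ bounded by two isotropic geodesics by a hyperbolic element $\gamma_0$ is precisely a de Sitter annulus bounded by two photon circles with nontrivial hyperbolic holonomy. In fact, the paper's own proof shows that after removing the tachyons and gravitons from $T$ by a surgery along isotropic half-leaves (``regularization''), the resulting regular annulus $T'$ develops isomorphically onto such a strip $W$, and $T' \cong W/\langle\gamma_0\rangle$ with $\gamma_0$ exactly the hyperbolic holonomy inherited from the hyperbolic side via Proposition~\ref{pro.hypdegree0}. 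So there is no holonomy mismatch to exploit.

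The actual obstruction, which your plan misses entirely, is causal rather than holonomy-theoretic. In the explicit model $W/\langle\gamma_0\rangle$ the second isotropic foliation $\cF_2$ has two closed leaves, hence closed causal curves. The paper then analyzes the first return map of $\cF_2$ on a leaf of $\cF_1$ and shows that undoing the regularization by \emph{positive} surgeries (this is where the positive-mass hypothesis on tachyons and gravitons enters, via Remark~\ref{rk.cuttachyon2}) only pushes the first return map further into the past, so the original $T$ still contains a trapped $\cF_2$-leaf and hence a CCC. This contradicts causality of $\Sigma$. The remark immediately following the proof confirms that positivity is essential: with negative-mass surgeries one can destroy the closed leaves and obtain a genuine causal annulus with two photon-circle boundaries. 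Your proposed holonomy argument cannot distinguish these cases and therefore cannot work.
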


\begin{proof}
If $T$ is a disk, we are done. Hence we can assume
that $T$ is homeomorphic to the annulus. Reversing the time if necessary we also can assume
that $H$ is a past hyperbolic component. Let $C'$ be the other connected boundary component of $T$, i.e. its
future boundary. If $C'$ is an extreme parabolic singularity, the proposition is proved. Hence we are
reduced to the case where $C'$ is a circle of photons.

According to Corollary~\ref{cor.Lclosed} every leaf of $\cF_{1}$ or $\cF_{2}$ is a closed
line joining the two boundary components of $T$.
For every singularity $x$ in $H$, or every graviton in $C$, let $L_{x}$ be the future oriented half-leaf of $\cF_{1}$
emerging from $x$. Assume that $L_{x}$ does not contain any other singularity. Cut along
$L_{x}$: we obtain a $\dS^2$-surface $T^\ast$ admitting in its boundary two copies of $L_{x}$. Since $L_{x}$ accumulates until
$C'$ it develops in $\dS^2$ into a geodesic ray touching $\partial\HH^2$. In
particular, we can glue the two copies of $L_{x}$ in the boundary of $T^\ast$ by an isometry fixing their common
point $x$. For the appropriate choice of this glueing map, we obtain
a new $\dS^2$-spacetime where $x$ has been replaced by a regular point: we call this process,
well defined, \textit{regularization at $x$} (see figure~\ref{fig.regul}).

\begin{figure}[ht]
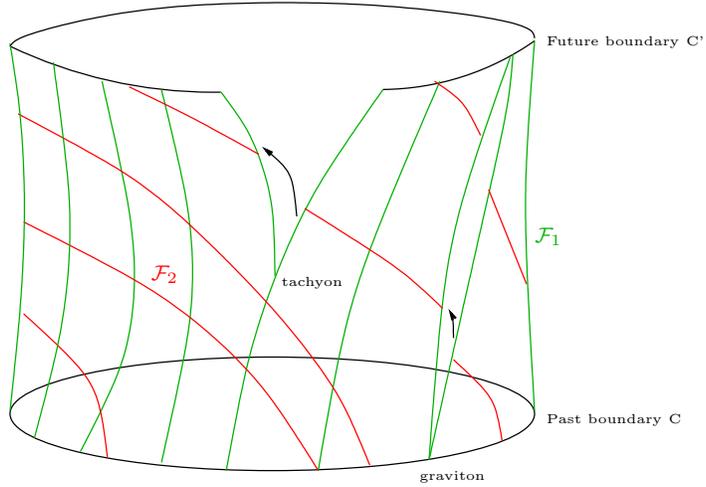

\begin{center}
\input regul.pstex_t
\end{center}
\caption{Regularization of a tachyon and a graviton.}
\label{fig.regul}
\end{figure}

After a finite number of regularizations, we obtain a regular $\dS^2$-spacetime
$T'$. Moreover, all these surgeries can actually be performed on $T \cup C \cup H$: the de Sitter annulus $A'$ can be glued
to $H \cup C$, giving rise to a HS-surface containing as circle of photons $C$ disconnecting the hyperbolic region $H$ from
the regular de Sitter region $T'$ (however, the other boundary component $C'$ have been modified and do not match anymore
the other hyperbolic region adjacent to $T$). Moreover, the circle of photons contains no graviton, hence its $\R\PP^1$-structure from
the de Sitter side coincide with the $\R\PP^1$-structure from the hyperbolic side. According to Proposition~\ref{pro.hypdegree0}
this structure is hyperbolic of degree $0$: it is the quotient of an interval $I$ of $\R\PP^1$ by an hyperbolic element $\gamma_{0}$,
with no fixed point inside $I$.

Denote by $\cF'_{1}$, $\cF'_{2}$ the isotropic foliations in $T'$.
Since we performed the surgery along half-leaves of $\cF_{1}$, leaves of $\cF'_{1}$ are still closed in $T'$. Moreover,
each of them accumulates at a unique point in $C$: the space of leaves of $\cF'_{1}$ is identified with $C$. Let
$\widetilde{T}'$ the universal covering of $T'$, and $\widetilde{\cF}'_{1}$ the lifting of $\cF_{1}$.
Recall that $\dS^2$ is naturally identified with $\R\PP^1 \times \R\PP^1 \setminus \kD$, where $\kD$ is the diagonal.
The developing map $\cD: \widetilde{T}' \to \R\PP^1 \times \R\PP^1 \setminus \kD$ maps every leaf of
$\widetilde{\cF}'_{1}$ into a fiber $\{ \ast \} \times \R\PP^1$. Besides, as affine lines, they are complete affine lines,
hence they still develop onto the entire geodesic $\{ \ast \} \times (\R\PP^1 \setminus \{ \ast \})$.
It follows that $\cD$ is a homeomorphism between $\widetilde{T}'$ and the open domain $W$ in $\R\PP^1 \times \R\PP^1 \setminus \kD$
comprising points with first component in the interval $I$, i.e. the region in $\dS^2$ bounded by two $\gamma_{0}$-invariant
isotropic geodesics. Hence $T'$ is isometric to the quotient of $W$ by $\gamma_{0}$, which is well understood (see figure~\ref{fig.W};
it has been more convenient to draw the lifting $W$ in the region in $\uRP \times \uRP$ between the graph of the identity
map and the translation $\delta$, region which is isomorphic to
the universal covering of $\R\PP^1 \times \R\PP^1 \setminus \kD$).

\begin{figure}[ht]
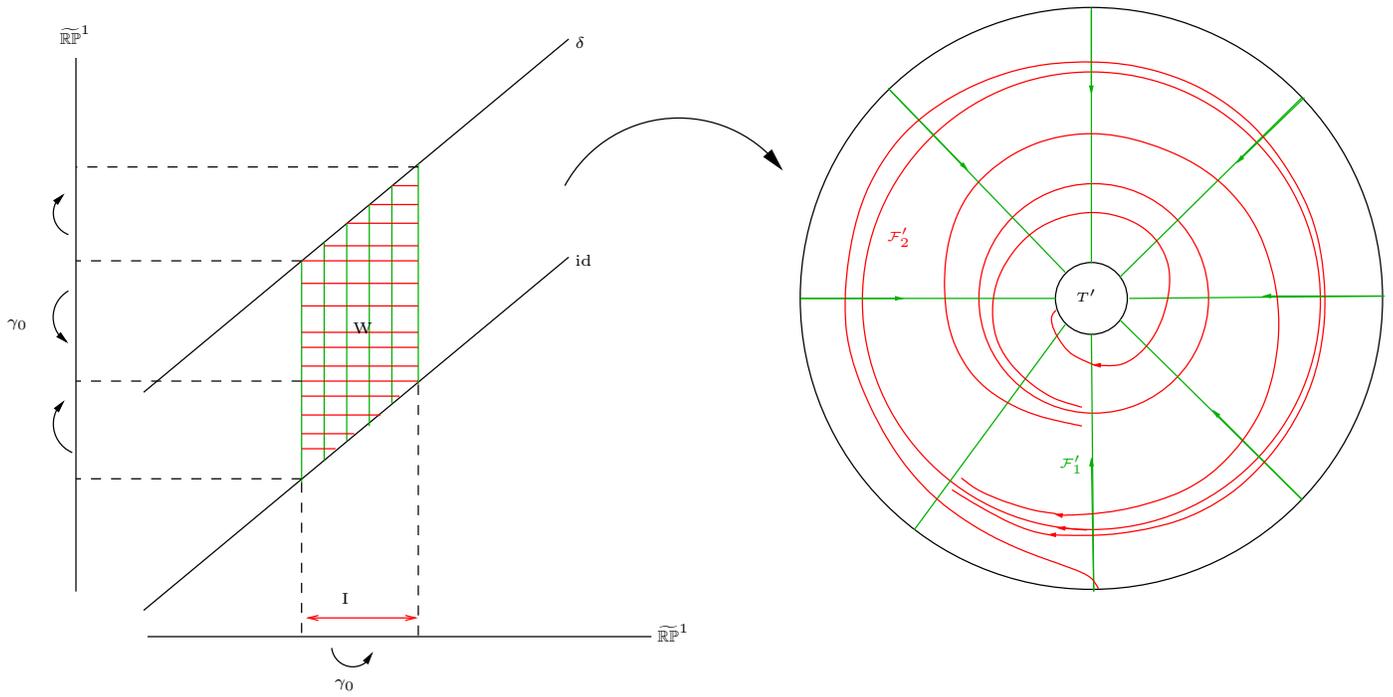

\begin{center}
\input W.pstex_t
\end{center}
\caption{The domain $W$ and its quotient $T'$.}
\label{fig.W}
\end{figure}

Hence the foliation $\cF_{2}$ admits two compact leaves.
These leaves are CCC, but it is not yet in contradiction with the fact that $\Sigma$ is causal,
since the regularization might create such CCC.

The regularization
procedure is invertible and $T$ is obtained from $T'$ by \textit{positive} surgeries along future oriented
half-leaves of $\cF_{1}$, i.e. obeying the rules described in Remark~\ref{rk.cuttachyon2}.
We need to be more precise: pick a leaf $L'_{1}$ of $\cF'_{1}$. It corresponds to a vertical line in $W$ depicted
in figure~\ref{fig.W}. We consider the first return $f'$ map from $L'_{1}$ to $L'_{1}$ along future oriented leaves of $\cF'_{2}$:
it is defined on an interval $]-\infty, x_{\infty}[$ of $L'_{1}$, where $-\infty$ corresponds to the end of $L'_{1}$ accumulating on
$C$. It admits two fixed points $x_{1} < x_{2} < x_{\infty}$, the former being attracting and the latter repelling.
Let $L_{1}$ be a leaf of $\cF_{1}$ corresponding, by the reverse surgery, to $L'_{1}$. We can assume
without loss of generality that $L_{1}$ contains no singularity. Let $f$ be the first return map from $L_{1}$ into itself
along future oriented leaves of $\cF_{2}$. There is a natural identification between $L_{1}$ and $L'_{1}$, and since all gravitons and tachyons in
$T \cup C$ are positive, \textit{the deviation of $f$ with respect to $f'$ is in the past direction,} i.e. for every $x$ in
$L_{1} \approx L'_{1}$ we have $f(x) \leq f'(x)$ (it includes the case where $x$ is not in the domain of definition of $f$,
in which case, by convention, $f(x) = \infty$). In particular, $f(x_{0}) \leq x_{0}$. It follows that the future oriented leaf
of $\cF_{2}$ is trapped below its portion between $x_{0}$, $f(x_{0})$. Since it is closed, it must accumulate on $C$.
But it is impossible since future oriented leaves near $C$ exit from $C$, intersect a spacelike loop, and cannot go back because
of orientation considerations. The proposition is proved.
\end{proof}

\begin{figure}[ht]
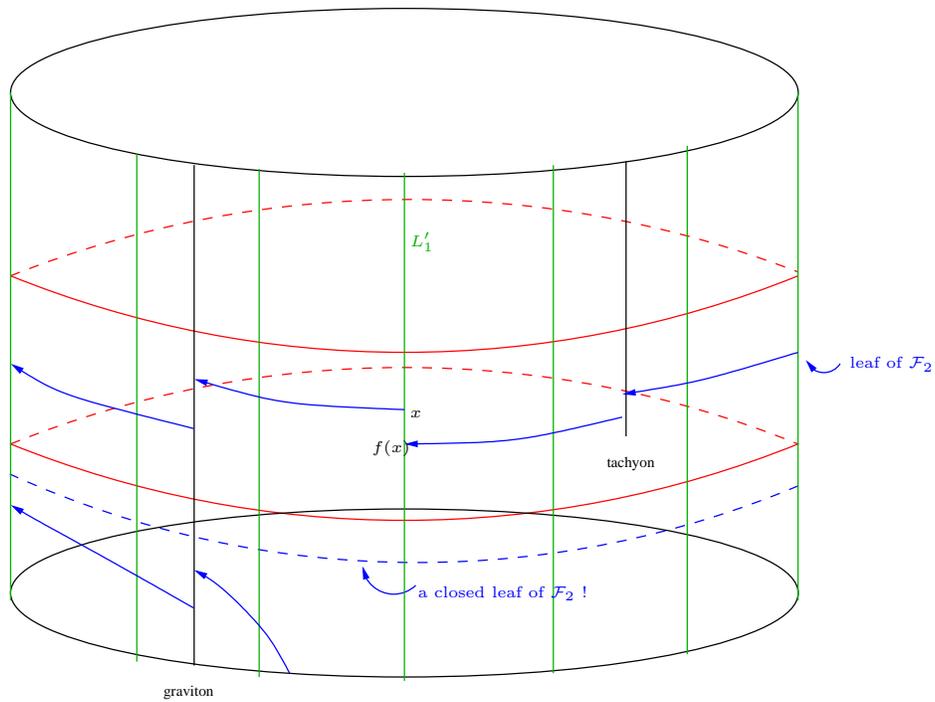

\begin{center}
\input return.pstex_t
\end{center}
\caption{First return maps. The identification maps along lines above tachyons and gravitons
compose the broken arcs in blue in leaves of $\mathcal{F}_2$. }
\label{fig.return}
\end{figure}

\begin{remark}
In Proposition~\ref{pro.hyphyp} the positive mass hypothesis is necessary. Indeed, consider a regular HS-surface
made of one annular past hyperbolic region connected to one annular future hyperbolic region by two de Sitter regions
isometric to the region $T' = W/\langle\gamma_{0}\rangle$ appearing in the proof of Proposition~\ref{pro.hyphyp}.
Pick up a photon $x$ in the past boundary of one of these de Sitter components $T$, and let $L$ be the leaf of $\cF_{1}$
accumulating in the past to $x$. Then $L$ accumulates in the future to a point $y$ in the future boundary component.
Cut along $L$, and glue back by a parabolic isometry fixing $x$ and $y$. The main argument in the proof above is that
if this surgery is performed in the positive way, so that $x$ and $y$ become positive tachyons, then the resulting spacetime
still admits two CCC, leaves of the foliation $\cF_{2}$. But if the surgery is performed in the negative way, with a sufficiently
big parabolic element, the closed leaves of $\cF_{2}$ in $T$ are destroyed, and every leaf of the new foliation $\cF_{2}$
in the new singular surface joins the two boundary components of the de Sitter region, which is therefore causal.
\end{remark}

\begin{theorem} \label{tm:thierry}
Let $\Sigma$ be a singular causal positive HS-surface, homeomorphic to the sphere. Then, it admits at most one past hyperbolic
component, and at most one future hyperbolic component. Moreover, we are in one of the following mutually exclusive situations:

\begin{enumerate}

\item \textit{Causally regular case: } There is a unique annular de Sitter component, connecting one past hyperbolic region
homeomorphic to the disk to a future hyperbolic region homeomorphic to the disk.

\item  \textit{Interaction of black-holes or white-holes: } There is no past or no future hyperbolic region, and every de Sitter region is a
either a disk containing a unique future BTZ singularity, or a disk with an extreme black-hole singularity removed.

\item \textit{Big-Bang and Big Crunch: } There is no de Sitter region, and only one timelike region, which is a
singular hyperbolic sphere - if the timelike region is a future one, the singularity is called a Big-Bang; if the timelike region is a
past one, the singularity is a Big Crunch.

\item \textit{Interaction of a white hole with a black-hole: } There is no hyperbolic region. The surface $\Sigma$ contains one past black-hole singularity
and a future black-hole singularity - these singularities may be extreme or not.

\end{enumerate}
\end{theorem}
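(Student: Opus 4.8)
The strategy is to reconstruct the global shape of $\Sigma$ from the combinatorics of its decomposition into photons $\mathcal{P}(\Sigma)$, hyperbolic regions $\HH^2(\Sigma)$ and de Sitter regions $\dS^2(\Sigma)$. First I would note that every circle of photons $C$ separates its two local sides into a hyperbolic one and a de Sitter one, since a photon lies in $\partial\HH^2_\pm$, which inside $\HS^2$ has $\HH^2_\pm$ on one side and $\dS^2$ on the other; in particular no circle of photons bounds two hyperbolic regions or two de Sitter regions. As $\Sigma$ is a $2$-sphere and the circles of photons form a finite disjoint family of embedded circles, cutting along them yields a finite bipartite tree $\mathcal{G}$ whose vertices are the closures of the hyperbolic and de Sitter regions, whose edges are the circles of photons, and in which hyperbolic and de Sitter vertices alternate. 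Cuspidal and extreme parabolic singularities contribute only point-like ``ends'', each lying in the closure of a single region, and do not affect $\mathcal{G}$.

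I would then dispose of the case in which there is no hyperbolic region at all: a circle of photons requires a hyperbolic side, so then there are no circles of photons and $\Sigma$ is a single de Sitter region homeomorphic to the sphere (with at most two extreme parabolic ends, by the corollary in \S\ref{sub.dSclass}). A Poincar\'e--Hopf count for $\cF_1$ then forces exactly two black-hole singularities, possibly extreme, and Corollary~\ref{cor.Lclosed} makes one a past and the other a future singularity (if both had the same type, some leaf of $\cF_1$ would have empty limit set on the opposite side and no boundary component of $\Sigma$ to accumulate on); this is case~(4). From now on every de Sitter region is a disk or an annulus, and since an annulus is neither a disk nor a sphere, the corollary in \S\ref{sub.dSclass} shows a de Sitter \emph{annulus} contains no timelike hyperbolic singularity.

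The dichotomy is governed by Proposition~\ref{pro.hyphyp}. If some hyperbolic region has completion $\bar{H}$ not homeomorphic to a disk, then every de Sitter region adjacent to $H$ is a disk with a BTZ singularity or a disk with an extreme singularity removed, hence has a single circle of photons in its boundary and is a leaf of $\mathcal{G}$; so $\mathcal{G}$ consists of $H$ together with its leaf-neighbours, $H$ is the \emph{only} hyperbolic region, and we land in case~(2), degenerating to case~(3) when $\bar{H}=\Sigma$ and there is no de Sitter region. If instead $\bar{H}$ is a disk for every hyperbolic region, each hyperbolic region has exactly one circle of photons in its boundary and is a leaf; since hyperbolic vertices are never adjacent and $\mathcal{G}$ is a tree with at least one hyperbolic vertex, $\mathcal{G}$ is a star centred at a single de Sitter vertex $T$ (which exists, being adjacent to a hyperbolic region) with hyperbolic-disk leaves. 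If $T$ is a disk, or an annulus with a single circle of photons in its boundary, then $\Sigma=\bar{H}\cup\bar{T}$ is a union of two closed disks; a leaf of $\cF_1$ issuing from the boundary photon circle has nowhere to terminate except at a timelike hyperbolic singularity, so $T$ contains exactly one of those in the disk case, and we are again in case~(2). If $T$ is an annulus with two circles of photons, it has two hyperbolic-disk neighbours $H_1,H_2$, and since $T$ has no timelike hyperbolic singularity every leaf of $\cF_1$ runs from one boundary of $T$ to the other; Corollary~\ref{cor.Lclosed} then makes one boundary a past and the other a future boundary of $T$, equivalently $H_1$ and $H_2$ are one past and one future hyperbolic disk, which is case~(1). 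In all cases there is at most one past and at most one future hyperbolic component, and the four cases are visibly mutually exclusive.

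The part I expect to be most delicate is the time-orientation bookkeeping rather than the combinatorics of $\mathcal{G}$: verifying that a de Sitter region adjacent to a \emph{past} hyperbolic region meets it along a \emph{past} boundary (so that leaves of $\cF_1$ exit there), that the singularity thereby produced in a de Sitter disk is a \emph{future} BTZ singularity, and that an extreme parabolic ``end'' counts as a past or future boundary component as the situation requires. All of this rests on the leaf dynamics of $\cF_1,\cF_2$ encoded in Corollary~\ref{cor.Lclosed} and in the proof of Proposition~\ref{pro.hyphyp}; the geometric substance of the theorem is carried by Propositions~\ref{pro.hypdegree0} and~\ref{pro.hyphyp} and by that corollary, and the argument above is essentially their assembly along the tree~$\mathcal{G}$.
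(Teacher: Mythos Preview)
Your argument is correct and relies on the same ingredients as the paper's proof---Proposition~\ref{pro.hyphyp}, Corollary~\ref{cor.Lclosed}, and the disk/annulus/sphere trichotomy for de Sitter components---so the two proofs are essentially the same in substance. The organizational difference is that the paper argues by first asking whether both a past and a future hyperbolic region exist (if so, it claims there must be a de Sitter annulus joining them, and then Proposition~\ref{pro.hyphyp} in contrapositive form forces both hyperbolic sides to be disks), whereas you set up the bipartite tree $\mathcal{G}$ of regions and then split according to whether some $\bar H$ fails to be a disk. Your tree formalism makes explicit several steps the paper leaves to the reader: why, in the causally regular case, the connecting annulus exists and is the \emph{only} de Sitter piece; why, in case~(2), the single non-disk hyperbolic region really is the unique hyperbolic vertex; and why the counts match in case~(4) when extreme parabolic ends are present. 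Your closing caveat about time-orientation bookkeeping is well placed: that is exactly where the paper is tersest, and where the details (which singularities are future versus past in the HS versus AdS sense) require the most care.
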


\begin{proof}
If the future hyperbolic region and the past hyperbolic region is not empty, there must be a de Sitter annulus
connected one past hyperbolic component to a future hyperbolic component. By Proposition \ref{pro.hyphyp}
these hyperbolic components are disks: we are in the causally regular case.

If there is no future hyperbolic region, but one past hyperbolic region, and one Sitter region, every de Sitter component
cannot be annuli
connecting two hyperbolic regions: its closure is a closed disk. It follows that there is only one past hyperbolic component:
$\Sigma$ is an interaction of black-holes. Similarly, if there is a de Sitter region, a past hyperbolic region but no future hyperbolic region,
$\Sigma$ is an interaction of white holes.

The remaining situations are the cases where $\Sigma$ has no de Sitter region, or no hyperbolic region. The former
case corresponds obviously to the description (3) of Big-Bang or Big-Crunch , and the latter to the description (4) of
an interaction between a black-hole and a white-hole.
\end{proof}

\begin{remark}
It is easy to construct singular hyperbolic spheres, i.e. Big-Bang or Big-Crunch: take for example the double
of a hyperbolic triangle. The existence of interactions of white-holes with black-hole is slightly less obvious.
Consider the HS-surface $\Sigma_{m}$ associated to the black-hole $\mathcal{B}_{m}$. It can be described as follows:
take a point $p$ in $\dS^2$, let $d_{1}$, $d_{2}$ be the two projective circles in $\HS$ containing $p$, its opposite
$-p$, and tangent to $\partial\HH^2_{\pm}$. It decomposes $\HS^2$ in four regions. One of these components,
that we denote by $U$, contains the past hyperbolic region $\HH^2_{-}$. Then, $\Sigma_{m}$ is the quotient
of $U$ by the group generated by a hyperbolic isometry $\gamma_{0}$ fixing $p$, $-p$, $d_{1}$ and $d_{2}$.
Let $x_{1}$, $x_{2}$ be the points where $d_{1}$, $d_{2}$ are tangent to $\partial\HH^2_{-}$, and let $I_{1}$,
$I_{2}$ be the connected components of $\partial\HH^2_{-} \setminus \{ x_{1}, x_{2} \}$. We select the index
so that $I_{1}$ is the boundary of the de Sitter component $T_{1}$ of $U$ containing $x_{1}$. Now let $q$ be a point in
$T_{1}$ so that the past of $q$ in $T_{1}$ has a closure in $U$ containing a fundamental domain $J$ for the action of
$\gamma_{0}$ on $I_{1}$. Then there are two timelike geodesic rays starting from $q$ and accumulating at points
in $I_{1}$ which are extremities of a subintervall containing $J$. These rays project in $\Sigma_{m}$ onto two timelike
geodesic rays $l_{1}$ and $l_{2}$ starting from the projection $\bar{q}$ of $q$. These rays admit a first intersection point $\bar{q}'$ in the past of $\bar{q}$. Let $l'_{1}$, $l'_{2}$
be the subintervalls in respectively $l_{1}$, $l_{2}$ with extremities $\bar{q}$, $\bar{q}'$: their union is a circle disconnecting
the singular point $\bar{p}$ from the boundary of the de Sitter component. Remove the component adjacent to this boundary.
If $\bar{q}'$ is well-chosen, $l'_{1}$ and $l'_{2}$ have the same proper time. Then we can glue one to the other by a hyperbolic isometry. The resulting spacetime is as required an interaction between a black-hole corresponding to $\bar{p}$ with a white-hole corresponding
to $\bar{q}'$ - it contains also a tachyon of positive mass corresponding to $\bar{q}$.
\end{remark}

\section{From particle interactions to convex polyhedra}

This section describes a relationship between on 
interactions of particles in $3$-dimensional AdS manifolds, 
HS-structure on the sphere, and convex polyhedra in $\HS^3$, the natural
extension of the hyperbolic $3$-dimensional by the de Sitter space. 

Given a convex polyhedron in $\HS^3$, one can consider the
induced metric $g$ on its boundary, which is a HS-structure on $S^2$ with some cone
singularities, and then the cone over $(S^2,g)$, which is an
AdS metric with cone singularities at the vertex and along the 
lines corresponding to cone points of $g$. This is the metric
in a neighborhood of an interaction of particles. 

The converse also holds at least to some extend, under a technical 
hypothesis which
appears to be physically relevant. Let $M$ be an AdS manifold
with interacting particles, and let $x\in M$ be an interaction
point. The link of $x$ is homeomorphic to $S^2$, with a natural
HS-structure $g$ with cone singularities at the points corresponding
to the particles interacting at $x$. 

Under the hypothesis that
the interaction has ``positive mass'' -- a hypothesis which 
appears to make sense physically, as explained below -- 
this HS-structure $g$ should be realized
as the induced metric on the boundary of a unique convex polyhedron in 
$\HS^3$. This is proved here in most ``simple'' cases, using 
a previously known result on the induced metrics on the boundary of convex
polyhedra in $\HS^3$. 

Some hypothesis in the polyhedral result are precisely those
which are physically relevant in the context of interactions, 
for instance the condition of positive mass for massive particles
and for tachyons. Some other conditions in the polyhedral result
deal with interactions which are geometrically possible but 
more difficult to interpret, and it is not completely clear
what the ``physical'' meaning of those conditions is.

For technical reasons that will appear clearly below, we do not
consider in this section gravitons -- singularities along 
light-like lines -- and restrict our attention to massive
particles, tachyons, and black/white holes, as well as
big bangs/crunches.

\subsection{Convex polyhedra in $\HS^3$}

We recall here for completeness a (slightly incomplete) description of 
the induced metric on convex
polyhedra in $\HS^3$. The material here is from \cite{shu,cpt}.

\subsubsection{The space $\HS^3$}

The previous sections contain a description of the 2-dimensional
space $\HS^2$, which is a natural extension of the hyperbolic plane
$\HH^2$ by a quotient by $\Z/2\Z$ of the de Sitter plane $dS^2$. Its
double cover, $\HSt^2$, is simply connected. 

We consider here the same notion in dimension 3, the
object corresponding to $\HS^2$ 
is $\HS^3$, the natural extension of the hyperbolic space
$\HH^3$ by the quotient by $\Z/2\Z$ of the de Sitter space $dS^3$. 
Its double cover, $\HSt^3$, is simply connected, and is made of
two copies of $\HH^3$ and one of $dS^3$ (see \cite{shu,cpt} for more
details). 

An elementary but useful point is that $\HSt^3$ has a projective model
in the 3-sphere $S^3$. As a consequence, there is a well-defined notion
of polyhedron in $\HSt^3$, or in $\HS^3$. 

\subsubsection{Three kinds of polyhedra}

As already mentioned we do not consider here gravitons. Such particles
are cone singularities along a light-like line, they correspond
to cone singularities on the light cone of the link of the interaction
point. When this metric is the induced metric on the boundary of a convex polyhedron 
in $\HSt^3$, those cone singularities correspond to vertices on the
boundary at infinity of the hyperbolic part of $\HSt^3$. Such vertices
are not considered in the results of \cite{cpt} -- which will be
needed below -- and this explains the exclusion of gravitons from this
section. 

Note also that by ``polyhedra'' we mean here a polyhedron with at least
$3$ vertices. This allows for degenerate polyhedra which are reduced
to a triangle, but excludes segments, with only two vertices. Polyhedra
are then always contained in the complement of a plane in $\HS^3$, and
they can be lifted to a convex polyhedron in $\HSt^3$. It is slightly simpler to
consider polyhedra in $\HSt^3$, which is what we will do below.

Once vertices on the boundary of $\HH^3$ are excluded, there are three 
distinct types of polyhedra in $\HSt^3$, as shown in Figure \ref{fig:3poly}.

\begin{figure}[ht]
\begin{center}
\psfig{figure=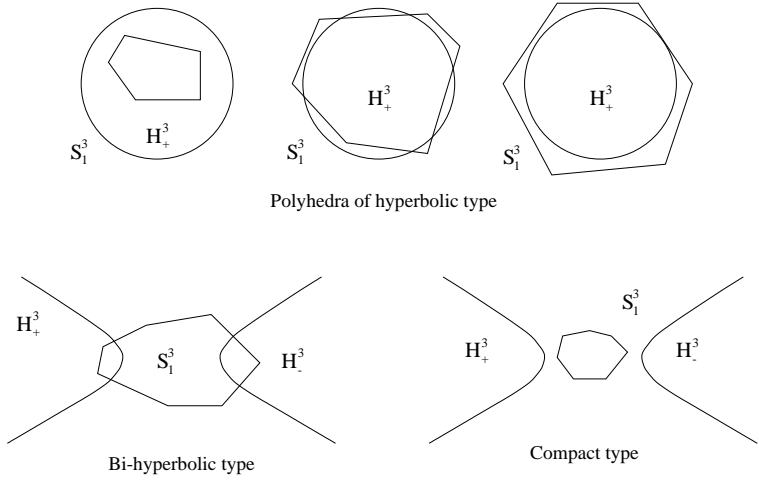,width=10cm}
\end{center}
\caption{Three types of polyhedra in $\HS^3$.}
\label{fig:3poly}
\end{figure}

The distinction is more easily understood by considering polyhedra
in $\HSt^3$. 

\begin{itemize}
\item The first family contains all polyhedra which intersect 
one of the hyperbolic spaces in $\HSt^3$ but not the other. 
Those polyhedra are of ``hyperbolic type'', they include compact, ideal or
hyperideal hyperbolic polyhedra, as well as some more exotic polyhedra
having more vertices in $dS^3$. The induced metrics on the boundaries of
those polyhedra are described in \cite{shu}, and each is obtained in exactly one
way (the result is recalled below).
\item The second family contains all 
polyhedra which intersect both hyperbolic spaces in $\HSt^3$, they are
of ``bi-hyperbolic type'', the induced metrics on their boundaries are described in
\cite{cpt} and each possible metric is obtained in exactly on way.
\item The third family contains all polyhedra which are contained in 
$dS^3$, but are not the dual of a hyperbolic polyhedron. The possible induced
metrics on their boundaries are described in \cite{cpt}, and it is shown that possible
metric is obtained on a unique polyhedron in $dS^3$ under a mild
additional hypothesis.
\end{itemize}
 
We will see below that each type of polyhedron has an interpretation in
terms of an interaction between particles in a Lorentz space-form.

\subsubsection{Topological data}

To describe properly the induced metrics on the boundaries of polyhedra in 
$\HSt^3$ of the three types described above, it is necessary to
include, in addition to the metric itself, an additional data
corresponding to which edges are ``degenerate'' timelike faces,
and which edges or vertices are ``degenerate'' space-like faces. 

\begin{defi}
Let $P\subset \HSt^3$, we call $\Sigma$ the union of: 
\begin{itemize}
\item the space like faces of $P$ in $dS^3$,
\item the space-like edge $e$ of $P$ which bound two timelike or light-like
faces, such that, in any neighborhood of a point of $e$, some space-like
geodesic intersects both,
\item the vertices $v$ of $P$ such that all geodesics of $P$ (for the the
induced metric) starting from $v$ are timelike,
\end{itemize}
from which are removed:
\begin{itemize}
\item the space-like edges $e$ of $P$ bounding two space-like faces such that,
in any neighborhood of any point of $e$, some timelike geodesic intersects both,
\item the vertices $v$ of $P$ such that the link of $P$ at $v$ is a polygonal
disk contained in the de Sitter part of $\HSt^2$.
\end{itemize}
We also define $\cT$ as the subset of points in the de Sitter part of $P$ which
are not in $\Sigma$, and $\cH$ the set of hyperbolic points in $P$, that is, 
the intersection of $P$ with the union of the two copies of $\HH^3$, 
considered as subsets of $\HSt^3$.
Given a polyhedron $P\subset \HSt^3$, the {\bf marked induced HS-structure} on the
boundary of $P$
is the induced HS-structure on the boundary $P$ along with $\Sigma$ ($\cT$ and $\cH$ are then
defined implicitly).
\end{defi}

The heuristic motivation for this definition is that $\Sigma$ is, in a natural
way, the union of all space-like faces of $P$, including those which are
reduced to either an edge or a vertex. This definition exhibits a kind of
topological stability, under deformations of convex polyhedra. For instance,
if a cube in $dS^3$ is deformed so that its upper face, which is space-like
throughout the deformation, is an edge in the limit, then that edge is
in $\Sigma$. $\cT$ is, in a similar ``natural'' way, the union of the
timelike or light-like faces of $P$ (in the de Sitter part of $\HS^3$),
including the ``faces'' which are reduced to an edge or to a vertex.

More specifically, the topology of $\Sigma$ and of $\cT$ takes a specific form
for each of the three types of polyhedra considered above. In all cases, $\cT$
is the disjoint union of a finite number of deformation retracts of annuli
(each might be a circle, or obtained by gluing disks and segments).
In addition:
\begin{itemize}
\item If $P$ is of hyperbolic type, there is no other topological constraint.
\item If $P$ is of bi-hyperbolic type, $\Sigma=\emptyset$, the hyperbolic
part of the metric has two connected components which are contractible, and $\cT$ is
a cylinder in which every timelike curve connects one of the hyperbolic
components to the other.
\item If $P$ is of compact type, the metric on the boundary of $P$ has no hyperbolic
component, $\Sigma$ has two connected components which are contractible,
and  $\cT$ is the deformation retract of an annulus 
in which every timelike curve connects one 
of the components of $\Sigma$ to the other, or a circle.
\end{itemize}

We will see below that each of those types can be interpreted in the
setting of collisions of particles, and each corresponds to a different
physical situation. 

\subsubsection{Lengths of geodesics}

The induced metrics on the boundaries of 
polyhedra in $\HS^3$ satisfy some metric conditions,
concerning the lengths of two kinds of space-like ``geodesics''. We first
introduce those two types of curves.

\begin{defi}
A {\bf $\Sigma$-geodesic} in $P$ is a polygonal curve contained in $\Sigma$,
geodesic in each face of $P$, such that, at each vertex $v$ and on each side,
either there is (in each neighborhood of $v$) an element of $\cT$, or 
the metric is concave. 
\end{defi}

The meaning of ``concave'' here should be clear, since it applies to the
sides of the curve on which there is no element of $\cT$, so that all
faces are spherical.

Suppose for instance that $P\subset dS^3$ is the dual of a hyperbolic
polyhedron, so that $\Sigma$ is the whole boundary of $P$. 
Then $\Sigma$-geodesics are polygonal
curves for which each side is concave, so they correspond to ``usual''
geodesics in the induced metric on the boundary of $P$. The $\Sigma$-geodesics have 
length bounded from below, a point appearing already, for the duals 
of hyperbolic polyhedra, in \cite{RH}.

\begin{lemma} \label{lm:length1}
  \begin{enumerate}
  \item Let $P\subset \HS^3$ be a convex polyhedron. Then all closed $\Sigma$-geodesics
on the boundary of $P$ have length $L\geq 2\pi$, with equality only when they bound a degenerate
domain in $\cT$.
\item If $P$ is of compact type, $\Sigma$-geodesic segments in both connected 
components of $\Sigma$ have length less than $\pi$.
  \end{enumerate}
\end{lemma}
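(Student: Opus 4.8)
\textbf{Proof plan for Lemma \ref{lm:length1}.}

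The plan is to reduce each statement to a computation in $\HSt^2$ about geodesic polygons, using the fact (from the earlier subsection) that $\HSt^3$ has a projective model in $S^3$ and that the link of a polyhedron at each vertex is an HS-structure on a circle, i.e. an $\R\PP^1$-circle in the sense of \S~\ref{sub.RP1circle}. For part (1), I would argue as follows. Let $\gamma$ be a closed $\Sigma$-geodesic on $\partial P$. Since $\gamma$ is contained in $\Sigma$, it runs through space-like faces, which are pieces of $\HH^2$ or of $dS^2$; along each such face $\gamma$ is an honest geodesic segment, and at each vertex of $\gamma$ the defining condition says that either $\cT$ abuts $\gamma$ on that side or the surface is locally concave (all faces spherical there, i.e. the vertex is a hyperbolic point of $P$). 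I would develop $\gamma$ into $\HSt^2$: the carrier of $\Sigma$ near $\gamma$ develops, because each space-like face is totally geodesic, into a union of totally geodesic pieces of $\HH^2$ and $dS^2$, and the total turning of $\gamma$ is controlled by the concavity/$\cT$ condition. The key mechanism is a Gauss--Bonnet type argument: a closed space-like geodesic in $\HSt^2$ that stays in the $\HH^2\cup dS^2$ carrier, is locally ``straight or bending away from $P$'', and encircles a region, must have length at least $2\pi$; this is because in the de Sitter part lengths of space-like geodesics are measured by an angle, and a closed space-like curve in $dS^2$ winding once has length $2\pi$ exactly when it is a ``horizon'' bounding a degenerate (point or segment) domain, and is $>2\pi$ otherwise. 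The hyperbolic pieces only add length (they contribute positively with no upper bound issue), and the concavity condition at vertices ensures the developed curve never ``cuts a corner'' in a way that could shorten it below the de Sitter model value. The equality case is then forced: $L=2\pi$ only if $\gamma$ meets no genuine hyperbolic piece and has no genuine bending, i.e. $\gamma$ bounds a degenerate component of $\cT$.

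For part (2), assume $P$ is of compact type, so $\partial P \subset dS^3$ and $\Sigma$ has two contractible components $\Sigma_+,\Sigma_-$, each a union of space-like faces (possibly degenerate) glued along space-like edges, and $\cT$ is the intervening annulus of timelike/lightlike faces. A $\Sigma$-geodesic segment in, say, $\Sigma_+$ develops into $\HH^2$ (this is the content of ``bi-hyperbolic/compact type'': each $\Sigma_\pm$ develops into one copy of $\HH^2$, because the space-like faces of a convex polyhedron meeting along space-like edges with the surface concave on the $\Sigma$ side patch together into a locally convex, hence embedded, piece of $\HH^2$ — here one uses convexity of $P$ together with the orientation conventions distinguishing the two copies of $\HH^2$ in $\HSt^3$). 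Then the claim is that this developed segment, being a geodesic in a region of $\HH^2$ that is the ``shadow'' of a convex polyhedron seen from the de Sitter side, has length $<\pi$. The bound $\pi$ is the diameter-type bound coming from the dual picture: $\Sigma_+$ is the truncation face arising from a single vertex $v_+$ of $P$ lying in $dS^3$ beyond infinity (a hyperideal vertex), and its polar in $\HH^2$ is the geodesic completion; the induced metric on $\Sigma_+$ is that of a convex hyperbolic polygon inscribed in a circle of radius $<\infty$, whose geodesic segments therefore have length $<\pi$ because a geodesic of length $\geq \pi$ in $\HH^2$ cannot be a chord of a bounded convex region that is a face of the polyhedron — more precisely the two endpoints would have to be antipodal on the link circle at $v_+$, contradicting that the link at $v_+$ is a polygonal disk strictly contained in the de Sitter part of $\HSt^2$ (this is exactly the condition excluding $v_+$ from $\Sigma$ by the last bullet of the definition of $\Sigma$, correctly applied to the dual). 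I would make this precise by passing to the polar dual: a $\Sigma$-geodesic segment in $\Sigma_+$ of length $\geq\pi$ dualizes to a closed-up configuration in the de Sitter dual forcing $v_+$ to be an ordinary ideal or finite vertex, contradicting compact type.

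The main obstacle, I expect, is part (2): making rigorous the statement that each component $\Sigma_\pm$ of a compact-type polyhedron develops injectively into a single $\HH^2$ as a convex hyperbolic polygon of ``bounded size'', and extracting the sharp constant $\pi$ from the convexity of $P$. This requires carefully tracking how the two copies of $\HH^2$ and the de Sitter part fit together in the projective model in $S^3$, and using that the edges of $\cT$ adjacent to $\Sigma_\pm$ are space-like (so the dihedral angles there are of ``hyperbolic'' type), to guarantee both the embedding and the strict inequality. Part (1)'s length bound $2\pi$ is more robust — it is essentially the statement that a space-like geodesic loop in $dS^2$ has length $\geq 2\pi$ (this is in \cite{cpt,RH}) plus the observation that inserting hyperbolic pieces or convex corners only increases length — but the equality/degeneracy analysis still needs the same careful bookkeeping of when a developed curve exactly realizes the de Sitter horizon. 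Throughout I would lean on the classification of $\R\PP^1$-circles from \S~\ref{sub.RP1circle} to control the links at vertices of $\gamma$, and cite \cite{shu,cpt} for the developing statements about the three types of polyhedra rather than reprove them.
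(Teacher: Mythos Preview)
The paper does not prove this lemma: immediately after the statement it writes ``We refer the reader to \cite{shu,cpt} for the proof.'' So there is no argument in the paper to compare your plan against; the intended ``proof'' is a citation, and any self-contained argument you give is already going beyond what the paper does.

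That said, your plan for part (2) contains a genuine geometric error. For a polyhedron of compact type the boundary lies entirely in $dS^3$, and the space-like faces in $\Sigma_\pm$ are totally geodesic space-like planes in de Sitter space; these carry a Riemannian metric of constant curvature $+1$, i.e.\ they are \emph{spherical} polygons, not hyperbolic ones. So the claim that ``each $\Sigma_\pm$ develops into one copy of $\HH^2$'' is wrong, and the subsequent reasoning about chords of bounded convex hyperbolic regions does not apply. The bound $<\pi$ is a spherical-geometry statement (geodesic segments in $S^2$ have length $<\pi$ precisely when their endpoints are not antipodal), and the actual argument in \cite{cpt} uses the convexity of $P$ in the projective model of $\HSt^3\subset S^3$ to control how $\Sigma_\pm$ sits inside a round $2$-sphere. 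Your dualization idea is in the right spirit --- the compact-type picture is closely tied to the Rivin--Hodgson analysis of duals of hyperbolic polyhedra --- but the carrier is spherical, not hyperbolic, and the $\pi$ bound should be extracted from that. Your outline for part (1) is closer to the mark (the $2\pi$ lower bound for closed space-like geodesics in the de Sitter part is exactly the Rivin--Hodgson ingredient you cite), though it remains a sketch rather than a proof.
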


We refer the reader to \cite{shu,cpt} for the proof. 

There is a similar notion of $\cT$-geodesics.

\begin{defi}
A {\bf $\cT$-geodesic} in $P$ is a polygonal curve contained in $\cT$,
geodesic and space-like in each face of $P$, such that, at each vertex $v$ and on each side,
either there is (in each neighborhood of $v$) an element of $\Sigma$, or 
the metric is concave. It is {\bf simple} if it intersects every timelike
curve in $\cT$ at most once.
\end{defi}

Here again the notion of concavity used should be straightforward: concavity
is a projective notion so that it can be used as soon as one has a connection,
here we use the Levi-Civita connection of the induced metrics on the faces, 
which is Lorentz since the faces are time-like. 

\begin{lemma} \label{lm:length2}
Let $P\subset \HS^3$ be a polyhedron, then all simple $\cT$-geodesics in $P$ 
have length $L\leq 2\pi$, with equality only when they bound a degenerate
domain in $\cT$.
\end{lemma}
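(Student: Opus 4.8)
\textbf{Proof strategy for Lemma~\ref{lm:length2}.}

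The plan is to dualize the problem and reduce it to the statement about $\Sigma$-geodesics already contained in Lemma~\ref{lm:length1}(1). The key point is that a simple $\cT$-geodesic in the de Sitter part of a polyhedron $P\subset\HSt^3$ should be put in correspondence, via the polar duality of $\HSt^3$ (which exchanges $\HH^3$ and $dS^3$ and reverses inclusions of totally geodesic subspaces), with a closed $\Sigma$-geodesic on the boundary of the dual polyhedron $P^*$. First I would recall the duality: a point of $dS^3$ corresponds to a totally geodesic spacelike hyperplane, and a spacelike geodesic in $dS^3$ corresponds to a point of $\HH^3$ (or, at the level of the boundary of a polyhedron, a timelike edge of $P$ corresponds to a spacelike face of $P^*$ and vice versa). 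Under this correspondence the faces of $P$ meeting the de Sitter part in a timelike way are the vertices/edges of $P^*$ where $\Sigma$-geodesics are allowed to turn, and the spacelike geodesic segments making up a $\cT$-geodesic in $P$ become the geodesic segments in the spherical faces of $P^*$ making up a $\Sigma$-geodesic.

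Second, I would check that lengths are preserved, and more precisely that a closed $\cT$-geodesic of length $L$ in $P$ corresponds to a closed $\Sigma$-geodesic of length $L$ in $P^*$: this is the standard fact that in the projective model of $\HSt^3$ in $S^3$ the polar duality sends a spacelike geodesic segment in $dS^3$ subtending angle $\alpha$ at its pole to a spherical arc of length $\alpha$, and conversely; the concavity/turning conditions at the vertices transform into one another because duality reverses the local convexity. The ``simple'' hypothesis on the $\cT$-geodesic is exactly what guarantees that the dual object is an \emph{embedded} closed curve on $\partial P^*$, i.e., a genuine closed $\Sigma$-geodesic rather than a curve with self-intersections; without it one could only produce a closed geodesic in an immersed sense. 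Then Lemma~\ref{lm:length1}(1) applied to $P^*$ gives $L\geq 2\pi$ for the dual closed $\Sigma$-geodesic, hence $L\geq 2\pi$; wait — we want the reverse inequality $L\le 2\pi$, so the correct normalization is that a $\cT$-geodesic of length $L$ dualizes to a closed $\Sigma$-geodesic of length $2\pi - L$ on the ``complementary'' side, or equivalently that the relevant inequality for spacelike curves in $dS^3$ comes out with the opposite sign because the de Sitter metric has the opposite sign convention from the spherical one. Concretely, a spacelike $\cT$-geodesic segment of length $\ell$ in a timelike face corresponds under duality to a spherical arc, and summing the dual contributions of a \emph{simple} closed $\cT$-geodesic yields a closed spherical polygonal curve which is a $\Sigma$-geodesic on $\partial P^*$ of length $2\pi - L$; since that length is non-negative (indeed $\geq 2\pi$ would be the wrong direction, so one uses instead the elementary fact that the dual curve bounds and hence has length at most $2\pi$, or one invokes the appropriate half of \cite{cpt}), we get $L\le 2\pi$, with equality exactly when the dual closed $\Sigma$-geodesic is degenerate, i.e. when the original $\cT$-geodesic bounds a degenerate domain in $\cT$.

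The main obstacle, as the parenthetical hedging above indicates, is getting the duality bookkeeping exactly right: tracking which side of the dual polyhedron the dual curve lives on, verifying that the turning conditions (an element of $\Sigma$ on one side, or concavity on the other) really are interchanged by duality, and confirming the sign in the length relation between spacelike segments in $dS^3$ and their polar spherical arcs. A clean way to handle this is to work entirely in the projective $S^3$-model, where polar duality is the literal orthogonal-complement map and both the spherical and de Sitter metrics are read off from the same ambient bilinear form of signature $(1,3)$; then the length identity for a segment and its polar is a one-line linear-algebra computation, and the combinatorial face-vertex-edge dictionary is immediate. For the boundary case (equality), I would note that equality forces every dual turning to be trivial and every dual spherical arc to be a ``straight'' piece, which by Lemma~\ref{lm:length1}(1) means the dual curve bounds a degenerate component of $\cT$ on $\partial P^*$; transporting back through the duality, the original simple $\cT$-geodesic bounds a degenerate domain in $\cT$, as claimed. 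Since the statement is explicitly attributed to \cite{shu,cpt}, it would be legitimate to simply say that the proof is the dual of the proof of Lemma~\ref{lm:length1} and refer the reader there, but the duality reduction above is the conceptual content.
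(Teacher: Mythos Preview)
The paper does not give its own proof of this lemma; it simply refers the reader to \cite{cpt}. So there is no in-paper argument to compare against, only your proposal to assess on its own merits.

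Your duality heuristic is appealing---Lemmas~\ref{lm:length1} and~\ref{lm:length2} do look like a dual pair---but as written it is not a proof, and you visibly know it. You lose control of the direction of the inequality mid-argument (``wait --- we want the reverse inequality''), then introduce the formula $2\pi - L$ with no derivation, then hedge by invoking \cite{cpt} after all. The decisive step, namely showing that polar duality carries a simple $\cT$-geodesic on $\partial P$ to a closed $\Sigma$-geodesic on $\partial P^*$ with a specific computable length, is never carried out.

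Worse, the duality dictionary does not obviously say what you want. A timelike face of $P$ in $dS^3$ lies in a linear $3$-plane of signature $(1,2)$ in $\R^{1,3}$; its orthogonal complement is a spacelike line, so the dual point lies in $dS^3$. Thus timelike faces of $P$ dualize to \emph{de Sitter vertices} of $P^*$, not to spacelike faces. The dual of a $\cT$-geodesic is therefore, at first sight, a path through de Sitter vertices of $P^*$---not a $\Sigma$-geodesic, which by definition lives in the spacelike-face region $\Sigma$. Conversely, $\Sigma$-geodesics on $P^*$ run through spacelike faces of $P^*$, and those dualize to \emph{hyperbolic} vertices of $P$. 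So the naive polar duality does not send $\cT$-geodesics to $\Sigma$-geodesics, and whatever the correct correspondence might be, you have neither exhibited it nor computed how lengths transform under it. The ``$2\pi - L$'' claim in particular has no justification.

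If you want to salvage the approach you must (i) say precisely what curve on $\partial P^*$ a simple $\cT$-geodesic dualizes to, (ii) verify that it satisfies the $\Sigma$-geodesic turning conditions, and (iii) prove a length identity relating the two. Short of that, the honest course is the paper's: cite \cite{cpt}.
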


Again we refer the reader to \cite{cpt} for a proof. This statement did not
have any clear meaning in the setting of isometric embeddings of polyhedra,
however it has a natural physical interpretation in the context of interactions
of particles considered below.

\subsubsection{Convexity conditions at the vertices of $P$} \label{ssc:convexity}

Given a convex polyhedron $P\subset \HS^3$, the induced metric satisfies a 
``convexity'' condition at each vertex. If $P$ is a hyperbolic polyhedron,
or the dual of a hyperbolic polyhedron, the condition takes a simple form,
but for the general case one has to consider a series of different conditions
depending on the position of the vertices relative to $\Sigma, \cT$ and $\cH$.
We repeat here the definition in \cite{cpt} (Section 3).

To define precisely those conditions, we consider a marked HS-structure 
$(\mu, \Sigma)$ and call $S_\cT$ the set of singular points $v\in \Sigmab\cap
\cT$ such that all geodesic rays starting from $v$ are space-like. It is 
necessary to define the interior angle at the face of a polygon in a plane
in $dS^3$ (or any Lorentz space-form), we follow here the notations in \cite{cpt}
and refer the reader there for precise definitions. The angle can be real 
(when the induced metric on the polygon is Riemannian) or of the form $k\pi/2+ir$,
where $r\in \R$ and $k\in \N$.

The conditions are then as follows, in the different cases. Each of those conditions
has or might have a ``physical'' interpretation in terms of interactions of particles.

\begin{enumerate}
\item $v\in \cH$: then $\mu$ is required to have positive singular curvature
at $v$ (that is, the sum of the angles of the faces is strictly less than $2\pi$.
\item $v$ is in the interior of $\Sigma$, then $\mu$ has negative singular 
curvature at $v$.
\item $v$ is in the interior of $\cT$, then the sum of the angles at $v$ of 
the incident faces is $2\pi+ir$, with $r>0$.
\item $v$ is an isolated point of $\Sigma$, there is then no condition.
\item $v\in S_\cT$, $\Sigma\setminus \{ v\}$ has two connected components in
the neighborhood of $v$, and, in each, the sum of the angles of the faces
incident to $v$ is in $[0,\pi)$; and $\cT\setminus \{ v\}$ has two connected
components in the neighborhood of $v$ and, in each, the sum of the
angles is in $i\R_{\geq 0}$.
\item $x\in \Sigmab\cap \cT\setminus S_\cT$, $\cT\setminus \{ v\}$ is connected
in the neighborhood of $v$, and the angles $\theta_i$ at $s$ of the faces in 
$\cT$ and the angles $\theta'_j$ at $v$ of the faces in $\Sigma$ satisfy:
$$ \sum_i \theta_i = \pi-ir_1~, ~~\sum_j \theta'_j=r_2~, $$
with $r_1\in \R$, $r_2\geq 0$, and either $r_1>0$ or $r_2<\pi$.
\item $v\in \Sigmab\cap \cT\setminus S_\cT$, $\cT \setminus \{ v\}$ is not 
connected in the neighborhood of $v$, and, for each connected component 
$C$ of $\cT\setminus \{ v\}$ in the neighborhood of $v$, the sum $\alpha$
of the angles at $v$ of the faces in $C$ is in $\pi-i\R_{>0}$,
or $\alpha=\pi$ and all faces in $C$ are light-like; and the sum of all
angles at $v$ is not $2\pi$.
\end{enumerate}

In cases (5) and (6), a sum of angles equal to $0$ corresponds to the case
where the corresponding angular domain is limited to a segment. 

\begin{defi} \label{df:cvx}
A marked HS-structure $(g,\Sigma)$ is {\bf convex} if those conditions are
satisfied.  
\end{defi}

\subsubsection{From marked HS-structures to convex polyhedra}

We are now ready to state a previously known result describing the induced metrics
on convex polyhedra in $HS^3$. 

\begin{theorem}[\cite{cpt}] \label{tm:cpt}
Let $(\sigma, \Sigma)$ be a marked HS-structure on $S^2$. Suppose that $(\sigma,
\Sigma)$ is induced on a convex polyhedron in $\HS^3$. Then $(\sigma, \Sigma)$
satisfies the following properties:
\begin{enumerate}
\item[(A)] $(\sigma, \Sigma)$ is convex (as in Definition \ref{df:cvx}) at its singular
points.
\item[(B)] Closed $\Sigma$-geodesic curves of $(\sigma, \Sigma)$ have length $L>2\pi$, or
$L=2\pi$ if they bound a degenerate domain in $\cT$.
\item[(C)] Closed, simple $\cT$-geodesic curves of $(\sigma, \Sigma)$ 
have length $L<2\pi$, or
$L=2\pi$ if they bound a degenerate domain in $\cT$.
\item[(D)] One of the following is true:
\begin{enumerate}
\item each timelike geodesic on $\cT$ joins $H$ to $\Sigma$
($(\sigma, \Sigma)$ is of hyperbolic type); 
\item $\Sigma=\emptyset$, $H$ has two connected components $H_+$ and $H_-$,
and each timelike geodesic in $\cT$ joins $H_+$ to $H_-$ ($(\sigma, \Sigma)$ is
of bi-hyperbolic type); 
\item $H=\emptyset$, $\Sigma$ has two connected components $\Sigma_+$ and $\Sigma_-$,
and each timelike geodesic in $\cT$ joins $\Sigma_+$ to $\Sigma_-$; and,
moreover, $\Sigma$-geodesic segments in $\Sigma_+$ and $\Sigma_-$ have length $L<\pi$
($(\sigma, \Sigma)$ is of compact type). 
\end{enumerate}
\end{enumerate}
Suppose now that $(\sigma, \Sigma)$ satisfies properties (A),
(B), (C), (D), and also:
\begin{enumerate}
\item[(E)] In case (D.c), $\Sigma_+$ and $\Sigma_-$ are
convex, with boundaries of length less than $2\pi$.   
\end{enumerate}
Then $(\sigma, \Sigma)$ is induced on a the boundary of a 
unique convex polyhedron in $\HS^3$.
\end{theorem}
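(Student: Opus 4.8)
The plan is to treat the two directions separately, with almost all the work in the converse. For the forward direction, I would observe that conditions (A)--(D) have essentially been established already. Condition (A) is, by construction, the enumeration of the local models for the marked induced HS-structure near a vertex of a convex polyhedron, so it holds automatically whenever $(\sigma,\Sigma)$ is induced on such a polyhedron. Conditions (B) and (C) are Lemmas~\ref{lm:length1} and \ref{lm:length2}, the compact-type refinement being Lemma~\ref{lm:length1}(2). Condition (D) is the topological trichotomy already recorded: $\partial P$ splits into $\cH$, $\Sigma\setminus\cH$ and $\cT$, and the pattern is governed by whether $P$ meets one, both, or neither of the two copies of $\HH^3$ inside $\HSt^3$, with $\cT$ always a disjoint union of (retracts of) annuli whose timelike curves run between the two ``massive'' parts. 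So the real content is the converse: that (A)--(E) force a unique convex-polyhedral realization.

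For the converse I would run a continuity (deformation) argument, separately in each of the three cases of (D) --- which is why the statement is really the union of the results of \cite{shu} (hyperbolic type) and \cite{cpt} (bi-hyperbolic and compact type). First fix the discrete data: the combinatorial type of a candidate polyhedron $P\subset\HSt^3$ together with the assignment of each vertex to $\cH$, to the interior of $\Sigma$, to the interior of $\cT$, or to one of the mixed strata $S_\cT$ and $\Sigmab\cap\cT$, and the type (D.a)/(D.b)/(D.c). Let $\mathcal{P}$ be the space of convex polyhedra realizing this data up to global $\HSt^3$-isometry, and $\mathcal{M}$ the space of marked HS-structures on $S^2$ of the corresponding combinatorial type satisfying (A)--(E). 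For each fixed discrete type these should be smooth manifolds of the same dimension: a vertex-by-vertex count, keeping track of whether the incident faces and angles are Riemannian or Lorentzian (angles of the form $k\pi/2+ir$), gives the same number of moduli on both sides. The induced-metric map $\Phi\colon\mathcal{P}\to\mathcal{M}$ is continuous, and the goal is to show it is a homeomorphism onto a union of connected components of $\mathcal{M}$, then to identify that union with all of $\mathcal{M}$ by exhibiting one explicit realization per component --- a tetrahedron of the appropriate type in cases (D.a), (D.b), and a suitable ``doubling'' of a convex spacelike polygon in case (D.c) --- together with connectedness of $\mathcal{M}$ within each discrete type.

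Two ingredients would drive this. The first is \emph{infinitesimal rigidity}: a first-order deformation of a convex polyhedron in $\HSt^3$ that fixes the marked induced HS-structure comes from a Killing field of $\HSt^3$. With the dimension equality this makes $d\Phi$ everywhere an isomorphism, hence $\Phi$ a local homeomorphism; global injectivity (the uniqueness clause) then follows either from a Lorentzian Cauchy--Alexandrov-type global rigidity argument or, once local injectivity and properness are in hand, from a connectedness argument on $\mathcal{M}$. The second is \emph{properness} of $\Phi$: if polyhedra $P_n\in\mathcal{P}$ have induced marked structures converging in $\mathcal{M}$, then the $P_n$ subconverge to a convex polyhedron of the same discrete type. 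This is exactly where (B), (C) and (E) are used: the bound $L\ge 2\pi$ on closed $\Sigma$-geodesics (strict away from degenerate $\cT$-domains) prevents a hyperbolic or spacelike part from collapsing; the bound $L\le 2\pi$ on simple $\cT$-geodesics prevents the de Sitter cylinder from degenerating the wrong way; and the length bound of (E) on $\partial\Sigma_\pm$ in the compact case rules out the remaining degeneration, in which a spacelike cap would open up to a non-convex or non-compact region. Then $\Phi$ is a proper local homeomorphism, its image is open and closed, hence a union of components of $\mathcal{M}$, and the base-case realizations finish the argument.

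I expect the infinitesimal rigidity step to be the main obstacle. Since $\partial P$ carries faces of three kinds --- spacelike faces in $dS^3$, timelike faces in $dS^3$ with Lorentzian induced metric, and faces in the two copies of $\HH^3$ --- plus ``degenerate faces'' (edges or vertices behaving like faces, encoded by $\Sigma$ and $\cT$), one cannot simply invoke Cauchy's theorem or Rivin--Hodgson; the rigidity must be proved for all face types simultaneously and must be compatible with the degenerations allowed inside $\mathcal{M}$. This is the technical heart of \cite{shu,cpt} --- a Lorentzian analogue of the arm lemma, or a second-variation argument for an appropriate functional --- and, together with verifying that (B), (C), (E) genuinely cut out a locus on which $\Phi$ is proper, it is where essentially all the difficulty lies; the split into the three types of (D) is precisely what makes each sub-problem manageable.
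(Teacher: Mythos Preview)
The paper does not prove this theorem: it is stated with attribution to \cite{cpt} (and, for the hyperbolic-type case, to \cite{shu}), and no proof or sketch is given in the present paper beyond the forward references for (B), (C) to Lemmas~\ref{lm:length1} and~\ref{lm:length2} and the discussion of the topological trichotomy. So there is no ``paper's own proof'' to compare against here.

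That said, your outline is a faithful description of the continuity method that actually underlies the results of \cite{shu,cpt}: fix the combinatorial/discrete type, show the induced-metric map between same-dimensional manifolds of polyhedra and of admissible marked HS-structures is a proper local homeomorphism (local via infinitesimal rigidity plus a dimension count, proper via the length conditions (B), (C), (E)), and conclude by connectedness and a base case. Your identification of infinitesimal rigidity across mixed face types as the technical crux is accurate; in \cite{shu,cpt} this is handled by a Pogorelov-type doubling/rigidity argument adapted to $\HS^3$, rather than a pure Cauchy arm-lemma. One point to be careful about: the ``same dimension'' claim is not an innocent vertex count --- handling the degenerate faces (edges or vertices that belong to $\Sigma$ or $\cT$ as ``zero-area faces'') and the strata $S_\cT$, $\Sigmab\cap\cT$ requires a stratified analysis, and the references treat this by working within a fixed stratum and then controlling transitions. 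Your sketch is correct in spirit but would need that stratification made explicit to be complete.
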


Clearly this result is not quite complete since there is a slight
discrepancy, for polyhedra of compact type, between the conditions that
are known to be satisfied by marked HS-metrics induced on polyhedra in 
$\HS^3$, and the conditions which are necessary to insure that a marked
HS-metric is actually realized.

\subsection{From convex polyhedra to particle interactions}

The key point of this section is a direct relationship between 
particle interactions in a Lorentz space-form (in particular $AdS^3$),
convex polyhedra in $\HS^3$, and marked polyhedral metrics on the sphere.

\subsubsection{Constructing an interaction from a convex polyhedron}

Consider first a convex polyhedron $P\subset \HS^3$ (with no vertex on
the boundary at infinity of $\HH^3$). $\HS^3$ can be considered as the
link of a point in $AdS^4$ (or for that matter in any Lorentz 
4-manifold) with the quadric $Q$ which is the union of the two sphere
which are the boundaries of the two copies of $\HH^3$ corresponding
to the light cone. The converse works as follows.

\begin{remark}
Let $h$ be the metric on $\HS^3\setminus Q$, let $g$ be the metric 
on $\R_{>0}\times(\HS^3\setminus Q)$ defined as
$$ d(\epsilon r)^2 + \cos(\epsilon r)^2 h~, $$
with $r\in \R_{>0}$ and $\epsilon =i$ for points in the hyperbolic
part of $\HS^3$, and to $1$ in the de Sitter part. Then 
$(\R_{>0}\times(\HS^3\setminus Q), g)$ is isometric to the 
complement of the light cone of a point in the universal cover
of $AdS^4$.
\end{remark}

This remark of course extends to the construction of the 
de Sitter or Minkowski space as a cone over $\HS^3$, with
$\cos$ replaced by $\cosh$, resp. $r\rightarrow r^2$.

Now given a polyhedron $P\subset \HS^3$ as above, we can
consider the cone $P'$ over $P$ as a subset of $AdS^4$
(including the points in the light cone corresponding to 
the intersection of $P$ with the quadric $Q$). It is a
polyhedral cone in $AdS^4$, with a vertex which we call 
$x$. By construction $P$ is identified with the link of 
$P'$ at $x$. The induced metric on the boundary of $P'$, in the neighborhood
of $x$, is a locally AdS metric with cone particles, with 
one interaction point at $x$.

This interaction then satisfies some physically natural
conditions. For instance, any massive particle (singularity
along a timelike line) has positive mass, this is a 
consequence of the fact that the induced metric on the boundary 
of $P$ is ``convex'' (as in Definition \ref{df:cvx}), and
the same holds for tachyons for the same reason. 

\subsubsection{From interactions to convex polyhedra}

The converse is somewhat less clear but works at least to some
extend. Consider a particle 
interaction, that is, a singular point $x$
in an $AdS$ manifold with particles, such that there are at least $3$
singular segments arriving at $x$. Then we can consider the link $L_x$ of
$x$, it is naturally an HS-structure, with cone
singularities corresponding to the ``particles'' -- the singular segments
-- arriving at $x$. 

The convexity conditions appearing in subsection \ref{ssc:convexity}
take on a new meaning in this context, depending
on the position of a vertex $v$ of $L_x$. It is remarkable that the
simplest of those conditions at least are precisely those which are 
physically relevant. In particular, condition (1) is equivalent to 
the condition that all massive particles -- cone singularities
along timelike lines -- have positive mass, while condition (3) 
is the condition that tachyons have positive mass. So it is possible to 
apply Theorem \ref{tm:cpt}, when its conditions are satisfied, e.g. when
only massive particles and tachyons are present, this leads to a convex
polyhedron in $\HSt^3$.

Note also that in this setting, the condition in Lemma \ref{lm:length2},
which was somewhat mysterious in the polyhedral context, has a
physical meaning since it states that the interaction has ``positive
mass'' in a way which extends the usual condition for massive particles.
If there is a space-like disk $D$ containing
the interaction point $x$ and $D$ is totally geodesic outside $x$, then 
the holonomy of the disk, evaluated 
on a simple curve around $x$, is a rotation of 
angle less than $2\pi$. This is because the ``trace'' of $D$ on the link of
$x$ is a $\cT$-geodesic so that the condition in Lemma \ref{lm:length2}
applies.
The interpretation in terms of collisions of the condition on the length 
of the $\Sigma$-geodesics appearing in 
Lemma \ref{lm:length1} is less clear, but it appears only in 
fairly special situations (not in the ``causally regular'' case as defined
in Theorem \ref{tm:thierry}).

\section{Global hyperbolicity}

In previous sections, we considered local properties of AdS manifolds with particles. 
We already observed in \S~\ref{sub.futpast}
that the usual notions of causality (causal curves, future, past, time functions...) 
in regular Lorentzian manifolds still hold.
In this section, we consider the global character of causal properties of AdS manifolds with particles.
The main point presented here is that, as long as no interaction  appears, global hyperbolicity is
still a meaningfull notion for singular AdS spacetimes. This notion will be necessary in sections
6-8.

In all this {\S} $M$ denotes a singular AdS manifold admitting 
as singularities only massive particules and no interaction.
The regular part of $M$ is denoted by $M^\ast$ in this section. 
Since we will consider other Lorentzian metrics on
$M$, we need a denomination for the singular AdS metric~: we denote it $g_0$.

\subsection{Local coordinates near a singular line}
\label{sub.localcoord}

Causality notions only depend on the conformal class of the metric, and
AdS is conformally flat. Hence, AdS spacetimes and flat spacetimes share
the same local causal properties. Every regular AdS spacetime admits
an atlas for which local coordinates have the form $(z, t)$,
where $z$ describes the unit disk $D$ in the complex plane, $t$
the interval $]-1, 1[$ and such that the AdS metric is conformally
isometric to:
$$-dt^2 + |dz|^2~. $$

For the singular case considered here, any point $x$ lying on
a singular line $l$ (a massive particule of mass $m$), the same
expression holds, but we have to remove a wedge $ \{ 2\alpha\pi <Arg(z) < 2\pi \}$ where
$\alpha=1-m$, and to glue the two sides of this wedge.
Consider the map $z \to z^\alpha$: it sends the disk with a wedge removed onto
the entire disk $D$, and is compatible with the glueing of the sides of the wedge.
Hence, a convenient
local coordinate system near $x$ is $(z, t)$ where $(z,t)$ still lies in $D \times ]-1, 1[$.
The singular AdS metric is then, in these coordinates, conformally isometric to:
$$ (1-m)^2 \frac{dz^2}{z^{2m}} - dt^2~. $$

In these coordinates, future oriented causal curves can be parametrized by
the time coordinate $t$, and satisfies:
$$ \frac{z'(t)}{|z|^{m}} \leq \frac{1}{\alpha}~. $$

Observe that all these local coordinates define a differentiable atlas on the
topological manifold $M$ for which the AdS metric on the regular part is smooth.

\subsection{Achronal surfaces}
Usual definitions in regular Lorentzian manifolds still apply to the singular AdS spacetime $M$:

\begin{defi}
A subset $S$ of $M$ is achronal (resp. acausal) if there is no non-trivial timelike (resp. causal) 
curve joining two points in $S$. 
It is only locally achronal (resp. acausal) if every point in $S$ admits a neighborhood $U$ such that
the intersection $U \cap S$ is achronal (resp. acausal) inside $U$.
\end{defi}

Typical examples of locally acausal subsets are spacelike surfaces, but the definition above
also includes non-differentiable "spacelike" surfaces, with only Lipschitz regularity. Lipschitz
spacelike surfaces provide actually the general case if one adds the {\it edgeless} assumption~:

\begin{defi}
A locally achronal subset $S$ is edgeless if every point $x$ in $S$ admits a neighborhood
$U$ such that every causal curve in $U$ joining one point of the past of $x$ (inside $U$)
to a point in the future (in $U$) of $x$ intersects $S$.
\end{defi}

In the regular case, closed edgeless locally achronal subsets are embedded locally Lipschitz surfaces.
More precisely, in the coordinates $(z,t)$ defined in \S~\ref{sub.localcoord}, they are graphs
of $1$-Lipschitz maps defined on $D$.

This property still holds in $M$, except the locally Lipschitz property which is not valid anymore at
singular points, but only a weaker weighted version holds:
closed edgeless acausal subsets containing $x$ corresponds to functions 
$f: D \to ]-1, 1 [$ differentiable almost everywhere and
satisfying:
$$\Vert d_zf \Vert < \alpha |z|^{-m}~. $$
It is sometimes more relevant to adopt the following point of view:
the coordinates $(z, t)$ have the the form $(\zeta^\alpha, t)$ where $\zeta$
describes the disk with the wedge $ \{ 2\alpha\pi <Arg(\zeta) < 2\pi \}$ removed.
The acausal subset is then the graph of a $1$-Lipschitz map $\varphi$ over
the disk minus the wedge. Moreover, the values of $\varphi$ on the boundary of
the wedge must coincide since they have to be send one to the other by the rotation
performing the glueing. Hence, for every $r < 1$:
$$\varphi(r)=\varphi(re^{i2\alpha\pi})~. $$
We can extend $\varphi$ over the wedge by defining $\varphi(re^{i\theta}) = \varphi(r)$
for $2\alpha\pi \leq \theta \leq 2\pi$. This extension over the entire $D \setminus \{ 0 \}$ is then clearly
$1$-Lipschitz. It therefore extends to $0$. We have just proved:

\begin{lemma}
The closure of any closed edgeless achronal subset of $M^\ast$
is a closed edgeless achronal subset of $M$.
\end{lemma}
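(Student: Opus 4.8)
The plan is to reduce the statement to a local analysis around the points that the closure adds --- all of which lie on the singular locus --- and there to feed the closed edgeless achronal set into the weighted Lipschitz graph description recalled in \S~\ref{sub.localcoord}. So let $S$ be a closed edgeless achronal subset of $M^\ast$. Its closure $\bar S$ in $M$ is of course closed in $M$; and since $M^\ast$ is open and $S$ is closed in $M^\ast$, we have $\bar S\cap M^\ast=S$, so every point of $\bar S\setminus S$ lies on a massive particle line. Away from the singular locus, $M$ and $M^\ast$ agree and $\bar S$ coincides locally with $S$, which is already closed, edgeless and achronal there; hence the only thing to check is the behaviour of $\bar S$ near a point $x$ of a singular line $l$ with $x\in\bar S$. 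Fix such an $x$ and the coordinates $(z,t)$ of \S~\ref{sub.localcoord} on a neighbourhood $D\times\,]-1,1[$ of $x$, in which $l=\{z=0\}$, $\alpha=1-m$, and the metric is conformal to $(1-m)^2|z|^{-2m}|dz|^2-dt^2$.

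Next I would identify $\bar S$ near $x$. Over each spatial point $z\in D\setminus\{0\}$ there is at most one point of $S$ (two points on the same vertical line would be timelike related), so inside the chart $S$ is the graph of a function $f$ defined on its spatial shadow $\Omega\subset D\setminus\{0\}$, with $\Vert d_zf\Vert\le\alpha|z|^{-m}$ by achronality. The set $\Omega$ is \emph{open}: through a point of $S$ a short vertical timelike segment runs from its past to its future, so by edgelessness it meets $S$ again, and the same then holds for all sufficiently near vertical segments, i.e.\ all nearby spatial points lie in $\Omega$. The set $\Omega$ is also \emph{closed} in $D\setminus\{0\}$: if $z_k\in\Omega$ and $z_k\to z_\infty\neq 0$, then near $z_\infty$ the bound $\alpha|z|^{-m}$ is bounded, so $f$ is Lipschitz there, $f(z_k)$ converges to some $t_\infty$, and $(z_\infty,t_\infty)\in\bar S\cap M^\ast=S$, whence $z_\infty\in\Omega$. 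Since $x\in\bar S$ the set $\Omega$ is non-empty, and $D\setminus\{0\}$ is connected, so $\Omega=D\setminus\{0\}$. Thus $f$ is a weighted Lipschitz function on the whole punctured disc; as $m<1$ the weight $|z|^{-m}$ is integrable at $0$, so $f$ extends continuously through $0$ --- equivalently, in the $\zeta$-coordinates of \S~\ref{sub.localcoord} one obtains a $1$-Lipschitz map which, after matching on the sides of the wedge and extending over it, extends continuously through $\zeta=0$, exactly the computation carried out just before the statement. The extended graph is $\bar S$ in a neighbourhood of $x$, with $f(0)$ equal to the $t$-coordinate of $x$.

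From this the local properties of $\bar S$ near $x$ follow: a weighted Lipschitz graph through a point of a massive particle line is closed and locally achronal, and it is edgeless because it is defined over the \emph{whole} spatial shadow, so any causal curve running from below the graph to above it must cross it. For global achronality, suppose a timelike curve of $M$ joins two points $p,q\in\bar S$; replacing $p,q$ by nearby points of $S$ and pushing the curve off the singular lines (where it runs along a singular line, translate it by a small constant amount in $z$, which keeps it timelike since $z'$ is unchanged; elsewhere use that timelikeness is an open condition) produces a timelike curve inside $M^\ast$ between two points of $S$, contradicting achronality of $S$ in $M^\ast$. Hence $\bar S$ is closed, edgeless and achronal in $M$.

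The step I expect to be the crux is the identification $\Omega=D\setminus\{0\}$: without it one only gets that $\bar S$ is \emph{locally} achronal near $x$, not that it is edgeless there, and it is exactly here that one uses that $S$ is edgeless, not merely achronal. Once the shadow is known, the continuous extension through the singular point is immediate from the integrability of $|z|^{-m}$ (equivalently from the $1$-Lipschitz bound in the $\zeta$-coordinates), and the remaining points are routine causality bookkeeping.
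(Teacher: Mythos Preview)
Your proof is correct and follows the same approach as the paper, whose argument is the computation in the paragraph immediately preceding the lemma: in the $\zeta$-coordinates the achronal set becomes the graph of a $1$-Lipschitz map on the disk minus a wedge, which extends over the wedge and then continuously through $0$. You supply considerably more detail than the paper does --- notably the open/closed argument forcing the spatial shadow to be all of $D\setminus\{0\}$ and the global achronality check by pushing a timelike curve off the singular lines --- but the strategy is identical.
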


\begin{defi}
A spacelike surface $S$ in $M$ is a closed edgeless locally acausal subset whose
intersection with the regular part $M^\ast$ is a smooth embedded spacelike surface.
\end{defi}

\subsection{Time functions}
\label{sub.singtime}

As in the regular case, we can define time functions as maps $T: M \to \RR$ which
are increasing along any future oriented causal curve. For non-singular
spacetimes the existence is related to \textit{stable causality~:}

\begin{defi}
Let $g$, $g'$ be two Lorentzian metrics on the same manifold $X$.
Then, $g'$ dominates $g$ if
every causal tangent vector for $M$ is timelike for $g'$. We denote
this relation by $g \prec g'$.
\end{defi}

\begin{defi}
A Lorentzian metric $g$ is stably causal if there is a metric $g'$ such that
$g \prec g'$, and such that $(X, g')$ is chronological, i.e. admits no periodic timelike
curve.
\end{defi}

\begin{theorem}[See \cite{beem}]
\label{thm.stabletime}
A Lorentzian manifold $(M, g)$ admits a time function iff it is
\textit{stably causal.} Moreover, when a time function exists, then
there is a smooth time function.
\end{theorem}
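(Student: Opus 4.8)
The statement is classical (Hawking; see also Seifert, and \cite{beem}); the plan is to recall the standard argument in both directions, treating the smoothness claim as a separate local step.

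\textbf{From a time function to stable causality.} First I would record that it suffices to work with a \emph{smooth} time function $\tau$ (its existence is the ``moreover'' part, proved independently below), and that for such a $\tau$ the differential $d\tau$ is $g$-timelike: since $d\tau(v)>0$ for every future-directed $g$-causal $v$, the $g$-causal cone lies in the open half-space $\{d\tau>0\}$, which forces $\nabla^g\tau$ to be timelike. Then set $g'(v,v):=g(v,v)-(d\tau(v))^2$. If $v\neq 0$ is future-directed $g$-causal, then $g(v,v)\le 0$ and $d\tau(v)>0$, so $g'(v,v)<0$; hence $g\prec g'$. Finally $(M,g')$ is chronological: on the $g'$-future cone $C'$ (the component containing the $g$-future cone) the functional $d\tau$ cannot vanish — a zero $v\in C'$ would be $g$-timelike and $g$-orthogonal to the $g$-timelike vector $\nabla^g\tau$, which is impossible — so by connectedness $d\tau>0$ on all of $C'$, $\tau$ is strictly increasing along future $g'$-timelike curves, and no such curve can be periodic.

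\textbf{From stable causality to a time function.} Here I would follow Hawking's volume argument. By hypothesis there is $g'$ with $g\prec g'$ and $(M,g')$ chronological; openness of the relation $\prec$ together with paracompactness of $M$ yields a smooth family $(g_s)_{s\in[0,1]}$ with $g_0=g$, $g_s\prec g_{s'}$ for $s<s'$, each $g_s$ chronological (being dominated by $g'$). Fix a finite Borel measure $\mu$ on $M$ positive on every nonempty open set, put $t_s(p):=\mu(J^-_{g_s}(p))$, which is non-decreasing along future $g_s$-causal curves hence along future $g$-causal curves, and set $T:=\sum_n 2^{-n}t_{s_n}$ for a countable dense $\{s_n\}\subset(0,1)$; the series converges since $0\le t_s\le\mu(M)<\infty$. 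For a non-trivial future-directed $g$-causal curve $\gamma$ from $p$ to $q$, one chooses $n$ with $\gamma$ still causal for $g_{s_n}$ and some $s_{n'}>s_n$ for which $\gamma$ admits a $g_{s_{n'}}$-timelike perturbation; then $J^-_{g_{s_{n'}}}(q)$ contains a $\mu$-positive open set disjoint from $J^-_{g_{s_{n'}}}(p)$, so $t_{s_{n'}}(q)>t_{s_{n'}}(p)$ and $T(q)>T(p)$. Thus $T$ is a continuous time function.

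\textbf{Smoothing and the main obstacle.} To upgrade a continuous time function to a smooth one I would mollify it locally in causally convex charts and patch with a partition of unity subordinate to a locally finite refinement, arranging that on each patch the smoothing error is dominated by the strict increase of $T$ along short causal segments there. The delicate points are precisely (i) this smoothing bookkeeping and (ii) the \emph{strict} monotonicity in the volume argument, i.e.\ guaranteeing that every non-trivial causal curve acquires a genuinely timelike perturbation for some $g_{s_n}$ so that its past-volume jumps; both rest on the openness of $\prec$ and are carried out in full in \cite{beem}, to which we refer for the complete proof.
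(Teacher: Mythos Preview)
The paper does not give a proof of this statement at all: it is stated with the attribution ``[See \cite{beem}]'' and immediately followed by Remark~\ref{rk.stablecausal}, so the paper simply cites the result from the literature. Your proposal therefore goes well beyond what the paper does, supplying a sketch of the classical Hawking argument (widened cones for one direction, past-volume functions averaged over a family of dominating metrics for the other, plus a smoothing step), and correctly deferring the technical details to \cite{beem}. There is nothing to compare against in the paper itself; your sketch is a reasonable outline of the standard proof.
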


\begin{remark}
\label{rk.stablecausal}
In section~\ref{sub.localcoord} we defined some differentiable atlas on
the manifold $M$. For this differentiable structure, the null cones of $g_0$
degenerate along singular lines to half-lines tangent to the "singular" line
(which is perfectly smooth for the selected differentiable atlas). Obviously,
we can extend the definition of domination to the more general case
$g_0 \prec g$ where $g_0$ is our singular metric and $g$ a smooth regular
metric. Therefore, we can define the stable causality of in this context:
$g_0$ is stably causal if there is a smooth Lorentzian metric $g'$ which is
achronological and such that $g_0 \prec g$.
Theorem~\ref{thm.stabletime} is still valid in this more general context.
Indeed, there is a smooth Lorentzian metric $g$ such that $g_0 \prec g \prec g'$,
which is stably causal since $g$ is dominated by the achronal metric $g'$.
Hence there is a time function
$T$ for the metric $g$, which is still a time function for $g_0$ since $g_0 \prec g$:
causal curves for $g_0$ are causal curves for $g$.
\end{remark}

\begin{lemma}
\label{le.singstable}
The singular metric $g_0$ is stably causal if and only if its restriction
to the regular part $M^\ast$ is stably causal. Therefore, $(M, g_0)$ admits
a smooth time function if and only if $(M^\ast, g_0)$ admits a time function.
\end{lemma}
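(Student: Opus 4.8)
The ``therefore'' clause is purely formal once the first assertion is proved: by Theorem~\ref{thm.stabletime} together with Remark~\ref{rk.stablecausal}, $(M,g_0)$ admits a (smooth) time function if and only if $g_0$ is stably causal on $M$, and the analogous statement holds for $M^\ast$, so everything reduces to the equivalence of stable causality for $g_0$ and for $g_0|_{M^\ast}$. One implication is trivial: if $g'$ is a smooth chronological metric on $M$ with $g_0\prec g'$, then $g'|_{M^\ast}$ still dominates $g_0$ and is chronological. So the content is the converse, and the plan is to take a time function on $M^\ast$ and extend it to one on $M$, which by Remark~\ref{rk.stablecausal} forces $g_0$ to be stably causal on $M$.

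Assume then that $g_0|_{M^\ast}$ is stably causal and fix a time function $\tau\colon M^\ast\to\RR$ (Theorem~\ref{thm.stabletime}). Each level set $S_c=\tau^{-1}(c)$ is a closed acausal subset of $M^\ast$, and it is edgeless there: a causal curve joining a point with $\tau<c$ to a point with $\tau>c$ meets $\{\tau=c\}$ by the intermediate value theorem. Hence, by the lemma proved just above, its closure $\bar S_c$ in $M$ is a closed edgeless achronal subset of $M$, so in the coordinates $(z,t)$ of \S\ref{sub.localcoord} near a singular line $\ell$ it is the graph over the disk $D$ of a map $\varphi_c$ with $\Vert d_z\varphi_c\Vert<\alpha|z|^{-m}$ away from $z=0$ (where $\alpha=1-m$) and continuous at $z=0$. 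The plan is to show that the $\bar S_c$ foliate $M$, and then to read off the time function $T$ on $M$ with $T|_{M^\ast}=\tau$ from this foliation.

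The key step, and the one I expect to be the main obstacle, is to prove that $\tau$ is bounded near every point $p$ of the singular locus. Here one uses the description of the local causal structure along a massive particle from \S\ref{sub.futpast}: for $q^-,q^+$ points of $\ell$ just before and just after $p$, the causal diamond $V=I^+(q^-)\cap I^-(q^+)$ is a causally convex open neighbourhood of $p$, these diamonds shrink to $\{p\}$, and the local past $I^-(q^-)$ and local future $I^+(q^+)$ are nonempty open sets and so contain regular points $r\ll q^-$ and $r'\gg q^+$. Then $V\cap M^\ast\subset I^+(r)\cap I^-(r')$, and for every $x\in V\cap M^\ast$ one has $r\ll x\ll r'$; after a small perturbation keeping the connecting timelike curves inside $M^\ast$ this gives $\tau(r)<\tau(x)<\tau(r')$, so $\tau$ is bounded on $V\cap M^\ast$. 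Granting this, the $\bar S_c$ foliate $M$: over the punctured disk the $\varphi_c$ are disjoint continuous graphs, hence ordered, say $\varphi_c<\varphi_{c'}$ for $c<c'$; if two of them (hence all intermediate ones) agreed at $z=0$, then $\tau$ would be unbounded near a nearby point of $\ell$, contradicting boundedness, so $c\mapsto\varphi_c(0)$ is strictly increasing; and if its monotone image had a jump, a point of $\ell$ over the gap would (being a limit of points of $M^\ast$ on which $\tau$ is bounded) still lie on some $\bar S_c$, a contradiction, so it is continuous. Thus the ``leaf coordinate'' defines a continuous $T\colon M\to\RR$ with $T|_{M^\ast}=\tau$, and $T$ is strictly increasing along future causal curves of $M$ (the $\bar S_c$ are achronal hypersurfaces crossed in increasing order of $c$, and along $\ell$ itself $t\mapsto T(0,t)$ is strictly increasing since $c\mapsto\varphi_c(0)$ is). Then $T$ is a time function on $(M,g_0)$, and Remark~\ref{rk.stablecausal} together with Theorem~\ref{thm.stabletime} gives that $g_0$ is stably causal and admits a smooth time function.

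The delicate points are therefore the boundedness estimate and the justification that the causal diamonds behave near a massive particle as claimed; once the local causal geometry of \S\ref{sub.futpast} is in hand, the remainder is bookkeeping with Lipschitz graphs. An alternative route avoids time functions: on each chart around $\ell$ one dominates $g_0$ by a flat metric (its light cones are very narrow near $\ell$, so there is ample room) and glues this, by a partition of unity, to a smooth chronological metric dominating $g_0$ on $M^\ast$, using that at each point the metrics dominating $g_0$ form a convex set; but then one faces the dual difficulty of checking that the surgery creates no closed timelike curve through the singular lines, which again requires exactly the local control above.
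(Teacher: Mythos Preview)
Your ``alternative route'' at the end is in fact the paper's proof, and it is considerably simpler than your main line. The paper takes a smooth chronological metric $g'$ on $M^\ast$ dominating $g_0$, builds a chronological metric $g''$ dominating $g_0$ on a tubular neighbourhood $U$ of the singular locus from the local models (the singular lines cannot be periodic, since $g'$ is chronological on $M^\ast$), arranges $g''\prec g'$ on $U\cap M^\ast$ by shrinking $g''$, and sets $g_1=ag''+bg'$ for a partition of unity subordinate to $U$ and $M\setminus\overline{U'}$. The ``dual difficulty'' you anticipate is dispatched in one line: on $M^\ast$ every $g_1$-timelike vector is $g'$-timelike, so any putative closed $g_1$-timelike curve can be pushed off the one-dimensional singular locus to a closed $g'$-timelike curve in $M^\ast$, contradicting chronology of $g'$.

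Your main approach---extending the time function leaf by leaf---is attractive, but the foliation step has a gap. From boundedness of $\tau$ near a point $p=(0,t_0)\in\ell$ you cannot conclude that $c\mapsto\varphi_c(0)$ is \emph{strictly} increasing: nothing you prove excludes $\varphi_c(0)=t_0$ for all $c$ in a nondegenerate interval $[a,b]$, the graphs $\bar S_c$ all pinching at $(0,t_0)$ while remaining disjoint over $D\setminus\{0\}$. In that scenario $\tau$ stays bounded near $p$ (between your $\tau(r)$ and $\tau(r')$), yet has no limit there, and the leaf coordinate $T(p)$ is undefined. Your sentence ``then $\tau$ would be unbounded near a nearby point of $\ell$'' is the assertion that needs proof, not a consequence of what precedes it. A secondary issue is that you tacitly assume each $S_c$ meets the local chart as a graph over the \emph{entire} punctured disk, which also requires an argument. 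Both points can probably be repaired with more work, but as written the argument is incomplete, whereas the gluing approach you relegate to a final remark avoids these difficulties entirely.
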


\begin{proof}
The fact that $(M^\ast, g_0)$ is stably causal as soon as $(M, g_0)$ is stably
causal is obvious. Let's assume that $(M, g_0)$ is stably causal:
let $g'$ be smooth achronological Lorentzian metric on $M^\ast$ dominating
$g_0$. On the other hand, using the local models around singular lines,
it is easy to construct an achronological Lorentzian metric $g''$ on a tubular neighborhood $U$
of the singular locus of $g_0$ (the fact that $g'$ is achronological implies that
the singular lines are not periodic). Reducing $g''$ if necessary, we can assume that
$g'$ dominates $g''$ on $U$. Let $U' $ be a smaller tubular neighborhood of
the singular locus so that $\overline{U}' \subset U$, and let $a$, $b$ be
a partition of unity subordonate to $U$, $M \setminus U'$. Then
$g_1 = ag'' + bg'$ is a smooth Lorentzian metric dominating $g_0$.
Moreover, we also have $g_1 \subset g'$ on $M^\ast$. Hence any timelike curve
for $g_1$ can be slightly perturbed to an achronological timelike curve for $g'$
avoiding the singular lines. It follows that $(M, g_0)$ is stably causal.
\end {proof}

\subsection{Cauchy surfaces}

\begin{defi}
A spacelike surface $S$ is a Cauchy surface if it is acausal and intersects every inextendible causal curve in $M$.
\end{defi}

Since a Cauchy surface is acausal, its future $I^+(S)$ and its past $I^-(S)$ are disjoint.

\begin{remark}
\label{rk.ghnongh}
The regular part of a Cauchy surface in $M$ is not a Cauchy surface in
the regular part $M^\ast$, since causal curves can exit the regular region through a
timelike singularity.
\end{remark}

\begin{defi}
A singular AdS spacetime is globally hyperbolic if it admits a Cauchy surface $M$.
\end{defi}

\begin{remark}
We defined Cauchy surfaces as smooth objects for further requirements in this paper,
but this definition can be generalized for non-smooth locally achronal closed subsets.
This more general definition leads to the same notion of globally hyperbolic spacetimes,
i.e. singular spacetimes admitting a non-smooth Cauchy surface also admits a smooth one.
\end{remark}

\begin{prop}
\label{pro.ghtime}
Let $M$ be a singular AdS spacetime without interaction and singular
set reduced to massive particles. Assume that $M$ is globally hyperbolic.
Then $M$ admits a time function $T: M \to \RR$ such that every level
$T^{-1}(t)$ is a Cauchy surface.
\end{prop}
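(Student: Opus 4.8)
The plan is to reduce this statement to the Geroch-type structure theorem for globally hyperbolic spacetimes, working around the singular lines via the tools already established in this section.

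First I would invoke Lemma~\ref{le.singstable}: since $M$ is globally hyperbolic it is in particular causal, and a Cauchy surface gives a splitting showing that no closed causal curve exists even on $M^\ast$; moreover global hyperbolicity is stronger than stable causality, so $(M,g_0)$ is stably causal. By Theorem~\ref{thm.stabletime} (in the singular form explained in Remark~\ref{rk.stablecausal}) there is a smooth time function $T_0 : M \to \RR$. This is, however, not yet what we want: a generic time function need not have Cauchy levels. The standard trick (Geroch, in the form adapted by Bernal--S\'anchez in the regular case) is to average the ``volume functions'' of the past and the future with respect to the Cauchy surface $S$, or more precisely to build $T$ out of the Cauchy time. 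Concretely, fix the Cauchy surface $S\subset M$ and, using the fact that $M\setminus S = I^+(S)\sqcup I^-(S)$ (noted right after the definition of Cauchy surface), I would try to show that $M$ is homeomorphic to $S\times\RR$ with $S$ corresponding to a level set, and then take $T$ to be the projection onto the $\RR$ factor, smoothed appropriately.

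The key steps, in order, are: (i) establish that every inextendible causal curve in $M$ meets $S$ exactly once — once, because $S$ is acausal and by the edgeless/achronality discussion curves cannot ``graze'' it; (ii) push $S$ along a complete timelike vector field to sweep out a neighbourhood, using the local coordinates $(z,t)$ near singular lines from \S\ref{sub.localcoord}, where the singular line itself is a perfectly smooth curve transverse to $S$ and one can choose the timelike field tangent to it near the particle — this is where the weighted-Lipschitz description of achronal graphs, $\Vert d_zf\Vert < \alpha|z|^{-m}$, guarantees that the flow of a suitable field stays transverse to all nearby Cauchy surfaces; (iii) define $\tau(x)$ for $x\in M$ to be the parameter at which the integral curve of that field through $x$ hits $S$, show $\tau$ is continuous and strictly increasing along causal curves, and check that its level sets are acausal edgeless surfaces meeting every inextendible causal curve, i.e. Cauchy surfaces; (iv) finally smooth $\tau$ into a genuine smooth time function $T$ with the same (topological) level sets, or rather arrange that the levels remain Cauchy — here one can either quote the regular smoothing results applied on $M^\ast$ and check compatibility near the singular lines using the explicit metric $(1-m)^2 z^{-2m}dz^2 - dt^2$, or argue that a $C^0$ Cauchy time function can be replaced by a smooth one with Cauchy levels by a partition-of-unity argument as in Lemma~\ref{le.singstable}.

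The main obstacle I expect is step (iv) together with the transversality in step (ii) \emph{at} the singular locus: the null cones of $g_0$ degenerate along the particles in the chosen differentiable atlas, so a naive smoothing of $\tau$ could produce level sets that fail the weighted-Lipschitz bound exactly on the singular line and hence stop being achronal there. The way around this is to do the smoothing in two stages — first on a tubular neighbourhood $U$ of the singular locus using the rotationally symmetric model (where one can take $T = t$ outright, which is already smooth and has achronal levels by the explicit causal-curve inequality $z'(t)/|z|^m \le 1/\alpha$), then on $M\setminus U'$ by the regular theory, and glue with a partition of unity subordinate to $(U, M\setminus U')$ exactly as in the proof of Lemma~\ref{le.singstable}, checking that the convex combination of two time functions with Cauchy levels, both compatible near $\partial U'$, is again a time function whose levels are Cauchy. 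Establishing that the glued levels still intersect every inextendible causal curve — using global hyperbolicity of $M$ to rule out causal curves that are ``incomplete'' at the singular lines — is the delicate point, and I would handle it by the standard argument that a level set of a time function that is a topological Cauchy surface in the sense of meeting each inextendible causal curve is automatically acausal once the function is strictly monotone along causal curves.
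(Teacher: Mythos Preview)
Your overall plan has a genuine gap at step (iii). The function $\tau(x)$ defined as the flow-time to $S$ along a fixed timelike vector field $Z$ is strictly increasing along integral curves of $Z$, but there is no reason it should be strictly increasing along \emph{arbitrary} causal curves. Equivalently, the push-forwards $\phi^Z_t(S)$ of the Cauchy surface by the flow of a generic timelike field need not remain achronal: as soon as $Z$ is ``tilted'' relative to the normal of $S$, a $g_0$-causal curve can cross two such level sets in the wrong order. This is exactly why the classical Geroch argument does \emph{not} use flow-time but rather the past/future volume functions, and why the paper's Corollary~\ref{cor.splitting} (which does produce the $S\times\RR$ splitting via a flow) is proved \emph{after} Proposition~\ref{pro.ghtime}, using the time function already obtained there to build $Z$ as (essentially) its gradient. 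Your steps (ii)--(iii) reverse this logical order, and the reversal does not work without an extra argument you have not supplied.

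The paper takes a quite different route that you should look at: instead of building $T$ directly on the singular spacetime, it constructs a smooth \emph{regular} Lorentzian metric $g'$ on $M$ with $g_0 \prec g'$ such that $S$ is still a Cauchy surface for $(M,g')$. The construction uses two auxiliary Cauchy surfaces $S_-\subset I^-(S)$ and $S_+\subset I^+(S)$, builds dominating metrics $g'_1$ on $I^+(S_-)$ and $g'_2$ on $I^-(S_+)$ for which past- (resp.\ future-) oriented causal curves still reach $S$, and interpolates. Once $(M,g')$ is a genuine regular globally hyperbolic spacetime, the classical Geroch theorem gives a smooth time function $T$ with $g'$-Cauchy levels; since $g_0 \prec g'$, every $g_0$-causal curve is $g'$-causal, so $g'$-Cauchy surfaces are automatically $g_0$-Cauchy surfaces and $T$ is a $g_0$-time function. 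This domination trick completely sidesteps both the monotonicity issue in your step (iii) and the delicate smoothing near the singular locus in your step (iv).
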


\begin{proof}
This is a well-known theorem by Geroch in the regular case, even for
general globally hyperbolic spacetimes without compact Cauchy surfaces.
But, the singular version does not follow immediately by applying this
regular version to $M^\ast$ (see Remark~\ref{rk.ghnongh}).

Let $l$ be an inextendible causal curve in $M$. It intersects the Cauchy surface $S$,
and since $S$ is achronal, $l$ cannot be periodic. Therefore, $M$ admits no periodic
causal curve, i.e. is \textit{acausal.}


Let $U$ be a small tubular neighborhood of $S$ in $M$, so that the boundary $\partial U$
is the union of two spacelike hypersurfaces $S_-$, $S_+$ with $S_- \subset I^-(S)$,
$S_+ \subset I^+(S)$,
and such that every inextendible future oriented causal curve in $U$ starts from
$S_-$, intersects $S$ and then hits $S^+$. If there were some acausal curve joining
two points in $S_-$, then we could add to this curve some causal segment in $U$
and obtain some acausal curve in $M$ joining two points in $S$. Since $S$ is acausal,
this is impossible~: $S_-$ is acausal. Similarly, $S_+$ is acausal. It follows that
$S_\pm$ are both Cauchy surfaces for $(M, g_0)$.

For every $x$ in $I^+(S_-)$ and every past oriented $g_0$-causal tangent vector $v$,
the past oriented geodesic tangent to $(x,v)$ intersects $S$. The same property holds for
tangent vector $(x, v')$ nearby. It follows that there exists on $I^+(S_-)$ a smooth Lorentzian metric
$g'_1$ so that $g_0 \prec g'_1$ and such that every inextendible past oriented $g'_1$-causal
curve attains $S$. Furthermore, we can select $g'_1$ so that $S$ is $g'_1$-spacelike,
and such that every future oriented $g'_1$-causal
vector tangent at a point of $S$ points in the $g_0$-future of $S$. It follows that future oriented $g'_1$-causal curves
crossing $S$ cannot come back to $S$: $S$ is acausal, not only for $g_0$, but
also for $g'_1$.

We can also define $g'_2$ in the past of $S_+$ so that
$g_0 \prec g'_2$, every inextendible future oriented $g'_2$-causal
curve attains $S$, and such that $S$ is $g'_2$-acausal. We can now interpolate in the common region
$I^+(S_-) \cap I^-(S_+)$, getting a Lorentzian metric $g'$ on the entire $M$
so that $g_0 \prec g' \prec g'_1$ on $I^+(S_-)$, and
$g_0 \prec g' \prec g'_2$ on $I^-(S_+)$. Observe that even if it is not totally
obvious that the metrics $g'_i$ can be selected continuous, we have enough room to
pick such a metric $g'$ in a continuous way.

Let $l$ be a future oriented $g'$-causal curve starting from a point in $S$. Since $g' \prec g'_1$, this
curve is also $g'_1$-causal as long as it remains inside $I^+(S_-)$. But since $S$ is acausal for
$g'_1$, it implies that $l$ cannot cross $S$ anymore: hence $l$ lies entirely in $I^+(S)$.
It follows that $S$ is acausal for $g'$.

By construction of $g'_1$, every past-oriented $g'_1$-causal curve starting from a point inside $I^+(S)$
must intersect $S$. Since $g' \prec g'_1$ the same property holds for $g'$-causal curves.
Using $g'_2$ for points in $I^+(S_-)$, we get that every inextendible $g'$-causal curve intersects
$S$. Hence, $(M, g')$ is globally hyperbolic. According to Geroch's Theorem in the regular case,
there is a time function $T: M \to \RR$ whose levels are Cauchy surfaces. The proposition
follows, since $g_0$-causal curves are $g'$-causal curves, implying that $g'$-Cauchy surfaces
are $g_0$-Cauchy surfaces and that $g'$-time functions are $g_0$-time functions.
\end{proof}

\begin{cor}
\label{cor.splitting}
If $(M, g_0)$ is globally hyperbolic, there is a decomposition $M \approx S \times \RR$
where every level $S \times \{ \ast \}$ is a Cauchy surface, and very vertical line
$\{ \ast \} \times \RR$ is a singular line or timelike.
\end{cor}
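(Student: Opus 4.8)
The plan is to promote the time function of Proposition~\ref{pro.ghtime} to a genuine product structure by flowing along a well-chosen vector field. Let $T\colon M\to\RR$ be the smooth time function of Proposition~\ref{pro.ghtime}, all of whose levels $S_t:=T^{-1}(t)$ are Cauchy surfaces (smoothness being understood for the differentiable atlas of \S\ref{sub.localcoord}). First I would construct a smooth vector field $X$ on $M$ that is (i) future-directed $g_0$-timelike on the regular part $M^\ast$, (ii) tangent to the singular locus $\Delta$ at each of its points, and (iii) satisfies $dT(X)>0$ everywhere; then the renormalised field $\hat X:=X/dT(X)$ satisfies $dT(\hat X)\equiv 1$, and its flow will provide the splitting.

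To build $X$: over $M^\ast$ the future-timelike vectors $v$ with $dT(v)>0$ form, at each point, a non-empty open convex cone, so a partition of unity yields a smooth section $X_2$ on $M^\ast$. Near a singular line, in the coordinates $(z,t)\in D\times{]-1,1[}$ of \S\ref{sub.localcoord} where $g_0$ is conformal to $(1-m)^2|dz|^2/|z|^{2m}-dt^2$ and the line is $\{z=0\}$, the field $X_1:=\partial_t$ is tangent to $\{z=0\}$ and $g_0$-timelike off it; moreover $dT(X_1)>0$ along $\{z=0\}$ since $T$ increases along the future-timelike singular line (a causal curve in the sense of \S\ref{sub.futpast}), hence $dT(X_1)>0$ on a neighbourhood. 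Patching these local $X_1$'s over a tubular neighbourhood $U$ of $\Delta$ and then interpolating with $X_2$ by a partition of unity subordinate to $\{U, M^\ast\}$ produces $X$ with properties (i)--(iii): a convex combination of future-timelike vectors is future-timelike, the combination is tangent to $\Delta$ wherever only the $U$-part contributes, and $dT$ stays positive by linearity. Here one uses that, with no interactions, $\Delta$ is a disjoint union of timelike lines, so each is handled by a single tube.

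Next I would run the usual Geroch-type argument, adapted to allow causal curves to travel along particles. Since $\hat X$ is tangent to $\Delta$, its flow $\phi_s$ leaves $\Delta$ invariant, so each integral curve is either contained in $M^\ast$, where it is future-timelike, or is a reparametrisation of a single singular line; in either case it is a causal curve in the sense of \S\ref{sub.futpast}. From $T(\phi_s(x))=T(x)+s$ one sees the flow is complete: an integral curve with finite maximal right endpoint $b$ would either be inextendible as a causal curve --- then it must meet the Cauchy surface $S_{T(x)+b}$, where $T=T(x)+b$, contradicting $T<T(x)+b$ along it --- or converge to a point, contradicting maximality of the ODE solution; and symmetrically for the left endpoint. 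Now set $S:=S_0$ and define $\Theta\colon S\times\RR\to M$ by $\Theta(x,s):=\phi_s(x)$. Applying $T$ gives injectivity; surjectivity follows since the inextendible integral curve through any $y\in M$ crosses the Cauchy surface $S_0$ at $\phi_{-T(y)}(y)$; and $\Theta$ is a diffeomorphism with smooth inverse $y\mapsto(\phi_{-T(y)}(y),T(y))$. Under $\Theta$ each slice $S\times\{s\}$ is the Cauchy surface $S_s$ and each line $\{x\}\times\RR$ is an integral curve of $\hat X$, hence timelike or a singular line --- which is the assertion.

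The main obstacle is the construction of $X$ in the first step, and more precisely the requirement that $X$ lie inside the degenerating $g_0$-cone on $M^\ast$ while being tangent to $\Delta$: this is exactly why one cannot simply import Geroch's splitting from the regular part $M^\ast$ (cf.\ Remark~\ref{rk.ghnongh}: a Cauchy surface of $M$ need not be Cauchy in $M^\ast$), and why the integral curves must be allowed to run along the particles. Once $X$ is available, the completeness and the diffeomorphism statements are routine.
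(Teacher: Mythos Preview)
Your argument is correct and follows the same route as the paper: construct, via a partition of unity, a smooth future-directed vector field that is $g_0$-timelike on $M^\ast$ and tangent to the singular locus, then use its flow together with the time function $T$ of Proposition~\ref{pro.ghtime} to obtain the product decomposition. The paper's proof is terser --- it takes $Z=aY+bX$ with $X=-\nabla T$ on $M^\ast$ and $Y$ a local timelike field near the singular lines, and simply invokes global hyperbolicity to say each $Z$-orbit meets every Cauchy level of $T$, defining $F(x)=(p(x),T(x))$ directly --- whereas you add the normalisation $\hat X=X/dT(X)$ and spell out the completeness argument, but these are cosmetic differences rather than a different strategy.
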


\begin{proof}
Let $T: M \to \RR$ be the time function provided by Proposition~\ref{pro.ghtime}.
Let $X$ be minus the gradient  (for $g_0$) of
$T$: it is a future oriented timelike vector field on $M^\ast$. Consider also a future oriented
timelike vector field $Y$ on a tubular neighborhood $U$ of the singular locus: using a partition
of unity as in the proof of Lemma~\ref{le.singstable}, we can construct a smooth timelike vector field
$Z = aY + bX$ on $M$ tangent to the singular lines. The orbits of the flow generated by $Z$
are timelike curves. The global hyperbolicity of $(M, g_0)$ ensures that each of these orbits
intersect every Cauchy surface, in particular, the levels of $T$. In other words, for every $x$ in $M$
the $Z$-orbit of $x$ intersects $S$ at a point $p(x)$. Then the map
$F: M \to S \times \RR$ defined by $F(x) = (p(x), T(x))$ is the desired diffeomorphism
between $M$ and $S \times \RR$.
\end{proof}

\subsection{Maximal globally hyperbolic extensions}

From now we assume that $M$ is globally hyperbolic, admitting a compact Cauchy surface $S$.
In this section, we prove the following facts, well-known in the case of regular globally hyperbolic solutions of the Einstein equation (\cite{gerochdependence}): \textit{there is exist a maximal extension, which is unique up to isometry.}

\begin{defi} An isometric embedding $i: (M, S) \to (M', S')$ is a Cauchy embedding if
$S'=i(S)$ is a Cauchy surface of $M'$.
\end{defi}

\begin{remark}
If $i: M \to M'$ is a Cauchy embedding then the image $i(S')$ of any Cauchy surface $S'$
of $M$ is also a Cauchy surface in $M'$. Indeed, for every inextendible causal curve $l$
in $M'$, every connected component of the preimage $i^{-1}(l)$ is an inextendible causal
curve in $M$, and thus intersects $S$. Since $l$ intersects $i(S)$ in exactly one point,
$i^{-1}(l)$ is connected. It follows that the intersection $l \cap i(S')$ is non-empty and reduced to a single point: $i(S')$ is a Cauchy surface.

Therefore, we can define Cauchy embeddings without reference to the selected Cauchy surface
$S$. However, the natural category is the category of \textit{marked} globally hyperbolic
spacetimes, i.e. pairs $(M, S)$.
\end{remark}

\begin{lemma}
\label{le.coincide}
Let $i_1: (M, S) \to (M', S')$, $i_2: (M, S) \to (M', S')$ two Cauchy embeddings into the same marked globally hyperbolic singular AdS spacetime $(M', S')$. Assume that
$i_1$ and $i_2$ coincide on $S$. Then, they coincide on the entire $M$.
\end{lemma}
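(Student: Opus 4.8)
The plan is to use a standard analytic-continuation argument adapted to the singular setting. First I would pass to universal covers: let $\pi : \tilde M \to M$ and $\pi' : \tilde M' \to M'$ be the universal covering maps, and lift the embeddings $i_1, i_2$ to maps $\tilde i_1, \tilde i_2 : \tilde M \to \tilde M'$ (after choosing base points compatibly, so that they agree at a chosen lift of a point of $S$). Because $M$ has singular locus consisting only of massive particles, $\tilde M$ is (the completion of) a manifold that carries a developing map $\dev : \tilde M \to \uAdS$ into the model space, unique up to post-composition with an element of $\isom(\uAdS)$, and similarly for $\tilde M'$; the two embeddings $\tilde i_1, \tilde i_2$ must each intertwine these developing maps up to such an element.

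Next I would set up the ``coincidence set'' $A = \{ x \in \tilde M : \tilde i_1(x) = \tilde i_2(x) \text{ and } d(\tilde i_1)_x = d(\tilde i_2)_x \}$ on the regular part. Since $i_1 = i_2$ on $S$ and both are isometric embeddings, the tangent spaces along $S$ are carried the same way (the differentials agree on $TS$, and since both preserve the metric and the time-orientation, they agree on the normal direction too), so $A$ contains the regular part of $S$. The set $A$ is visibly closed in the regular part, and it is open by the usual rigidity of local isometries between AdS structures: two isometric embeddings of a connected AdS manifold into another that agree to first order at one point agree on a neighborhood (this is the uniqueness of the developing map, locally). Hence $A$ contains the whole connected regular component of $M$ containing $S$ — but since $M$ is globally hyperbolic with Cauchy surface $S$ and $M^*$ is $M$ minus a $1$-complex (the particles), one must check that $M^*$ is connected; it is, because removing a locally finite collection of timelike lines from a globally hyperbolic $3$-manifold does not disconnect it (any two points can be joined by a path avoiding the singular locus, using a Cauchy surface and the product structure of Corollary~\ref{cor.splitting}). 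Therefore $i_1 = i_2$ on $M^*$, and by continuity (each $i_k$ extends continuously to the singular lines, since the metric completion is canonical) on all of $M$.

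The step I expect to be the main obstacle is handling the differential along the singular lines and making the ``open'' part of the argument fully rigorous near the particles. One cannot simply invoke smooth analytic continuation through the singular locus; instead I would argue that $M^*$ is connected and carry the whole argument on $M^*$, then extend by continuity using the fact — established in \S\ref{sub.localcoord} — that in the coordinates $(z,t)$ the singular AdS metric and the differentiable structure are perfectly well-behaved, so that a continuous isometric embedding that is determined on the dense regular part is determined everywhere. A secondary point to be careful about is the passage to universal covers: one must ensure that the lifted coincidence propagates down, i.e. that the deck-transformation equivariance is the same for $\tilde i_1$ and $\tilde i_2$, which follows because they agree near a point of $S$ and the holonomy representation is determined by the germ of the developing map there.
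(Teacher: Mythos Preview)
Your coincidence-set argument on $M^\ast$ is correct and suffices: the set where $i_1$ and $i_2$ agree together with their differentials is closed, open (by rigidity of local isometries between AdS manifolds), and contains the regular part of $S$; since $M^\ast$ is connected (codimension-two singular locus) you get $i_1=i_2$ on $M^\ast$ and hence on $M$ by continuity. The universal-cover and developing-map layer you put on top is unnecessary and slightly muddled---the developing map lands in $\uAdS$, not in $\tilde M'$, so it does not directly compare $\tilde i_1$ with $\tilde i_2$; fortunately you never actually use it, and the argument goes through on $M^\ast$ without any lifting.

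The paper's proof takes a different, more causal-geometric route. First it shows $i_1=i_2$ on a neighborhood of $S$ by observing that a point $x'$ close to $S'$ is determined by the set $I^-(x')\cap S'$; since $i_1,i_2$ are isometric embeddings sending $S$ to $S'$, this forces $i_1(x)=i_2(x)$ near $S$. Then it extends to all of $M$ in one stroke via the exponential map: every $x\in M$ is $\exp_y(\xi)$ for some $y\in S$ and $\xi\in T_yM$ along a timelike geodesic whose interior lies in $M^\ast$, and isometries commute with $\exp$, so $i_1(x)=i_2(x)$. Your approach is the textbook $(G,X)$-structure argument and is arguably cleaner conceptually; the paper's approach avoids discussing connectedness of $M^\ast$ and handles singular endpoints directly via the exponential map rather than by a separate continuity step.
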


\begin{proof}
If $x'$, $y'$ are points in $M'$ sufficiently near to $S'$, say, in the future of $S'$,
then they are equal if and only if the intersections $I^-(x') \cap S'$ and
$I^-(y') \cap S'$ are equal. Apply this observation to $i_1(x)$, $i_2(x)$ for $x$
near $S$: we obtain that $i_1$, $i_2$ coincide in a neighborhood of $S$.

Let now $x$ be any point in $M$.
Since there is only a finite number of singular lines
in $M$, there is a timelike geodesic segment $[y, x]$, where $y$ lies in $S$,
and such that $[y, x[$ is contained in $M^\ast$ ($x$ may be singular).
Then $x$ is the image by the exponential map of some $\xi$ in $T_yM$.
Then $i_1(x)$, $i_2(x)$ are the image by the exponential map of respectively
$d_yi_1(\xi)$, $d_yi_2(\xi)$. But these tangent vectors are equal, since $i_1 = i_2$
near $S$.
\end{proof}

\begin{lemma}
\label{le.causalconvex}
Let $i: M \to M'$ be a Cauchy embedding into a singular AdS spacetime. Then, the image of $i$ is
causally convex, i.e. any causal curve in $M'$ admitting extremities
in $i(M)$ lies inside $i(M)$.
\end{lemma}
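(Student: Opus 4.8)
The plan is to argue by contradiction, exploiting the fact that a Cauchy embedding carries the Cauchy surface $S$ to a Cauchy surface $i(S)$ of $M'$, together with the causality results already established (global hyperbolicity of $M$, acausality of $S$, and Lemma \ref{le.coincide}). Suppose there is a causal curve $c'$ in $M'$ with both endpoints $p',q'$ in $i(M)$ but which leaves $i(M)$. Reparametrizing and orienting $c'$ toward the future, let $p' = i(p)$ and $q' = i(q)$ with $p,q\in M$. Since $i(S)$ is a Cauchy surface of $M'$, the curve $c'$, or any inextendible causal extension of it, meets $i(S)$; I would first reduce to the case where the ``excursion'' of $c'$ outside $i(M)$ happens entirely on one side of $i(S)$, say in the future $I^+(i(S))$, by cutting $c'$ at its (unique, by acausality of $i(S)$) intersection with $i(S)$ and treating each half separately.

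First I would set up the local picture near the point where $c'$ first exits $i(M)$. Let $t_0$ be the first parameter at which $c'(t)\notin i(M)$; by continuity $x' := c'(t_0)$ lies on the topological boundary $\partial i(M)$ in $M'$. The key structural input is that $i(M)$ is an open subset of $M'$ whose boundary, where it is not itself a Cauchy-type surface, must be reached only along causal directions compatible with $M$ being a \emph{maximal-type} domain relative to $S$; but more elementarily, I can use that $c'|_{[0,t_0)}$ lies in $i(M)$, hence its preimage $i^{-1}(c'|_{[0,t_0)})$ is a future-oriented causal curve in $M$ starting at $p$, and I would show this curve is \emph{inextendible in the future within $M$} (otherwise it would have a limit point $x\in M$ with $i(x)=x'$, contradicting that $c'$ has just left $i(M)$, unless $x'$ is interior — and then $c'$ did not leave, contradiction). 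Now apply global hyperbolicity of $M$: an inextendible future causal curve starting from $p$ must cross $S$ if $p$ is in the past of $S$, and if $p$ is already in $I^+(S)$ then the curve, being future-inextendible, escapes every compact set. Either way I get a contradiction with the fact that $c'$ must return to $i(M)$ at the future endpoint $q'$: in the first case the image curve would have to re-cross $i(S)$, violating acausality of $i(S)$ in $M'$; in the second case, the portion of $c'$ after $t_0$ would have to ``come back'' across $\partial i(M)$, and I would rule this out by noting that once outside $i(M)$ the curve cannot re-enter through the part of $\partial i(M)$ corresponding to future-inextendibility in $M$ without the return point lifting to an endpoint of an inextendible curve in $M$ — impossible.

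The steps, in order: (1) reduce to an excursion on one side of $i(S)$ using acausality of $i(S)$ in $M'$ (Remark after the definition of Cauchy embedding, plus the fact that $i(S)$ is Cauchy hence acausal); (2) identify the first exit point $x'\in\partial i(M)$ and lift the initial segment to a causal curve in $M$; (3) show that lifted curve is future-inextendible in $M$; (4) invoke global hyperbolicity of $M$ (equivalently Corollary \ref{cor.splitting}, $M\approx S\times\RR$ with vertical lines timelike or singular) to describe the escape behavior of inextendible causal curves; (5) derive a contradiction with the existence of the future endpoint $q'\in i(M)$, using acausality of $i(S)$ once more for the ``crosses back'' scenario and a boundary-topology argument for the ``escapes to infinity'' scenario.

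The main obstacle I anticipate is step (3)–(5): precisely controlling the topological boundary $\partial i(M)$ in $M'$ and ruling out that $c'$ re-enters $i(M)$ through a ``lateral'' part of the boundary that does not correspond to inextendibility in $M$. The clean way around this is probably to avoid boundary topology altogether and instead argue purely with Cauchy surfaces: take the connected component of $i^{-1}(c')$ through $p$; it is an inextendible causal curve in $M$ (inextendible because any endpoint in $M$ would map to an interior point of $i(M)$ on $c'$, and $c'$ is assumed to have left $i(M)$), hence meets $S$ exactly once; then $c'$ meets $i(S)$ in a point lying on the subarc of $c'$ inside $i(M)$. Applying the same to the component through $q$ (running $c'$ backward) gives a second intersection of $c'$ with $i(S)$; if the two subarcs are distinct then $c'$ meets $i(S)$ twice, contradicting acausality of the Cauchy surface $i(S)$; if they coincide, then $p$ and $q$ lie on a single causal curve component that stays in $i(M)$, forcing $c'\subset i(M)$ after all. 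This reformulation should make the argument essentially a bookkeeping exercise once the inextendibility claim is pinned down, which is where I would spend the care.
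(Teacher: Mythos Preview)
Your final paragraph is very close to the paper's argument, which is short and clean: extend the causal segment $l$ to an \emph{inextendible} causal curve $\hat{l}$ in $M'$; each connected component of $\hat{l}\cap i(M)$ is then an inextendible causal curve in $i(M)\cong M$, hence meets $i(S)$; since $i(S)$ is a Cauchy surface in $M'$, the curve $\hat{l}$ meets it exactly once; therefore $\hat{l}\cap i(M)$ has a single component, which contains both extremities of $l$ and hence all of $l$.

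The gap in your version is precisely the step you flag as needing care. You take the connected component of $i^{-1}(c')$ through $p$ and claim it is inextendible in $M$, justifying this by ``any endpoint in $M$ would map to an interior point of $i(M)$ on $c'$, and $c'$ is assumed to have left $i(M)$.'' That justification works at the end where $c'$ exits $i(M)$, but not at the end $p$ itself: $p$ \emph{is} an endpoint of the component in $M$, mapping to the extremity $p'=i(p)\in i(M)$ of $c'$, and nothing contradicts that. So your component is only inextendible on one side, and a one-sided-inextendible causal curve from $p$ need not meet $S$ (e.g.\ if $p\in I^+(S)$ and the curve is future-inextendible). Your earlier reduction ``cut at the intersection with $i(S)$'' does not rescue this, since $c'$ may not meet $i(S)$ at all. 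The one-line fix is exactly what the paper does: extend $c'$ to an inextendible curve in $M'$ \emph{before} taking components. Then every component is automatically two-sided inextendible in $i(M)$, and your counting argument (two components would force two crossings of $i(S)$) goes through immediately. All the first-exit and boundary-topology machinery in your earlier steps becomes unnecessary.
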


\begin{proof}
Let $l$ be a causal segment in $M'$ with extremities in $i(M)$. We extend it as an inextendible causal curve $\hat{l}$.
Let $l'$ be a connected component of $\hat{l} \cap i(M)$: it is an inextendible causal curve
inside $i(M)$. Thus, its intersection with $i(S)$ is non-empty. But $\hat{l} \cap i(S)$ contains at most one point: it follows that $\hat{l} \cap i(M)$ admits only one connected component, which contains $l$.
\end {proof}

\begin{cor}
\label{cor.bordgh}
The boundary of the image of a Cauchy embedding $i: M \to M'$
is the union of two closed edgeless achronal subsets $S^+$, $S^-$ of $M'$,
and $i(M)$ is the intersection between the past of $S^+$ and the future of $S^-$.
\end{cor}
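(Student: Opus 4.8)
### Proof proposal for Corollary \ref{cor.bordgh}

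The plan is to analyze the topological boundary $\partial i(M)$ inside $M'$ and split it according to the time orientation. First I would use Lemma~\ref{le.causalconvex}: since $i(M)$ is causally convex, any point $x'\in\partial i(M)$ has the property that no causal curve through $x'$ can enter $i(M)$, leave it, and come back; in particular a point of $\partial i(M)$ cannot lie in the ``causal interior'' between two points of $i(M)$. The first step is then to show that $\partial i(M)$ is the disjoint union of two pieces $S^+$ and $S^-$, where $S^+$ consists of boundary points lying in the future of $i(S)$ (equivalently, in the closure of $I^+(i(S))\cap i(M)$) and $S^-$ of those in its past. That these two sets are disjoint follows because $i(S)$ is a Cauchy surface of $i(M)$, hence $I^+(i(S))$ and $I^-(i(S))$ are disjoint open sets whose union is dense in $i(M)$, and a boundary point accumulated from the future cannot also be accumulated from the past without violating causal convexity (a causal curve realizing both passages would have endpoints in $i(M)$ but leave it).

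The second step is to prove that each of $S^+$ and $S^-$ is closed, edgeless, and achronal. Closedness is automatic since they are closed subsets of the closed set $\partial i(M)$ (intersected with the closed future/past of $i(S)$). For achronality: if $x',y'\in S^+$ were joined by a timelike curve in $M'$, one perturbs to get interior points of $i(M)$ near $x'$ and $y'$ joined by a causal curve through a neighborhood of the boundary; causal convexity forces the whole curve, hence a neighborhood of a boundary point, to lie in $i(M)$, contradicting that $x'$ is a boundary point (one must check the perturbation stays causal, using openness of $i(M)$ and that causal curves can be $C^0$-perturbed to timelike ones between interior points). For edgelessness: given $x'\in S^+$, a causal curve in a small neighborhood $U$ joining a point of $I^-_U(x')$ to a point of $I^+_U(x')$ must meet $S^+$ — otherwise it stays in $i(M)$ or in its complement throughout $U$; it cannot stay in $\complement i(M)$ since $x'\in\overline{i(M)}$, and if it stays in $i(M)$ then $x'$ is an interior point by causal convexity applied to nearby interior points, again a contradiction. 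Here I would lean on the fact, recorded earlier in the section, that in the coordinates of \S\ref{sub.localcoord} closed edgeless achronal sets are exactly graphs of suitably-Lipschitz functions, which makes the local picture near $x'$ transparent even at a singular point of the particle locus.

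The third and final step identifies $i(M)$ with $I^-(S^+)\cap I^+(S^-)$. The inclusion $i(M)\subseteq I^-(S^+)\cap I^+(S^-)$ comes from the splitting $M\approx S\times\RR$ of Corollary~\ref{cor.splitting}: every point of $i(M)$ lies on a timelike (or singular) vertical line which, followed to the future, must exit $i(M)$ across $\partial i(M)$, and the exit point lies in $S^+$ by the definition of $S^+$; symmetrically for the past and $S^-$. For the reverse inclusion, take $z'\in I^-(S^+)\cap I^+(S^-)$, so there is a past-directed causal curve from $z'$ meeting $S^-\subset\overline{i(M)}$ and a future-directed one meeting $S^+\subset\overline{i(M)}$; concatenating and using that $i(M)$ is causally convex and open, together with the fact that $i(S)$ is Cauchy in $i(M)$ so the concatenated causal curve (which passes arbitrarily close to points of $i(M)$ on both sides) must actually pass through $i(M)$ and hence through $i(S)$, one deduces $z'\in i(M)$.

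I expect the main obstacle to be the rigorous justification of the local perturbation arguments (achronality and edgelessness) at the singular points of the particle locus, where the metric $g_0$ is only conformally of the degenerate form $(1-m)^2 z^{-2m}\,dz^2 - dt^2$ and the usual smooth causality lemmas do not apply verbatim; one resolves this by working in the weighted-Lipschitz description of achronal graphs established just before Lemma~\ref{le.singstable}, i.e. pulling back by $z\mapsto z^\alpha$ to reduce to the flat model on a disk-minus-wedge, where the standard arguments go through and then descend to $M$.
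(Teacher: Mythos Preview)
Your argument is essentially what the paper intends: it defines $S^+$ (resp.\ $S^-$) as the boundary points whose past (resp.\ future) meets $i(M)$ and declares the rest ``a general property of causally convex open subsets \ldots\ straightforward and left to the reader'', so you are simply supplying the omitted details along the same lines.

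One correction worth making: in Step~1 you invoke that $i(S)$ is a Cauchy surface \emph{of $i(M)$}, but the crucial input is that $i(S)$ is a Cauchy surface \emph{of $M'$} (this is the definition of Cauchy embedding). That is what gives the global decomposition $M' = I^+(i(S)) \sqcup i(S) \sqcup I^-(i(S))$ and forces every point of $\partial i(M)$ to lie in exactly one of $S^+,S^-$; causal convexity alone does not suffice (think of a Rindler wedge in Minkowski space, whose boundary is a timelike line and fits neither description). Once this is in place, your remaining steps go through: $i(M)\cap I^+(i(S))$ is then an open past set in the globally hyperbolic manifold $I^+(i(S))$ (by causal convexity, since any point there lies causally between $i(S)$ and a point of $i(M)$), so its boundary $S^+$ is automatically closed, achronal and edgeless by the standard achronal-boundary argument, and the identification $i(M)=I^-(S^+)\cap I^+(S^-)$ follows as you outline.
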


Each of $S^+$, $S^-$ might be empty, and is not necessarily connected.

\begin{proof}
This is a general property of causally convex open subsets: $S^+$ (resp. $S^-$) is the set of elements in the boundary of $i(M)$ whose past (resp. future) intersects $i(M)$. The proof is
straightforward and left to the reader.
\end{proof}

\begin{defi}
$(M, S)$ is maximal if every Cauchy embedding $i: M \to M'$ into a singular AdS spacetime is surjective, i.e. an isometric homeomorphism.
\end{defi}

\begin{prop}
$(M, S)$ admits a maximal singular AdS extension, i.e. a Cauchy embedding into
a maximal globally hyperbolic singular AdS spacetime $(\widehat{M}, \hat{S})$ without interaction.
\end{prop}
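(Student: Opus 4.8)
The plan is to follow the classical Geroch–Choquet-Bruhat approach to maximal globally hyperbolic developments, adapted to the singular setting. One considers the collection of all isometry classes of Cauchy embeddings $i:(M,S)\to (N,S_N)$ into globally hyperbolic singular AdS spacetimes without interaction, and shows that this collection, ordered by inclusion (where $(N_1,S_{N_1})\preceq (N_2,S_{N_2})$ if there is a Cauchy embedding $N_1\to N_2$ commuting with the embeddings of $M$), admits a maximal element via Zorn's lemma. Lemma~\ref{le.coincide} and Lemma~\ref{le.causalconvex} are exactly the two facts needed to make this ordering well-behaved: uniqueness of the comparison map when it exists, and the fact that the image of a Cauchy embedding is causally convex, so that ``gluing'' two extensions along their common part is unambiguous.

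First I would verify that the relation $\preceq$ is a genuine partial order: reflexivity is clear, transitivity is composition of Cauchy embeddings, and antisymmetry follows from Lemma~\ref{le.coincide} (two mutually Cauchy-embedded extensions agreeing on $S$ must be isometric). Then, given a chain $\{(N_\alpha, S_\alpha)\}$, I would construct an upper bound by taking the direct limit $N_\infty = \varinjlim N_\alpha$: as a topological space this is the quotient of $\bigsqcup N_\alpha$ by the identifications coming from the Cauchy embeddings, and the singular AdS metrics glue because the transition maps are isometries on the regular parts and respect the local models near singular lines. The point to check here is that $N_\infty$ is again a singular AdS spacetime with only massive particles and no interaction (the local models are preserved under direct limits since they are a local condition), that $S$ sits inside it as a Cauchy surface — any inextendible causal curve in $N_\infty$ lies in some $N_\alpha$ by compactness of $S$ and the causal-convexity of each $N_\alpha$ in the later ones, hence meets $S$ exactly once — and that each $N_\alpha \to N_\infty$ is a Cauchy embedding. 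Zorn's lemma then yields a $\preceq$-maximal element $(\widehat M, \hat S)$.

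Finally I would argue that a $\preceq$-maximal element is maximal in the sense of the preceding definition, i.e. every Cauchy embedding $\widehat M \to M'$ is onto. If not, by Corollary~\ref{cor.bordgh} the image of $\widehat M$ in $M'$ has nonempty boundary $S^+\cup S^-$, and one can enlarge $\widehat M$ by adjoining a collar neighborhood of a boundary component inside $M'$; this produces a strictly larger globally hyperbolic singular AdS spacetime into which $\widehat M$ Cauchy-embeds, contradicting maximality. One must check that this enlargement does not introduce interactions — but the boundary of a Cauchy-embedded region meets the singular locus only along massive particle segments (interactions are excluded in $M'$ by hypothesis on the target category), so the local models along the collar are still those of massive particles.

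\textbf{The main obstacle} I expect is the set-theoretic and point-set bookkeeping in the direct limit step: one must be careful that the class of extensions forms a set (bound the cardinality by fixing the underlying smooth manifold up to the topological type $S\times\R$ from Corollary~\ref{cor.splitting}, so each extension is a singular AdS metric on a fixed manifold), that the direct limit is Hausdorff and second countable (this uses that each inclusion $N_\alpha\hookrightarrow N_\beta$ is open with causally convex image, preventing the pathologies of non-Hausdorff limits familiar from Lorentzian geometry), and that the limiting metric is genuinely a \emph{singular} AdS metric in the precise sense of \S~\ref{sub.singular_lines} near the limit of a chain of singular lines — i.e. the singular locus of $N_\infty$ is a disjoint union of massive particle lines, each an increasing union of the corresponding segments in the $N_\alpha$, with consistent cone angles.
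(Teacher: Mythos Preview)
Your proposal is correct and follows essentially the same Zorn's-lemma argument as the paper: partially order the Cauchy extensions of $(M,S)$, use Lemma~\ref{le.coincide} for antisymmetry, take direct limits for upper bounds of chains, and extract a maximal element. The paper's proof is in fact sketchier than yours and explicitly defers the verification that the limit is a singular AdS spacetime to a remark.

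One minor simplification: your final step (showing that a $\preceq$-maximal element is maximal in the sense of the definition) does not need the collar construction via Corollary~\ref{cor.bordgh}. If $i:\widehat{M}\to M'$ is a non-surjective Cauchy embedding into a globally hyperbolic singular AdS spacetime without interaction, then $M'$ itself, with the composed embedding from $(M,S)$, is already an element of the poset strictly above $\widehat{M}$---no enlargement is needed. The collar-and-boundary argument you sketch is closer to what the paper uses in the \emph{next} proposition (uniqueness of the maximal extension), where one must glue two a priori different maximal extensions.
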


\begin{proof}
Let $\mathcal M$ be set of Cauchy embeddings $i: (M, S) \to (M', S')$.
We define on $\mathcal M$ the relation $(i_1, M_1, S_1) \preceq (i_2, M_2, S_2)$ if
there is a Cauchy embedding $i: (M_1, S_1) \to (M_2, S_2)$ such that $i_2 = i \circ i_1$.
It defines a preorder on $\mathcal M$. Let $\overline{\mathcal M}$ be the space
of Cauchy embeddings up to isometry, i.e. the quotient space of the equivalence relation 
identifying $(i_1, M_1, S_1)$ and $(i_2, M_2, S_2)$ if
there is an isometric homeomorphism $i: (M_1, S_1) \to (M_2, S_2)$ such that
$i_2 = i \circ i_1$. Then $\preceq$ induces on $\overline{\mathcal M}$ a preorder relation, that we still denote by $\preceq$. Lemma~\ref{le.coincide} ensures
that $\preceq$ is a partial order (if $(i_1, M_1, S_1) \preceq (i_2, M_2, S_2)$
and $(i_2, M_2, S_2) \preceq (i_1, M_1, S_1)$, then $M_1$ and $M_2$ are isometric
one to the other and represent the same element of $\overline{\mathcal M})$.
Now, any totally ordered subset
$A$ of $\overline{\mathcal M}$ admits an upper bound in $A$: the inverse limit
of (representants of) the elements of $A$. By Zorn Lemma, we obtain that $\overline{M}$ contains a maximal
element. Any representant in $\mathcal M$ of this maximal element is a maximal extension
of $(M, S)$.
\end{proof}

\begin{remark}
The proof above is sketchy: for example, we didn't justify the fact that the inverse
limit is naturally a singular AdS spacetime. This is however a straightforward verification,
the same as in the classical situation, and is left to the reader.
\end{remark}

\begin{prop}
The maximal extension of $(M, S)$ is unique up to isometry.
\end{prop}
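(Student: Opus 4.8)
The plan is to follow the classical Choquet-Bruhat--Geroch argument adapted to the singular setting, using that any two maximal extensions can be amalgamated. First I would show that given two Cauchy embeddings $i_1 : (M,S)\to (M_1,S_1)$ and $i_2 : (M,S)\to (M_2,S_2)$, there exists a common globally hyperbolic singular AdS extension $(N,S_N)$ together with Cauchy embeddings $j_k : (M_k,S_k)\to (N,S_N)$ with $j_1\circ i_1 = j_2\circ i_2$. Granting this, if both $(M_1,S_1)$ and $(M_2,S_2)$ are maximal then $j_1$ and $j_2$ are surjective isometric homeomorphisms, so $M_1$ and $M_2$ are isometric via $j_2^{-1}\circ j_1$, which moreover intertwines $i_1$ and $i_2$. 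Thus the whole argument reduces to the amalgamation statement.

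To build the amalgam $N$, the idea is to take the disjoint union $M_1 \sqcup M_2$ and glue along the largest region on which the two embeddings agree. Concretely, consider the set $\mathcal{A}$ of open subsets $U\subset M_1$ containing $i_1(S)$ that admit an isometric embedding $\phi_U : U \to M_2$ with $\phi_U\circ i_1 = i_2$ on $S$ and sending $i_1(S)$ to $i_2(S)$. By Lemma~\ref{le.coincide} (applied on connected causally convex pieces, after first matching a neighbourhood of the Cauchy surface via the local argument there) any two such $\phi_U$ agree on the overlap of their domains, so there is a maximal element: an open $U^\ast\subset M_1$ with a canonical isometric embedding $\phi^\ast : U^\ast \to M_2$. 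One checks $U^\ast$ is causally convex in $M_1$ and $\phi^\ast(U^\ast)$ is causally convex in $M_2$ (this is where Lemma~\ref{le.causalconvex} and Corollary~\ref{cor.bordgh} enter, exactly as in the proof that Cauchy embeddings have causally convex image), and that $i_1(S)\subset U^\ast$, so $U^\ast$ is itself globally hyperbolic with Cauchy surface $i_1(S)$. Then define
$$ N := \bigl(M_1 \sqcup M_2\bigr)\big/\!\sim, \qquad x\sim \phi^\ast(x)\ \text{for }x\in U^\ast. $$
The singular AdS structures agree on the glued part, so $N$ inherits a singular AdS metric; $N$ is Hausdorff precisely because $U^\ast$ and $\phi^\ast(U^\ast)$ are causally convex (a non-separated pair of points would produce, via the splitting of Corollary~\ref{cor.splitting}, a causal curve violating causal convexity on one side). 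One verifies $N$ contains no interaction since $M_1$, $M_2$ do not, and that $S_N := $ image of $i_1(S) = i_2(S)$ is a Cauchy surface of $N$: an inextendible causal curve in $N$, restricted to $M_1$ or to $M_2$, is inextendible there (again by causal convexity of the glued region), hence meets the Cauchy surface, and acausality of $S_N$ follows from acausality in each $M_k$. The tautological maps $j_k : M_k \to N$ are then Cauchy embeddings with $j_1\circ i_1 = j_2\circ i_2$.

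The main obstacle is the Hausdorff/maximality interplay: one must be sure that $U^\ast$ is genuinely maximal and that the resulting identification space is Hausdorff, i.e. that there is no pair of "twin" points sitting just outside the glued region. In the regular case this is handled by the fact that if a point $p_1\in\partial U^\ast$ and a point $p_2\in M_2$ are limits of $x_n\sim\phi^\ast(x_n)$, then a neighbourhood of $p_1$ still embeds compatibly, contradicting maximality of $U^\ast$ — unless the obstruction is genuinely a boundary point, in which case causal convexity of $U^\ast$ (Corollary~\ref{cor.bordgh}, writing $M_1\setminus U^\ast$ as lying beyond two achronal hypersurfaces $S^\pm$) forces the twin points to be separated by disjoint causally convex neighbourhoods. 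The only new feature here is that near a massive particle one must run the local argument in the weighted coordinates $(z,t) = (\zeta^\alpha,t)$ of \S~\ref{sub.localcoord}; but the isometry-extension and causal-convexity statements were already set up to hold at singular points (Lemma~\ref{le.coincide}, Lemma~\ref{le.causalconvex}), so the singular locus causes no extra trouble. The rest is the routine verification, identical to the classical case and left to the reader in the same spirit as the preceding proofs.
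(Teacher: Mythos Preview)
Your strategy is the right one and is the same amalgamation argument the paper carries out: glue the two maximal extensions along a largest common piece, prove the glued space is Hausdorff and globally hyperbolic, and conclude by maximality. The paper's bookkeeping is slightly different (it takes the Zorn-maximal globally hyperbolic $(M',S')$ Cauchy-embedding into both $\widehat M_1$ and $\widehat M_2$, rather than your maximal open $U^\ast\subset M_1$), but this is cosmetic; $\varphi_1(M')$ plays the role of your $U^\ast$.

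The genuine gap is at exactly the step you flag as ``the main obstacle'', and your resolution is circular. You invoke Lemma~\ref{le.causalconvex} to get causal convexity of $U^\ast$, but that lemma applies to the image of a Cauchy embedding, i.e.\ it needs the domain to already be globally hyperbolic with $i_1(S)$ as Cauchy surface --- precisely what is in question for $U^\ast$. Without causal convexity you cannot run your global-hyperbolicity argument for $N$. Relatedly, in your Hausdorff step, extending $\phi^\ast$ across a boundary point $p_1$ yields only a local isometry on $U^\ast\cup V_1$; nothing prevents $\phi^\ast(U^\ast\setminus V_1)$ from meeting the image of $V_1\setminus U^\ast$, so you have not produced a larger element of $\mathcal A$, and the appeal to Corollary~\ref{cor.bordgh} does not rescue this. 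The paper confronts this head-on: assuming $x_1$ lies on the future achronal boundary $S^+_1$ of $\varphi_1(M')$, it follows a \emph{maximal past-oriented causal curve inside $S^+_1$} from $x_1$ to a terminal point $y_1$ (still non-separated from a point $y_2$ on the other side). The defining property of $y_1$ is that every past-oriented causal curve from it leaves $S^+_1$ immediately; this is what guarantees that adjoining a small neighbourhood $U_1$ of $y_1$ keeps $\varphi_1(M')\cup U_1$ globally hyperbolic with the \emph{same} Cauchy surface $S_1$, hence yields a strictly larger $(M'',S'')$ Cauchy-embedding into both $\widehat M_i$ and contradicts maximality in the right category. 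This ``slide to the extremity of the achronal boundary'' is the missing idea in your sketch.
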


\begin{proof}
Let $(\widehat{M}_1, S_1)$, $(\widehat{M}_1, S_1)$ be two maximal extensions of $(M,S)$.
Consider the set of globally hyperbolic singular AdS spacetimes $(M', S')$ for which there
is a commutative diagram as below, where arrows are Cauchy embeddings.

\begin{center}
$$ \xymatrix{
       &  & (\widehat{M}_1, S_1) \\
    (M, S) \ar[r] \ar[rru] \ar[rrd] & (M', S') \ar[ru] \ar[rd] & \\
       &  & (\widehat{M}_2, S_2)
  } $$
\end{center}

Reasoning as in the previous proposition, we get that this set admits a maximal element:
there is a marked extension $(M', S')$ of $(M,S)$, and Cauchy embeddings $\varphi_i: M' \to \widehat{M}_i$ which cannot be simultaneously extended.

Define $\widehat{M}$ as the union of $(\widehat{M}_1, S_1)$ and $(\widehat{M}_2, S_2)$,
identified along their respective embedded copies of $(M', S')$, through $\varphi:=\varphi_2 \circ \varphi_1^{-1}$, equipped with the quotient topology. The key point is to prove that
$\widehat{M}$ is Hausdorff. Assume not: there is a point $x_1$ in $\widehat{M}_1$,
a point $x_2$ in $\widehat{M}_2$, and a sequence $y_n$ in $M'$ such that $\varphi_i(y_n)$
converges to $x_i$, but such that $x_1$ and $x_2$ do not represent the same
element of $\widehat{M}$. It means that $y_n$ does not converge in $M'$, and that
$x_i$ is not in the image of $\varphi_i$. Let $U_i$ be small neighborhoods in $\widehat{M}_i$ of $x_i$.

Denote by $S^+_i$, $S^-_i$ the upper and lower boundaries of $\varphi_i(M')$ in $\widehat{M}_i$ (cf.
Corollary~\ref{cor.bordgh}). Up to time reversal, we can assume that $x_1$ lies
in $S^+_1$: it implies that all the $\varphi_1(y_n)$ lies in $I^-(S^+_1)$, and that,
if $U_1$ is small enough, $U_1 \cap I^-(x_1)$ is contained in $\varphi_1(M')$. It is an open subset, hence $\varphi$ extends to some AdS isometry $\overline{\varphi}$ between $U_1$ and $U_2$ (reducing the $U_i$ if necessary). Therefore, every $\varphi_i$ can be extended
to isometric embeddings $\overline{\varphi}_i$ of a spacetime $M''$ containing $M'$, so that
$$\overline{\varphi}_2 = \overline{\varphi} \circ \overline{\varphi}_1$$

We intend to prove that $x_i$ and $U_i$ can be chosen so that $S_i$ is a Cauchy surface in $\overline{\varphi}_i(M') \cup U_i$. Consider past oriented causal curves, starting from $x_1$, and contained in $S^+_1$. They are partially ordered by the inclusion. According to Zorn lemma,
there is a maximal causal curve $l_1$ satisfying all these properties.
Since $S^+_1$ is disjoint from $S_1$, and since every inextendible causal curve crosses
$S$, the curve $l_1$ is not inextendible: it has a final endpoint $y_1$ belonging
to $S^+_1$ (since $S^+_1$ is closed). 
Therefore, any past oriented causal curve starting from $y_1$ is disjoint from $S^+_1$ 
(except at the starting point $y_1$).

We have seen that $\varphi$ can be extended over in a neighborhood of $x_1$: this extension
maps the initial part of $l_1$ onto a causal curve in $\widehat{M}_2$ starting from $x_2$ and contained in $S^+_2$. By compactness of $l_1$, this extension can be performed along the entire $l_1$, and the image is a causal curve admitting a final point $y_2$ in $S^+_2$. The points
$y_1$ and $y_2$ are not separated one from the other by the topology of $\widehat{M}$.
Replacing $x_i$ by $y_i$, we can thus assume that \textit{every past oriented causal curve
starting from $x_i$ is contained in $I^-(S^+_i)$.} It follows that, once more reducing 
$U_i$ if necessary, inextendible past oriented causal curves starting from points in $U_i$ 
and in the future of $S^+_i$ intersects $S^+_i$ before escaping from $U_i$. In other words,
inextendible past oriented causal curves in $U_i \cup I^-(S^+_i)$ are also 
inextendible causal curves in $\widehat{M}_i$, and therefore, intersect $S_i$. 
As required, $S_i$ is a Cauchy surface in $U_i \cup \overline{\varphi_i}(M')$.

Hence, there is a Cauchy embedding of $(M, S)$ into some globally hyperbolic spacetime
$(M'', S'')$, and Cauchy embeddings
$\overline{\varphi}_i: (M'', S'') \to \varphi_i(M') \cup U_i$, which are related by
some isometry $\overline{\varphi}: \varphi_1(M') \cup U_1 \to \varphi_2(M') \cup U_2$:
$$\overline{\varphi}_2 = \overline{\varphi} \circ \overline{\varphi}_1$$

It is a contradiction with the maximality of $(M', S')$. Hence, we have proved that
$\widehat{M}$ is Hausdorff. It is a manifold, and the singular AdS metrics on $\widehat{M}_1$, $\widehat{M}_2$ induce a singular AdS metric on $\widehat{M}$.
Observe that $S_1$ and $S_2$ projects in $\widehat{M}$ onto the same spacelike surface
$\widehat{S}$. Let $l$ be any extendible curve in $\widehat{M}$. Without loss of generality, we can assume that $l$ intersects the projection $W_1$ of $\widehat{M}_1$ in $\widehat{M}$.
Then every connected component of $l \cap W_1$ is an inextendible causal curve in
$W_1 \approx \widehat{M}_1$. It follows that $l$ intersects $\widehat{S}$. Finally,
if some causal curve links two points in $\widehat{S}$, then it must be contained
in $W_1$ since globally hyperbolic open subsets are causally convex. It would contradict
the acausality of $S_1$ inside $\widehat{M}_1$.

The conclusion is that $\widehat{M}$ is globally hyperbolic, and that $\widehat{S}$
is a Cauchy surface in $\widehat{M}$. In other words, the projection of $\widehat{M}_i$
into $\widehat{M}$ is a Cauchy embedding. Since $\widehat{M}_i$ is a maximal extension,
these projections are onto. Hence $\widehat{M}_1$ and $\widehat{M}_2$ are isometric.
\end{proof}

\begin{remark}
The uniqueness of the maximal globally hyperbolic AdS extension is no more true if
we allow interactions. Indeed, in the next {\S} we will see how, given some singular AdS spacetime without interaction, to define a surgery near a point in a singular line, introducing some collision or interaction at this point. The place where such a surgery can be performed is arbitrary.
\end{remark}  

\section{Global examples}

The main goal of this section is to construct examples of globally hyperbolic
AdS manifolds with interacting particles, so we go beyond the local examples 
constructed in section 2. It will be shown in the next section that some of the examples
constructed in this section are ``good'' space-times, as in Definition \ref{df:good}.
It will therefore follow from Theorem \ref{tm:homeo} that there are large moduli
spaces of globally hyperbolic spaces with interacting particles, obtained by
deforming the fairly special cases constructed here.

\subsection{An explicit example} \label{ssc:explicit}

Let $S$ be a hyperbolic surface with one cone point $p$ of angle
$\theta$.   Denote by $\mu$ the corresponding
singular hyperbolic metric on $S$.

Let us consider the Lorentzian metric on $S\times (-\pi/2,\pi/2)$ given by
\begin{equation}\label{ads:eq}
   h= -{\rm d}t^2\ + \cos^2t \ \mu
 \end{equation}
 where $t$ is the real parameter of the interval $(-\pi/2,\pi/2)$.

 We denote by $M(S)$ the singular spacetime $(S\times(-\pi/2,\pi/2),h)$.
 \begin{lemma}
 $M(S)$ is an AdS spacetime with a particle corresponding to the
   singular line $\{p\}\times(-\pi/2,\pi/2)$.  The corresponding cone
   angle is $\theta$. Level surfaces $S\times\{t\}$ are orthogonal to
   the singular locus.
 \end{lemma}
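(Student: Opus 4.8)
The plan is to put $h$ inside $\AdS_3$ by an explicit local isometry, and then to read off the cone singularity from the way the cone metric $\mu$ is built near $p$. \emph{Step 1: $h$ is locally $\AdS$.} Realize $\HH^2$ as the hyperboloid $\{w\in\R^{1,2}:\langle w,w\rangle=-1,\ w_0>0\}$, write $\R^{2,2}=\R^{1,2}\oplus\R e$ with $\langle e,e\rangle=-1$ and $e\perp\R^{1,2}$, and set $\Phi(w,t)=\cos t\cdot w+\sin t\cdot e$ for $(w,t)\in\HH^2\times(-\pi/2,\pi/2)$. Since $w\perp e$ one gets $\langle\Phi,\Phi\rangle=-\cos^2 t-\sin^2 t=-1$, so $\Phi$ takes values in $\AdS_3$ (and lifts to $\uAdS$); it is an immersion. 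Computing $\Phi^\ast\langle\cdot,\cdot\rangle$ is a one-line check: the cross term $dt\,dw$ drops out because $\langle w,dw\rangle=\frac{1}{2}d\langle w,w\rangle=0$ on $\HH^2$ and $\langle e,dw\rangle=0$, while the $dt^2$-coefficient collapses to $-1$, giving $\Phi^\ast\langle\cdot,\cdot\rangle=-dt^2+\cos^2 t\,g_{\HH^2}$. Since $\mu$ restricted to $S\setminus\{p\}$ is locally isometric to open subsets of $\HH^2$, composing such local isometries with $\Phi$ yields an $(\isom(\AdS_3),\AdS_3)$-atlas on $\big((S\setminus\{p\})\times(-\pi/2,\pi/2),h\big)$; hence the regular part of $M(S)$ is a genuine locally $\AdS$ spacetime, oriented by the orientation of $S$ and time-oriented by $\partial_t$.

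\emph{Step 2: identification of the singular line.} By definition of a hyperbolic cone point of angle $\theta$, a neighbourhood of $p$ in $(S,\mu)$ is isometric to the quotient of a wedge $\Delta\subset\HH^2$ of angle $\theta$ bounded by two geodesic rays $\rho_1,\rho_2$ issued from a point $O$, glued along $\rho_1,\rho_2$ by the rotation $r_O$ of $\HH^2$ fixing $O$. Applying $\Phi$: the curve $\Phi(\{O\}\times(-\pi/2,\pi/2))$ is the timelike geodesic $c=\AdS_3\cap W$ with $W=\mathrm{span}(O,e)$ negative definite; each $\Phi(\rho_i\times(-\pi/2,\pi/2))$ is a timelike totally geodesic half-plane with boundary $c$; the slice $\Phi(\HH^2\times\{0\})=\AdS_3\cap e^\perp$ is a totally geodesic hyperbolic plane, orthogonal to $c$ because $\cos'(0)=0$, carrying the metric $\cos^2 0\cdot\mu=\mu$, so that the angle between the two half-planes measured along $c$ on this plane equals exactly $\theta$; and $r_O$ extends to $\tilde r_O=r_O\oplus\mathrm{id}_{\R e}$, an elliptic isometry of $\AdS_3$ fixing $c$ pointwise, with $\Phi\circ(r_O\times\mathrm{id})=\tilde r_O\circ\Phi$. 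Comparing with the construction of $\mathcal{P}_\theta$ in \S~\ref{sub.singular_lines}, a neighbourhood of $\{p\}\times(-\pi/2,\pi/2)$ in $M(S)$ is isometric to a neighbourhood of the singular line of $\mathcal{P}_\theta$ (the argument covers $\theta>2\pi$ as well, matching the ``wrapped'' $\mathcal{P}_\theta$). Hence $M(S)$ is a singular $\AdS$ spacetime whose only singularity is a massive particle of cone angle $\theta$ along $\{p\}\times(-\pi/2,\pi/2)$.

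\emph{Step 3 and main obstacle.} The orthogonality of $S\times\{t\}$ to the singular locus is then immediate: $T_{(q,t)}(S\times\{t\})=T_qS$ and $h(\partial_t,v)=0$ for every $v\in T_qS$ by the warped form of $h$. I expect Step 2 to be the only delicate part: one must verify that the $t=0$ slice is totally geodesic, orthogonal to $c$, and inherits $\mu$ itself, since it is precisely this that makes the cone angle, in the sense used to define $\mathcal{P}_\theta$, equal to $\theta$ on the nose. The metric computation of Step 1 and the orthogonality statement are routine.
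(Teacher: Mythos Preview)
Your proof is correct and follows essentially the same approach as the paper: both use the normal evolution map $\Phi(w,t)=\cos t\cdot w+\sin t\cdot e$ (the paper writes it as $F(x,t)=\cos t\,x+\sin t\,n$) to realize the warped product as an $\AdS$ metric, and both identify the singular line by comparing with $\mathcal{P}_\theta$ via this same map. The only cosmetic difference is the direction of the comparison in Step~2: the paper takes an orthogonal totally geodesic plane $P$ inside $\mathcal{P}_\theta$ and observes that its normal evolution reproduces the warped form $-dt^2+\cos^2 t\,\mu_P$, whereas you push the wedge model of the cone point through $\Phi$ and match it against the definition of $\mathcal{P}_\theta$; these are the same argument read in opposite directions.
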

\begin{proof}
First we show that $h$ is an AdS metric. The computation is local, so
we can assume $S=\mathbb H^2$. Thus we can identify $S$ to a geodesic
plane in $AdS_3$. We consider $AdS_3$ as embedded in $\R^{2,2}$, as mentioned in the introduction.
Let $n$ be the normal direction to $S$ then we can
consider the normal evolution
\[
      F:S\times (-\pi/2,\pi/2)\ni(x,t)\mapsto \cos t x +\sin t n\in AdS_3
\]
The map $F$ is a diffeomorphism onto an open domain of $AdS_3$ and the
pull-back of the $AdS_3$-metric takes the form (\ref{ads:eq}).

To prove that $\{p\}\times (-\pi/2,\pi/2)$ is a conical singularity of
angle $\theta$, take a geodesic plane $P$ in $\mathcal P_\theta$
orthogonal to the singular locus. Notice that $P$ has exactly one cone
point $p_0$ corresponding to the intersection of $P$ with the singular
line of $\mathcal P_\theta$ (here $P_\theta$ is the singular model space
defined in subsection \ref{sub.singular_lines}). 
Since the statement is local, it is
sufficient to prove it for $P$. Notice that the normal evolution of
$P\setminus\{p_0\}$ is well-defined for any $t\in (-\pi/2,\pi/2)$. Moreover, such
evolution can be extended to a map on the whole
$P\times(-\pi/2,\pi/2)$ sending $\{p_0\}\times (-\pi/2,\pi/2)$ onto
the singular line. This map is a diffeomorphism of $P\times
(-\pi/2,\pi/2)$ with an open domain of $\mathcal P_\theta$. Since the
pull-back of the $AdS$-metric of $\mathcal P_\theta$ on
$(P\setminus\{p_0\})\times(-\pi/2,\pi/2)$ takes the form
(\ref{ads:eq}) the statement follows.
\end{proof}

Let $\Sigma$ be a causally regular HS-sphere with an elliptic future
singularity at $p$ of angle $\theta$ and two elliptic past
singularities, $q_1,q_2$ of angles $\eta_1,\eta_2$.

Let $r$ be the future singular ray in $e(\Sigma)$. For a given
$\epsilon>0$ let $p_\epsilon$ be the point at distance $\epsilon$ from
the interaction point. Consider the geodesic disk $D_\epsilon$ in
$e(\Sigma)$ centered at $p_\epsilon$, orthogonal to $r$ and with
radius $\epsilon$.

The past normal evolution $n_t:D_\epsilon\rightarrow e(\Sigma)$ is
well-defined for $t\leq\epsilon$.  In fact, if we restrict to the
annulus $A_\epsilon=D_\epsilon\setminus D_{\epsilon/2}$, the evolution
can be extended for $t\leq\epsilon'$ for some $\epsilon'>\epsilon$.


Let us set
\[
\begin{array}{l}
   U_\epsilon=\{n_t(p)|p\in D_\epsilon,
   t\in(0,\epsilon)\}\\ \Delta_\epsilon=\{n_t(p)~|~p\in
   D_\epsilon\setminus D_{\epsilon/2}, t\in (0,\epsilon')\}
   \end{array}
\]
Notice that the interaction point is in the closure of $U_\epsilon$.
It is possible to contruct a neighbourhood $\Omega_\epsilon$ of the
interaction point such that
\begin{itemize}
\item
$U_\epsilon\cup\Delta_\epsilon\subset\Omega_\epsilon$;
\item
$\Omega_\epsilon$ admits a foliation in achronal disks
  $(D(t))_{t\in(0,\epsilon')}$ such that
\begin{enumerate}
\item
 $D(t)=n_t(D_\epsilon)$ for $t\leq\epsilon$
\item
  $D(t)\cap\Delta_t=n_t(D_\epsilon\setminus D_{\epsilon/2})$ for
  $t\in(0,\epsilon')$
 \item
  $D(t)$ is orthogonal to the singular locus.
  \end{enumerate}
  \end{itemize}

Consider now the space $M(S)$ as in the previous lemma.
For small $\epsilon$ the disk $D_\epsilon$ embeds in $M(S)$, sending
$p_\epsilon$ to $(p,0)$.

Let us identify $D_\epsilon$ with its image in $M(S)$. The normal
evolution on $D_\epsilon$ in $M(S)$ is well-defined for $0<t<\pi/2$ and
in fact coincides with the map
\[
   n_t(x,0)=(x,t)\,.
\]
It follows that the map
 \[
    F:(D_\epsilon\setminus D_{\epsilon/2})\times
    (0,\epsilon')\rightarrow \Delta_\epsilon
 \]
 defined by $F(x,t)=n_t(x)$ is an isometry.

\begin{figure}[ht]
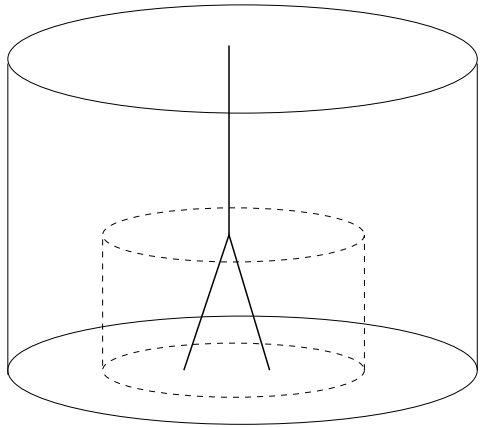

\input interaction.pstex_t
\caption{Surgery to add a collision.}
\label{fig.collision}
\end{figure}

 Thus if we glue $(\Sigma\setminus D_{\epsilon/2})\times
 (0,\epsilon')$ to $\Omega_\epsilon$ by identifying
 $D_{\epsilon}\setminus D_{\epsilon/2}$ to $\Delta_\epsilon$ via $F$
 we get a spacetime
 \[
   \hat M=(\Sigma\setminus D_{\epsilon/2})\times(0,\epsilon)\cup_F
   \Omega_\epsilon
 \]
  such that
  \begin{enumerate}
  \item
  topologically, $\hat M$ is homeomorphic to $S\times\mathbb R$,
  \item
  in $\hat M$, two particles collide producing one particle only,
  \item
  $\hat M$ admits a foliation by spacelike surfaces orthogonal to the
    singular locus.
  \end{enumerate}

  We say that $\hat M$ is obtained by a surgery on $M(S)$.

  \subsection{Surgery}\label{surg:sec}

In this section we get a generalization of the
construction explained in the previous section.  In particular we
show how to do a surgery on a spacetime with conical singularity
in order to obtain a spacetime with collision more complicated
than that described in the previous section.

\begin{lemma}\label{surgery-key}
Let $\Sigma$ be a causally regular HS-sphere containing only elliptic
singularities.

If $C_-$ and $C_+$ denote the circles of photons of $\Sigma$, the
respective projective structures are isomorphic.

Suppose moreover that those projective structures are elliptic of angle $\theta$.
Then $e(\Sigma)\setminus (I^+(p_0)\cup I^-(p_0))$ embeds in $\mathcal P_\theta$.
\end{lemma}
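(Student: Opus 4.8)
The plan is to realize $e(\Sigma)\setminus(I^+(p_0)\cup I^-(p_0))$ as an explicit subdomain of $\mathcal{P}_\theta$ by constructing the developing map of its regular part and checking it is an embedding. First I would recall the structure theory established above: since $\Sigma$ is a causally regular HS-sphere with only elliptic singularities, Theorem~\ref{tm:thierry} puts us in case (1), so $\Sigma$ consists of a single past hyperbolic disk $H_-$ (with one cone point of angle $\theta$), a single future hyperbolic disk $H_+$ (with one cone point of the same angle $\theta$, by the angle hypothesis), and a unique annular de Sitter region $T$ joining them. The de Sitter annulus $T$, carrying its two isotropic foliations $\cF_1,\cF_2$, is by Proposition~\ref{pro.hyphyp} (or rather its proof, applied on both sides) of a very rigid type: its universal cover develops onto the region $W\subset \dS^2$ bounded by two $\gamma_0$-invariant isotropic geodesics, where $\gamma_0$ is the elliptic element of angle $\theta$ fixing the two points $p,-p\in\dS^2$ whose links are $C_-$ and $C_+$. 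The key structural input is that the holonomy of $\Sigma$ restricted to $T$, to $H_+$ and to $H_-$ is generated by a single elliptic element of angle $\theta$ — this is exactly what ``$C_-\cong C_+$ elliptic of angle $\theta$'' gives, together with the fact that $H_\pm$ are disks so their holonomy is determined by the cone point.

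Next I would describe $\mathcal{P}_\theta$ intrinsically in the same coordinates. Recall $\mathcal{P}_\theta$ is $\uAdS$ cut along a timelike wedge of angle $\theta$ about a timelike geodesic $c$ and reglued by the elliptic isometry $\rho$ fixing $c$ pointwise with rotation angle $\theta$; equivalently, writing $H$ for a totally geodesic $\HH^2$ orthogonal to $c$, the regular part of $\mathcal{P}_\theta$ is the suspension (in the sense of the normal evolution used in \S~\ref{ssc:explicit}) of the singular hyperbolic surface $H/\langle\rho\rangle$, which is a hyperbolic disk with one cone point of angle $\theta$. The removed sets $I^+(p_0)\cup I^-(p_0)$ inside $\mathcal{P}_\theta$ correspond precisely to the two ``caps'' cut off by the conjugate points of a point on $c$, i.e. to the region where the normal evolution degenerates; what remains of $\mathcal{P}_\theta$ is exactly a suspension region foliated by the copies $H_t$ of $H/\langle\rho\rangle$ for $t$ in a full interval, together with a de Sitter part which, when one passes to the link of a point $\bar p\in c$, develops onto the $\gamma_0$-invariant region bounded by two isotropic geodesics in $\dS^2$ — the same $W$ as above.

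The construction then goes as follows. Pick the vertex $\bar p$ of $e(\Sigma)$; its link is $\Sigma$ and $e(\Sigma)\setminus(I^+(p_0)\cup I^-(p_0)) = e(\mathfrak e(\cdot))$-type region is, by the suspension construction of \S~2.2, foliated over $\Sigma$ by geodesic segments through $\bar p$ of proper time $\pi$ on the timelike part and by full spacelike geodesics on the de Sitter part. I would define a map $\Phi$ from this region into $\mathcal{P}_\theta$ by sending $\bar p$ to a chosen point $\bar p_0$ on the singular line $c$ of $\mathcal{P}_\theta$, choosing an identification of the link $L(\bar p_0)$ inside $\mathcal{P}_\theta$ (which is itself a singular HS-sphere, of exactly the same combinatorial type — one future elliptic point of angle $\theta$, one past elliptic point of angle $\theta$, one annular de Sitter region developing onto $W$) with $\Sigma$, and then extending radially along geodesics exactly as in the definition of the suspension $e$. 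The identification of the two links is legitimate because an HS-sphere of this type is determined by the angle $\theta$: on the hyperbolic disks it is the developing datum of a hyperbolic cone disk of angle $\theta$, which is unique; on the de Sitter annulus it is $W/\langle\gamma_0\rangle$, also unique; and the gluing along the two photon circles $C_\pm$ is rigid since $C_\pm$ carry no graviton (only elliptic singularities are present), so the $\R\PP^1$-structures from the hyperbolic and de Sitter sides coincide, pinning down the gluing up to the centralizer of $\gamma_0$. One then checks that $\Phi$ is well-defined (the geodesics of proper time $\pi$ land before the conjugate locus, i.e. inside the non-removed part of $\mathcal{P}_\theta$, by the definition of the removed sets), that it is a local isometry on the regular part (immediate, since both sides are suspensions and $\Phi$ is built to be radial), and that it is injective (the target suspension region is foliated by the disks $H_t$ and the spacelike geodesics, $\Phi$ respects this foliation and is injective on each leaf and on the leaf space, which is $\Sigma$ itself).

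The main obstacle I anticipate is the injectivity/embedding claim near the boundary between the hyperbolic and de Sitter parts, i.e. along the photon circles $C_\pm$ and near the ``equator'' where the suspension degenerates: one must verify that distinct spacelike geodesics through $\bar p$ issuing from distinct points of $T\subset\Sigma$ do not meet inside $\mathcal{P}_\theta$ before reaching the removed caps, and that they fit continuously with the timelike suspension across $C_\pm$. This is where the precise rigidity of $W$ as the region between two $\gamma_0$-invariant isotropic geodesics (established in the proof of Proposition~\ref{pro.hyphyp}) is used: it guarantees that the de Sitter part of $L(\bar p_0)$ in $\mathcal{P}_\theta$ and the de Sitter part of $\Sigma$ develop onto the very same model domain, so the radial extension of the identification is automatically an embedding there. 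The remaining verifications — that $\Phi$ extends continuously over $\bar p$ and over the cuspidal/photon boundary, and that its image is exactly $\mathcal{P}_\theta\setminus(I^+(p_0)\cup I^-(p_0))$ — are routine once the leaf-by-leaf picture is in place, and I would leave the more tedious of these to the reader in the style of the surrounding text.
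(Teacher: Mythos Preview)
Your overall strategy matches the paper's: reduce the embedding claim to the assertion that the de Sitter annulus $T$ of $\Sigma$ is isometric to the de Sitter part of the link $\Sigma_\theta$ of a singular point in $\mathcal{P}_\theta$, i.e.\ to $\tilde{dS_2}/\langle\gamma_0\rangle$ with $\gamma_0$ elliptic of angle $\theta$; once this is known, $e(T)=e(\Sigma)\setminus(I^+(p_0)\cup I^-(p_0))$ embeds in $\mathcal{P}_\theta$ automatically. But your justification of this key isometry has a genuine gap. You invoke Proposition~\ref{pro.hyphyp} (``or rather its proof''), yet that proposition carries the explicit hypothesis that the adjacent hyperbolic region is \emph{not} a disk, which fails here---we are precisely in the causally regular case where both $H_\pm$ are disks. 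More seriously, your description of the image of the developing map as a region $W$ ``bounded by two $\gamma_0$-invariant isotropic geodesics'' cannot be right when $\gamma_0$ is elliptic: an elliptic element of $\SO_0(1,2)$ fixes no point of $\partial\HH^2$, hence fixes no isotropic geodesic in $\dS^2$. The picture you import from the proof of Proposition~\ref{pro.hyphyp} is specific to the hyperbolic-holonomy situation there and does not transplant.

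What the paper actually does is prove directly that the developing map $d:\tilde T\to\tilde{dS_2}$ is a \emph{global} diffeomorphism (onto all of $\tilde{dS_2}$, not a strip). Injectivity comes from a short foliation argument: if a left leaf and a right leaf of the lifted isotropic foliations met twice they would bound a disk, and then some left leaf through that disk would have to be tangent to the right boundary segment, which is impossible in the model. Surjectivity uses that $\gamma_0$, though elliptic in $G$, acts as a translation on $\tilde{dS_2}\cong\uRP\times\uRP\setminus\widetilde{\kD}$; the image, being simply connected, $\gamma_0$-invariant, and abutting both $R_\pm$, must contain arbitrarily large ``rectangles'' and hence everything. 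The first assertion of the lemma (that $C_-$ and $C_+$ carry isomorphic $\R\PP^1$-structures) is likewise proved directly, via the natural projective identifications $\iota_L,\iota_R:R_-\to R_+$ given by the two isotropic foliations, rather than assumed. (A side remark: your claim that each hyperbolic disk has a single cone point of angle $\theta$ is also not what is being asserted---there can be several elliptic singularities in $H_\pm$; only the de Sitter part matters here since $I^\pm(p_0)$ are removed.)
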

\begin{proof}

Let $D$ be the de Sitter part of $\Sigma$, and consider the developing map
\[
  d:\tilde D\rightarrow\tilde{dS_2}
\]
As in Section~\ref{sec.classificationHS}, $\tilde {dS}_2$ can be
completed by two lines of photons, say $R_+, R_-$ that are
projectively isomorphic to $\tilde{\mathbb R\mathbb P^1}$.

Consider the two isotropic foliations of $\tilde {dS}_2$. Each leaf of
each foliation has an $\alpha$-limit in $R_-$ and an $\omega$-limit on
$R_+$. Moreover every point of $R_-$ (resp. $R_+$) is an
$\alpha$-limit (resp. $\omega$-limit) of exactly one leaf of each
foliation.  Thus two identifications arise, 
$\iota_L:\iota_R:R_-\rightarrow R_+$, 
that turn out to be projective equivalences.  These
identifications are natural in the following sense. For each
$\gamma\in\mathrm{Isom}(\tilde {dS}_2)$ we have
\[
   \iota_L\circ\gamma=\gamma\circ\iota_L\qquad\iota_R\circ\gamma=
   \gamma\circ\iota_R
\]

Now the developing map
\[
   d:\tilde D\rightarrow\tilde{dS}_2
\]
extends to a map
\[
  d:\tilde D\cup\tilde C_-\cup\tilde C_+\rightarrow \tilde{dS}_2\cup
  R_-\cup R_+
 \]
 and the restriction of $d$ to $\tilde C_-$ (resp. $\tilde C_+$) is
 the developing map of $C_-$ and $C_+$.  In particular those
 restrictions are bijective. Thus the identification
 $\iota_L:R_-\rightarrow R_+$ induces an identification
 $\iota_L:\tilde C_-\rightarrow\tilde C_+$, that, by naturality,
 produces an identification between $C_-$ and $C_+$.

 Assume now that $C_\pm$ is elliptic of angle $\theta$.
 Notice that
 \[e(D)=e(\Sigma)\setminus (I^+(p_0)\cup I^-(p_0))
 \]
Thus, to prove that $e(D)$ embeds in $\mathcal P_\theta$ it is
sufficient to prove that $D$ is isometric to the de Sitter part of the
HS sphere $\Sigma_\theta$ that is the link of a singular point of
$\mathcal P_\theta$. Such de Sitter surface is the quotient of
$\tilde{dS}_2$ under an elliptic transformation of $\tilde{SO}(2,1)$
of angle $\theta$.

So the statement is equivalent to proving that  the developing map
\[
  d:\tilde D\rightarrow \tilde{dS_2}
\]
is a diffeomorphism.

 Since $d$ is a local isometry, the map $d$ sends the double foliation
 of $\tilde D$ to the double foliation of $\tilde{dS_2}$. Given a point
 $p$, consider the $\omega$-limit points of the two leaves passing
 through $p$, say $r_L(p)$ and $r_R(p)$. They are sent by $d$ to the
 $\omega$-limit points of the leaves passing through $d(p)$.

Since the restriction of $d$ on $C_+$ is injective, in order to prove
that $d$ is injective it is sufficient to showing that points $r_L(p)$
and $r_R(p)$ determines $p$. This is equivalent to show that a left
and right leaves of $\tilde D$ meet at most in one point. By
contradiction assume that there exist two leaves $l$ and $r$ meeting
twice. By standard arguments, there is an embedded disk $\Delta$ in
$\tilde D$ such that $\partial\Delta=l_1\cup r_1$, where $l_1$
(resp. $r_1$) is a segment on $l$ (resp. $r$). Notice that each leaf
$l'$ of the left foliation passing through $\Delta$ meets
$\partial\Delta$ at least twice. Since it cannot meet $l_1$, it
follows that it meets $r_1$ at least twice. Thus there is a left leaf
that is tangent to a right leaf. Since in the model such leaves do not
exist, we get a contradiction. Thus the injectivity is proved.

To prove that $d$ is surjective notice that $d(\tilde D)$ is a simply
connected open domain of $\tilde{dS}_2$, such that
\begin{itemize}
\item
$R_-$ and $R_+$ are in its closure;
\item
$d(\tilde D)$ is invariant under the holonomy $\gamma$ of $D$ that acts like a translation
\end{itemize}
Since $d(\tilde D)$ is connected, it contains an arc, $c_0$, joining
$R_-$ to $R_+$.  By the invariance for the holonomy, we see that it
contains infinitely many arcs $c_k=\gamma^k(c_0)$ for $k\in\mathbb
Z$. Now given any point $p$ in $\tilde{dS}_2$, we can find $k>>0$ such
that $p$ is contained in the rectangle of $d(\tilde D)$ bounded by
$c_{-k}$ and $c_k$. Since the boundary of such a rectangle is
contained in $d(\tilde D)$ that is simply connected, the whole
rectangle is contained in $d(\tilde D)$.
\end{proof}

\begin{defi}
Let $M$ be a singular spacetime homeomorphic to $S\times\mathbb R$
and let $p\in M$. A neighbourhood $U$ of $p$ is said to be {\bf cylindrical} if
\begin{itemize}
\item $U$ is topologically a ball;
\item $\partial_\pm C:=\partial U\cap I^\pm(p)$ is a spacelike disk;
\item there are two disjoint closed spacelike slices $S_-, S_+$ homeomorphic to $S$
  such that $S_-\subset I^-(S_+)$ and $I^\pm(p)\cap \partial U=
  \partial_\pm C$.
\end{itemize}
  \end{defi}

\begin{remark}
\-
\begin{itemize}
\item
If a spacelike slice through $p$ exists then
cylindrical neighbourhoods form a fundamental family of neighbourhoods.
\item
There is an open retract $M'$ of $M$ whose boundary is $S_-\cup S_+$.
\end{itemize}
\end{remark}

\begin{cor}
Let $\Sigma$ be a HS-sphere as in Lemma \ref{surgery-key}.  Given an
AdS spacetime $M$ homeomorphic to $S\times\mathbb R$ containing a
particle of angle $\theta$, let us fix a point $p$ on it and suppose
that a spacelike slice through $p$ exists. There is a cylindrical
neighborhood $C$ of $p$ and a cylindrical neighbourhood $C_0$ of the
interaction point $p_0$ in $e(\Sigma)$ such that $C\setminus (
I^+(p)\cup I^-(p))$ is isometric to $C_0\setminus(I^+(p_0)\cup
I^-(p_0)$.
\end{cor}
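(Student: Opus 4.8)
The plan is to realize both $C\setminus(I^+(p)\cup I^-(p))$ and $C_0\setminus(I^+(p_0)\cup I^-(p_0))$ as one and the same region inside $\mathcal{P}_\theta$, namely the complement of the future and the past of a point $x_0$ on its singular line, and then recover honest cylindrical neighbourhoods on either side. First I would sharpen the output of Lemma~\ref{surgery-key}: its proof shows that the de Sitter part $D$ of $\Sigma$ is isometric to the de Sitter part of the HS-sphere $\Sigma_\theta$ which is the link of a point $x_0$ on the singular line of $\mathcal{P}_\theta$. Since $e(\Sigma)\setminus(I^+(p_0)\cup I^-(p_0))=e(D)$ and, likewise, $\mathcal{P}_\theta\setminus(I^+(x_0)\cup I^-(x_0))$ is the suspension of the de Sitter part of $\Sigma_\theta$, one gets an isometry $\iota\colon e(\Sigma)\setminus(I^+(p_0)\cup I^-(p_0))\to\mathcal{P}_\theta\setminus(I^+(x_0)\cup I^-(x_0))$ with $\iota(p_0)=x_0$.

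Next I would bring in $M$. As $p$ lies on a massive particle of angle $\theta$, it has a neighbourhood $W$ in $M$ with an isometric embedding $\phi$ onto a neighbourhood of a point of the singular line of $\mathcal{P}_\theta$; composing with an isometry of $\mathcal{P}_\theta$ translating along that line we may assume $\phi(p)=x_0$. Since a spacelike slice through $p$ exists, cylindrical neighbourhoods of $p$ form a fundamental system, so I can pick one, $C\ni p$, with $C\subset W$ and small enough that $\phi(C)$ lies between two spacelike slices of $\mathcal{P}_\theta$ near $x_0$; then $\phi$ identifies $I^\pm(p)\cap C$ with $I^\pm(x_0)\cap\phi(C)$, so $\phi$ carries $C\setminus(I^+(p)\cup I^-(p))$ into $\mathcal{P}_\theta\setminus(I^+(x_0)\cup I^-(x_0))$, i.e. into the image of $\iota$. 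Hence $\Phi:=\iota^{-1}\circ\phi$ is an isometric embedding of $C\setminus(I^+(p)\cup I^-(p))$ into $e(\Sigma)\setminus(I^+(p_0)\cup I^-(p_0))$ extending continuously by $p\mapsto p_0$.

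It then remains to produce $C_0$. The frontier of $C\setminus(I^+(p)\cup I^-(p))$ in $M$ consists of the lateral spacelike annulus $A\subset\partial C$ together with two topological disks lying in $\partial I^+(p)$ and $\partial I^-(p)$; its image $\Phi(A)$ is a spacelike annulus encircling $p_0$, and I would let $C_0$ be the connected component of $e(\Sigma)\setminus\Phi(A)$ containing $p_0$. By construction $C_0\cap\bigl(e(\Sigma)\setminus(I^+(p_0)\cup I^-(p_0))\bigr)=\Phi\bigl(C\setminus(I^+(p)\cup I^-(p))\bigr)$, so $C_0$ is obtained from this region by filling in pieces of $I^+(p_0)$ and of $I^-(p_0)$. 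The point to verify — and this is where I expect the only genuine work to lie — is that $C_0$ is indeed a cylindrical neighbourhood of $p_0$: that it is an open topological ball, that $\partial_\pm C_0=\partial C_0\cap I^\pm(p_0)$ are spacelike disks, and that the spacelike slices $S_\pm$ bounding $C$ in $M$ propagate, across the two cones, into spacelike slices surrounding $C_0$. This uses that $\Sigma$ is a causally regular HS-sphere: by Theorem~\ref{tm:thierry}(1), $I^+(p_0)$ and $I^-(p_0)$ in $e(\Sigma)$ are the suspensions of, respectively, the future and the past hyperbolic disks of $\Sigma$, hence are well-behaved cone-like domains that cap onto the spacelike annulus $\Phi(A)$ exactly as in the explicit construction of \S~\ref{ssc:explicit}. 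Granting this, $C_0\setminus(I^+(p_0)\cup I^-(p_0))=\Phi\bigl(C\setminus(I^+(p)\cup I^-(p))\bigr)$ is isometric, via $\Phi$, to $C\setminus(I^+(p)\cup I^-(p))$, which is the assertion. The remaining points (causal convexity of the chosen $C$, the exact identification of pasts and futures under $\phi$ and $\iota$) are routine and I would treat them only briefly.
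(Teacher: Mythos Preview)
The paper states this corollary without proof, treating it as an immediate consequence of Lemma~\ref{surgery-key} together with the local model property of massive particles; your argument correctly supplies the details along exactly the expected lines, namely by identifying both regions with the same piece of $\mathcal{P}_\theta$ via the isometry produced in the proof of Lemma~\ref{surgery-key} and the chart near $p$.

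The only place you expend real effort beyond what the paper implicitly assumes is in the construction and verification of $C_0$. Your strategy of cutting along the spacelike annulus $\Phi(A)$ and capping with pieces of $I^\pm(p_0)$ is sound, and your appeal to Theorem~\ref{tm:thierry}(1) (causal regularity forces the hyperbolic regions of $\Sigma$ to be disks, hence $I^\pm(p_0)$ are topological balls in $e(\Sigma)$) is the right ingredient. One slightly cleaner route for this last step: rather than defining $C_0$ as a connected component of a complement, you can take $C_0$ directly as the union of $\Phi(C\setminus(I^+(p)\cup I^-(p)))$ with the suspensions $e(D_+)$ and $e(D_-)$ of small metric disks $D_\pm$ in the two hyperbolic regions of $\Sigma$, each $D_\pm$ chosen so that its boundary circle in $C_\pm$ matches, under $\iota$, the boundary of $\partial_\pm C$ on the light cone in $\mathcal{P}_\theta$. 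This makes the ball structure and the spacelike disks $\partial_\pm C_0$ explicit, and mirrors the construction in \S~\ref{ssc:explicit} more directly.
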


Take an open deformation retract $M'\subset M$ with spacelike boundary
such that $\partial_\pm C\subset\partial M'$.  Thus let us glue
$M'\setminus ( I^+(p)\cup I^-(p))$ and $C_0$ by identifying
$C\setminus (I^+(p)\cup I^-(p))$ to $C_0\cap e(D)$. In this way we get
a spacetime $\hat M$ homeomorphic to $\Sigma\times\mathbb R$ with an
interaction point modelled on $e(\Sigma)$. We say that $\hat M$ is
obtained by a surgery on $M$.

The following proposition is a kind of converse to the previous construction.

\begin{prop} \label{pr:example}
Let $\hat M$ be a spacetime with conical singularities homeomorphic to
$S\times\mathbb R$ containing only one interaction between
particles. Suppose moreover that a neighbourhood of the interaction point is isometric to
an open subset in 
$e(\Sigma)$, where $\Sigma$ is a HS-surface as in
Lemma~\ref{surgery-key}.  Then a subset of $M$ is obtained by
a surgery on a spacetime without interaction.
\end{prop}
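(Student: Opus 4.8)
The plan is to reverse the surgery construction of the previous paragraphs. We are given $\hat M$, homeomorphic to $S\times\R$, with a single interaction point $p_0$, and a neighborhood of $p_0$ isometric to an open subset of $e(\Sigma)$ for an HS-sphere $\Sigma$ of the type in Lemma~\ref{surgery-key} (only elliptic singularities, matching circles of photons which are elliptic of some common angle $\theta$). First I would fix a cylindrical neighborhood $C_0$ of $p_0$ inside the copy of $e(\Sigma)$ sitting in $\hat M$; by the remark after the definition of cylindrical neighborhoods such neighborhoods form a fundamental system, so $C_0$ can be taken as small as needed, with boundary data $S^0_-\subset I^-(S^0_+)$, and $\partial_\pm C_0 = \partial C_0\cap I^\pm(p_0)$ spacelike disks. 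The key geometric input is Lemma~\ref{surgery-key}: since $\Sigma$ has only elliptic singularities and its two photon circles are elliptic of angle $\theta$, the complement $e(\Sigma)\setminus(I^+(p_0)\cup I^-(p_0)) = e(D)$ embeds isometrically in $\mathcal P_\theta$. Hence $C_0\setminus(I^+(p_0)\cup I^-(p_0))$ embeds in $\mathcal P_\theta$, i.e. it is locally AdS with a single massive particle of angle $\theta$ through it and no interaction.

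Next I would excise the interaction: set $\hat M' := \hat M\setminus(I^+(p_0)\cup I^-(p_0))$ and observe that, since the only interaction of $\hat M$ is at $p_0$, the spacetime $\hat M'$ is a singular AdS spacetime \emph{without interaction} whose singular set consists of the particle lines emanating from $p_0$ — and near $p_0$ these organize, via the embedding into $\mathcal P_\theta$, into a single massive particle line of angle $\theta$ passing through (the removed point) $p_0$. The issue is that $\hat M'$ has the wrong topology near $p_0$: it has a "notch" cut out where the cones $I^\pm(p_0)$ were removed. The remedy is exactly the inverse of the gluing in Section~\ref{surg:sec}: using the isometry of $C_0\setminus(I^+(p_0)\cup I^-(p_0))$ with an open subset of $\mathcal P_\theta$, one glues in a genuine piece of $\mathcal P_\theta$ (a model neighborhood $\Omega$ of a point on the particle line, playing the role of $U_\epsilon\cup\Delta_\epsilon$ from \S\ref{ssc:explicit}) along the annular region $\Delta_0 := C_0\cap e(D)$. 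Concretely, I would choose inside $C_0$ an annular collar $A_0 = D_\epsilon\setminus D_{\epsilon/2}$ around the particle line, isometric (by Lemma~\ref{surgery-key}) to the corresponding region in $\mathcal P_\theta$; then form
\[
   M := \bigl(\hat M'\setminus (\text{inner part of }C_0)\bigr)\ \cup_{A_0}\ \Omega,
\]
where $\Omega\subset\mathcal P_\theta$ is glued along $A_0$ by the given isometry. By construction $M$ is homeomorphic to $S\times\R$, is a singular AdS spacetime with the same particle lines as $\hat M$ away from $C_0$ but now with an \emph{uninterrupted} massive particle (no interaction) through the region where $p_0$ was, and $\hat M$ is recovered from $M$ by the surgery of the Corollary preceding the proposition — one removes a cylindrical neighborhood of a point $p$ on that particle line and glues back $e(\Sigma)$ via the isometry of complements-of-cones. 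This is precisely the assertion: a subset of $M$ (the deformation retract $M'$ with spacelike boundary $S_-\cup S_+$ furnished by the cylindrical neighborhood remark) is obtained by surgery on the interaction-free spacetime just built, equivalently $\hat M$ restricted to a neighborhood of $p_0$ arises by the surgery of \S\ref{surg:sec}.

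The main obstacle is checking that the cut-and-reglue actually produces a manifold homeomorphic to $S\times\R$ (Hausdorff, no extra topology introduced) and that the resulting metric is a bona fide singular AdS metric across the gluing locus — i.e. that the isometry of $A_0$ with its image in $\mathcal P_\theta$ is compatible on both sides with the achronal-disk foliations $D(t)$, so that the glued object inherits a foliation by spacelike slices orthogonal to the singular locus exactly as in \S\ref{ssc:explicit}. This is where Lemma~\ref{surgery-key} does the real work: the isometric embedding $e(D)\hookrightarrow\mathcal P_\theta$ it provides is what guarantees the collar $A_0$ on the $\hat M$-side and the model collar on the $\mathcal P_\theta$-side agree as AdS metrics \emph{with their singular lines matched}, so no new interaction or causal pathology is created. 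The verification that the two boundary components $\partial_\pm C_0$ can be completed to global spacelike slices $S_-,S_+$ of the new spacetime (so that an open deformation retract $M'$ with $\partial M' = S_-\cup S_+$ exists, as in the Remark after the definition of cylindrical neighborhoods) is then a routine causality argument using that $\hat M\setminus C_0$ is unchanged and already fibered over $\R$; I would treat it as in Corollary~\ref{cor.splitting}. Once these compatibility points are in place, the statement follows by unwinding the definition of "obtained by a surgery."
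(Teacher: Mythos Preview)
Your proposal is correct and follows essentially the same route as the paper: take a cylindrical neighborhood $C_0$ of $p_0$, use Lemma~\ref{surgery-key} to identify $C_0\setminus(I^+(p_0)\cup I^-(p_0))$ with the corresponding region around a point $p$ on the singular line of $\mathcal P_\theta$, then glue a retract of $\hat M$ with the cones of $p_0$ removed to a cylindrical neighborhood $C\subset\mathcal P_\theta$ along this identification to obtain an interaction-free spacetime $M^0$ on which the original surgery reproduces the retract $M'$. The paper's proof is terser---it simply states the isometry $C\setminus(I^+(p)\cup I^-(p))\cong C_0\setminus(I^+(p_0)\cup I^-(p_0))$ and glues directly, without your annular-collar description or the explicit manifold/causality checks---but the construction is the same.
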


\begin{proof}
Let $p_0$ be the interaction point. There is an HS-sphere $\Sigma$ as
in Lemma~\ref{surgery-key} such that a neighborhood of $p_0$ is
isometric to a neighborhood of the vertex of $e(\Sigma)$. In
particular there is a small cylindrical neighborhood $C_0$ around
$p_0$. According Lemma~\ref{surgery-key}, for a suitable cylindrical
neighborhood $C$ of a singular point $p$ in $\mathcal P_\theta$ we
have
\[
    C\setminus (I^+(p)\cup I^-(p))\cong C_0\setminus(I^+(p_0)\cup I^-(p_0))
 \]
 Taking the retract $M'$ of $M$ such that $\partial_\pm C_0$ is in the
 boundary of $M'$, the spacetime $M'\setminus(I^+(p_0)\cup I^-(p_0))$
 can be glued to $C$ via the above identification. We get a spacetime
 $M^0$ with only one singular line. Clearly the surgery on $M^0$ of
 $C_0$ produces $M'$.
  \end{proof}

  \subsection{Spacetimes containing black holes singularities}
  \label{sub.bhfrancesco}

  In this section we describe a class of spacetimes containing
  black holes singularities.

  First, consider two hyperbolic transformations $\gamma_1,\gamma_2\in
  PSL(2,\mathbb R)$ with the same translation length. There are
  exactly $2$ spacelike geodesics $l_1,l_2$ in $AdS_3$ that are left
  invariant under the action of $(\gamma_1,\gamma_2)\in PSL(2,\mathbb
  R)\times PSL(2,\mathbb R)=Isom(AdS_3)$.

  Moreover we can suppose that points of $l_1$ are fixed by
  $(\gamma_1,\gamma_2)$ whereas it acts by pure translation on $l_2$. 
$l_1$ and $l_2$ are then dual lines, that is, any point in $l_1$ can be
joined to any point in $l_2$ by a time-like segment of length $\pi/2$.
The union of the timelike segments with past end-point on $l_2$ and
  future end-point on $l_1$ is a domain $\Omega_0$ in $AdS_3$
  invariant under $(\gamma_1,\gamma_2)$.  The action of
  $(\gamma_1,\gamma_2)$ on $\Omega_0$ is proper and free and the
  quotient $M_0(\gamma_1,\gamma_2)=\Omega_0/(\gamma_1,\gamma_2)$ is a
  spacetime homeomorphic to $S^1\times\mathbb R^2$.


  We state that there exists a spacetime with singularities 
  $\hat M_0(\gamma_1,\gamma_2)$ such that $M_0(\gamma_1,\gamma_2)$ is
  isometric to the regular part of $\hat M_0(\gamma_1,\gamma_2)$ and
  it contains a future black hole singularity.
 Define
 \[
 \hat M_0(\gamma_1,\gamma_2)= (\Omega_0\cup l_1)/(\gamma_1,\gamma_2)
 \]

 To show that $l_1$ is a future black hole singularity, let us
 consider an alternative description of $\hat M_0(\gamma_1,\gamma_2)$.
 Notice that a fundamental domain in $\Omega_0\cup l_1$ for the action
 of $(\gamma_1,\gamma_2)$ can be constructed as follows. Take on $l_2$
 a point $z_0$ and put $z_1=(\gamma_1,\gamma_2)z_0$.  Then consider
 the domain $P$ that is the union of timelike geodesic joining a point
 on the segment $[z_0,z_1]\subset l_2$ to a point on $l_1$. $P$ is
 clearly a fundamental domain for the action with two timelike
 faces. $\hat M_0(\gamma_1,\gamma_2)$ is obtained by gluing the faces
 of $P$.

Now let us fix a surface $\Sigma$ with some boundary component and
negative Euler characteristic. Consider on $\Sigma$ two
hyperbolic metrics $\mu_l$ and $\mu_r$ with geodesic boundary such
that each boundary component has the same length with respect to those
metrics.

Let $h_l,h_r:\pi_1(\Sigma)\rightarrow PSL(2,\mathbb R)$ be the
corresponding holonomy representations.  The pair
$(h_l,h_r):\pi_1(\Sigma)\rightarrow PSL(2,\mathbb R)\times
PSL(2,\mathbb R)$ induces an isometric action of $\pi_1(\Sigma)$ on
$AdS_3$.

In \cite{barbtz1,barbtz2, bks} it is proved that there exists a convex
domain $\Omega$ in $AdS_3$ invariant under the action of
$\pi_1(\Sigma)$ and the quotient $M=\Omega/\pi_1(\Sigma)$ is a
strongly causal manifold homeomorphic to $\Sigma\times\mathbb R$. We
refer to those papers for the description of the geometry of $M$.

\begin{prop}
There exists a manifold with singularities $\hat M$ such that
\begin{enumerate}
\item The regular part of $\hat M$ is $M$.
\item There is a future black hole singularity and a past black hole
  singularity for each boundary component of $M$.
\end{enumerate}
 \end{prop}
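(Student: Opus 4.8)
The plan is to mimic the construction of $\hat M_0(\gamma_1,\gamma_2)$ given above, capping off each end of $M$ by a dual pair of BTZ-type singular lines, one yielding a future (black-hole) singularity and one a past (white-hole) singularity. Fix a boundary component $c$ of $\Sigma$ and let $\gamma_c=(h_l(c),h_r(c))\in PSL(2,\mathbb R)\times PSL(2,\mathbb R)$ be the associated peripheral holonomy. Since $\mu_l$ and $\mu_r$ assign the same length to $c$, both $h_l(c)$ and $h_r(c)$ are hyperbolic with the same translation length; hence, exactly as in the discussion of $\hat M_0$, the pair $\gamma_c$ fixes pointwise one spacelike geodesic $l_1^c\subset AdS_3$ and acts by pure translation on its dual geodesic $l_2^c$. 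These two geodesics will become the singular lines glued onto the end of $M$ over $c$, and since $\Sigma$ has finitely many boundary components this adds only finitely many singular lines.

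The key step is to show that, in a neighbourhood of the end of $M$ over $c$, the invariant domain $\Omega$ agrees with the domain $\Omega_0^c$ associated with $\gamma_c$ (the union of timelike segments running from $l_2^c$ to $l_1^c$). Concretely I would use the description of $\Omega$ and of $M$ in \cite{barbtz1,barbtz2,bks}: the boundary of $\Omega$ is made of the future boundary of the ``lower'' region and the past boundary of the ``upper'' region, these are achronal ruled surfaces, and near the end over $c$ their rulings accumulate precisely on the fixed points of $\gamma_c$ on the boundary at infinity of $AdS_3$. Hence near that end $\partial\Omega$ is exactly the past and future boundary of $\Omega_0^c$, so there is a $\langle\gamma_c\rangle$-invariant neighbourhood of $l_1^c\cup l_2^c$ in $\Omega_0^c\cup l_1^c\cup l_2^c$ whose intersection with $\Omega_0^c$ embeds $\langle\gamma_c\rangle$-equivariantly into $\Omega$ as a neighbourhood of the end over $c$. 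In particular $l_1^c$ and $l_2^c$ lie in the closure of $\Omega$, the $\pi_1(\Sigma)$-action extends to the orbit of $\bigcup_c(l_1^c\cup l_2^c)$, and it is still proper and free there since locally only the peripheral subgroup $\langle\gamma_c\rangle$ acts, and it acts properly and freely on $\Omega_0^c\cup l_1^c\cup l_2^c$ (as recalled in the construction of $\hat M_0$). One then sets $\hat M$ to be the quotient by $\pi_1(\Sigma)$ of $\Omega$ together with the $\pi_1(\Sigma)$-orbit of $\bigcup_c(l_1^c\cup l_2^c)$, endowed with the obvious singular AdS metric, and checks as usual that this quotient is Hausdorff.

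Finally one verifies the two assertions. Deleting the added singular lines recovers $\Omega/\pi_1(\Sigma)=M$, which is (1). For (2), fix a boundary component $c$: by the middle step a neighbourhood of the union of the images of $l_1^c$ and $l_2^c$ in $\hat M$ is isometric to a neighbourhood of the two singular lines in the quotient $(\Omega_0^c\cup l_1^c\cup l_2^c)/\langle\gamma_c\rangle$, which is precisely a static BTZ black hole $\mathcal B_{m_c}$ as in \S~\ref{sub.singular_lines}, carrying one future (black-hole) and one past (white-hole) singular line. Hence $\hat M$ has one future and one past black-hole singular line for each boundary component of $M$, as claimed.

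The main obstacle is the middle step: making precise that $\Omega$ is, near each of its ends, governed only by the corresponding peripheral element $\gamma_c$ and coincides there with $\Omega_0^c$, and that adjoining the $l_i^c$ produces a Hausdorff singular spacetime. This amounts to a careful reading of the shape of $\partial\Omega$ in \cite{barbtz1,barbtz2,bks}, in the spirit of the analysis of Cauchy developments near BTZ ends; everything else is either formal or already contained in the treatment of $\hat M_0(\gamma_1,\gamma_2)$.
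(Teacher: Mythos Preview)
Your outline has the right spirit but the geometry of the end is not what you picture, and this matters for the construction.

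First, the role of $l_1^c$ and $l_2^c$ is asymmetric. The geodesic $l_2^c$ (on which $\gamma_c$ acts by pure translation) is \emph{not} in the boundary of $\Omega$: it lies inside $\Omega$, on the boundary of the convex core $K$. So $l_2^c$ is not a singular line to adjoin; it already sits in the regular part. Both singularities---the future and the past one---come from the \emph{same} pointwise-fixed geodesic $l_1^c$. More precisely, if $p\in l_1^c$ is the point dual to the face of $\partial K$ containing $l_2^c$, then one ray $r(c)\subset l_1^c$ from $p$ lies in $\partial_+\Omega$ and the opposite ray $r_-(c)$ lies in $\partial_-\Omega$. These two rays, not whole geodesics, are what one adjoins.

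Second, because $l_2^c$ is interior to $\Omega$, your ``$\Omega$ coincides with $\Omega_0^c$ near the end'' is false: $\Omega$ strictly contains $\Omega_0^c$ near $l_2^c$. What one actually shows is that the set $U(c)$ of timelike segments from $l_2^c$ to the ray $r(c)$ is contained in $\Omega$ and has stabilizer $\langle\gamma_c\rangle$, with $a\cdot U(c)=U(aca^{-1})$ and the $U(c)$ for distinct peripheral classes disjoint. That is the mechanism giving a neighbourhood of $r(c)$ in $\hat M$ isometric to a neighbourhood of the singular line in $\hat M_0(\gamma_1,\gamma_2)$; your appeal to ``$\partial\Omega$ near the end is exactly $\partial\Omega_0^c$'' would not give this.

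So the fix is: keep your overall plan, but replace ``adjoin $l_1^c\cup l_2^c$'' by ``adjoin the two rays $r(c), r_-(c)\subset l_1^c$'', replace the claim that $\Omega\simeq\Omega_0^c$ near the end by the convex-core argument locating $l_2^c\subset\partial K$ and identifying the rays of $l_1^c$ in $\partial_\pm\Omega$ via duality with support planes, and verify the stabilizer/disjointness properties of the $U(c)$ to pass to the quotient.
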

 \begin{proof}

Let $c\in\pi_1(\Sigma)$ be a loop representing a
   boundary component of $\Sigma$ and let $\gamma_1=h_L(c)$,
   $\gamma_2=h_R(c)$.

 By hypothesis, the translation lengths of $\gamma_1$ and $\gamma_2$
 are equal, so there are geodesics $l_1$ and $l_2$ as above. Moreover
 the geodesic $l_2$ is contained in $\Omega$ and is in the boundary of
 the convex core $K$ of $\Omega$. By~\cite{bks,bb_hand}, there exists
 a face $F$ of the past boundary of $K$ that contains $l_2$. The dual
 point of such a face, say $p$, lies in $l_1$. Moreover a component of
 $l_1\setminus\{p\}$ contains points dual to some support planes of
 the convex core containing $l_2$. Thus there is a ray $r=r(c)$ in
 $l_1$ with vertex at $p$ contained in $\partial_+\Omega$ (and
 similarly there is a ray $r_-=r_-(c)$ contained in
 $l_1\cap\partial_-\Omega$).

 Now let $U(c)$ be the set of timelike segments in $\Omega$ with past
 end-point in $l_2$ and future end-point in $r(c)$. Clearly
 $U(c)\subset\Omega(\gamma_1,\gamma_2)$.
The stabilizer of $U(c)$ in $\pi_1(\Sigma)$ is the group
 generated by $(\gamma_1,\gamma_2)$. Moreover we have
 \begin{itemize}
 \item
 for some $a\in\pi_1(\Sigma)$ we have $a\cdot U(c)=U(aca^{-1})$,
 \item
 if $d$ is another peripheral loop, $U(c)\cap U(d)=\varnothing$.
 \end{itemize}

 So if we  put
 \[
 \hat M=(\Omega\cup\bigcup r(c)\cup\bigcup r_-(c))/\pi_1(\Sigma)
 \]
 then a neighbourhood of $r(c)$ in $\hat M$ is isometric to a neighbourhood
 of $l_1$ in $M(\gamma_1,\gamma_2)$, and is thus a black hole
 singularity (and analogously $r_-(c)$ is a white hole singularity).
 \end{proof}


 \subsection{Surgery on spacetimes containing black holes singularities}

 Now we illustrate how to get spacetimes $\cong S\times\mathbb R$
 containing two particles that collide producing a black hole
 singularity.  Such examples are obtained by a surgery operation
 similar to that implemented in Section~\ref{surg:sec}. The main
 difference with that case is that the boundary of these spacetimes is
 not spacelike.

 Let $M$ be a spacetime $\cong S\times\mathbb R$ containing a
 black-hole singularity $l$ of mass $m$ and fix a point $p\in l$. Let
 us consider a HS-surface $\Sigma$ containing a black hole singularity
 $p_0$ of mass $m$ and two elliptic singularities $q_1,q_2$. A small
 disk $\Delta_0$ around $p_0$ is isomorphic to a small disk $\Delta$
 in the link of the point $p\in l$.

 Let $B$ be a ball around $p$ and $B_\Delta$ be the intersection of
 $B$ with the union of segments starting from $p$ with velocity in
 $\Delta$.  Clearly $B_\Delta$ embeds in $e(\Sigma)$, moreover there
 exists a small disk $B_0$ around the vertex of $e(\Sigma)$ such that
 $e(\Delta_0)\cap B_0$ is isometric to the image of $B_\Delta$ in $B_0$.

 Now $\Delta'=\partial B\setminus B_\Delta$ is a disk in
 $M$. So there exists a topological surface $S_0$ in $M$ such that
 \begin{itemize}
 \item $S_0$ contains $\Delta'$;
 \item $S_0\cap B=\varnothing$;
 \item $M\setminus S_0$ is the union of two copies of $S\times\mathbb R$.
 \end{itemize}
 Notice that we do not require $S_0$ to be spacelike.

 Let $M_1$ be the component of $M\setminus S_0$ that contains
 $B$.  Consider the spacetime $\hat M$ obtained by gluing
 $M_1\setminus(B\setminus B_\Delta)$ to $B_0$ identifying $B_\Delta$
 to its image in $B_0$.
Clearly $\hat M$ contains two particles that collide giving a BH
singularity and topologically $\hat M\cong S\times\mathbb R$.

\section{Stereographic pictures}

In this section we extend to manifolds with interacting particles 
the result of Mess \cite{mess}, who showed how
one can associate to a GHM AdS manifold two Riemann surfaces. 
This correspondence between couples of hyperbolic surfaces and
3D AdS manifolds was already extended to manifolds with particles
(with angles less than $\pi$, and therefore no interaction) in 
\cite{cone}, each 3-manifold then yields two hyperbolic surfaces
with cone singularities. Here we consider 3-manifolds with 
interacting particles of positive mass -- but no tachyon or other 
exotic particle -- and show that one can associated to it a
series of ``stereographic pictures'', each one made of two 
hyperbolic surfaces with cone singularities. In the next section
we show that this ``stereographic movie'' defines a local parameterization 
of the moduli space of 3D metrics by the 2D pictures. 

\subsection{The left and right flat connections}

The constructions described below in this section can be understood
in a fairly simple manner through two connections on the space of
unit timelike tangent vectors on an AdS 3-manifold. In this subsection
we consider an AdS manifold $M$, which could for instance be the regular
part of an AdS manifold with particles. 

\subsubsection{The left and right connections}

\begin{defi}
We call $T^{1,t}M$ the bundle of unit
timelike vectors on $M$, and we define the left and right connections
on $T^{1,t}M$ as follows: if $u$ is a section of $T^{1,t}M$ and $x\in TM$
then 
$$ D^l_xu = \nabla_xu + u\times x~ , ~ ~ D^r_xu = \nabla_xu - u\times x~, $$
where $\nabla$ is the Levi-Civita connection of $M$.
\end{defi}

Here $\times$ is 
the cross-product in $AdS^3$ -- it can be defined by $(x\times y)^*=*(x^*\wedge
y^*)$, where $x^*$ is the 1-form dual to $x$ for the AdS metric and $*$ is the
Hodge star operator.

\begin{lemma}
$D^l$ and $D^r$ are flat connections.
\end{lemma}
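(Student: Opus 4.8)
The plan is to verify directly that the curvature tensors of $D^l$ and $D^r$ vanish, by computing $R^l(x,y)u = D^l_xD^l_yu - D^l_yD^l_xu - D^l_{[x,y]}u$ in terms of the curvature $R$ of the Levi-Civita connection $\nabla$ and of the cross-product, and then using that $M$ has constant curvature $-1$. First I would write out, for a section $u$ of $T^{1,t}M$ and vector fields $x,y$ on $M$,
$$ D^l_xD^l_yu = \nabla_x\nabla_yu + (\nabla_xu)\times y + u\times(\nabla_xy) + (\nabla_yu)\times x + (u\times y)\times x~, $$
and the symmetric expression with $x,y$ exchanged, and subtract $D^l_{[x,y]}u = \nabla_{[x,y]}u + u\times[x,y]$. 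The $\nabla\nabla$ terms together with $-\nabla_{[x,y]}u$ give $R(x,y)u$; the terms $u\times(\nabla_xy) - u\times(\nabla_yx) - u\times[x,y]$ cancel because $\nabla$ is torsion-free; the mixed terms $(\nabla_xu)\times y + (\nabla_yu)\times x$ are symmetric in $x,y$ and so cancel in the antisymmetrization. What remains is
$$ R^l(x,y)u = R(x,y)u + (u\times y)\times x - (u\times x)\times y~. $$

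The key step is then the algebraic identity for the double cross-product in a three-dimensional space of signature $(1,2)$ (Lorentzian analog of the vector triple product): for the AdS metric one has $(a\times b)\times c = \langle a,c\rangle b - \langle b,c\rangle a$ up to a sign that I would pin down from the chosen conventions (the sign depends on the normalization of the Hodge star and the curvature $-1$). Applying this,
$$ (u\times y)\times x - (u\times x)\times y = \bigl(\langle u,x\rangle y - \langle y,x\rangle u\bigr) - \bigl(\langle u,y\rangle x - \langle x,y\rangle u\bigr) = \langle u,x\rangle y - \langle u,y\rangle x~, $$
so that $R^l(x,y)u = R(x,y)u + \langle u,x\rangle y - \langle u,y\rangle x$ (again up to the overall sign convention). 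Since $M$ is AdS, i.e. has constant sectional curvature $-1$, its curvature tensor is $R(x,y)u = -(\langle u,x\rangle y - \langle u,y\rangle x)$, and the two contributions cancel exactly, giving $R^l \equiv 0$. The computation for $D^r$ is identical with $\times$ replaced by $-\times$ throughout; the sign of $u\times x$ flips, but it flips in both the connection and in the double-cross-product term, so the conclusion is unchanged and $R^r\equiv 0$.

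The main obstacle is purely bookkeeping: getting the sign conventions consistent between the definition of $\times$ via the Hodge star, the normalization of the AdS curvature as $-1$, and the triple-product identity, so that the cancellation is exact rather than off by a sign. I would fix this once and for all by evaluating everything in a single orthonormal frame $(e_0,e_1,e_2)$ with $\langle e_0,e_0\rangle=-1$, computing $e_i\times e_j$ explicitly, and checking $R^l(e_1,e_2)e_0$ by hand; once that single case comes out zero, the tensorial identity above guarantees the general case. One should also note that $D^l$ and $D^r$ genuinely are connections on $T^{1,t}M$ (not just on $TM$): this is immediate since $\langle u\times x, u\rangle = 0$, so $D^l_xu$ and $D^r_xu$ are orthogonal to $u$ and hence tangent to the fibre of unit timelike vectors — worth a one-line remark before the curvature computation.
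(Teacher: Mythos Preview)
Your approach is essentially the same as the paper's: both compute the curvature of $D^l$ directly, reduce it to $R(x,y)u$ plus a double cross-product term, and then invoke constant curvature $-1$ to obtain the cancellation. You are in fact more explicit than the paper, which simply asserts that ``a direct computation, using the fact that $M$ has constant curvature $-1$, shows that this last term vanishes''; your plan to pin down the Lorentzian sign in the triple-product identity (it is $(a\times b)\times c = -\langle a,c\rangle\,b + \langle b,c\rangle\,a$ here, opposite to the Euclidean one) via an orthonormal-frame check is exactly what is needed to make that step rigorous.
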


\begin{proof}
Let $x,y$ be vector fields on $M$, and let $u$ be a section of $T^{1,t}M$.
Then the curvature tensor of $D^l$ is equal to
\begin{eqnarray*}
R^l_{x,y}u & = & D^l_xD^l_y u - D^l_yD^l_x u - D^l_{[x,y]} u \\
& = & \nabla_x(D^l_yu) + x\times (D^l_yu) - \nabla_y(D^l_xu) - y\times (D^l_xu) -
\nabla_{[x,y]}u - [x,y]\times u \\
& = & \nabla_x\nabla_yu + \nabla_x(y\times u) + x\times \nabla_yu + x\times (y\times u) \\
& & - \nabla_y\nabla_xu - \nabla_y(x\times u) - y\times \nabla_xu - y\times (x\times u) \\
& & - \nabla_{[x,y]}u - [x,y]\times u \\
& = & R_{x,y}u + (\nabla_xy)\times u - (\nabla_yx)\times u - [x,y]\times u 
+  x\times (y\times u) - y\times (x\times u) \\
& = &  R_{x,y}u + x\times (y\times u) - y\times (x\times u)~.
\end{eqnarray*}
A direct computation, using the fact that $M$ has constant curvature $-1$,
shows that this last term vanishes.
The same proof can be applied to $D^r$.
\end{proof}

Recall that we denote by $\isomz$ the 
isometry group of the quotient of $AdS^3$ by the antipodal map, and it 
is isomorphic to $PSL(2,\R)\times PSL(2,\R)$.

Suppose now that $M$ is time-oriented. Note that the fiber of 
$T^{1,t}M$ at each point is isometric to the hyperbolic plane $\HH^2$. 
Therefore, the flat connections $D^l$ and $D^r$ lead to the definition of two 
holonomy representations $\rho_l,\rho_r:\pi(M)\rightarrow PSL(2,\R)$
from the fundamental group of the AdS manifold $M$ to the isometry group
of $\HH^2$. In addition we also have a holonomy representation 
$\rho:\pi_1(M)\rightarrow \isomz$. 

\begin{lemma} \label{lm:rho}
$\rho=(\rho_l,\rho_r)$ under the canonical identification of
$\isomz$ with $PSL(2,\R)\times PSL(2,\R)$.
\end{lemma}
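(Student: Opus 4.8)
The plan is to identify each side of the claimed equality as a representation into $PSL(2,\R)$ coming from a natural geometric construction, and then to verify that the two constructions agree by a local computation at a single point. Concretely, fix a universal cover $\widetilde M$ of $M$ with deck group $\pi_1(M)$, and a developing map $\dev:\widetilde M\to \AdS^3$ (or into the quotient $\AdS^3/\{\pm 1\}$), equivariant with respect to $\rho:\pi_1(M)\to\isomz\cong PSL(2,\R)\times PSL(2,\R)$. The left connection $D^l$ is flat, so parallel transport identifies all fibers of $T^{1,t}\widetilde M$ with a single copy of $\HH^2$; the monodromy of this identification around an element $\gamma\in\pi_1(M)$ is precisely $\rho_l(\gamma)\in\isom(\HH^2)=PSL(2,\R)$, and similarly for $D^r$ and $\rho_r$. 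So the statement amounts to: the monodromy of $(D^l,D^r)$ equals the two factors of $\rho$.

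The key step is a pointwise model computation. Pick a base point $x_0\in\widetilde M$ and work in $\AdS^3\subset\R^{2,2}$ near $\dev(x_0)$. First I would record the standard fact that, writing $PSL(2,\R)\times PSL(2,\R)$ acting on $\AdS^3$ identified with $PSL(2,\R)$ (or with a quadric in $\R^{2,2}$), the two factors correspond to left and right multiplication. The decisive observation is that the connections $D^l,D^r$ defined by $D^l_xu=\nabla_xu+u\times x$, $D^r_xu=\nabla_xu-u\times x$ are exactly the pullbacks, under the identification $T^{1,t}\AdS^3\to$ (appropriate homogeneous space), of the Maurer–Cartan–type flat connections whose monodromies are left resp.\ right translation. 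Equivalently: a section $u$ of $T^{1,t}\AdS^3$ is $D^l$-parallel iff it is invariant under the left $PSL(2,\R)$-action (up to the chosen trivialization), and $D^r$-parallel iff invariant under the right action. This is where the cross-product term $u\times x$ does its work: one checks that the infinitesimal generators of the left (resp.\ right) action, restricted to $T^{1,t}\AdS^3$, are killed by $D^l$ (resp.\ $D^r$). I would do this by taking a one-parameter subgroup of left translations, differentiating its action on a unit timelike vector field, and verifying the resulting variation field $X$ satisfies $\nabla_v X + X\times v = 0$ — a short computation using that left-invariant fields are Killing and the explicit form of $\nabla$ on a Lie group with bi-invariant metric of constant curvature $-1$.

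Granting the model computation, the global statement follows by transport of structure: the developing map $\dev$ pulls back the left-invariant $D^l$-parallel framing on $\AdS^3$ to a $D^l$-parallel framing on $\widetilde M$, and $\rho$-equivariance of $\dev$ forces the monodromy of this framing around $\gamma$ to be the left factor of $\rho(\gamma)$; likewise on the right. Hence $\rho_l$ is the first component of $\rho$ and $\rho_r$ the second, under the canonical isomorphism $\isomz\cong PSL(2,\R)\times PSL(2,\R)$, which is exactly $\rho=(\rho_l,\rho_r)$. The main obstacle I anticipate is bookkeeping rather than conceptual: getting the left/right conventions to match (which cross-product sign goes with which $PSL(2,\R)$ factor, and with which orientation/time-orientation convention), and correctly handling the passage between $\AdS^3$ and its quotient by the antipodal map so that the representations land in $PSL(2,\R)$ and not its double cover. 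I would settle these by checking one explicit example — say, a Fuchsian (diagonal) representation, where $M$ is a Fuchsian AdS manifold and both $\rho_l$ and $\rho_r$ are manifestly the Fuchsian holonomy — which pins down all signs at once.
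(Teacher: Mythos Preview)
Your approach is correct but genuinely different from the paper's. You work in the Lie group model $\AdS^3\cong PSL(2,\R)$ and argue that $D^l$ (resp.\ $D^r$) is the flat connection whose parallel sections are the right- (resp.\ left-) invariant unit timelike fields, using the standard identity $\nabla_XY=\pm\tfrac12[X,Y]$ for invariant fields on a group with bi-invariant metric together with the relation between the bracket and the cross product in $\mathfrak{sl}(2,\R)$. The paper instead works extrinsically in $\R^{2,2}$: it uses the double ruling of $\partial_\infty\AdS^3\cong\R\PP^1\times\R\PP^1$ by two families of lines $\mathcal L_l,\mathcal L_r$, associates to a unit timelike vector $v$ at $x$ a point $y(s)$ on the ideal boundary and the two lines through it, and then checks by a direct coordinate computation in the light cone that if $v$ is $D^l$-parallel along a path then the line $l_l(s)\in\mathcal L_l$ stays constant. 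Your Lie-theoretic argument is cleaner and explains conceptually why the signs $\pm u\times x$ pick out the two factors; the paper's argument has the advantage of being phrased in exactly the language (ideal boundary, left/right rulings) used in Mess' work and in the later sections on left/right hyperbolic metrics, so it plugs directly into the surrounding narrative. Your caveat about sign conventions is well placed: the actual content of both proofs reduces to a one-line computation once the conventions are fixed.
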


\begin{proof}
Let $\gamma:[0,1]\rightarrow M$ be a closed curve in $M$, i.e., with 
$\gamma(0)=\gamma(1)$. We can lift it to a segment $\gammab:[0,1]\rightarrow AdS^3$ 
in $AdS^3$. There is then a natural isometry $\phi_\gamma:T_{\gammab(0)}AdS^3\rightarrow 
T_{\gammab(1)}AdS^3$ obtained by compositions of the projections
$T_{\gammab(0)}AdS^3\rightarrow T_{\gamma(0)}M\rightarrow T_{\gammab(1)}AdS^3$. By
definition, $\phi_\gamma$ is the restriction to $T_{\gammab(0)}AdS^3$ of the holonomy
of $M$ on $\gamma$.
 
To prove the lemma, we need to check that the identifications between 
$T^{1,t}_{\gammab(0)}AdS^3$ and $T^{1,t}_{\gammab(1)}AdS^3$ provided by the
parallel transport for $D^l$ (resp. $D^r$) along $\gammab$ is identical
to the identification obtained between the same spaces which goes through
the intersection of the boundary at infinity of $T^{1,t}_{\gammab(0)}AdS^3$ and 
$T^{1,t}_{\gammab(1)}AdS^3$ with the family of lines $\cL_l$ (resp. $\cL_r$). 
Note that there is a natural identification of $T^{1,t}_{\gammab(0)}AdS^3$
with the plane dual to $\gammab(0)$ in $AdS_{3,+}$ (the space of points connected
to $\gammab(0)$ by a time-like geodesic segment of length $\pi/2$), which is
a totally geodesic space-like plane.

So we consider the following setup: in addition to $\gammab$, we choose
a unit timelike vector field $x$ along $\gammab$, and a unit space-like
vector field $v$ orthogonal to $x$ along $\gammab$, considered as a vector
tangent to $T^{1,t}_{\gammab(s)}AdS^3$ at $x$. To each $s\in [0,1]$
we then associate the point $y(s)$ in the boundary at infinity of 
$T^{1,t}_{\gammab(s)}AdS^3$ which is the endpoint of the geodesic ray starting from 
$x(s)$ with velocity $v(s)$, and the line $l_l(s)$ in $\cL_l$ (resp. $l_r(s)$
in $\cL_r$) containing
that point (using the natural embedding of  $T^{1,t}_{\gammab(s)}AdS^3$ in $AdS^3$).

The boundary at infinity of $AdS^3$ is canonically identified with the projectivization
of this cone. In this model, $y(s)$ corresponds
to the line generated by $\gammab(s)+v(s)$ in the light-cone of $0$ in $\R^{2,2}$,
$$ C=\{ x\in \R^{2,2}~|~ \langle x,x\rangle=0\}~. $$
The lines in $\dr_\infty AdS^3$ correspond to 2-dimensional planes
containing the origin contained in $C$. There are two such planes containing 
$\vect(\gamma(s)+v(s))$, they can be described using the vector $w(s)=x(s)\times v(s)$ as:
$$ \vect(\gamma(s)+v(s), x(s)-w(s))~,~~\vect(\gamma(s)+v(s), x(s)+w(s))~. $$
Those planes correspond to $l_l(s)$ and $l_r(s)$, respectively. 

It remains to show that, if $x$ and $v$ are parallel for $D^l$ (resp. $D^r$) then
$l_l(s)$ (resp. $l_r(s)$) is constant. One way to prove this is to show that
if $x,v$ are parallel for $D^l$ then $\gammab(s)+v(s)$ remains in the same 2-dimensional
plane in $C$. To do this computation note that the derivative of $v(s)$ as a 
vector in the flat space $\R^{2,2}$ is equal to $v'(s)=\nabla_{\gammab'(s)}v(s) +
\langle v(s),\gammab'(s)\rangle \gammab(s)$, because $AdS^3$ is umbilic in $\R^{2,2}$.
Therefore, if $v(s)$ is parallel for $D^l$, then
$$ \gammab'(s)+v'(s) = \gammab'(s) - \gammab'(s)\times v(s) + 
\langle v(s),\gammab'(s)\rangle \gammab(s)~. $$
To check that this is contained in $\vect(l_l(s))$ we check 3 cases, using
the basic properties of the cross product $\times$:
\begin{itemize}
\item $\gammab'(s)=v(s)$: then $\gammab'(s)+v'(s) = v(s) + \gammab(s)$,
\item $\gammab'(s)=w(s)$: then $\gammab'(s)+v'(s)=w(s) - w(s)\times v(s) = w(s)-x(s)$,
\item $\gammab'(s)=x(s)$: then $\gammab'(s)+v'(s)=x(s) - x(s)\times v(s) = x(s)-w(s)$.
\end{itemize}
The general case follows by linearity, and $l_l(s)$ remains constant. 
The same argument works, with minors differences in the signs, when $v(s)$ is parallel
for $D^r$ and for $l_r(s)$. 
\end{proof}

\subsubsection{The left and right metrics}

The flat connections $D^l$ and $D^r$ can be used to define two natural
degenerate metrics on the bundle of unit timelike vectors on $M$. 
We use a natural identification based on the Levi-Civita connection $\nabla$ of $M$:
$$ \forall (x,v)\in T^{1,t}M, T_{(x,v)}(T^{1,t}M)\simeq T_xM\times v^\perp\subset T_xM\times T_xM~. $$
In this identification, given $v'\in v^\perp$, the vector $(0,v')$, considered as a vector
in $T(T^{1,t}M)$, corresponds to a ``vertical'' vector, fixing $x$ and moving
$v$ according to $v'$.
And, given $x'\in T_xM$, the vector $(x',0)$, considered as a vector
in $T(T^{1,t}M)$, corresponds to a ``horizontal'' vector, 
moving $x$ according to $x'$ while doing a parallel
transport of $v$ (for the connection $\nabla$).

\begin{defi}
We call $M_l$ and $M_r$ the two degenerate metrics (everywhere of rank 2)
defined on $T^{1,t}M$ as follows. Let $(x,v)\in T^{1,t}M$, so that $x\in M$
and $v\in T_xM$ is a unit timelike vector. Let $(x',v')\in T(T^{1,t}M)$,
so that $x'\in T_xM$ and $v'\in v^\perp\subset T_xM$. Then
$$ M_l((x',v'),(x',v')) = \| v' + v\times x'\|^2~,~~ 
    M_r((x',v'),(x',v')) = \| v' - v\times x'\|^2~. $$
\end{defi}

By construction $M_l$ and $M_r$ are symmetric bilinear forms on the 
tangent space of $T^{1,t}M$, and they are semi-positive, of rank $2$ at
every point. 

\begin{defi}
$G(M)$ is the space of timelike maximal geodesics in $M$.
\end{defi}

\begin{lemma}
$M_l$ and $M_r$ are invariant under the
geodesic flow of $M$.
\end{lemma}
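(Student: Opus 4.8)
**The plan is to show invariance of $M_l$ (and $M_r$) under the geodesic flow by reducing the statement to the flatness of the corresponding connection $D^l$ (resp. $D^r$), which was established in the previous lemma.** First I would recall what the geodesic flow on $T^{1,t}M$ does: starting from $(x,v)$, it moves $x$ along the timelike geodesic with initial velocity $v$, parallel-transporting $v$ along that geodesic. In the identification $T_{(x,v)}(T^{1,t}M)\simeq T_xM\times v^\perp$ described just before the definition of $M_l,M_r$, the derivative of the geodesic flow sends a tangent vector $(x',v')$ to its parallel transport, still decomposed along horizontal and vertical directions, plus a shear term coming from the fact that $v$ itself is being moved.

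The key computation is the following. Let $\Phi_t$ denote the geodesic flow, and fix $(x,v)\in T^{1,t}M$ and $(x',v')\in T_{(x,v)}(T^{1,t}M)$. I would extend $(x',v')$ to a variation field along the geodesic $s\mapsto \Phi_s(x,v)$, compute $\frac{d}{ds}M_l$ evaluated on this field, and show it vanishes. Concretely, along the geodesic the base point velocity is the (parallel) vector $v$, and the ``shear'' contribution to the horizontal/vertical splitting is governed by $\nabla_v$ acting on the relevant fields. Writing $w=v'+v\times x'$ for the vector whose norm squared is $M_l((x',v'),(x',v'))$, one has to check that $w$ is parallel along the geodesic whenever $(x',v')$ is carried by the flow. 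This is exactly the assertion that the combination $v'+v\times x'$ is the object measured by the connection $D^l$: indeed $D^l_{x'}v = \nabla_{x'}v + v\times x'$, so a section $(x,v)$ of $T^{1,t}M$ along a curve with velocity $x'$ is $D^l$-parallel iff $\nabla_{x'}v = -v\times x'$, i.e. the relevant combination vanishes. More precisely, I would argue that $M_l$ is the pullback, under the map sending a point of $T^{1,t}M$ and a tangent direction to the $D^l$-``velocity'' of the corresponding section, of the flat metric on the fiber $\HH^2$; since parallel transport for a flat connection is an isometry of fibers and the geodesic flow acts by such parallel transports followed by the canonical identification, $M_l$ is preserved.

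Alternatively, and perhaps cleaner, I would phrase it via the holonomy/developing picture: $D^l$ being flat, the bundle $T^{1,t}M$ has a $D^l$-parallel trivialization over any simply connected set, identifying each fiber isometrically with a fixed $\HH^2$; in this trivialization $M_l$ is literally the constant metric pulled back from $\HH^2$ by the ``$D^l$-Gauss map'' $(x,v)\mapsto v$. The geodesic flow, viewed in this trivialization, is a flow whose time-$t$ map on each fiber is given by $D^l$-parallel transport along the geodesic, hence an isometry onto the target fiber; and the identification $T_{(x,v)}(T^{1,t}M)\simeq T_xM\times v^\perp$ used to define $M_l$ is exactly the one compatible with this picture once one accounts for the cross-product shear. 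Therefore $\Phi_t^*M_l=M_l$. The same argument applies verbatim to $D^r$ and $M_r$, with the sign of the cross-product term reversed.

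The main obstacle I anticipate is the bookkeeping of the horizontal/vertical decomposition under the geodesic flow: one must carefully track how the splitting $T_xM\times v^\perp$ transforms, since both the base point and the fiber vector $v$ move, and verify that the shear term that appears is precisely $v\times(\cdot)$, so that it cancels the corresponding term in the definition of $M_l$. Concretely this amounts to checking that if $(x',v')$ is parallel-transported by $\nabla$ along the geodesic (which is what the flow does on the horizontal part, up to the motion of $v$), then $v'+v\times x'$ is $\nabla$-parallel along the geodesic — which reduces to the identity $\nabla_v(v\times x') = v\times\nabla_v x'$ using $\nabla_v v=0$ along a geodesic, together with $D^l$-flatness to control the full variation. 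Once this identity is in hand the invariance is immediate; the rest is routine, and I would not grind through the index computations in detail.
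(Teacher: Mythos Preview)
Your proposal mixes a correct argument with an incorrect one, and you oscillate between them.

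The incorrect claim, which appears both in your first approach and in your final paragraph, is that $w = v' + v\times x'$, carried by the geodesic flow (i.e.\ with $(x',v')$ replaced by the Jacobi field $(J,J')$), is $\nabla$-parallel along the geodesic. This is false. With the explicit Jacobi fields in curvature $-1$ (which is how the paper proceeds) one finds $w(t) = \sin(t)\,a + \cos(t)\,(v\times a)$ for a fixed $a\in v^\perp$; so $w$ rotates and is not parallel, though $\|w\|$ is constant because $v\times(\cdot)$ is an isometry of $v^\perp$. The identity $\nabla_v(v\times x') = v\times\nabla_v x'$ that you invoke is correct but does not give $\nabla_v w = 0$: you would also need $J'' = -v\times J'$, which is not the Jacobi equation.

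Your Gauss-map argument, on the other hand, is correct and is a genuine alternative to the paper's computation. Flatness of $D^l$ gives a local projection $\pi: T^{1,t}M \to \HH^2$ onto the $D^l$-leaf space; since $d\pi(x',v') = D^l_{x'}v = v' + v\times x'$, one has $M_l = \pi^* g_{\HH^2}$ (here one should note that $D^l-\nabla$ is skew for the Lorentz metric, so $D^l$ is a metric connection and the fibers are identified isometrically with a fixed $\HH^2$). Then $\pi\circ\Phi_t = \pi$ because $D^l_{\gamma'}\gamma' = \nabla_{\gamma'}\gamma' + \gamma'\times\gamma' = 0$, i.e.\ $\gamma'$ is itself $D^l$-parallel along the geodesic; hence $\Phi_t^*M_l = M_l$.

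The paper instead writes the Jacobi field explicitly as $J = u_0 + u_1\cos t + u_2\sin t$ with parallel $u_i$, substitutes into $\|J' + v\times J\|^2$, and reduces it to $\|-u_1 + v\times u_2\|^2$, visibly independent of $t$. Your correct approach packages this computation into the flatness of $D^l$ and is more conceptual; the paper's route is a short direct calculation that makes the constant-curvature structure explicit. If you clean up the proposal by dropping the ``$w$ is parallel'' claim and keeping only the $\pi$-factorization, it is a valid proof.
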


\begin{proof}
Let $x\in M$, let $v\in T^{1,t}_xM$ be a timelike vector, and let $\gamma:I\rightarrow M$ be
the timelike geodesic (parameterized by arc-length) with $\gamma(0)=x$ and $\gamma'(0)=v$.
Let $(x',v')\in T_{(x,v)}T^{1,t}M$, it extends under the geodesic flow on $M$ as a Jacobi
field along $\gamma$ and is therefore of the form
$$ x'(\gamma(t))=u_0 + u_1\cos(t)+u_2 \sin(t)~, ~~ v'(\gamma(t))= -u_1 \sin(t) + u_2 \cos(t)~, $$
with $u_0,u_1,u_2$ parallel vector fields along $\gamma$, where 
$u_0$ is parallel to $\gamma$ and $u_1, u_2$ are orthogonal to $\gamma$.
Therefore,
\begin{eqnarray*}
\| v'+v\times x'\|^2 & = & \| -u_1 \sin(t) + u_2 \cos(t) + v\times (u_1\cos(t)+u_2 \sin(t))\|^2 \\
& = & \| \sin(t) (-u_1+v\times u_2)+\cos(t) (u_2+v\times u_1)\|^2 \\
& = & \| \sin(t) (-u_1+v\times u_2)+\cos(t) v\times (-u_1+v\times u_2)\|^2 \\
& = & \| -u_1+v\times u_2\|^2~. 
\end{eqnarray*}
This shows that $M_l$ is invariant under the geodesic flow. The same argument, with a few
sign changes, shows the same result for $M_r$.
\end{proof}

\begin{defi}
$m_l$ and $m_r$ are the degenerate metrics induced by $M_l$ and
$M_r$, respectively, on $G(M)$.  
\end{defi}

\begin{lemma}
$m_l\oplus m_r$ is locally isometric to $\HH^2\times \HH^2$.
\end{lemma}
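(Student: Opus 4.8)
The statement is local, so the plan is to reduce to the case where $M$ is an open subset of $AdS^3$ (or, cleaner, to work $\pi_1(M)$-equivariantly on the universal cover): then $G(M)$ is, locally, an open piece of a $4$-manifold and $m_l,m_r$ are the restrictions of the corresponding degenerate metrics, so it is enough to produce a local diffeomorphism from $G(M)$ to $\HH^2\times\HH^2$ pulling back the product hyperbolic metric to $m_l\oplus m_r$.

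The crux is to recognize $M_l$ (resp.\ $M_r$) as a pull-back metric on $T^{1,t}M$. Using the identification $T_{(x,v)}(T^{1,t}M)\simeq T_xM\times v^\perp$ from the text, I would check that the $D^l$-horizontal lift of $x'\in T_xM$ is $(x',-v\times x')$ (indeed $D^l_{x'}v=\nabla_{x'}v+v\times x'=0$ for this lift), so the $D^l$-vertical part of $(x',v')$ is $(0,\,v'+v\times x')$; since the fibre $\{w\in T_xM:\langle w,w\rangle=-1\}$ is a copy of $\HH^2$ whose metric is the restriction of the Lorentz form of $T_xM$, this shows $M_l((x',v'),(x',v'))=\|v'+v\times x'\|^2$ is exactly the squared hyperbolic norm of the image of $(x',v')$ under the local $D^l$-flat projection $\pi_l\colon T^{1,t}M\to\HH^2$. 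Hence $M_l=\pi_l^{\,*}g_{\HH^2}$, $\ker M_l$ is the ($D^l$-flat, hence integrable) horizontal distribution, and symmetrically for $M_r$, $D^r$, $\pi_r$. Since the geodesic vector field at $(x,v)$ is $(v,0)$ and $v\times v=0$, it lies in $\ker M_l\cap\ker M_r$; together with the flow-invariance already proved and $M_l,M_r\geq 0$, this makes $M_l,M_r,\pi_l,\pi_r$ descend to $G(M)$, giving $m_l=\bar\pi_l^{\,*}g_{\HH^2}$, $m_r=\bar\pi_r^{\,*}g_{\HH^2}$ for local submersions $\bar\pi_l,\bar\pi_r\colon G(M)\to\HH^2$. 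Finally $(x',v')\in\ker M_l\cap\ker M_r$ forces $v'=0$ and $v\times x'=0$, i.e.\ $(x',v')$ is a multiple of $(v,0)$, so on $G(M)$ one gets $\ker m_l\cap\ker m_r=0$; since each kernel is $2$-dimensional and $G(M)$ is $4$-dimensional, $(\bar\pi_l,\bar\pi_r)\colon G(M)\to\HH^2\times\HH^2$ has injective differential everywhere, hence is a local diffeomorphism pulling $g_{\HH^2}\oplus g_{\HH^2}$ back to $\bar\pi_l^{\,*}g_{\HH^2}+\bar\pi_r^{\,*}g_{\HH^2}=m_l\oplus m_r$, which is the claim. (That the two factors are the ``left'' and ``right'' hyperbolic planes, carrying the actions $\rho_l$ and $\rho_r$, follows from Lemma \ref{lm:rho}, and globally this recovers Mess's parametrization of $G(AdS^3)$ by $\HH^2\times\HH^2$.)

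The only genuine computation is the identity $M_l=\pi_l^{\,*}g_{\HH^2}$, which rests on the elementary algebra of $\times$ in $\R^{1,2}$ (bilinearity, $w\times w=0$, $w\times x'\perp w$) and the definition of the fibre metric of $T^{1,t}M$; given what is already established---flatness of $D^l,D^r$, flow-invariance and constant rank $2$ of $M_l,M_r$---the rest is Frobenius and linear algebra. I expect the only real care needed is the local bookkeeping: $G(M)$ is in general only locally a manifold (the geodesic flow of $AdS$ being periodic), so the cleanest route is to run the argument $\pi_1(M)$-equivariantly on the universal cover and note that the resulting local isometry is $\isomz$-equivariant, hence descends; this is also why ``locally isometric'' is the natural form of the conclusion.
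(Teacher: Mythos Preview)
Your proof is correct and rests on the same core idea as the paper's---that the flatness of $D^l$ and $D^r$ produces two transverse rank-$2$ foliations on $G(M)$, each carrying a hyperbolic transverse metric---but the packaging differs in a way worth noting. The paper fixes $(x,v)$, takes the totally geodesic spacelike plane $P$ through $x$ orthogonal to $v$, and uses the $D^l$- and $D^r$-parallel sections $s^l_v$, $s^r_v$ over a neighborhood: the two resulting $2$-parameter families of geodesics, parameterized by their intersection with $P$, are the leaves through the base geodesic, and the restriction of the nonvanishing metric to each is (a constant multiple of) the induced hyperbolic metric on $P$. You instead observe directly that $M_l$ is the pull-back of the fibre metric by the $D^l$-vertical projection, hence $M_l=\pi_l^{\,*}g_{\HH^2}$ for the local flat trivialization $\pi_l\colon T^{1,t}M\to\HH^2$, and likewise for $M_r$; the map $(\bar\pi_l,\bar\pi_r)$ then comes for free. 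Your route is more intrinsic and yields the local isometry $G(M)\to\HH^2\times\HH^2$ explicitly, without ever choosing an auxiliary plane; the paper's route is more concrete in that each $\HH^2$ factor is realized as an actual spacelike plane in $M$, which makes the link to the later ``left and right metrics on a surface'' (Definition~\ref{df:610}) more visible. Both arguments reduce to the same elementary computation $D^l_{x'}u=\nabla_{x'}u+u\times x'$ and the fact that $v\times\cdot$ is an isometry of $v^\perp$.
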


\begin{proof}
Choose $x\in M$ and $v\in T^{1,t}_xM$, let $P$ be the totally geodesic
plane in $M$ containing $x$ and orthogonal to $v$ (considered in a 
neighborhood of $x$). 
Consider the section $s^l_v$ (resp. $s^r_v$) of $T^{1,t}M$ defined in the
neighborhood of $x$ such that $s^l_v(x)=v$ (resp. $s^r_v(x)=v$), and which is
parallel for $D^l$ (resp. $D^r$). The integral curves of $s^l_v$ (resp. $s^r_v$)
form a two-dimensional subspace of the space of timelike geodesics near $x$,
parameterized by the intersection point with $P$.
Clearly $m_r$ (resp. $m_l$) vanishes on this subspace, while the restriction
to this subspace of $m_l$ (resp. $m_r$) is twice the pull-back of the metric
on $P$ by the map sending a geodesic to its intersection with $P$. This proves
that, in the neighborhood in $G(M)$ of the geodesic parallel to $v$ at $x$, 
$m_l\oplus m_r$ is isometric to $\HH^2\times \HH^2$.
\end{proof}

Note that there is another possible way to obtain the same hyperbolic metrics
$m_l$ and $m_r$ in another way, using the identification of $H^2\times H^2$
with $PSL(2,\R)\times PSL(2,\R)/O(2)\times O(2)$. We do not elaborate on this point
here since it appears more convenient to use local considerations.

\subsection{Transverse vector fields}

The construction of the left and right hyperbolic metric of a 
3-manifold is based on the use of a special class of 
surfaces, endowed with a unit timelike vector field behaving 
well enough, in particular with respect to the singularities.

\begin{defi}
Let $S\subset M$ be a space-like surface, and let $u$ be a field of timelike
unit vectors defined along $S$. It is {\bf transverse} if 
\begin{itemize}
\item $u$ is parallel to the particles at their intersection points with
$S$,
\item for all
$x\in S$, the maps $v\mapsto D^l_vu$ and $v\mapsto D^r_vu$ have rank
$2$.
\end{itemize}
\end{defi}

It is not essential to suppose that $S$ is space-like, and the weaker
topological assumption that $S$ is isotopic in $M$ to a space-like
surface would be sufficient. The definition is restricted to space-like
surface for simplicity.

\begin{defi} \label{df:610}
Let $S\subset M$ be a surface, and let $u$ be a transverse vector
field on $S$. Let $\delta:S\rightarrow G(M)$ be the map sending a
point $x\in S$ to the timelike geodesic parallel to $u$ at $x$. 
We call $\mu_l:=\rho^*m_l$ and $\mu_r:=\rho^*m_r$.
\end{defi}

Since $D^l$ and $D^r$ are flat connections, $\mu_l$ and $\mu_r$ 
can be defined locally as pull-backs of the metric on the hyperbolic
plane -- the space of unit timelike vectors at a point -- by some
map. Therefore $\mu_l$ and $\mu_r$ are hyperbolic metrics (with
cone singularities at the intersections of $S$ with the singularities). 
They are called the {\it left and right hyperbolic metrics on $S$}.

From this point on we suppose that $M$ is time-orientable, so that 
$u$ can be chosen among positively oriented unit timelike vector 
fields.

\begin{lemma} \label{lm:surface}
Given $S$, $\mu_l,\mu_r$ do not depend (up to isotopy) on the choice of the transverse
vector field $u$. Moreover, $\mu_l, \mu_r$ do not change (again up to 
isotopy) if $S$ is 
replaced by another surfaces isotopic to it.
\end{lemma}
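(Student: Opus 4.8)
The plan is to reduce both independence statements (from the choice of $u$ and from the choice of $S$) to a single local computation showing that the map $\delta:S\to G(M)$ is, up to the choices, a homotopy of maps into $G(M)$, together with the fact that $D^l$ and $D^r$ are flat. First I would fix the conceptual point: since $D^l$ is flat, on the universal cover $\widetilde M$ the bundle $T^{1,t}\widetilde M$ carries a global parallel identification with a single copy of $\HH^2$ (the fiber at a base point), equivariant for the holonomy $\rho_l$; the same holds for $D^r$ with $\rho_r$. Under this identification $\mu_l=\delta^*m_l$ is simply the pull-back by the composite map $S\to G(M)\xrightarrow{\;m_l\text{-projection}\;}\HH^2$, which on the level of $\widetilde S$ is a $\rho_l$-equivariant map $\widetilde S\to\HH^2$; the metric $\mu_l$ is its pull-back of the hyperbolic metric. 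So everything comes down to understanding how this developing-type map changes under the two kinds of modification, and the transversality hypothesis (rank $2$ of $v\mapsto D^l_vu$ and $v\mapsto D^r_vu$) guarantees that the pull-back is nondegenerate, i.e. that $\mu_l,\mu_r$ are genuine cone-hyperbolic metrics rather than degenerate.

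For independence of the transverse vector field: given two transverse fields $u_0,u_1$ on the same $S$, I would join them by a path $u_s$, $s\in[0,1]$, of timelike unit vector fields along $S$, each parallel to the particles at the singular points, chosen so that transversality holds for all $s$ (this is an open condition, so after subdividing and possibly perturbing it holds along the whole path; one must check the set of transverse fields is connected, using that the fibers $T^{1,t}_xM\cong\HH^2$ are contractible and the constraint at the finitely many singular points is codimension zero in an interval). Each $u_s$ yields a map $\delta_s:S\to G(M)$ and hence metrics $\mu_l^s,\mu_r^s$. Because $D^l$ is flat, the parallel identification above is independent of $s$, so $\delta_s$ lifts to a continuous family of $\rho_l$-equivariant maps $\widetilde S\to\HH^2$; a continuous family of equivariant maps is an equivariant isotopy (one can integrate the $s$-derivative to an equivariant vector field on $\widetilde S$ and flow), and pulling back the hyperbolic metric along an isotopy gives isotopic metrics. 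Hence $\mu_l^0$ and $\mu_l^1$ are isotopic, and likewise on the right.

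For independence of the surface: if $S'$ is isotopic to $S$ in $M$, push a transverse vector field along the ambient isotopy to get a transverse field on $S'$ (again checking transversality is preserved, shrinking the isotopy if needed near the particles). The corresponding maps into $G(M)$ are then homotopic through maps $S_t\to G(M)$, and the same equivariant-isotopy argument on the universal covers shows the pulled-back metrics are isotopic. The main obstacle I expect is not the flatness bookkeeping but the verification that transversality can be maintained along the interpolating paths --- i.e. that the space of transverse timelike unit vector fields along $S$ (with the prescribed behaviour at the particles) is nonempty and connected, and that during an ambient isotopy one can keep the pushed field transverse. This requires a careful local analysis near each particle, using the explicit local model of a massive particle from Section 2 (the coordinates $(z,t)$ of \S\ref{sub.localcoord}), to see that the rank-$2$ conditions on $v\mapsto D^l_vu$, $v\mapsto D^r_vu$ define an open dense, connected set of admissible fields; the rest is a routine consequence of the flatness of $D^l$ and $D^r$ established in the previous subsection.
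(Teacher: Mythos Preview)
Your approach is genuinely different from the paper's, and the difference is instructive. The paper does not interpolate at all: it first proves an independent rigidity lemma (Lemma~\ref{pr:hypersurface}) stating that a closed hyperbolic surface with cone singularities of angle less than $2\pi$ is uniquely determined by its holonomy. Given that, the proof of Lemma~\ref{lm:surface} is almost immediate: by Lemma~\ref{lm:rho}, the holonomy of $\mu_l$ on any closed curve $\gamma$ equals $\rho_l(\gamma)$, which depends only on the flat connection $D^l$ and on the free homotopy class of $\gamma$ in the regular part of $M$ --- not on $u$, and not on which isotopic surface carries $\gamma$. Hence any two candidate metrics $\mu_l$ share the same holonomy and are therefore isotopic by the rigidity lemma. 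No path of transverse fields is needed.

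Your plan, by contrast, rests on two steps that are not actually established. First, the connectedness of the space of transverse vector fields on $S$ (with the prescribed value along the particles): you flag this as ``the main obstacle'' and assert it follows from a local analysis, but the rank-$2$ conditions on $v\mapsto D^l_vu$ and $v\mapsto D^r_vu$ are simultaneous nonvanishing-determinant conditions on the $1$-jet of $u$, and there is no evident reason the resulting set of sections is path-connected; you would need a genuine argument here. Second, even granting a path $u_s$ of transverse fields, turning the family of equivariant local diffeomorphisms $\widetilde{S}\to\HH^2$ into an isotopy of $S$ requires integrating a time-dependent vector field obtained by pulling back $\partial_s f_s$ through $(df_s)^{-1}$; this flow lives on the noncompact universal cover of $S$ minus the cone points, and its completeness (hence the existence of the isotopy) near the cone points is not automatic. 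The paper's holonomy argument sidesteps both issues entirely, at the cost of the separate (and nontrivial) proof of Lemma~\ref{pr:hypersurface}.
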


The proof uses a basic statement on hyperbolic surfaces with
cone singularities. Although this result might be well known,
we provide a simple proof for completeness.

\begin{lemma} \label{pr:hypersurface}
A closed hyperbolic surface with cone singularities of angle less than
$2\pi$ is uniquely determined by its holonomy. 
\end{lemma}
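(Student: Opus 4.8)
The statement asserts that a closed hyperbolic surface with cone singularities of angle less than $2\pi$ is determined by its holonomy representation $\rho : \pi_1(S \setminus \{p_1, \dots, p_n\}) \to \operatorname{PSL}(2,\R)$. The plan is to use the developing map together with a rigidity/uniqueness argument: if two such metrics $g_1, g_2$ on the same surface have the same holonomy, construct a developing map for each, $\dev_i : \widetilde{S \setminus \{p_1,\dots,p_n\}} \to \HH^2$, both equivariant with respect to the same $\rho$. Then $\phi := \dev_2 \circ \dev_1^{-1}$ is a priori only a locally defined $\rho$-equivariant map; the goal is to upgrade it to a genuine isometry, i.e. an element of $\operatorname{PSL}(2,\R)$ commuting with $\rho$ in such a way that it identifies the two metrics. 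The key is to show the developing images are the same and that the map between them is the identity up to composition with a deck-compatible isometry.

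First I would reduce to the case where the holonomy is understood near each puncture: around a cone point $p_i$ of angle $\theta_i < 2\pi$, the holonomy of a small loop is an elliptic element of rotation angle $\theta_i$ (mod $2\pi$), which has a unique fixed point in $\HH^2$; this fixed point is the developing image of the completion point. Thus the metric completion of each $\dev_i$ adds back the cone points at well-defined locations in $\HH^2$ (locally). Next, I would consider the "difference" map. The standard trick: pull back $g_2$ via a $\rho_1$–$\rho_2$-compatible identification and look at the identity map $(S,g_1) \to (S,g_2)$; better, work upstairs with $\dev_1, \dev_2$. Since both are local isometries from the same simply connected domain equivariant under the same $\rho$, the obstruction to them being equal is measured by how they compare on overlapping charts; one shows the set where they agree (after post-composing $\dev_2$ by a suitable element of the centralizer, if needed) is open and closed, hence everything.

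The cleanest route, which I expect the authors take, is via a Gauss–Bonnet / area argument combined with analytic continuation, or via the Schwarzian: a hyperbolic cone metric determines and is determined by a branched $\operatorname{PSL}(2,\R)$-structure, and two branched structures on the same surface with the same holonomy and the same branch data (cone points, which are read off from the elliptic conjugacy classes in $\rho$) and the same topological type of developing map are equivalent. One invokes that the developing map extends over the cone points as a branched covering of $\HH^2$, and that a $\rho$-equivariant branched local isometry $\HH^2 \to \HH^2$ with the given branch points is, by the uniqueness of the universal branched cover / by a connectedness argument on the locus of coincidence, equal to $\dev_1$ up to the ambiguity of the centralizer of $\rho$ — and when $\rho$ has non-abelian (or at least irreducible) image, that centralizer is trivial, pinning down the map. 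The case of reducible holonomy (e.g. a sphere with few cone points) must be handled separately and is essentially elementary.

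The main obstacle I anticipate is precisely the local analysis at the cone points and ruling out pathological developing maps: one must be sure that "same holonomy" forces "same cone angles" and "same combinatorial type of developing map", and that a $\rho$-equivariant local isometry cannot, say, wrap differently around a cone point. This is where the hypothesis $\theta_i < 2\pi$ is used — it guarantees the local model near each $p_i$ is an honest embedded cone (a genuine injective branched chart), preventing the developing map from overlapping itself locally, so the coincidence set argument goes through. I would organize the write-up as: (1) recall the developing map and its extension over cone points; (2) identify cone-point locations from elliptic fixed points of $\rho$; (3) run the open-closed argument on the set where $\dev_1 = \dev_2$ (after adjusting by the centralizer); (4) dispose of the reducible/exceptional cases by hand.
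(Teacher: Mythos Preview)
Your developing-map strategy has a genuine gap. The maps $\dev_1, \dev_2 : \widetilde{S^*} \to \HH^2$ are local isometries for two \emph{different} source metrics $\tilde g_1, \tilde g_2$ on $\widetilde{S^*}$, so the coincidence set $\{\dev_1 = \dev_2\}$ has no reason to be open: agreement at a point does not force agreement of derivatives, because $d\dev_i|_x$ is a linear isometry from $(T_x\widetilde{S^*}, \tilde g_i|_x)$ to $T_{\dev_i(x)}\HH^2$, and the two source inner products differ. Equivalently, $\dev_2 \circ \dev_1^{-1}$ is locally a diffeomorphism between open subsets of $\HH^2$ but is only a local \emph{isometry} if $g_1 = g_2$ already, which is exactly what you are trying to prove; hence it does not extend to an element of $\operatorname{PSL}(2,\R)$ and there is no well-defined comparison isometry on which to run analytic continuation. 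Even granting openness, you never establish non-emptiness. This is not a technicality: on an open surface (and $S \setminus \{p_i\}$ is open) there are in general many non-isomorphic hyperbolic structures with the same holonomy. The cone-point completion with angles below $2\pi$ is precisely the extra input that should single one out, but your outline does not give a mechanism by which it does so; the remark that the local model is ``an honest embedded cone'' does not by itself relate the two developing maps.

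The paper's proof is entirely different and more hands-on. It first reduces the question to an infinitesimal one: using Troyanov's theorem it deforms the cone angles along a path down to values below $\pi$, where the result is already known, so it suffices to show that first-order deformations of the metric are detected by first-order variations of the holonomy. For this the paper fixes a geodesic triangulation of $(S,h)$ with vertex set exactly the cone points and argues that the holonomy determines both the cone angle at each vertex (from the loop around it) and the length of each edge. The edge-length step is the key idea: cut $S$ open along an edge $e$ with endpoints $v_-, v_+$ and glue in the double of a hyperbolic triangle (possibly with one ideal or hyperideal vertex) chosen so that the cone singularities at $v_\pm$ disappear; the third angle of that triangle is read off from the holonomy around $e$, and the three angles then determine the side length, which is the length of $e$. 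Since nearby cone metrics are determined by the edge lengths of the same geodesic triangulation, this gives the required local rigidity of the holonomy map.
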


\begin{proof}
Let $S$ be a closed surface with marked points $x_1,\cdots, x_n$,
let $\theta_1, \cdots, \theta_n\in (0,2\pi)$, and let $h_0$ be a 
hyperbolic metric on $S$ with cone singularities of angles $\theta_i$
on the $x_i$, $1\leq i\leq n$. 

Given angles $\theta'_1,\cdots, \theta'_n$ compatible with the
Gauss-Bonnet relation 
$$ \sum_i (2\pi-\theta'_i) > -2\pi \chi(S)~, $$
there is a unique metric on $S$, conformal to $h_0$, with cone 
singularity of angle $\theta'_i$ at $x_i$, see \cite{troyanov}. 
In particular there is a unique one-parameter family $(h_t)_{t\in [0,1]}$
of metrics conformal to $h_0$ such that $h_t$ has a cone singularity
of angle $\theta_i/(1+t)$ at $x_i$. It is known that a hyperbolic
metric with cone singularities of angle less than $\pi$ is determined
by its holonomy \cite{dryden-parlier}. Therefore the lemma will be proved if
we can show that, given a hyperbolic metric $h$ with cone singularities
of angle less than $2\pi$, the first-order deformations of $h$ are
uniquely determined by the first-order variations of its holonomy. 

Let $h$ be such a metric (with cone singularities of angle less than
$2\pi$). There exists a triangulation $T$ of $S$ which has as vertices 
exactly the singular points of $h$ and for which the edges are realized
by pairwise disjoint (except at the endpoints) geodesic segments of $h$. 

The end of the proof rests on the following two facts. 
\begin{enumerate}
\item The cone angle at a singular point $v$ is determined by the holonomy
of the oriented boundary of a small neighborhood of $v$.
\item The length of a geodesic segment $e$ with endpoints two singular 
points $v_-, v_+$ is determined by the cone angles at $v_-, v_+$ and
by the holonomy of the oriented neighborhood of $e$.
\end{enumerate}
The first point is self-evident. For the second point let $\theta_-,
\theta_+$ be the cone angles at $v_-, v_+$, let $l$ be the length of
$e$, and let $\alpha$ be the
holonomy of a small oriented neighborhood of $e$. Consider the
hyperbolic surface with boundary obtained by cutting $(S,h)$ open
along $e$. There is a unique way to extend $h$ by gluing at the
boundary either a disk with one cone singularity or a cylinder, in
such a way that the singularities at the boundary disappear. The
hyperbolic surface which is glued can be either:
\begin{itemize}
\item the double of a hyperbolic triangle, with one edge of length
$l$ and the angles of its endpoints equal to $\pi-\theta_-/2$ and
$\pi-\theta_+/2$. Then the angle at the third vertex is determined
by $\alpha$, and the three angles determine $l$, because a hyperbolic
triangle is completely determined by its angles.
\item the double of a hyperbolic triangle with one ideal vertex,
again with one edge of length $l$ and with the same angles
as in the previous case. Then $l$ is determined by the angles
of this triangle, because a hyperbolic triangle with one ideal
vertex is completely determined by its two non-zero angles. 
\item the double of a hyperideal vertex, i.e., of a hyperbolic
domain with boundary made of one geodesic segment and two
geodesic rays, and of infinite area. Then the angle at the
hyperideal vertex corresponds to (twice) the distance between the 
geodesic rays in the boundary of this domain, and it is determined
by the holonomy $\alpha$; in turns it determines the length $l$,
because a hyperbolic triangle with one hyperideal vertex is
completely determined by its angles. 
\end{itemize}

In all cases the holonomy $\alpha$ determines $l$, and this proves
that the lengths for $h$ of the edges of the triangulation $T$
are determined by the holonomy of $h$. For hyperbolic metrics close
to $h$, $T$ remains a triangulation with edges which can be 
realized by geodesic segments. Therefore hyperbolic metrics close
to $h$ are uniquely determined by their holonomy, this concludes
the proof of the lemma.
\end{proof}

\begin{proof}[Proof of Lemma \ref{lm:surface}]
For the first point consider another transverse vector field $u'$
on $S$, and let $\mu'_l,\mu'_r$ be the hyperbolic metrics defined 
on $S$ by the choice of $u'$ as a transverse vector field. 
Let $\gamma$ be a closed curve on the complement of the singular
points in $S$. The holonomy of $\mu'_l$ (resp. $\mu'_r$) on 
$\gamma$ is equal to the holonomy
of $D^l$ (resp. $D^r$) acting on the hyperbolic plane, identified
with the space of oriented timelike unit vectors at a point of 
$S$. So $\mu_l$ and $\mu'_l$ (resp. $\mu_r$ and $\mu'_r$) have the
same holonomy, so that they are isotopic by Lemma \ref{lm:surface}.

The same argument can be used to prove the second part of the lemma.
Let $\gamma_1$ be a closed curve on $S_1$ which does not intersect the 
singular set of $M$, and let $\gamma_2$ be a closed curve on $S_2$ 
which is isotopic to $\gamma_1$ in the regular set of $M$. The holonomy
of $M$ on $\gamma_1$, $\rho(\gamma_1)$, is equal to the holonomy of 
$M$ on $\gamma_2$, $\rho(\gamma_2)$. But $\rho=(\rho_l,\rho_r)$ by
Lemma \ref{lm:rho}, and $\rho_l,\rho_r$ are the holonomy representations
of the left and right hyperbolic metrics on $S_1$ and on $S_2$ by
Lemma \ref{lm:rho}. Therefore, $(S_1,\mu_l)$ has the same holonomy
of $(S_2,\mu_l)$, and $(S_1,\mu_r)$ has the same holonomy as 
$(S_2,\mu_r)$. The result therefore follows by Proposition 
\ref{pr:hypersurface}.
\end{proof}

Note that a weaker version of this proposition is proved
as \cite[Lemma 5.16]{minsurf} by a different argument.
The notations $\mu_l,\mu_r$ used here are the same as in 
\cite{cone}, while the same metrics
appeared in \cite{minsurf} under the notations $I^*_\pm$. Those 
metrics already appeared, although implicitly only, in Mess' paper
\cite{mess}. There one considers globally hyperbolic AdS manifolds,
which are the quotient of a maximal convex subset $\Omega$ of $AdS^3$ by 
a surface group $\Gamma$ acting by isometries on $\Omega$. The identification
of $\isomz$ with $PSL(2,\R)\times PSL(2,\R)$ then determines
two representations of $\Gamma$ in $PSL(2,\R)$, and it is proved in
\cite{mess} that those representations have maximal Euler 
number, so that they define hyperbolic metrics. It is proved in 
\cite{minsurf} that those two hyperbolic metrics correspond precisely
to the left and right metrics considered here.

\subsubsection{Note.} 

The reason this section is limited to manifolds with particles
-- rather than more generally with interacting singularities -- is that we
do not at the moment have good analogs of those surfaces with transverse
vector fields when other singularities, e.g. tachyons, are present.

\subsubsection{Example: good surfaces}

The previous construction admits a simple special case, when the
timelike vector field is orthogonal to the surface (which then
has to be space-like). 

\begin{defi}
Let $M$ be an AdS manifold with interacting particles. Let $S$ be a 
smooth space-like surface. $S$ is a {\it good} surface if:
\begin{itemize}
\item it does not contain any interaction point,
\item it is orthogonal to the particles,
\item its induced metric has curvature $K<0$.
\end{itemize}
\end{defi}

Note that, given a good surface $S$, one can consider the equidistant
surfaces $S_r$ at distance $r$ on both side. For $r$ small enough 
(for instance, if $S$ has principal curvature at most $1$,
when $r\in (-\pi/4,\pi/4)$), $S_r$ is a smooth
surface, and it is also good. So from one good surface one gets a
foliation of a neighborhood by good surfaces. 

The key property of good surfaces is that their unit normal vector
field is a transverse vector field, according to the definition given
above. This simplifies the picture since the left and right metrics
are defined only in terms of the surface, without reference to a
vector field. However the construction of a good surface seems to
be quite delicate in some cases, so that working with a more general
surface along with a transverse vector field is simpler.

\begin{lemma}
Let $S$ be a good surface, let $u$ be the unit normal vector field on
$S$, then $u$ is a transverse vector field.
\end{lemma}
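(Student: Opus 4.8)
The plan is to verify the two clauses in the definition of a transverse vector field directly, the second one reducing to a one-line determinant computation combined with the Gauss equation. The first clause is immediate: a good surface is orthogonal to the particles and contains no interaction point, so at a point $x$ where a particle meets $S$ the unit normal $u$ is tangent to that particle, i.e.\ parallel to it. Only the second clause, that $v\mapsto D^l_vu$ and $v\mapsto D^r_vu$ have rank $2$ at every $x\in S$, requires work.

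First I would observe that, for $v\in T_xS$, both $\nabla_vu$ and $u\times v$ lie in $u^\perp=T_xS$: the former because $\langle\nabla_vu,u\rangle=\frac12 d_v\langle u,u\rangle=0$, the latter because $\langle u\times v,u\rangle=0$ by antisymmetry of the cross-product. Hence $v\mapsto D^l_vu$ and $v\mapsto D^r_vu$ are endomorphisms of the plane $T_xS$, which is positive definite since $S$ is spacelike, so ``rank $2$'' means exactly that they are invertible. Next I would identify the two pieces: $W\colon v\mapsto -\nabla_vu$ is the shape operator of $S$, hence a $g_S$-symmetric endomorphism of $T_xS$; and $J\colon v\mapsto u\times v$ is $g_S$-skew-adjoint (again from $\langle u\times v,w\rangle=-\langle u\times w,v\rangle$) and satisfies $J^2=-\mathrm{Id}$, being the quarter-turn of the Euclidean plane $T_xS$. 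Thus $D^l_\cdot u=-W+J$ and $D^r_\cdot u=-W-J$.

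The computation is then elementary: in a $g_S$-orthonormal basis of $T_xS$ diagonalizing $W$, with principal curvatures $\lambda_1,\lambda_2$, the matrices of $-W\pm J$ have diagonal $(-\lambda_1,-\lambda_2)$ and skew off-diagonal entries of modulus $1$, so $\det(D^l_\cdot u)=\det(D^r_\cdot u)=\lambda_1\lambda_2+1=\det W+1$. Finally I would invoke the Gauss equation for the spacelike surface $S$ in the constant-curvature $-1$ manifold $M$, namely $K=-1-\det W$ (consistent with the fact that a totally geodesic spacelike plane in $AdS^3$, for which $\det W=0$, has $K=-1$); this yields $\det(D^l_\cdot u)=\det(D^r_\cdot u)=1+\det W=-K$. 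Since $S$ is good, $K<0$, hence $-K>0$ and both endomorphisms are invertible, which is the second clause. Therefore $u$ is transverse.

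There is essentially no real obstacle here; the only delicate point is bookkeeping of sign conventions — the sign of the shape operator relative to the chosen time orientation of $u$, the orientation entering $J$, and the precise form of the Gauss equation in Lorentzian signature. Each of these affects only off-diagonal signs or an overall sign of $W$, to which $\det W$ is insensitive, so the identity $\det(D^{l}_\cdot u)=\det(D^{r}_\cdot u)=-K$, and hence the conclusion, is robust.
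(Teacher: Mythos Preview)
Your proof is correct and follows essentially the same approach as the paper: identify $D^{l,r}_\cdot u$ with $-B\pm J$ on $T_xS$ (where $B$ is the shape operator and $J$ the complex structure), compute $\det(-B\pm J)=\det B+1=-K$, and conclude from $K<0$. You are in fact more careful than the paper, which omits the verification of the first clause and the justification that the maps land in $T_xS$.
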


\begin{proof}
Let $x\in S$ and let $v\in T_xS$. By definition, 
$$ D^l_v u = \nabla_vu + u\times v = -Bv + Jv~, $$   
$$ D^r_v u = \nabla_vu - u\times v = -Bv - Jv~, $$
where $B$ is the shape operator of $S$ ad $J$ is the complex structure 
of the induced metric on $S$. If $S$ is a good surface then its 
induced metric has curvature $K<0$. But
$\det(-B\pm J) = \det(B)+1 = -K$, so that $D^l_vu$ and $D^r_vu$
never vanish for $v\neq 0$. This means precisely that $u$ is a
transverse vector field.
\end{proof}

\subsubsection{Example.} 

Let $s_0$ be a space-like segment in $AdS^3$ of length 
$l>0$. Let $d_0, d_1$ be disjoint timelike lines containing the endpoints
of $s_0$ and orthogonal to $s$, 
chosen so that the angle between the (timelike) plane $P_0$
containing $s_0, d_0$ and the (timelike) plane $P_1$ containing $s_0$ 
and $d_1$ is equal to $\theta$, for some $\theta>0$. 
Let $W_0$ (resp. $W_1$) be wedges with 
axis $d_0$ (resp. $d_1$) not intersecting $s_0$ or $d_1$ (resp $d_0$).

Let $M_{\theta}$ be the space obtained from $AdS^3\setminus W_0\cup W_1$ by
gluing isometrically the two half-planes in the boundary of $W_0$
(resp. $W_1$), and let $M_{ex}:=M_\theta$ for $\theta=l$. 
Then $M_{ex}$ does not contain any good surface, or 
even any surface with a transverse vector field. 

To see this remark that for $\theta<l$, $M_{\theta}$ does contain a 
space-like surface with a transverse vector field (we leave the
construction to the interested reader) but with a left hyperbolic
metric, say $\mu_l(\theta)$, which as two cone singularities which 
``collide'' as $\theta\rightarrow l$. (This can be seen easily by 
taking a surface which contains $s_0$.) If $M_{ex}$ admitted a
surface with a transverse vector field, it could have only one cone
singularity (as is seen by considering the limit $M_\theta\rightarrow 
M_{ex}$, this is impossible.

Note that $M_{ex}$ is obviously not globally hyperbolic, and it contains no
closed space-like surface, it was chosen for its simplicity.

\subsection{From space-like surfaces to diffeomorphisms}

There are simple relations between on the one hand good space-like surfaces in
$M$ (or good equidistant foliations, or more generally surfaces with a transverse
vector field) and on the other hand diffeomorphisms between 
the corresponding left and right hyperbolic 
metrics. We consider good equidistant foliations first, then the more general
case of surfaces with a transverse vector field.

\begin{prop}\label{pr:area-preserving}
{~}
\begin{enumerate}
\item Let $(S_t)_{t\in I}$ be an equidistant foliation of a domain $\Omega\subset M$ by
good surfaces. We can associate to $(S_t)_{t\in I}$ an area-preserving diffeomorphism
$\phi:(S,\mu_l)\rightarrow (S,\mu_r)$ which fixes the singular points.
\item Given $\phi$ and $(S_t)_{t\in I}$ as above, any small deformation $\phi'$ of 
$\phi$, among area-preserving diffeomorphisms preserving the singular points, 
is obtained from a unique equidistant foliation $(S'_t)_{t\in I}$ by good
surfaces.
\end{enumerate}
\end{prop}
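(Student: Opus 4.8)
The plan is to route both parts through the \emph{normal Gauss map} of a good surface and the geometry of the space $G(M)$ of timelike geodesics. For (1) I would fix one leaf $S=S_{t_0}$ with unit normal $u$, which is a transverse vector field by the lemma just proved, and let $\delta\colon S\to G(M)$ send $x$ to the timelike geodesic through $x$ tangent to $u(x)$, so that $\mu_l=\delta^*m_l$ and $\mu_r=\delta^*m_r$ by Definition~\ref{df:610}. Unwinding the definitions of $m_l,m_r$ and of $D^l,D^r$, and writing $I$, $B$, $J$ for the induced metric, the shape operator and the complex structure of $S$, one gets $\mu_l(v,w)=\langle D^l_vu,D^l_wu\rangle=I\big((J-B)v,(J-B)w\big)$ and $\mu_r(v,w)=I\big((J+B)v,(J+B)w\big)$. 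Since $\det(J\mp B)=1+\det B=-K$ (the Gauss equation, already used to show $u$ is transverse) and $K<0$, both metrics share the area form $(-K)\,d\mathrm{area}_I>0$; hence $\mu_l,\mu_r$ are hyperbolic cone metrics with the same cone set $S\cap\{\text{particles}\}$, the same cone angles (those of the particles), and equal area forms.

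The diffeomorphism $\phi$ would then be read off geometrically. As $\mu_l,\mu_r$ are nondegenerate, $\delta$ is an immersion, and it is injective because the normal exponential map of a leaf is a diffeomorphism onto $\Omega$, so $\Sigma:=\delta(S)$ is an embedded surface in $G(M)$. Being the family of geodesics orthogonal to a hypersurface, $\Sigma$ is Lagrangian for a natural symplectic form on $G(M)$, which in the local model $G(M)\simeq\HH^2\times\HH^2$ (from the lemma identifying $m_l\oplus m_r$ with $\HH^2\times\HH^2$ and from $\isomz\simeq PSL(2,\RR)\times PSL(2,\RR)$) equals $\omega_l-\omega_r$, the difference of the two area forms. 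Nondegeneracy of $\mu_l$, resp.\ $\mu_r$, says exactly that the projection to the first, resp.\ second, factor restricts to a local diffeomorphism on $\Sigma$; passing to the universal cover of the regular part of $S$ and using that the cone angles are less than $2\pi$, so that the developing maps of $\mu_l,\mu_r$ are embeddings onto their images, $\Sigma$ becomes equivariantly the graph of a diffeomorphism $\phi\colon(S,\mu_l)\to(S,\mu_r)$. The Lagrangian condition reads $\phi^*\omega_r=\omega_l$, i.e.\ $\phi$ is area preserving (consistent with the pointwise computation above); and, the leaves being orthogonal to the particles, the particle geodesics lie in $\Sigma$, so $\phi$ fixes the singular points. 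Independence of the leaf, up to isotopy, would follow from Lemma~\ref{lm:surface} and the invariance of $M_l,M_r$ under the geodesic flow.

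For (2) I would run this backwards. Given a small perturbation $\phi'$ of $\phi$ among area preserving diffeomorphisms fixing the singular points, take its graph $\Sigma'$ in $\HH^2\times\HH^2\simeq G(\widetilde M)$: it is Lagrangian, $C^1$-close to $\Sigma$, and holonomy invariant. Since ``being a congruence of pairwise disjoint timelike geodesics sweeping out a product domain $\Omega'\cong S\times I$'' is an open condition verified by $\Sigma$, it is verified by $\Sigma'$, and $\phi'$ still fixing the singular points keeps the singular particle geodesics inside $\Sigma'$. The surfaces orthogonal to this congruence form an equidistant foliation $(S'_t)_{t\in I}$ of $\Omega'$; each leaf is space-like, has $K<0$ and is orthogonal to the particles — all open conditions holding for the $S_t$ — hence is good. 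By construction the Gauss map of $S'_0$ has image $\Sigma'$, so $(S'_t)$ realizes $\phi'$; and any equidistant foliation by good surfaces realizing $\phi'$ must have normal congruence equal to the Lagrangian graph of $\phi'$, i.e.\ $\Sigma'$, which gives uniqueness.

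The hard part is not the soft openness statements but two global/singular points. First, one must check that the Lagrangian $\Sigma'$ built from $\phi'$ really integrates, equivariantly, to an \emph{embedded} timelike geodesic congruence in a bona fide AdS manifold with particles, rather than merely an immersed one — this is where properness of the developing maps of hyperbolic cone metrics with angles less than $2\pi$ enters. Second, one must control the behaviour across the particles: that the leaves of $(S'_t)$ remain good up to and including their intersection points with the particles, and this is exactly where the hypothesis that $\phi'$ fix the singular points is used. Once these are in place, both statements follow from the openness of the nondegeneracy conditions together with the rigidity of Lemma~\ref{lm:surface} (which itself rests on Proposition~\ref{pr:hypersurface}).
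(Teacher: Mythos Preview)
Your overall strategy is sound and close to the paper's, but you overcomplicate Part~(1) and leave a real gap in Part~(2).

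For Part~(1), the paper's argument is much more direct: since $\mu_l$ and $\mu_r$ are both metrics on the \emph{same} surface $S_t$, the diffeomorphism $\phi$ is simply the identity map on $S_t$. Your formula $\mu_l=I((J-B)\cdot,(J-B)\cdot)$, $\mu_r=I((J+B)\cdot,(J+B)\cdot)$ and the computation $\det(J\mp B)=-K$ are exactly what the paper uses to verify that this identity is area-preserving. The detour through Lagrangian graphs in $\HH^2\times\HH^2$ and developing maps is unnecessary, and your claim that developing maps of cone metrics with angles $<2\pi$ are embeddings onto their images is not obviously true and is not needed.

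For Part~(2), you assert that ``the surfaces orthogonal to this congruence form an equidistant foliation,'' but this is precisely the step that requires proof: a congruence of timelike geodesics need not admit orthogonal hypersurfaces. The paper fills this gap by a short Frobenius computation. Writing $n$ for the unit vector field tangent to the congruence and $B=\nabla n$ on $n^\perp$, the differential of $\phi'$ is $(-B-J)(-B+J)^{-1}$; area-preserving forces $\tr(JB)=0$, hence $B$ is self-adjoint for $I$, and then for $x,y\perp n$ one gets $\langle [x,y],n\rangle=-\langle y,Bx\rangle+\langle x,By\rangle=0$, so the distribution $n^\perp$ is integrable. Your Lagrangian formulation is equivalent to this (Lagrangian sections of the space of geodesics correspond to hypersurface-orthogonal congruences), but you would need to state and prove that equivalence rather than invoke it implicitly.
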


\begin{proof}
Let $t\in I$. By definition of $\mu_l$ and $\mu_r$, those metrics can be defined
in terms of the induced metric $I_t$ on $S_t$ as
$$ \mu_l(v,v)=I(-Bv+Jv, -Bv+Jv)~, ~~ \mu_r(v,v)=I(-Bv-Jv, -Bv-Jv)~. $$
The identity map on $S_t$ defines a diffeomorphism $\phi$ between $(S,\mu_l)$ and
$(S,\mu_r)$. The construction of $\mu_l$ and $\mu_r$ above -- through the 
metrics $M_l$ and $M_r$ on the space of timelike geodesics -- shows that this
diffeomorphism does not depend on $t$, since different values of $t$ determine 
the same surface in the space $G(M)$ of timelike geodesics in $M$. Moreover, 
checking that $\phi$ is area-preserving amounts to checking that
$$ \det(-B-J)=\det(-B+J)~, $$
or (multiplying by $\det(J)=1$) that 
$$ \det(I-JB)=\det(I+JB)~. $$
This is always the case because $B$ is self-adjoint for $I$ so that $\tr(JB)=0$.

For the second point, let $\phi'$ be a small deformation of $\phi$, it corresponds,
through the description above of $G(M)$ as a product, to a surface in the space of 
timelike geodesics in  $M$, close to the family of timelike geodesics normal to
$(S_t)_{t\in I}$ so that those timelike geodesics foliate a domain 
$\Omega'\subset \Omega$. Let $n$ be the unit future-oriented timelike vector
field on $\Omega'$ parallel to those geodesics, let $B:n^\perp\rightarrow n^\perp$
be defined, at each point $x\in \Omega'$, by $Bx = D_xn$, then the argument in 
the first part of the proof shows that the differential of $\phi'$ can be
written as $(-B-J)\circ (-B+J)^{-1}$. The fact that $\phi'$ is area-preserving
then means that $\tr(JB)=0$ and therefore that $B$ is self-adjoint for the
induced metric $I$ on $n^\perp$. If $x,y$ are two vector fields on $\Omega'$
both normal to $n$ then
$$ \langle [x,y],n\rangle = \langle D_xy-D_yx,n\rangle = - \langle y, D_xn\rangle
+ \langle x, D_yn\rangle = -\langle y,Bx\rangle + \langle x, By\rangle = 0~. $$
This means that the distribution of planes normal to $n$ is integrable, and point
(2) follows.
\end{proof}

When one considers only a surface with a transverse vector field -- a more general
case since we have seen that the  unit normal vector field on a good surface is
transverse -- the diffeomorphism is still well-defined, but it is not area-preserving
any more. This statement is weaker, but more flexible, than Proposition 
\ref{pr:area-preserving}.

\begin{prop} \label{pr:diffeo}
\begin{enumerate}
\item Let $S\subset M$ be a space-like surface, and let $u$ be a transverse vector field
on $S$. There is a diffeomorphism $\phi:(S,\mu_l)\rightarrow (S,\mu_r)$ associated
to $S$. 
\item $S$ and $u$ are uniquely determined by $\phi$ in the sense that, if $S'$
is another space-like surface such that the domain bounded by $S$ and $S'$ 
contains no interaction of singularities, and if $u'$  is a transverse vector
field on $S'$ so that $(S', u')$ is associated to $\phi$, then all timelike
geodesics which are parallel to $u$ at their intersection point with $S$ are also
parallel to $u'$ at their intersection point with $S'$.  
\end{enumerate}
\end{prop}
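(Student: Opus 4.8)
The plan is to build the diffeomorphism $\phi$ by following the timelike geodesics determined by the transverse vector field, and to read off both the existence statement (1) and the rigidity statement (2) from the geometry of the map $\delta:S\to G(M)$ of Definition \ref{df:610}. First I would recall that since $u$ is transverse, the maps $v\mapsto D^l_vu$ and $v\mapsto D^r_vu$ have rank $2$ at every point of $S$; this is exactly the condition that the map $\delta$, sending $x\in S$ to the maximal timelike geodesic through $x$ parallel to $u$, is a local diffeomorphism onto its image in $G(M)$ with respect to \emph{both} degenerate metrics $m_l$ and $m_r$ restricted to that image. Concretely, $\delta^*m_l=\mu_l$ and $\delta^*m_r=\mu_r$ are honest (cone) hyperbolic metrics on $S$ by the discussion following Definition \ref{df:610}, and the images $\delta(S)$ viewed inside $(G(M),m_l)$ and inside $(G(M),m_r)$ are each locally isometric to a domain in $\HH^2$. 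The diffeomorphism $\phi:(S,\mu_l)\to(S,\mu_r)$ is then defined as the identity on the underlying surface $S$: it is tautologically a diffeomorphism, and the point is only to observe that it does not depend on any auxiliary choice, because both $\mu_l$ and $\mu_r$ are pulled back from the \emph{same} map $\delta$ into $G(M)$. Near a particle, $u$ is parallel to the singular line, so $\delta$ extends continuously over the puncture and $\phi$ fixes the singular points, as in the good-surface case.

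For part (2), the rigidity statement, I would argue as follows. Suppose $S'$ is another spacelike surface, cobounding with $S$ a domain $\Omega\subset M$ containing no interaction point, and $u'$ a transverse vector field on $S'$ with associated diffeomorphism $\phi':(S',\mu'_l)\to(S',\mu'_r)$; assume $(S',u')$ is ``associated to $\phi$'' in the same sense, i.e.\ after identifying $S\simeq S'\simeq S$ (isotopic in $M$, hence via the canonical identification used throughout Section 6) the two diffeomorphisms agree. The claim is that the family of timelike geodesics $\delta'(S')\subset G(M)$ coincides with $\delta(S)\subset G(M)$. The key observation is that a point of $G(M)$ is determined by a point of $S$ together with the pair of hyperbolic-plane ``coordinates'' it receives in $(G(M),m_l)$ and $(G(M),m_r)$; equivalently, by the $m_l\oplus m_r$-position, and $m_l\oplus m_r$ is locally isometric to $\HH^2\times\HH^2$ by the lemma preceding Definition \ref{df:610}. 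Since $\mu_l=\mu'_l$ and $\mu_r=\mu'_r$ as metrics on $S$ (because the left and right metrics depend only on the surface up to isotopy by Lemma \ref{lm:surface}, not on the transverse vector field), and since $\phi=\phi'$ matches up the two sets of coordinates consistently, the maps $\delta$ and $\delta'$ have the same image in the $\HH^2\times\HH^2$ picture. Unwinding the identification $G(M)\hookrightarrow \HH^2\times\HH^2$ (locally), this says exactly that every timelike geodesic parallel to $u$ at its intersection with $S$ is also parallel to $u'$ at its intersection with $S'$.

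The step I expect to be the main obstacle is making precise the passage ``$\phi=\phi'$ and $\mu_l=\mu'_l,\ \mu_r=\mu'_r$ $\Rightarrow$ $\delta(S)=\delta'(S')$ inside $G(M)$,'' because $G(M)$ is globally only a $3$-manifold carrying two \emph{degenerate} metrics, and the identification with $\HH^2\times\HH^2$ is merely local. I would handle this by working on the domain $\Omega$ between $S$ and $S'$: since $\Omega$ contains no interaction point, the space of maximal timelike geodesics meeting $\Omega$ is a well-behaved (singular, but mild) $3$-manifold foliated by the integral curves of any transverse vector field, and the left and right projections to $\HH^2$ are submersions there. One then shows, via a connectedness/continuation argument starting from a point where $\delta(S)$ and $\delta'(S')$ can be compared directly, that the two sections of this geodesic space over the same domain, inducing the same pair of metrics and the same identification $\phi$, must coincide. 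Near the particles one uses that $u,u'$ are both tangent to the singular lines, so the comparison extends across the punctures; the finiteness of the number of singular lines (as in the proof of Lemma \ref{le.coincide}) lets one reduce to the regular part and then fill in.
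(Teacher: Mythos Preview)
Your approach is essentially the same as the paper's. For part (1) you take $\phi$ to be the identity on the underlying surface $S$, exactly as the paper does (it refers back to the first point of Proposition \ref{pr:area-preserving}). For part (2) the paper's argument is extremely terse: it simply asserts that ``$\phi$ determines a map from $S$ to the space $G(M)$ of timelike geodesics in $M$,'' and concludes that if $(S,u)$ and $(S',u')$ are both associated to $\phi$ then they determine the same map to $G(M)$, hence the same family of timelike geodesics. Your proof unpacks precisely this step via the local $\HH^2\times\HH^2$ product structure on $(G(M),m_l\oplus m_r)$, which is the mechanism the paper has in mind but does not spell out.

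The concern you raise about the local-versus-global passage --- that the identification $G(M)\simeq\HH^2\times\HH^2$ is only local, so one needs a continuation argument to conclude $\delta(S)=\delta'(S')$ globally --- is legitimate, and the paper does not address it either. Your proposed resolution (work on the interaction-free domain $\Omega$ between $S$ and $S'$, use connectedness, and handle the particles separately using that $u,u'$ are tangent to the singular lines) is reasonable and in the spirit of the surrounding arguments. So you are not missing anything the paper provides; if anything you are being more careful than the paper about a point it leaves implicit.
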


\begin{proof}
The existence of the diffeomorphism $\phi$ is proved like the first point of Proposition 
\ref{pr:area-preserving}: since $u$ is a transverse vector field, 
$S$ inherits from $u$ two metrics $\mu_l$ and $\mu_r$, and the identity defines
a diffeomorphism between $(S,\mu_l)$ and $(S,\mu_r)$. 

The second point then follows from the considerations of the previous sub-section,
because $\phi$ determines a map from $S$ to the space $G(M)$ of timelike geodesics in 
$M$, which determines $\mu_l$ and $\mu_r$. 
If two couples $(S,u)$ and $(S', u')$ determine the same map to $G(M)$
then all timelike geodesics which are parallel to $u$ at their intersection point
with $S$ have to be parallel to $u'$ at their intersection point with $S'$.
\end{proof}

\subsection{Surgeries at collisions} \label{ssc:74}

We now wish to understand how the left and right hyperbolic metrics
change when an interaction occurs. We use the term ``collision'' here,
rather than the more general ``interaction'' seen above, since we
only consider particles, rather than more exotic singularities like
tachyons or black holes. 

\subsubsection{Good spacial slices}

The first step in understanding AdS manifolds with colliding particles
is to define more easily understandable pieces. 

\begin{defi}
Let $M$ be an AdS manifold with colliding particles. A {\bf spacial slice}
in $M$ is a subset $\Omega$ such that 
\begin{itemize}
\item there exists a closed surfaces $S$
with marked points $x_1,\cdots, x_n$ and a homeomorphism 
$\phi:S\times [0,1]\rightarrow \Omega$,
\item $\phi$ sends $\{ x_1,\cdots,x_n\}\times [0,1]$ to the singular set of 
$\Omega$,
\item $\phi(S\times \{ 0\})$ and $\phi(S\times \{ 1\})$ are space-like 
surfaces orthogonal to the singular set of $M$.
\end{itemize}
$\Omega$ is a {\bf good} spacial slice if in addition
\begin{itemize}
\item it contains a space-like surface with a transverse vector field.
\end{itemize}
\end{defi}

It is useful to note that Lemma \ref{lm:surface}, along with its proof, applies 
also to surfaces with boundary, with a transverse vector field, 
embedded in a good spacial slice. Such surfaces 
determine the holonomy of the restriction of the left and right metrics
to surfaces with boundary, as explained in the following remark.

\begin{remark} \label{rk:surface}
Let $\Omega$ be a good spacial slice, let $D\subset \Omega$ be a space-like
surface with boundary, and let $u'$ be a transverse vector field on $D$.
Then $u'$ determines a left and a right hyperbolic metric, $\mu'_l, \mu'_r$
on $D$, as for closed surfaces above. Moreover for any closed curve
$\gamma$ contained in $D$, the holonomy of $\mu'_l$ and $\mu'_r$ 
on $\gamma$ is equal to the holonomy on $\gamma$ of the left and the
right hyperbolic metrics of $\Omega$.
\end{remark}

The proof is a direct consequence of the arguments used in the proof of Lemma
\ref{lm:surface}.

The second step is to identify in the left and right hyperbolic metrics
the patterns characterizing particles which are bound to collide.

\begin{prop} \label{pr:disk}
Let $\Omega\subset M$ be a good spacial slice, which contains particles
(singular timelike segments) $p_1, \cdots, p_n$ intersecting at a
point $c\in \dr\Omega$ -- we can suppose for instance that $c$ is in
the future boundary of $\Omega$. Let $S$ be a space-like surface in $\Omega$ and
let $u$ be a transverse vector field on $S$.
There is then a topological disk $D$ embedded in
the past hyperbolic component of the link of $c$, and topological
disks $D_l, D_r$ embedded in $(S, \mu_l)$ and in $(S, \mu_r)$, 
such that $D$ is isometric to $D_l$ and to $D_r$. 
\end{prop}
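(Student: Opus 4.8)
The plan is to compare, on a small neighborhood of the collision point $c$, the local model $e(\Sigma_c)$ of the interaction (where $\Sigma_c$ is the link HS-sphere at $c$) with the good spacial slice $\Omega$, and to transport the ``stereographic picture'' from the model to $S$. First I would recall that, by the definition of an interaction, a neighborhood of $c$ in $M$ is isometric to a neighborhood of the vertex $\bar p$ in $e(\Sigma_c)$, and that $\Sigma_c$ has a \emph{past} hyperbolic component $H^-_c$ (Theorem \ref{tm:thierry}), which is a hyperbolic surface with cone singularities of angles equal to the masses of the particles $p_1,\dots,p_n$. The disk $D$ of the statement will be (a slight shrinking of) this past hyperbolic component $H^-_c$, viewed inside the link of $c$; since it is bounded by photons and there is no de Sitter or future region involved in the past direction for the causally regular/simplest pieces, $D$ is a topological disk carrying a singular hyperbolic metric.

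Next I would use the transverse vector field $u$ on $S$ to realize $D$ inside $(S,\mu_l)$ and $(S,\mu_r)$. The key mechanism is that $\mu_l,\mu_r$ are defined as pullbacks, via the map $\delta:S\to G(M)$ of Definition \ref{df:610}, of the left/right flat metrics $m_l,m_r$ on the space of timelike geodesics; and by Lemma \ref{lm:rho} the holonomies of $\mu_l,\mu_r$ along a loop in $S$ agree with the two $PSL(2,\R)$ factors of the $\AdS$ holonomy of that loop. In a small annular neighborhood in $S$ of the ``cluster'' of endpoints $p_1\cap S,\dots,p_n\cap S$ that are about to collide, the $\AdS$ holonomy of a loop enclosing the cluster is, up to conjugacy, exactly the holonomy of the interaction model $e(\Sigma_c)$ around $\bar p$; hence the restriction of $\mu_l$ (resp. $\mu_r$) to the subsurface of $S$ bounded by this loop and containing the cluster has the same holonomy as the left (resp. right) hyperbolic metric of the model near $\bar p$. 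Applying Proposition \ref{pr:hypersurface} (uniqueness of a cone-hyperbolic surface from its holonomy, using that all cone angles here are $<2\pi$), together with Remark \ref{rk:surface}, identifies this subsurface $(S,\mu_l)$-isometrically (and likewise $\mu_r$-isometrically) with the corresponding piece of the model's left (resp. right) picture.

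Finally I would check that in the interaction model $e(\Sigma_c)$ the left and right hyperbolic pictures associated to a good surface near $\bar p$ are \emph{both} isometric to the past hyperbolic component $H^-_c$ of $\Sigma_c$ itself. This is the computation at the heart of the surgery section: for the suspension $e(\Sigma_c)$ of an HS-sphere, a space-like disk near the vertex carries a transverse (indeed normal) vector field whose left and right metrics, read off through $m_l\oplus m_r\cong \HH^2\times\HH^2$, are isometric copies of the timelike region of $\Sigma_c$ — concretely, one shows the developing map of the disk's left metric and of its right metric each coincides with the developing map of $H^-_c\subset\HS^2$. Combining, $D:=H^-_c$ embeds isometrically as $D_l\subset(S,\mu_l)$ and as $D_r\subset(S,\mu_r)$, which is the assertion.

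The main obstacle I anticipate is the \emph{matching of the pieces across the whole disk, not just at the level of holonomy}: Proposition \ref{pr:hypersurface} gives a metric isometry only after one knows the relevant subsurface of $(S,\mu_l)$ is a cone-hyperbolic surface with the right topology (a disk with $n$ cone points of the prescribed angles) and the right holonomy, so one must argue that shrinking $S$ toward the cluster the $\mu_l$-induced metric is genuinely of this type — this uses that $\Omega$ is a \emph{good} spacial slice (existence of a surface with transverse vector field), so that $\mu_l,\mu_r$ are honest singular hyperbolic metrics near the cluster, and that the homeomorphism type is forced by how the $n$ particles converge to $c$. Controlling the boundary behavior (the loop around the cluster, and the fact that $D,D_l,D_r$ can be taken to be topological disks with coherent boundaries) is the delicate bookkeeping; the holonomy comparison and the model computation are comparatively formal given Lemmas \ref{lm:rho}, \ref{lm:surface} and Proposition \ref{pr:hypersurface}.
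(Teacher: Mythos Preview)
Your plan is in the right spirit but takes a detour that introduces a genuine gap you yourself flag: you want to compute the left/right metrics in the model $e(\Sigma_c)$ on a small disk near the vertex, and then \emph{transfer} this to the given surface $S$ by matching holonomies and invoking Proposition~\ref{pr:hypersurface}. The problem is that Proposition~\ref{pr:hypersurface} is a statement about \emph{closed} cone-hyperbolic surfaces; for an open disk with $n$ cone points the holonomy (a representation of a free group) does not determine the metric --- the relative positions of the cone points are extra data. So the step ``holonomy agrees $\Rightarrow$ disks are isometric'' does not go through, and your last paragraph correctly identifies this as the obstacle without resolving it.

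The paper sidesteps this entirely by a direct computation on $S$ itself, with no transfer from a model. Take the cone $C$ with vertex $c$ containing the particles, set $D':=C\cap S$, and on $D'$ replace the given transverse field $u$ by the \emph{radial} unit field $u'$ pointing toward $c$ along the timelike generators of $C$. A short computation shows $u'$ is transverse ($D^l_v u'=-\cotan(r)\,w+u'\times w$ where $r$ is the proper time to $c$ and $w$ is the projection of $v$ to ${u'}^\perp$), and that $\|D^l_v u'\|^2=\|w\|^2/\sin^2 r$; combined with the warped-product relation $\|w\|^2=\sin^2 r\,\|\rho_*v\|^2$ for the projection $\rho:D'\to D_L$ to the past link, this gives that $\rho$ is an isometry from $(D',\mu'_l)$ to $D_L$, and the same for $\mu'_r$. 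Thus both left and right metrics on $D'$ are \emph{equal} to the link metric via $\rho$, with no rigidity argument needed. Your ``step 5'' computation is essentially this, but you should perform it on $C\cap S$ inside $M$ rather than on an abstract small disk near the vertex --- that is what makes the holonomy-matching detour unnecessary.
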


\begin{proof}
Let $C$ be the intersection with $\Omega$ of a cone with vertex at $c$,
containing the $p_i, 1\leq i\leq n$. (Note that $C$ can be chosen as 
embedded in $\Omega$ since the $p_i$ do not intersect in $\Omega$.)
Let $D':=C\cap S$, and let $u'$ be the timelike unit vector field defined 
on $D'$ as the direction of the geodesic line in $C$ going through $c$.

$u'$ is then a transverse vector field on $D$. To prove this let $x\in D'$,
let $v\in T_xD'$, and let $w$ be the orthogonal projection of $v$ on ${u'}^\perp$,
the plane perpendicular to $u'$ in $T_x\Omega$. If $r$ is the length of the
timelike geodesic segment going from $x$ to $c$ through $C$, 
$$ D^l_vu' = D^l_wu' = -\cotan(r) w + u'\times w~, $$
so that $D^l_vu'\neq 0$ if $v\neq 0$. The same argument (up to a sign) shows
that $D^r_vu'\neq 0$ if $v\neq 0$. This means precisely that $u'$ is transverse.

Now let $\mu'_l, \mu'_r$ be the left and right hyperbolic metrics defined 
on $D'$ by $u'$. Let $D_L$ be the restriction of the past link of $c$ to $C$,
that is, the space of geodesic rays with endpoints at $c$ contained in $C$.
$D_L$ is naturally endowed with a (non-complete) hyperbolic metric, and
there is a canonical projection $\rho:D'\rightarrow D_L$ sending a point 
in $D'$ to the geodesic ray starting from $c$ containing it. If $x\in D$
is at distance $r$ from $c$, and if $v\in T_xS$ and $w$ is again the
orthogonal projection of $v$ on ${u'}^\perp$, then the geometry of 
the past cone of a point in $AdS^3$ as a warped product shows that
$$ \| w\|^2 = \sin^2(r) \| \rho_*(v) \|^2~, $$
while the definition of $D^l$ indicates that
$$ \| D^l_vu'\|^2 = \| D_vu' + u'\times v \|^2 =
\| D_wu' + u'\times w \|^2 = $$ 
$$ = \| -\cotan(r) w + u'\times w \|^2
= \cotan^2(r) \| w\|^2 + \| w\|^2 = \frac{1}{\sin^2r} \| w\|^2~. $$
It is therefore clear that $\rho$ is an isometry from $(D',\mu'_l)$ to
$D_L$. 

The same argument works (with only one sign change) for $\mu'_r$,
and the proposition follows.
\end{proof}

\subsubsection{Example}

If only two particles, $p_1$ and $p_2$, collide, the corresponding
cone points are at the same distance in the left and right hyperbolic
metric of $\Omega$; more precisely, there are two segments of the same
length, one in the left and one in the right hyperbolic metric of 
$\Omega$, joining the cone points corresponding to $p_1$ and to $p_2$.
Moreover the length of those segments is equal to the ``angle'' 
between $p_1$ and $p_2$ at $c$, i.e., to the distance between the
corresponding points in the link of $c$.

\subsubsection{Surgeries on the left and right metrics} \label{sssc:surgeries}

It is now possible to consider in details how the left and right metrics
change when a collision occurs. In this regard it is necessary to define
a more precise setting. 
Let $\Omega$ be an AdS manifold with interacting particles, 
containing exactly one collision point $m$, which is the future
endpoint of $n$ particles $s_1, \cdots, s_n$ and the past
endpoint of $m$ particles $s'_1, \cdots, s'_m$. Let $\theta_1,\cdots,
\theta_n$ be the cone singularities at the $s_i$, and let let
$\theta'_1,\cdots, \theta'_m$ be the cone singularities at the $s'_j$. 

Suppose that $\Omega$ is the union of two good space-like slices
$\Omega_-$ and $\Omega_+$, with disjoint interior, with $\Omega_-$ containing
$s_1, \cdots, s_n$ and $\Omega_+$ containing $s'_1, \cdots, s'_m$. We call
$S_-$ a space-like surface in $\Omega_-$ with a transverse vector field $u_-$,
and $S_+$ a space-like surface in $\Omega_+$ with a transverse vector field 
$u_+$. Let 
$\mu^\pm_l,\mu^\pm_r$ be the left and right hyperbolic metrics defined on 
$S_\pm$ by $u'_\pm$.

\begin{lemma} \label{lm:collision}
Under those conditions, 
\begin{enumerate}
\item $\mu^-_l,\mu^-_r$ have (among others) $n$ cone singularities of angle 
$\theta_1,\cdots, \theta_n$ at points $x_i^{-,l}, x_i^{-,r}, 1\leq i\leq n$
respectively,
\item there exist embedded disks $D^-_l$ and $D^-_r$ in $S_-$, containing the
$x_i^{-,l}$ and the $x_i^{-,r}$ respectively, such that $(D^-_l,\mu^-_l)$ is isometric
to $(D^-_r,\mu^-_r)$,
\item $\mu^+_l$ and $\mu^+_r$ are obtained from $\mu^-_l$
and $\mu^-_r$, respectively, by removing $D^-_l$ and $D^-_r$, respectively, 
and gluing instead a topological disk $D$ with a hyperbolic metric with 
cone singularities $h_+$
such that some neighborhood of the boundary of $(D,h_+)$ is isometric to
some neighborhood of the boundary in $(D^-_l,\mu^-_l)$ (or, equivalently,
in $(D^-_r, \mu^-_r)$. 
\end{enumerate}
The metric $(D,h_+)$ has $m$ cone singularities of angle $\theta'_1,\cdots,
\theta'_m$. 
\end{lemma}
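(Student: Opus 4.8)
The plan is to analyze the collision by looking at the intersection of the singular cone near $m$ with the two slices $\Omega_\pm$, and to use Proposition \ref{pr:disk} as the main building block on each side. First, on $S_-$: since $S_-$ is a space-like surface with a transverse vector field $u_-$ in the good spacial slice $\Omega_-$, and since the particles $s_1,\dots,s_n$ are timelike segments orthogonal to $S_-$ ending at $m$, each $s_i$ intersects $S_-$ in a point which (by the definition of transverse vector field and the argument in Lemma \ref{lm:surface}) becomes a cone point of angle $\theta_i$ for both $\mu^-_l$ and $\mu^-_r$. This gives point (1). For point (2), apply Proposition \ref{pr:disk} with $c=m$: it provides a topological disk $D$ in the past hyperbolic component of the link $L_m$, and disks $D^-_l\subset(S_-,\mu^-_l)$, $D^-_r\subset(S_-,\mu^-_r)$, each isometric to $D$; here $D^-_l,D^-_r$ are the traces on $S_-$ of a cone $C$ with apex $m$ containing the $s_i$, and we may choose $C$ wide enough that this disk contains all intersection points $x^{-,l}_i$, $x^{-,r}_i$. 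Transitivity of the isometries $D^-_l\cong D\cong D^-_r$ gives the isometry in (2).

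Next I would run the symmetric construction on $\Omega_+$. Let $C'$ be a cone with apex $m$ containing the $s'_j$, and set $D^+_l:=C'\cap S_+$ (with the analogous transverse vector field $u'_+$ pointing toward $m$, now along the \emph{future} cone of $m$). Exactly the same computation as in Proposition \ref{pr:disk}, replacing the past link by the future link $\HH^2_+$-component of $L_m$, shows $(D^+_l,\mu^+_l)$ and $(D^+_r,\mu^+_r)$ are each isometric to the restriction $D'$ of the future link of $m$ to $C'$, which is a hyperbolic disk with $m$ cone singularities of angles $\theta'_1,\dots,\theta'_m$; this will be the metric $(D,h_+)$ in the statement. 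This handles the topology and the cone angles of the glued-in piece.

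The heart of the proof — and the step I expect to be the main obstacle — is point (3): showing that $\mu^+_l$ (resp. $\mu^+_r$) is obtained from $\mu^-_l$ (resp. $\mu^-_r$) by cutting out $D^-_l$ (resp. $D^-_r$) and gluing in $(D,h_+)$, with matching neighborhoods of the boundary. The idea is that away from the cone $C\cup C'$ over $m$, the slices $\Omega_-$ and $\Omega_+$ can be glued along a common region of $M$ that contains no interaction point, so Proposition \ref{pr:diffeo}(2) (uniqueness of a surface with transverse vector field, given the associated diffeomorphism, in a region with no collision) lets us identify the left/right metrics of $S_-$ and $S_+$ outside the disks. Concretely: choose $C,C'$ so that $\partial C\cap S_-$ and $\partial C'\cap S_+$ bound the same tube around the singular locus in $M\setminus\{m\}$; then the timelike geodesics parallel to $u_-$ on $S_-\setminus D^-_l$ and those parallel to $u_+$ on $S_+\setminus D^+_l$ sweep out the same domain in $G(M)$, hence induce the same $m_l$, i.e. $\mu^-_l$ and $\mu^+_l$ agree on the complement of the disks and in particular on a neighborhood of the boundary circles. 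What remains is purely a cut-and-paste bookkeeping: $\mu^-_l$ is $(\text{outside})\cup D^-_l$, $\mu^+_l$ is $(\text{outside})\cup D$, and the matching of boundary collars is exactly the statement that $D^-_l$ and $D$ have isometric boundary neighborhoods — which follows since both are collar-isometric to the corresponding annular region of the link $L_m$ restricted to $\overline{C}\setminus C$ (equivalently $\overline{C'}\setminus C'$), the two cones sharing apex $m$. The same argument verbatim (one sign change) gives the right-metric statement, and the final sentence on the cone angles of $(D,h_+)$ has already been established above.
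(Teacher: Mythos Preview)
Your treatment of points (1) and (2) is essentially identical to the paper's: point (1) is the standard fact that transverse vector fields produce cone points of the correct angles, and point (2) is exactly Proposition~\ref{pr:disk}, applied once to $\Omega_-$ and once (with time reversed) to $\Omega_+$.

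The gap is in your argument for point~(3). You assert that ``the timelike geodesics parallel to $u_-$ on $S_-\setminus D^-_l$ and those parallel to $u_+$ on $S_+\setminus D^+_l$ sweep out the same domain in $G(M)$'', and cite Proposition~\ref{pr:diffeo}(2). Neither of these is doing the work you want. Different choices of transverse vector field on isotopic surfaces produce \emph{different} maps $\delta_\pm:S_\pm\to G(M)$ with a priori different images; there is no reason the two families of geodesics should coincide. Proposition~\ref{pr:diffeo}(2) goes in the wrong direction: it says that \emph{if} two pairs $(S,u)$, $(S',u')$ yield the same diffeomorphism $\phi$, then the geodesic families agree. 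You have not established that the diffeomorphisms associated to $(S_-,u_-)$ and $(S_+,u_+)$ coincide on the complements of the disks, so the hypothesis of that proposition is not available.

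What the paper actually does for point~(3) is a holonomy argument. By Lemma~\ref{lm:rho}, the holonomy of $\mu^-_l$ on any closed curve $\gamma_-\subset S_-\setminus D^-_l$ equals the left factor $\rho_l$ of the AdS holonomy on $\gamma_-$; since $\gamma_-$ is isotopic in the regular set of $M$ to a curve $\gamma_+\subset S_+\setminus D^+_l$, the AdS holonomies agree, hence so do the holonomies of $\mu^-_l$ and $\mu^+_l$. One then invokes Lemma~\ref{pr:hypersurface} (holonomy determines a hyperbolic cone metric) --- this is the content behind Lemma~\ref{lm:surface} and Remark~\ref{rk:surface} --- to conclude that $(S_-\setminus D^-_l,\mu^-_l)$ and $(S_+\setminus D^+_l,\mu^+_l)$ are isometric. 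That is the missing idea: the identification of the ``outside'' pieces goes through holonomy, not through a literal coincidence of geodesic families in $G(M)$.
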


\begin{proof}
The first point is a basic property of the left and right metrics associated
to a transverse vector field, see above. 
The second point is a reformulation of Proposition \ref{pr:disk}. Moreover,
the same proposition shows -- when applied to $\Omega_+$ with the time
reversed -- that there exist disks $D^+_l\subset (S_+,\mu^+_l)$ and 
$D^+_r\subset (S_+,D^+_r)$ which are isometric.

It follows from Lemma \ref{lm:rho} that the holonomy for $\mu^-_l$ 
of any closed curve $\gamma_-$ in the complement of $S_-\setminus D^-_l$ is equal to the
holonomy of $\mu^+_l$ on a closed curve $\gamma_+$ which is isotopic to $\gamma_-$
in the regular set of $M$. This implies that the restriction of $\mu^+_l$ on the
complement of some disk $D^+_l$ is isometric to the restriction of $\mu^+_-$ to the
complement of $D^-_l$. The same description applies to $\mu^+_r$ in the complement
of some disk $D^+_r$. The restriction of $\mu^+_l$ to $D^+_l$ is isometric to 
the restriction of $\mu^+_r$ to $S^+_r$ by the first part of the proof, this 
terminates the proof.
\end{proof}



\subsection{Transverse vector fields after a collision}

It might be interesting to remark that the description made in 
subsection \ref{ssc:74} of the surgery on the left and right hyperbolic metrics corresponding
to a collision only holds -- and actually only makes sense -- if there is a
space-like surface with a transverse vector field both before and after the
collision. However the existence of such a surface before the collision does
not ensure the existence of one after the collision, even for simple
collisions.

\subsubsection{Collisions with no transverse vector}

A simple example of such a phenomenon can be obtained by an extension of 
the example given above of an AdS space with two particles containing no
space-like surface with a transverse vector field. Consider the space
$M_\theta$ described in that example, with $\theta<l$, so that $M_\theta$
contains a space-like surface with a transverse vector field. This space
has two cone singularities, $d_0$ and $d_1$, each containing one of the
endpoints of $s_0$. It is now possible to perform on this space a simple
surgery as described in section 5, replacing the part of $d_1$ in
the past of its intersection with $s_0$ by two cone singularities, 
say $d_2$ and $d_3$, 
intersecting at the endpoint of $s_0$. This can be done in such a way
that the angle between the plane containing $s_0$ and $d_2$ and the
plane containing $s_0$ and $d_0$, is equal to $l$. The argument given
above for $M_{ex}$ then shows that there is no space-like surface
with a transverse vector field in a spacial slice before the collision.

\subsubsection{Collisions with transverse vector fields} \label{sssc:transverse}

Consider now the example constructed in subsections \ref{ssc:explicit} 
and \ref{surg:sec} by surgery
on a globally hyperbolic space-time with non-interacting massive particles,
in particular in Proposition \ref{pr:example}. Consider a space-time $M$ as
constructed in that proposition, with only one collision point $p$.
Let $\Sigma_-,\Sigma_+$ be closed, space-like surfaces in $M$ such that 
$p$ is in the future of $\Sigma_-$ and in the past of $\Sigma_+$. 
We suppose moreover that there is a tranverse vector field $v_+$ on $\Sigma_+$:
this is the case for the examples considered in subsection \ref{ssc:explicit},
which contain space-like surfaces which are ``almost'' totally geodesic (with arbitrarily
small curvature) after the collision point.

As seen above, $v_+$ corresponds to a diffeomorphism $\phi_+$ 
between the left and right hyperbolic metrics, $\mu^+_l$ and $\mu^+_r$, on $\Sigma_+$,
sending the singular points in $\mu^+_l$ to the corresponding singular points in $\mu^+_r$.
The effect of the collision on the left and right metrics is to replace two 
isometric disks $(D^+_l,\mu^+_l)$ and $(D^r_r,\mu^+_r)$, respectively in 
$(\Sigma^+,\mu^+_l)$ and in $(\Sigma^+, \mu^+_r)$, by two other isometric disks.
Clearly there exists a diffeomorphism $\phi_-$ between 
$(\Sigma^-,\mu^-_l)$ and in $(\Sigma^-, \mu^-_r)$, that is, after this surgery
is made. This shows that there is a transverse vector field along $\Sigma^-$.

\subsection{The graph of interactions}

The previous subsection contains a description of the kind of surgery on
the left and right hyperbolic metrics corresponding to a collision of
particles. Here a more global description is sought, and we will associate
to an AdS manifold with colliding particles a graph describing the relation
between the different spacial slices. In all this part we fix an AdS manifold
with colliding particles, $M$.

We need some simple definitions. First, we define an isotopy in $M$ as a
homeomorphism $\phi:M\rightarrow M$ such that there exists a one-parameter
family $(\phi_t)_{t\in [0,1]}$ of homeomorphisms from $M$ to $M$, with 
$\phi_0=Id, \phi_1=\phi$, such that each $\phi_t$ sends the singular set of
$M$ to itself. Two domains in $M$ are isotopic if there is an isotopy sending
one to the other. 

Let $\Omega, \Omega'$ be two spacial slices in $M$. They are {\bf equivalent}  
if each space-like surface in $\Omega$ is isotopic to a space-like surface
in $\Omega'$. Note that this clearly defines an equivalence relation on 
the spacial slices in $M$. 

\begin{defi} \label{df:good}
$M$ is a {\bf good} AdS manifold with colliding particles if any spacial
slice in $M$ is equivalent to a good spacial slice.   
\end{defi}

Clearly if two good spacial slices are equivalent then their holonomies
are the same, so that their left and right hyperbolic metrics are the
same by Proposition \ref{pr:hypersurface}. 

The content of \ref{sssc:transverse} shows that some of the examples
constructed by Proposition \ref{pr:example} are indeed good AdS manifolds
with colliding particles.

\begin{defi}
Let $\Omega_-$ and $\Omega_+$ be two spacial slices in $M$.
They are {\bf adjacent} if the union of the compact connected
components of the complement of the interior of 
$\Omega_-\cup \Omega_+$ in $M$ contains exactly one collision.
We will say that $\Omega_-$ is {\bf anterior} to $\Omega_+$ if 
this collision is in the future of $\Omega_-$ and in the
past of $\Omega_+$.
\end{defi}

Note that this relation is compatible with the equivalence
relation on the spacial slices: if $\Omega_-$ is adjacent to
$\Omega_+$ and $\Omega'_-$ (resp. $\Omega'_+$) is equivalent
to $\Omega_-$ (resp. $\Omega_+$) then $\Omega'_-$ is adjacent
to $\Omega'_+$. Moreover if $\Omega_-$ is anterior to $\Omega_+$
then $\Omega'_-$ is anterior to $\Omega'_+$. 

\begin{defi}
The {\bf graph of spacial slices} is the oriented graph associated
to a good AdS manifold with colliding particles $M$ in the following way.
\begin{itemize}
\item The vertices of $G$ correspond to the equivalence classes of
spacial slices in $M$.
\item Given two vertices $v_1, v_2$ of $G$, there is an edge between 
$v_1$ and $v_2$ if the corresponding spacial slices are adjacent.
\item This edge is oriented from $v_1$ to $v_2$ if the spacial
slice corresponding to $v_1$ is anterior to the spacial slice corresponding
to $v_2$.
\end{itemize}
\end{defi}

\subsection{The topological and geometric structure added to the graph of interactions}

Clearly the graph of spacial slice is not in general a tree -- there might
be several sequence of collisions leading from one spacial slice to
another one. A simple example is given in Figure \ref{fg:graph}, where the
graph of a manifold with colliding particles is shown together with a 
schematic picture of the collisions.

\begin{figure}[ht]
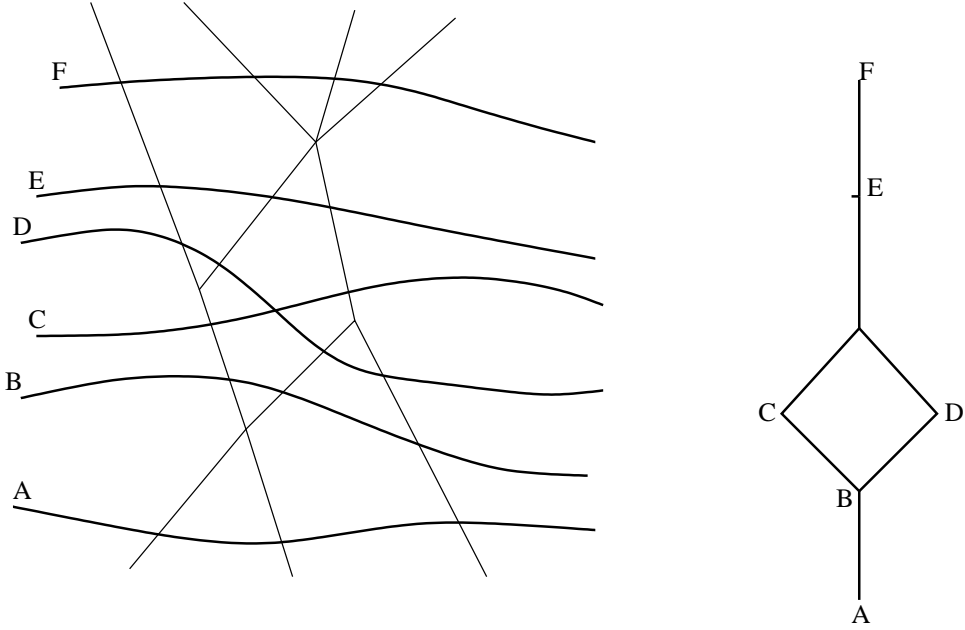

\begin{center}
\input graph.pstex_t
\end{center}
\caption{The graph of spacial slices.}
\label{fg:graph}
\end{figure}

The graph of spacial slices is clearly not sufficient to recover an AdS
manifold with colliding particles, additional data are needed. 

\begin{defi}
A {\bf topological data} associated to an oriented graph is the choice
\begin{itemize}
\item For each vertex $v$, of a closed surface $\Sigma_v$ with $n$ 
marked points $p_1, \cdots, p_n$, and a $n$-uple $\theta_v= 
(\theta_1,\cdots, \theta_n)\in (0,2\pi)^n$.
\item For each edge $e$ with vertices $e_-$ and $e_+$:
  \begin{enumerate}
  \item a homotopy class of disks $D_{e,+}\in \Sigma_{e_+}$, where the 
homotopies are in the complement of the marked points $p_i$ (or equivalently
they are homotopies of the complements of the $p_i$ in $\Sigma_v$),
  \item a homotopy class of disks $D_{e,-}\in \Sigma_{e_-}$, where again 
the homotopies fix the marked points,
  \item an isotopy class $i_e$ of homeomorphisms from 
$\Sigma_{e_-}\setminus D_{e,-}$ to $\Sigma_{e_+}\setminus D_{e,+}$. 
  \end{enumerate}
\end{itemize}
\end{defi}

\begin{defi}
A {\bf geometric data} associated to an oriented graph endowed with
a topological data is the choice, for each vertex $v$, of two 
hyperbolic metrics $\mu_l(v),\mu_r(v)$ on $\Sigma_v$, with a cone singularity 
of angle $\theta_i$ at $p_i$, so that, for each edge $e$ with endpoints $e_-$
and $e_+$:
\begin{itemize}
\item $\mu_l(e_-)$ and $\mu_r(e_-)$ coincide on $D_{e_-}$, while $\mu_l(e_+)$
and $\mu_r(e_+)$ coincide on $D_{e,+}$, 
\item $i_e$ is isotopic to an isometry between $(\Sigma_{e_-}\setminus D_{e,-},
\mu_l(e_-))$ and  $(\Sigma_{e_+}\setminus D_{e,+},
\mu_l(e_+))$, and to an isometry between $(\Sigma_{e_-}\setminus D_{e,-},
\mu_r(e_-))$ and $(\Sigma_{e_+}\setminus D_{e,+}, \mu_r(e_+))$.
\end{itemize}
\end{defi}

Given a good AdS space with colliding particles $M$ we can consider its graph
of collisions $\Gamma$, there is a natural topological and geometric structure associated
to $M$ on $\Gamma$. Given a vertex $v$ of the graph of collisions $\Gamma$, it
corresponds to a good space-like slice $S_v$ in $M$, and we take as $\Sigma_v$ a
space-like surface in $S_v$. The marked points correspond to the intersections 
of $\Sigma_v$ with the particles in $S_v$. By definition of a good spacial slice,
$S_v$ admits a transverse vector field, so one can define the left and right 
hyperbolic metrics $\mu_l(v)$ and $\mu_r(v)$ on $\Sigma_v$ through Definition 
\ref{df:610}. The fact that those two metrics are well defined follows from 
Lemma \ref{lm:surface}.

Now consider an edge of $\Gamma$, that, is a collision between particles. 
Let $e_-$ corresponds to the good spacial slice $\Omega_-$ in the past of the collision,
and $e_+$ to the good spacial slice $\Omega_+$ in the future of the collision. The existence
of a homotopy class of disks $D_-$ on $\Sigma_{e_-}$ on which 
$\mu_l(e_-)$ and $\mu_r(e_-)$ are isometric follows from Proposition \ref{pr:disk},
as well as the existence of a homotopy class of disks $D_+\subset \Sigma_{e_+}$
on which $\mu_l(e_+)$ and $\mu_r(e_+)$ are isometric. The existence of 
an isometry between $(\Sigma_{e_-}\setminus D_-,\mu_l(e_-))$ and 
$(\Sigma_{e_+}\setminus D_+,\mu_l(e_+))$ follows from Lemma \ref{lm:collision}, as
does the existence of an isometry between $(\Sigma_{e_-}\setminus D_-,\mu_r(e_-))$ and 
$(\Sigma_{e_+}\setminus D_+,\mu_r(e_+))$. Those isometries are in the isotopy class
of homeomorphisms between space-like surfaces in $\Omega_-$ and $\Omega_+$, respectively,
obtained by following a timelike vector field.

\section{Local parameterization of 3D metrics}

This section contains a local deformation result, Theorem \ref{tm:homeo}: 
it is shown that given
a globally hyperbolic space-time with interacting particles, its small
deformations are parameterized by small deformations of the geometric
data on its graph of interactions.

\subsection{Maximal good spacetimes}  

Let $M$ be a good spacetime with collision.  The oriented graph
associated to $M$ has a unique vertex $v_i$ with only outcoming edges
(the initial vertex) and a unique vertex $v_f$ with only incoming edges
(the final vertex).

Let us consider a surface with a transverse vector field
 $\Sigma_i$ (resp. $\Sigma_f$) in $M$
corresponding to $v_i$ ($v_f$). Let $M_i$ (resp. $M_f$) be the MGH
spacetime with particles and without collision, containing $\Sigma_i$
(resp. $\Sigma_f$).

The past of $\Sigma_i$ in $M$ can be thickened to a globally
hyperbolic spacetime containing $\Sigma_i$. This shows that
$I^-_M(\Sigma_i)$ isometrically embeds in $M_i$. In general its image
in $M_i$ is contained in the past of $\Sigma_i$ in $M_i$, but does not
coincide with it.

We say that $M$ is a $m$-spacetime, if the following conditions hold.

- $I^-_M(\Sigma_i)$ isometrically embeds in $M_i$ and coincides with
$I^-_{M_i}(\Sigma_i)$;

- $I^+_M(\Sigma_f)$ isometrically embeds in $M_f$ and coincides with
$I^+_{M_f}(\Sigma_f)$.

\begin{lemma}\label{mm:lem}
Every good spacetime embeds in a $m$-spacetime
\end{lemma}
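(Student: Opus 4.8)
The plan is to prove Lemma~\ref{mm:lem} by a direct construction: given a good spacetime $M$ with collisions, we enlarge it in the past and in the future so that the extended spacetime becomes an $m$-spacetime, and then check that the extension is still a good spacetime with the same graph of interactions. The key observation is that the obstruction to being an $m$-spacetime is purely ``at the ends'': the embedding $I^-_M(\Sigma_i)\hookrightarrow M_i$ always exists (as noted in the text, by thickening the past of $\Sigma_i$ inside $M$ to a globally hyperbolic spacetime without collision and invoking the uniqueness of the maximal globally hyperbolic extension from Section~5), but its image need not fill up $I^-_{M_i}(\Sigma_i)$. So the natural move is to glue in the missing part.

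First I would make precise the gluing in the past. Let $M_i$ be the MGH AdS spacetime with massive particles (no collision) containing $\Sigma_i$, which exists and is unique up to isometry by the results of Section~5 (Propositions on existence and uniqueness of the maximal globally hyperbolic extension). Inside $M$ we have the open subset $I^-_M(\Sigma_i)$, and the thickening argument gives a Cauchy embedding $j_i\colon I^-_M(\Sigma_i)\cup\Sigma_i \hookrightarrow M_i$ onto a causally convex neighbourhood of $\Sigma_i$ in $I^-_{M_i}(\Sigma_i)\cup\Sigma_i$. Now form
\[
 M' \ :=\ M \ \cup_{I^-_M(\Sigma_i)\cup\Sigma_i}\ \big(I^-_{M_i}(\Sigma_i)\cup\Sigma_i\big),
\]
identifying the two copies of $I^-_M(\Sigma_i)\cup\Sigma_i$ via $j_i$. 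Because $j_i$ has causally convex image and $\Sigma_i$ is a spacelike surface orthogonal to the singular locus in both pieces, this glued space is a Hausdorff manifold carrying a singular AdS metric (the verification of Hausdorffness is exactly parallel to the one in the proof of uniqueness of the maximal extension in Section~5, using the upper/lower boundary description of causally convex subsets, Corollary~\ref{cor.bordgh}). By construction $I^-_{M'}(\Sigma_i)$ is now all of $I^-_{M_i}(\Sigma_i)$, so the first condition in the definition of $m$-spacetime holds for $M'$. Then I would repeat the symmetric construction in the future, replacing a neighbourhood of the past of $\Sigma_f$ by $I^+_{M_f}(\Sigma_f)$, obtaining $M''$ which satisfies both conditions.

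Next I would check that $M''$ is still a \emph{good} spacetime with colliding particles and has the same graph of spacial slices as $M$. The collisions of $M$ all lie in the ``middle'' of $M$, between $\Sigma_i$ and $\Sigma_f$ (since $v_i$ has only outgoing edges and $v_f$ only incoming), so the pieces we added, $I^-_{M_i}(\Sigma_i)$ and $I^+_{M_f}(\Sigma_f)$, contain no collision; hence $M''$ has the same collision set, hence the same graph of interactions, as $M$. For goodness (Definition~\ref{df:good}), I must show that every spacial slice of $M''$ is equivalent to a good one. A spacial slice of $M''$ is isotopic to one lying in the part of $M''$ between (a slightly pushed copy of) $\Sigma_i$ and $\Sigma_f$, i.e.\ essentially inside $M$, because $\Sigma_i$ and $\Sigma_f$ are themselves space-like surfaces with transverse vector fields and the new collar regions are products; so goodness is inherited from $M$. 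Finally one observes that $M$ embeds in $M''$ as a Cauchy embedding (each of the two gluings is an isometric inclusion onto a causally convex subset containing a Cauchy surface), which is what ``embeds in an $m$-spacetime'' means.

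The main obstacle I anticipate is the Hausdorffness and well-definedness of the glued metric at the two seams --- in particular making sure the gluing map $j_i$ extends the identity in a neighbourhood of $\Sigma_i$ and that no pathology arises along the singular lines crossing $\Sigma_i$. This is handled exactly as in Section~5: one uses that $j_i$ is a Cauchy embedding, invokes Lemma~\ref{le.causalconvex} and Corollary~\ref{cor.bordgh} to control the boundary of its image, and argues by contradiction with non-separated points $x_1\in M$, $x_2\in I^-_{M_i}(\Sigma_i)$ that either $x_1$ or $x_2$ lies on the relevant boundary component, then propagates a local extension of $j_i$ along a maximal causal curve in that boundary to reach a contradiction. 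A secondary, more routine point is to verify that a transverse vector field exists along $\Sigma_i$ and $\Sigma_f$ in $M''$ (inherited from $M$) so that the collar regions are indeed product neighbourhoods, which is needed for the reduction of arbitrary spacial slices to ones lying in $M$.
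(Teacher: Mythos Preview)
Your proposal is correct and follows essentially the same construction as the paper: both argue by gluing $I^-_{M_i}(\Sigma_i)$ and $I^+_{M_f}(\Sigma_f)$ to $M$ along the overlaps $I^-_M(\Sigma_i)$ and $I^+_M(\Sigma_f)$. The paper's proof is extremely terse (it simply declares the gluing ``clearly'' yields an $m$-spacetime), whereas you supply the Hausdorffness argument and the verification that the extension remains good with the same interaction graph; these elaborations are sound but go beyond what the paper bothers to write out.
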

\begin{proof}
Let $\Sigma_i$ (resp. $\Sigma_f$) be a spacelike surface corresponding to
$v_i$ (resp. $v_f$) and denote by $U_i$ (resp. $U_f$) the past
(resp. the future) of $\Sigma_i$ ($\Sigma_f$) in $M$.  Clearly $U_i$
embeds in $M_i$. Let $V_i$ be the past of $U_i$ in $M_i$
(and analogously let $V_f$ be the future of $U_f$ in $M_f$).

Then $V_i$ and $V_f$ can be glued to $M$ by identifying $U_i$ to its
image in $V_i$ and $U_i$ to its image in $U_f$.  The so obtained
spacetime, say $M'$, is clearly an $m$-spacetime.
\end{proof}

Let $M$ be a spacetime with collision and $T$ be the singular locus of $M$.
We say that $M$ is maximal, if every isometric embedding 
\[
    (M,T)\rightarrow (N,T')
\]
that restricted to $T$ is a bijection with the singular locus $T'$ of
$N$, is actually an isometry.

\begin{lemma}
Every $m$-spacetime is maximal. Every good spacetime isometrically
embeds in a unique $m$-spacetime.
\end{lemma}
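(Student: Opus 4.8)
The plan is to prove the two assertions in order, using the already-established machinery for MGH spacetimes with particles and without collision (Propositions on the existence and uniqueness of maximal globally hyperbolic extensions in Section~5) together with the fact that a good spacetime with collision is, away from the finitely many collision points, stratified into spacial slices each of which is globally hyperbolic. First I would establish maximality of an $m$-spacetime $M$. Suppose $(M,T)\hookrightarrow (N,T')$ is an isometric embedding restricting to a bijection of singular loci. Pick the initial surface $\Sigma_i$ with transverse vector field. Its image in $N$ still has an embedded neighbourhood, and $I^-_M(\Sigma_i)$ is a globally hyperbolic spacetime with particles and no collision. By the uniqueness of maximal GH extensions (the Proposition in Section~5 giving uniqueness of $\widehat M$ for $(M,S)$ in the no-interaction category), and since $M$ being an $m$-spacetime means $I^-_M(\Sigma_i)=I^-_{M_i}(\Sigma_i)$ is already ``all of the past'' inside the MGH spacetime $M_i$, the embedding into $N$ cannot enlarge the past of $\Sigma_i$: any point of $N\setminus M$ in the past of (the image of) $\Sigma_i$ would give a Cauchy embedding of $I^-_{M_i}(\Sigma_i)$ into a strictly larger GH spacetime without collision, contradicting maximality of $M_i$. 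The symmetric argument with $\Sigma_f$ and $M_f$ handles the future of $\Sigma_f$. The intermediate region between $\Sigma_i$ and $\Sigma_f$ is a compact causally convex piece of $M$; I would argue, slice by slice along the graph of collisions, that nothing can be added there either: each spacial slice is globally hyperbolic with compact Cauchy surface, hence causally convex in $N$ by Lemma~\ref{le.causalconvex} applied locally, and the collision points are already present in $T'$ by hypothesis, so their links in $N$ agree with their links in $M$ and no regular neighbourhood can be enlarged. Concatenating, $M\to N$ is onto, i.e. an isometry.

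Next I would prove that every good spacetime $M$ embeds in an $m$-spacetime: this is exactly Lemma~\ref{mm:lem}, whose construction glues $V_i=I^-_{M_i}(U_i)$ and $V_f=I^+_{M_f}(U_f)$ onto $M$ along $U_i$ and $U_f$. One checks the result $M'$ is Hausdorff and a manifold (the gluing is along open causally convex subsets, as in the uniqueness proof in Section~5), carries a singular AdS metric with the same collisions as $M$, and satisfies the two defining conditions of an $m$-spacetime by construction, since $V_i$ fills up the past of $\Sigma_i$ inside $M_i$ and symmetrically for $V_f$. Finally, for uniqueness of the $m$-spacetime containing a given good $M$: if $M\hookrightarrow M'$ and $M\hookrightarrow M''$ are two embeddings into $m$-spacetimes, I would compare them through $M_i$ and $M_f$. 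The embedding $M\hookrightarrow M'$ restricts to $I^-_M(\Sigma_i)\hookrightarrow I^-_{M'}(\Sigma_i)=I^-_{M'_i}(\Sigma_i)$, but $M'_i$ is a MGH spacetime with the same Cauchy surface $\Sigma_i$ and the same holonomy data as $M_i$, so by uniqueness of the maximal GH extension (Section~5) $M'_i\cong M_i$ canonically; hence $I^-_{M'}(\Sigma_i)$ and $I^-_{M''}(\Sigma_i)$ are canonically identified with $I^-_{M_i}(\Sigma_i)$. The same holds in the future via $M_f$. Since $M'$ (resp. $M''$) is the union of this past, $M$ itself, and this future, glued along the same open sets in the same way, the identity on $M$ extends to an isometry $M'\cong M''$.

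The main obstacle I expect is the middle region: ensuring that maximality really forces surjectivity \emph{between} $\Sigma_i$ and $\Sigma_f$, not just in the extreme past and future. The point is that the portion of $M$ between $\Sigma_i$ and $\Sigma_f$ is not globally hyperbolic as a whole (causal curves exit through the collision points), so one cannot invoke a single uniqueness statement; instead one must work with the graph of spacial slices, use that each slice is GH with compact Cauchy surface and hence causally convex in any ambient spacetime, and use that the collision points and the types of all singular lines are preserved under the hypothesised embedding $(M,T)\hookrightarrow(N,T')$. A careful bookkeeping along the (finite) graph, together with the local rigidity of the link $e(\mathfrak{e}(L))$ of each interaction point, closes the gap; I would present this as an induction on the number of collisions, peeling off one slice at a time from the initial vertex.
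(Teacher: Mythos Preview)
Your approach is workable but differs from the paper's in a way worth noting. The paper does not prove maximality and uniqueness separately. Instead it establishes a single universal property: given a good spacetime $M$ with $m$-extension $M_{\max}$ (from the previous lemma), \emph{any} embedding $\iota:(M,T)\to(N,T')$ factors through an embedding $\pi':(N,T')\to(M_{\max},T_{\max})$ with $\pi'\circ\iota=\pi$. Maximality of $M_{\max}$ and uniqueness of the $m$-extension both fall out of this immediately.

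On the middle region, which you rightly flag as the obstacle: the paper's argument is much simpler than your proposed induction along the collision graph. Since the embedding is a bijection on singular loci, $N$ has no new collisions, so the region $\Omega'=I^+_N(\Sigma_i)\cap I^-_N(\Sigma_f)$ has closure homeomorphic to $\Sigma\times[0,1]$, just as $\overline\Omega=\overline{I^+_M(\Sigma_i)\cap I^-_M(\Sigma_f)}$ does. The map $\iota$ sends $\overline\Omega$ into $\overline{\Omega'}$ and $\partial\Omega$ onto $\partial\Omega'$; a standard topological argument then gives $\iota(\Omega)=\Omega'$. No slice-by-slice induction, no causal convexity lemma, no analysis of links of interaction points is needed. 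Your induction would presumably succeed, but it is overkill.

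There is also a small gap in your uniqueness argument: you assert that an $m$-extension $M'$ of $M$ decomposes as the union of $I^-_{M_i}(\Sigma_i)$, $M$ itself, and $I^+_{M_f}(\Sigma_f)$, but you have not argued that the portion of $M'$ between $\Sigma_i$ and $\Sigma_f$ coincides with that of $M$. This is exactly the middle-region issue again, and it is resolved by the same topological observation above (or, in the paper's organization, by the universal property, which handles it once and for all).
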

\begin{proof}
We sketch the proof, leaving details to the reader. 

 Let $(M,T)$ be a good spacetime and $\pi:(M,T)\rightarrow(M_{max},
 T_{max})$ be the maximal extension constructed in Lemma \ref{mm:lem}.

We prove that given any isometric embedding
\[
    \iota:(M,T)\rightarrow (N,T')
\]
there is an embedding of $\pi':(N,T')\rightarrow (M_{max}, T_{max})$
such that $\pi=\pi'\circ\iota$.

Let $\Sigma_i,\Sigma_f$ be spacelike surfaces in $M$ as in Lemma
\ref{mm:lem}.  The embedding $\iota$ identifies $\Sigma_i$ and
$\Sigma_f$ with disjoint spacelike surfaces in $N$ (that with some
abuse we will still denote by $\Sigma_i,\Sigma_f$). Moreover
$\Sigma_i$ is in the past of $\Sigma_f$ in $N$.
 
Now the closure of the domains $\Omega=I^+_M(\Sigma_i)\cap
I^-_M(\Sigma_f)$ and $\Omega'=I^+_N(\Sigma_i)\cap I^-_N(\Sigma_f)$ are
both homeomorphic to $\Sigma\times[0,1]$ and $\iota$ sends
$\overline\Omega$ to $\overline\Omega'$ and $\partial\Omega$ onto
$\partial\Omega'$. A standard topological argument shows that
$\iota(\Omega)=\Omega'$.

Finally if $M_i$ (resp. $M_f$) denotes the MGH spacetime containing
$\Sigma_i$ ($\Sigma_f$), we have that $I^-_{N}(\Sigma_i)$
(resp. $I^+_N(\Sigma_f)$) embeds in $I^-_{M_i}(\Sigma_i)$
(resp. $I^+_{M_f}(\Sigma_f)$).

Thus we can construct the map $\pi':N\rightarrow M_{max}$ in such a way that
\begin{itemize}
\item on $I_N^+(\Sigma_i)\cap I_N^-(\Sigma_f)$ we have $\pi'=\iota^{-1}$;
\item on $I_N^-(\Sigma_i)$ it coincides with the embedding in
  $I^-_{M_i}(\Sigma_i)$;
\item on $I_N^+(\Sigma_f)$ it coincides with the embedding in
  $I^+_{M_f}(\Sigma_f)$.
\end{itemize}
\end{proof}

\subsection{Deformation of the graph of interactions}

Let us consider an $m$-spacetime $M=(\Sigma_g\times\mathbb R, \mu_0)$
and singular locus $T$ made of colliding massive particles.
For every edge $e$ of $T$, let $\theta_e$ be
the corresponding cone angle, and denote by $\theta=(\theta_e)$ the
collection of all these angles. Moreover notice that the time orientation on $M$
induces on $T$ a natural orientation.

We consider singular AdS metrics $\mu$ on $\Sigma_{g}\times\mathbb R$ that
make $\Sigma_g\times\mathbb R$ an $m$-spacetime with singular locus
equal to $T$ such that
\begin{itemize}
\item each edge $e$ of $T$ corresponds to a massive particle with
cone angle $\theta_e$;
\item the orientation on $T$ induced by $\mu$ agrees with the
  orientation induced by $\mu_0$.
\end{itemize}

The following proposition ensures that the isometry type of an admissible metric is
determined by the isometry type of the complement of the singular locus.
Thus, in order to study the moduli space of admissible metrics, it will be sufficient to study
their restriction on the complement of the singular locus. The main advantage in considering
the AdS metrics on the complement that extends to admissible metrics, is that we can use the
theory of $(G,X)$-structures.

\begin{prop}\label{ext:prop}
Let $\mu,\nu$ be two singular metrics on $\Sigma\times\mathbb R$ 
with singular locus equal to $T$.

Then any isometry
\[
   \psi:(\Sigma\times\mathbb R\setminus
   T,\mu)\rightarrow(\Sigma\times\mathbb R\setminus T,\nu)
\]
extends to an isometry $\bar\psi:(\Sigma\times\mathbb
R,\mu)\rightarrow(\Sigma\times\mathbb R,\nu)$.
\end{prop}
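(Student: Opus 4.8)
The plan is to show that an isometry between the regular parts extends continuously over the singular locus and that this extension is automatically a smooth isometry. The key technical input is the local model for a singular AdS metric near a massive particle, as described in \S~\ref{sub.localcoord}: near any point $x$ of a singular line of mass $m$ (angle $\theta = 2\pi(1-m)$), there are coordinates $(z,t)\in D\times(-1,1)$ in which the metric is conformal to $(1-m)^2 z^{-2m}\,dz^2 - dt^2$, and the underlying differentiable structure on $M$ (in which the singular line is the smooth curve $\{z=0\}$) is well-defined. First I would observe that, since $\psi$ is an isometry of the regular parts, it is in particular a conformal map, hence it maps (maximal) causal curves of $\mu$ to causal curves of $\nu$. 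The edges of $T$ are distinguished among curves in $M$ by a purely causal/metric property: they are the timelike curves every point of which has a neighborhood isometric to a neighborhood of the singular point in some $\mathcal P_\theta$; equivalently, they are the locus where the local $(G,X)$-structure fails to be a genuine AdS structure. Because $\psi$ is an isometry of $M^\ast = \Sigma\times\R\setminus T$ and a homeomorphism near $T$ is what we are trying to produce, the cleanest route is to argue that $\psi$ already extends as a homeomorphism.

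The extension step goes as follows. Fix an edge $e$ of $T$ and a point $x\in e$, with model coordinates $(z,t)$ as above on a neighborhood $V$ of $x$; write $V^\ast = V\setminus e$. In these coordinates the completion of $V^\ast$ with respect to the path metric of the regular AdS metric recovers exactly the singular line $e\cap V$: indeed, in the alternative description of \S~\ref{sub.localcoord} one has coordinates $(\zeta^\alpha, t)$ with $\zeta$ in a disk minus a wedge and the two sides of the wedge glued, and the metric completion adds back the apex $\zeta = 0$, which is precisely the point of $e$. Since a metric completion is a metric (hence topological) invariant and $\psi$ is an isometry of $M^\ast$, $\psi$ extends to a homeomorphism of metric completions; these completions are naturally identified with neighborhoods of points of $T$ in $(M,\mu)$ and $(M,\nu)$. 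Patching these local extensions — which agree on overlaps because they all equal $\psi$ on $M^\ast$, a dense set — yields a homeomorphism $\bar\psi:(\Sigma\times\R,\mu)\to(\Sigma\times\R,\nu)$ sending $T$ to $T$ and restricting to $\psi$ off $T$. One checks along the way that $\bar\psi$ respects the given orientation of $T$, since $\psi$ does on $M^\ast$.

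It remains to promote $\bar\psi$ from a homeomorphism to an isometry of singular AdS metrics, i.e. to a smooth map which is an isometry of the AdS metrics on the regular part and sends the cone line of angle $\theta_e$ to the cone line of angle $\theta_e$. The angle condition is automatic: $\bar\psi|_{M^\ast}$ is an isometry, so the holonomy of a small loop around $e$ is preserved, hence the cone angle along $\bar\psi(e)$ equals that along $e$. For smoothness at $T$, I would work in the model coordinates on both sides. On $V^\ast$, $\psi$ is a local isometry of AdS metrics with isolated cone singularity, so it is real-analytic there; combined with the continuous extension across $e$ and the explicit form of the metric $(1-m)^2 z^{-2m}\,dz^2 - dt^2$ (with matching exponent $m$ on both sides, since the cone angles match), one sees that in the smooth coordinate $z$ the map $\bar\psi$ is differentiable with nondegenerate differential along $e$ — essentially because an isometry of the flat cone $(1-m)^2 z^{-2m}\,dz^2$ on the punctured disk extending continuously over the apex must be, in the uniformizing coordinate $z$, a rotation composed with a conformal map fixing $0$, hence smooth. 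This is the step I expect to be the main obstacle: one must rule out pathological behavior of $\bar\psi$ tangent to $T$ and check that the derivative transverse to $T$ behaves correctly, which requires a careful local computation with the warped-product structure rather than an appeal to a general principle. Once smoothness along $T$ is established, $\bar\psi$ is a smooth map which is an AdS isometry on a dense open set and a diffeomorphism near $T$, hence an isometry of the singular AdS metrics everywhere, which is the claim.
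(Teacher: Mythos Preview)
Your extension argument rests on taking the ``path metric of the regular AdS metric'' and passing to metric completions, but the AdS metric is Lorentzian: there is no path metric in the Riemannian sense, and the time-separation function (Lorentzian length of causal curves) is neither symmetric nor defined for all pairs, so the completion machinery does not apply. Nor can you substitute an auxiliary Riemannian metric built from the local model, since $\psi$ is only assumed to preserve the Lorentzian structure and need not respect any particular spatial slicing or conformal representative. This is a genuine gap: without a metric-space structure preserved by $\psi$, nothing in your argument forces $\psi$ of a sequence approaching $T$ to converge, let alone to converge to a point of $T$.

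The paper replaces the completion idea with causal and compactness arguments specific to the Lorentzian setting: one follows a geodesic segment $c$ into a singular point $p$, traps $c$ and hence $\psi\circ c$ between two Cauchy surfaces (using points $r_\pm$ with $c\subset I^+(r_-)\cap I^-(r_+)$), and uses compactness of the slab to extract a limit in $T$; well-definedness is then checked via a triangle argument and the fact that $T$ contains no lightlike segments. The verification that the extension is an isometry is not done by a smoothness computation in coordinates but by showing that the induced map $\psi^\#$ between the links $S_p$ and $S'_{\psi(p)}$ is an HS-isomorphism --- this is the correct notion of isometry at a singular point and sidesteps the delicate transverse-derivative analysis you flagged as the main obstacle.
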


\begin{proof}
Let us take $p\in T$.  Consider some small $\mu$-geodesic
$c:[0,1]\rightarrow\Sigma\times\mathbb R$ such that $c(1)=p$ and
$c([0,1))\cap T=\emptyset$. If $c$ is small enough, we can find two
  points $r_-$ and $r_+$ in $\Sigma\times\mathbb R\setminus T$ such
  that $c[0,1)\subset I^+_\mu(r_-)\cap I^-_\mu(r_+)$.

Now let us consider the $\nu$-geodesic path $c'(t)=\psi(c(t))$ defined
in $[0,1)$.  Notice that $c'$ is an inextensible geodesic path in
  $\Sigma\times\mathbb R\setminus T$.

We know that $c'$ is contained in $I^+_\nu(\psi(r_-))\cap
I^-_\nu(\psi(r_+)$. Thus if $\Sigma_\pm$ is a Cauchy surface through
$r_\pm$, we have that $c'\subset I^+_\nu(\Sigma_-)\cap
I^-_\nu(\Sigma_+)$ that is a compact region in $\Sigma\times\mathbb
R$.
Thus $c'(t)$ converges to some point in $T$ as $t\rightarrow 1$.
We define $\hat c=\lim_{t\rightarrow 1}c'(1)$. 

To prove that $\psi$ can be extended on $T$ we have to check that
$\hat c$ only depends on the endpoint $p$ of $c$. In other words, if
$d$ is another geodesic arc ending at $p$, we have to prove that $\hat
c$ is equal to $\hat d=\lim_{t\rightarrow1}\psi\circ d(t)$.  By a
standard connectedness argument, there is no loss of generality if we
assume that $d$ is  close to $c$. In particular we may assume
that there exists the geodesic triangle $\Delta$ with vertices $c(0),
d(0), p$.

Consider now the $\mu$-geodesic segment in $\Delta$, say $I_t$, with
endpoints $c(t)$ and $d(t)$.  The image $\psi(I_t)$ is a
$\nu$-geodesic segment contained in $\Sigma\times\mathbb R\setminus T$. 
Arguing as above, we can prove that
all these segments $(\psi(I_t))_{t\in[0,1)}$ are contained in some
compact region of $\Sigma\times\mathbb R$.  Thus either they converge
to a point (that is the case $\hat c=\hat d$), or they converge to
some geodesic path in $T$ with endpoints $\hat d$ and $\hat c$.

On the other hand, the $\nu$-length of $\psi(I_t)$ goes to zero as
$t\rightarrow 1$.  Thus either $\psi(I_t)$ converges to a point or it
converges to a lightlike path. Since $T$ does not contain any
lightlike geodesic it follows that $\psi(I_t)$ must converge to a
point.  Thus $\hat d=\hat c$.

Finally we can define
\[
   \psi(p)=\hat c
\]
where $c$ is any $\mu$-geodesic segment with endpoint equal to $p$. 

Let us prove now that this extension is continuous.
For a sequence of points $p_n$ converging to $p\in T$ we have to 
check that $\psi(p_n)\rightarrow\psi(p)$.
We can reduce to consider  two cases:
\begin{itemize}
\item $(p_n)$ is contained in $T$,
\item $(p_n)$ is contained in the complement of $T$.
\end{itemize}
In the former case we consider a point $q$ in the complement of $T$.
Let us consider the $\mu$-geodesic segment $c$ joining $q$ to $p$ and
the segments $c_n$ joining $q$ to $p_n$. Clearly for every $t\in (0,1]$ the points
$c_n(t)$ and $c(t)$
are timelike related and their Lorentzian distance converges to the
distance between $p_n$ and $p$ as $t\rightarrow 1$. On the other hand, since
$\psi(c_n(t))$ ($\psi(c(t))$) converges to $\psi(p_n)$ ($\psi(p)$) as
$t\rightarrow 1$, we can conclude that
\[
    d(\psi(p_n),\psi(p))=d(p_n,p)
\]
where $d$ denotes the Lorentzian distance along $T$. Clerly this equation implies that
$\psi(p_n)\rightarrow\psi(p)$ as $n\rightarrow+\infty$.

Let us suppose now that the points $p_n$ are contained in the complement of $T$.
We can take $r_+$ and $r_-$ such that $p_n\in I^+_\mu(r_-)\cap I^-_\mu(r_+)$ for $n\geq n_0$.
Thus the same argument used to define $\psi(p)$ shows  that $(\psi(p_n)) $ is contained in 
some compact subset of $\Sigma\times\mathbb R$. To conclude it is sufficient to prove that
if $\psi(p_n)\rightarrow x$ then $x=\psi(p)$.
Clearly $x\in
T$. Moreover either $x$ coincides with $\psi(p)$ or there is a
geodesic segment in $T$ connecting $x$ to $\psi(p)$. Since the length of this
geodesic should be equal to the limit of $d(p_n,p)$, that is $0$, we
conclude that $x=\psi(p)$.

Eventually we have  to check that the map $\psi$ is an isometry. Let us note
that $\psi$ induces a map
\[
\psi^\#:S_p\rightarrow S'_{\psi(p)}
\]
where $S_p$ and $S'_{\psi(p)}$ are respectively the link of $p$ with
respect to $\mu$ and the link of $\psi(p)$ with respect to $\nu$.
Simply, if $c$ is the tangent vector of a geodesic arc $c$ at $p$, we
define $\psi^\#(v)=w$ where $w$ is the tangent vector to the arc
$\psi\circ c$ at $\psi(p)$.  Notice that $\psi$ is an isometry around
$p$ iff $\psi^\#$ is a $HS$-isomorphism.

Clearly $\psi^\#$ is bijective and is an isomorphism of $HS$ surfaces
far from the singular locus. On the other hand, since singularities
are contained in the hyperbolic region, a standard completeness
argument shows that it is a local isomorphism around singular points.
\end{proof}

Let $\mathrm{Diffeo}_0$ be the group of diffeomorphisms of the pair
$(\Sigma\times\mathbb R, T)$ whose restriction to $\Sigma\times\mathbb
R\setminus T$ is isotopic to the identity.  It is not difficult to
show that $\mathrm{Diffeo}_0$ acts on the space of admissible metrics
by pull-back.

Let us denote by $\Omega(g,T,\theta)$ the   quotient space: an element of
$\Omega(g, T,\theta)$ is  a singular metric with properties
described above, up to the action of $\mathrm{Diffeo}_0$.
Notice that the graph of collision and the corresponding
topological data of any structure in $\Omega(g,T,\theta)$ coincide
with those of $M$.

There is a natural forgetting map from $\Omega(g, T,\theta)$ to the set of
AdS structures on $\Sigma_g\times\mathbb R\setminus T$ up to
isotopy. 
From Proposition \ref{ext:prop}  this map is injective, so
$\Omega(g,T,\theta)$ can be identified
to a subset of anti de Sitter structures on $\Sigma\times\mathbb
R\setminus T$.  Thus $\Omega(g, T,\theta)$ inherits from this
structure space a natural topology (see \cite{epstein} for a
discussion on the topology of the space of $(G,X)$-structures on a
fixed manifold).

Let $X$ be the graph of collisions associated to $M$ equipped with the
topological data.  We denote by $\mathcal D(X)$ the set of the
\emph{admissible geometric data}, that are those geometric data which
are compatible with the topological data. More precisely an element of
$\mathcal D(X)$ is given by associating to every vertex $v$ two
hyperbolic metrics, say $\mu_L(v),\mu_R(v),$ on $\Sigma_v$ such that
the following conditions are satisfied.
\begin{enumerate}

\item Every marked point $p_i\in\Sigma_v$ is a cone point of angle
  $\theta_i$ for both $\mu_l(v),\mu_r(v)$.
\item For each edge, $e$, with vertices $e_-, e_+$, there is a disk
  $\Delta^l_{e_-}$ in $\Sigma_v$ in the isotopy class of $D_{e_-}$ and
  an embedded disk $\Delta^l_{e_+}$ in $\Sigma_w$ in the isotopy class
  of $D_{e_+}$ such that there is an isometry $(\Sigma_{e_-}
  \setminus\Delta^l_{e_-},\mu_l(v))\rightarrow (\Sigma_{e_+}\setminus
  \Delta^l_{e_+},\mu_l(w))$ homotopic to $\alpha_e$.
\item For each edge, $e$, with vertices $e_-, e_+$, there is a disk
  $\Delta^r_{e_-}$ in $\Sigma_v$ in the isotopy class of $D_{e_-}$ and
  an embedded disk $\Delta^r_{e_+}$ in $\Sigma_w$ in the isotopy class
  of $D_{e_+}$ such that there is an isometry $(\Sigma_{e_-}
  \setminus\Delta^r_{e_-},\mu_l(v))\rightarrow (\Sigma_{e_+}\setminus
  \Delta^r_{e_+},\mu_l(w))$ homotopic to $\alpha_e$.
\end{enumerate} 

\begin{remark}
If $N\in\Omega(g,T,\theta)$, its geometric data is an element of
$\mathcal D(X)$.
\end{remark}

Notice that $\mathcal D(X)$ can be regarded as a subset of the product
of suitable Teichm\"uller spaces corresponding to vertices of $X$. Thus
$\mathcal D(X)$ is equipped with a natural topology. Clearly the map
\[
  \Phi:\Omega(g,T,\theta)\rightarrow\mathcal D(X)
\]  
associating to each spacetime with collision the corresponding
geometric data is continuous.

The remaining part of this section is devoted to proving the following result. 

\begin{theorem} \label{tm:homeo}
The map $\Phi$ is a local homeomorphism.
\end{theorem}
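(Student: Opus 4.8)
The plan is to prove that $\Phi$ is a local homeomorphism by establishing three things: that $\Phi$ is continuous (already noted), that $\Phi$ is locally injective, and that $\Phi$ is locally surjective (open). Continuity having been recorded, the heart of the matter is an ``inverse construction'': given an admissible geometric datum close to $\Phi(M)$, reconstruct a nearby $m$-spacetime with collisions whose datum is the prescribed one, and show this reconstruction depends continuously on the datum. The natural tool is the theory of $(G,X)$-structures on $\Sigma_g\times\RR\setminus T$: by Proposition \ref{ext:prop} an admissible metric is determined by its restriction to the complement of the singular locus, so $\Omega(g,T,\theta)$ sits inside the space of AdS structures on $\Sigma_g\times\RR\setminus T$, and the deformation theory of such structures (Ehresmann--Thurston, as in \cite{epstein}) gives a local model near $M$.

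The first main step is \emph{local injectivity of $\Phi$}. Suppose $N, N'\in\Omega(g,T,\theta)$ are close to $M$ and $\Phi(N)=\Phi(N')$, i.e. for each vertex $v$ the pairs of left and right hyperbolic metrics agree. Working slice by slice along the graph $X$: on a good spacial slice, Lemma \ref{lm:surface} and Proposition \ref{pr:hypersurface} show that the pair $(\mu_l,\mu_r)$ together with the holonomy representation $\rho=(\rho_l,\rho_r)$ (Lemma \ref{lm:rho}) determines the AdS structure of the slice up to isotopy --- indeed $\rho$ is recovered from the holonomies of $\mu_l$ and $\mu_r$ via the identification $\isomz\cong PSL(2,\R)\times PSL(2,\R)$. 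One then propagates the resulting isometry across collisions using Lemma \ref{lm:collision}: the surgery data (the disks $D_{e,\pm}$ and the gluing homeomorphisms $i_e$) are determined by the geometric datum, so the isometries on adjacent slices are compatible on the overlaps and glue to a global isometry $N\to N'$ isotopic to the identity, i.e. $N=N'$ in $\Omega(g,T,\theta)$. A delicate point here is that one must match the spacetimes not just on a single slice but across the whole graph, using that $M$ is an $m$-spacetime (so the past of the initial slice and the future of the final slice are the canonical maximal extensions, pinned down by the holonomy) --- this is exactly what the notion of $m$-spacetime was introduced to handle.

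The second main step is \emph{openness of $\Phi$}, which is where the real work lies. Given $M$ with datum $d_0=\Phi(M)$ and a nearby admissible datum $d$, one builds a spacetime $N_d$ as follows. For each vertex $v$, the pair $(\mu_l(v),\mu_r(v))$ close to that of $M$ determines, via the local parameterization of GHMC AdS manifolds with massive particles (the extension of Mess's theorem to cone singularities of angle less than $\pi$, here adapted to angle less than $2\pi$ using the left/right metric description of Section 7), a good spacial slice; concretely one realizes the pair as coming from a surface with a transverse vector field, reversing the construction of Definition \ref{df:610} --- the content of Proposition \ref{pr:diffeo}(2) and Proposition \ref{pr:area-preserving}(2) is precisely that small deformations of the diffeomorphism $\phi:(S,\mu_l)\to(S,\mu_r)$ come from genuine nearby surfaces with transverse vector fields. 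For each edge, the compatibility conditions (2),(3) in the definition of $\mathcal D(X)$ say the boundary slices of adjacent pieces match after removing disks, and one performs the surgery of \S\ref{sssc:surgeries}, in reverse, to glue an AdS collision block $e(\Sigma_e)$ (with $\Sigma_e$ the appropriate HS-sphere, of ``causally regular'' type by Theorem \ref{tm:thierry}(1)) between them; the existence of the requisite HS-sphere with prescribed boundary elliptic structures of the right angles is guaranteed by the surgery constructions of Section 6 (Lemma \ref{surgery-key} and Proposition \ref{pr:example}). Assembling these pieces along the graph $X$ and capping off at $v_i$ and $v_f$ with the maximal GHM extensions produces an $m$-spacetime $N_d$ with $\Phi(N_d)=d$; continuity of the whole construction in $d$ follows from continuity of each ingredient (the Mess-type parameterization, Troyanov's theorem on conformal metrics with prescribed cone angles used in Proposition \ref{pr:hypersurface}, and the explicit surgery formulas).

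The \textbf{main obstacle} is the openness step, specifically showing that the pieces assembled from a perturbed datum actually fit together to form a \emph{globally hyperbolic} $m$-spacetime rather than merely a local AdS structure with the right holonomy --- one must check that the achronal/edgeless slices remain Cauchy and that no pathology (closed causal curves, loss of causal convexity, failure of the slices to be ``good'') is introduced by the perturbation. This requires the causality machinery of Section 5 (stable causality, Lemma \ref{le.causalconvex}, Corollary \ref{cor.bordgh}) together with the observation in \S\ref{sssc:transverse} that a transverse vector field after a collision can be produced from one before it; the point is that all the relevant conditions are open, so they persist under small deformation, and the uniqueness of the maximal extension then identifies the result with a well-defined point of $\Omega(g,T,\theta)$.
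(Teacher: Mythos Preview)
Your overall architecture is reasonable, and the injectivity half is close in spirit to what the paper does (both you and the paper ultimately reduce to the holonomy $\rho=(\rho_l,\rho_r)$). But the openness step has a real gap, and the paper's proof takes a different route precisely to avoid it.

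The gap is this: in your reconstruction of $N_d$ from a nearby datum $d$, you assert that for each vertex $v$ the pair $(\mu_l(v),\mu_r(v))$ ``determines, via the local parameterization of GHMC AdS manifolds with massive particles \ldots\ here adapted to angle less than $2\pi$'', a good spacial slice. That is exactly a Mess-type \emph{existence} result for cone angles in $(0,2\pi)$, and it is \emph{not} available in the paper; the only version established with particles is the case of angles $<\pi$ from \cite{cone}. Propositions~\ref{pr:area-preserving}(2) and~\ref{pr:diffeo}(2), which you invoke, do not bridge this: they describe deformations of the diffeomorphism $\phi$ \emph{inside a fixed ambient spacetime}, not the construction of a new spacetime from new metrics $(\mu_l',\mu_r')$. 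Likewise, the surgery results of Section~6 build a collision block into a spacetime that already exists; they do not manufacture the ambient slice from a prescribed pair of hyperbolic metrics. So your openness argument is circular: it presupposes the very parameterization you are trying to prove.

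The paper sidesteps this by factoring $\Phi$ through the representation variety $\mathcal R$ of holonomies of $\pi_1(\Sigma_g\times\RR\setminus T)$. First (Proposition~\ref{vk:prop} and its corollaries, assembled as Proposition~\ref{hol:prop}), a Van Kampen computation shows that an admissible geometric datum determines, and is locally determined by, a representation in $\mathcal R$; this gives an open injection $H:\mathcal D(X)\to\mathcal R$. Second (Theorem~\ref{struct:prop}), the holonomy map $\Omega(g,T,\theta)\to\mathcal R$ is shown to be a local homeomorphism by Ehresmann--Thurston deformation theory (Lemma~\ref{ep:lem}) applied on the complement of $T$, supplemented by a local analysis of HS-surfaces near each collision point (Lemma~\ref{bo:lem}) to handle the singular locus, and an induction on the number of collisions to glue the pieces. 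Composing, $\Phi$ is a local homeomorphism. The point is that Ehresmann--Thurston gives the nearby $(G,X)$-structure for free once the holonomy is perturbed; no appeal to a Mess-type inverse for angles $<2\pi$ is needed.
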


Let us illustrate the strategy to prove this theorem.

We consider representations $\rho: \pi_1(\Sigma_g\times\mathbb
R\setminus T)\rightarrow\mathrm{Isom}(AdS_3)$ such for every meridian
loop $c_e$ around an edge of $T$ we have that $\rho(c_e)$ is a pair of
elliptic transformations of angle $\theta_e$ in $PSL(2,\mathbb R)$ (we
are implicitly using the identification
$\isomz=PSL(2,\mathbb R)\times PSL(2,\mathbb R)$).  Let
us denote by $\mathcal R$ the space of such representations up to
conjugation. Clearly the holonomy of a structure in $\Omega(g, T,
\theta)$ is an element of $\mathcal R$.

Now the proof is divided in two steps.

\begin{enumerate}
\item 
We show that the geometric data associated to $M$ determines the
holonomy. Moreover we will see that any admissible geometric data
determines a representation in $\mathcal R$.
\item
We show that the holonomy map is a local homeomorphism between
$\Omega(g,T,\theta)$ and $\mathcal R$.
\end{enumerate}

\subsection{From a deformation of the geometric data to a 
deformation of the holonomy}
Let $M$ be an $m$-spacetime with collision and $T$ be the singular
locus.  In the collision graph $X$ of $M$ consider a path starting
from the initial vertex and ending at the final vertex.  Let
$v_1,\ldots,v_n$ be the vertices meeting such a segment. For any $v_i$
let $\Sigma_i$ be the corresponding surface.

For any edge $e_i=[v_i,v_{i+1}]$, let $\Sigma_i=\Sigma_{v_i}$ and
$\Sigma_{i+1}=\Sigma_{v_{i+1}}$ be the corresponding spacial slices and
let $\Omega_i$ be the neighbourhood of collision point $p_i$ such that
$\Delta_{i_-}=\Omega_i\cap\Sigma_i$ and $\Delta_{i_+}=\Omega_i\cap\Sigma_{i+1}$.  
Consider the identification
\[
   \alpha_i:\Sigma_i\setminus \Delta_{i_-}\rightarrow
   \Sigma_{i+1}\setminus \Delta_{i_+}
 \]
  given by the flow of a  timelike vector field tangent to $\partial\Omega_i$.
  
 Let $\pi_1(M)$ denote the fundamental group of $\Sigma_g\times\mathbb
 R\setminus T$ and $\pi_1(\Sigma_i)$ denote the fundamental group of
 the punctured surface $\Sigma_i\setminus T$.  The following
 proposition allows to compute $\pi_1(M)$.

\begin{prop}\label{vk:prop}
The inclusion  homomorphism
\[
  i_*: \pi_1(\Sigma_1)*\pi_1(\Sigma_2)*\ldots*\pi_1(\Sigma_n)\rightarrow\pi_1(M)
\]
is surjective and $\ker i_*$ is the normal subgroup generated by
$\alpha_i(\gamma)\gamma^{-1}$ for every $i=1,\ldots,n-1$ and
$\gamma\in\pi_1(\Sigma_i\setminus\Delta_{i_-})$.
\end{prop}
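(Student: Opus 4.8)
The plan is to recognize this as an instance of the Seifert--van Kampen theorem, applied to a decomposition of $M^\ast = \Sigma_g\times\mathbb R\setminus T$ adapted to the graph of collisions $X$. First I would reorganize $M$ as a union of "slab" pieces: along the chosen path $v_1,\dots,v_n$ from the initial to the final vertex, the collision hypersurfaces $\Sigma_1,\dots,\Sigma_n$ (more precisely, thickenings $\Sigma_i\times(-\varepsilon,\varepsilon)$ of the spacelike surfaces) cut $M$ into pieces $W_0,W_1,\dots,W_n$, where $W_i$ for $1\le i\le n-1$ is the closure of the region between $\Sigma_i$ and $\Sigma_{i+1}$ and contains exactly one collision point $p_i$ in its interior, while $W_0$ is the past of $\Sigma_1$ and $W_n$ the future of $\Sigma_n$. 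Since the spacetime is globally hyperbolic on each spacial slice and the collisions are isolated, each $W_i$ for $1\le i\le n-1$ deformation retracts (in its regular part) onto $\Sigma_i\setminus T$: the key geometric input is that $W_i\setminus T$ is, up to isotopy, a product $(\Sigma_i\setminus T)\times[0,1]$ with the single collision point $p_i$ removed, and removing an interior point of a thickened surface does not change its homotopy type in the relevant sense (the link of $p_i$ is a sphere, so $p_i$ has a punctured-ball neighborhood, and $\pi_1$ is unaffected). Likewise $W_0\setminus T$ and $W_n\setminus T$ retract onto $\Sigma_1\setminus T$ and $\Sigma_n\setminus T$ respectively, using the splitting of a globally hyperbolic spacetime (Corollary \ref{cor.splitting}) applied to $M_i$ and $M_f$.

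Next I would set up the van Kampen amalgamation. The overlaps are neighborhoods of the cutting hypersurfaces $\Sigma_i\times(-\varepsilon,\varepsilon)\setminus T$, each homotopy equivalent to $\Sigma_i\setminus T$; but one must be careful because inside $W_i$ the surface $\Sigma_i$ sits as $\Delta_{i_-}$ glued to the rest via the flow map, while inside $W_{i-1}$ it sits as $\Delta_{(i-1)_+}$. The identification $\alpha_{i}:\Sigma_i\setminus\Delta_{i_-}\to\Sigma_{i+1}\setminus\Delta_{i_+}$ records precisely how the "outside-the-surgery-disk" part of $\Sigma_i$ is glued across the collision $p_i$ to $\Sigma_{i+1}$. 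Concretely, $W_i\setminus T$ is obtained from $(\Sigma_i\setminus\Delta_{i_-})\times[0,1]$ together with the model collision neighborhood $\Omega_i$, glued so that the two ends are identified with $\Sigma_i\setminus\Delta_{i_-}$ (inside $\Sigma_i$) and with $\Sigma_{i+1}\setminus\Delta_{i_+}$ (inside $\Sigma_{i+1}$) via $\alpha_i$. Since $\Omega_i\setminus T = e(\Sigma)\setminus(\text{vertex}\cup T)$ for an HS-sphere $\Sigma$ and this is simply connected away from its relation to the boundary pieces — more precisely its fundamental group is generated by meridians already accounted for — the effect on $\pi_1$ of passing through $W_i$ is exactly to impose $\alpha_i(\gamma)=\gamma$ for $\gamma\in\pi_1(\Sigma_i\setminus\Delta_{i_-})$. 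Iterating van Kampen over the pieces $W_0,\dots,W_n$ then yields that $\pi_1(M)$ is generated by the images of $\pi_1(\Sigma_1),\dots,\pi_1(\Sigma_n)$, with the only relations being those of the form $\alpha_i(\gamma)\gamma^{-1}=1$; this is the claimed presentation.

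The main obstacle I anticipate is the careful bookkeeping of \emph{which} copy of $\pi_1(\Sigma_i\setminus T)$ appears and how the surgery disks $\Delta_{i_\pm}$ interact with the $\pi_1$ computation: one has to check that $\pi_1(\Sigma_i\setminus\Delta_{i_-})$ injects appropriately, that the collision neighborhood $\Omega_i$ contributes no new generators (its $\pi_1$, computed from $e(\Sigma)$, is generated by loops around the singular lines entering and leaving the collision, which are already meridians in the adjacent slices), and that no spurious relations are introduced. This requires knowing that the collision HS-sphere $\Sigma$ has the structure described in Theorem \ref{tm:thierry} (a handful of hyperbolic and de Sitter regions), so that $e(\Sigma)\setminus T$ has controlled topology; and it requires the fact, used implicitly, that the graph $X$ has a single initial and single final vertex (proved at the start of Section 8) so that a path $v_1,\dots,v_n$ spanning all vertices exists — or, if $X$ is not a single path, one must run the argument over the whole graph, taking a free product over \emph{all} vertices and imposing the relations from \emph{all} edges, which is the general statement. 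I would handle the general case by induction on the number of edges of $X$, adding one collision at a time and invoking van Kampen for each addition, with the base case being a single spacial slice where $\pi_1(M)=\pi_1(\Sigma_1\setminus T)$ by Corollary \ref{cor.splitting}.
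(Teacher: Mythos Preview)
Your approach is essentially the paper's: van Kampen plus induction on the number of collisions. The paper streamlines it by, in the base case $n=2$, first retracting $M\setminus T$ onto a two-dimensional spine---namely $\Sigma_1$ together with a second surface obtained by cutting out $\Delta_{1_-}$ and capping with a new disk (this capped surface being isotopic to $\Sigma_2$)---and then applying van Kampen to that 2-complex; your slab decomposition $W_0,\dots,W_n$ achieves the same thing one dimension up.

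One point to tighten: your claim that $W_i\setminus T$ is ``$(\Sigma_i\setminus T)\times[0,1]$ with the collision point removed'' is not literally correct, since the singular graph $T$ branches at $p_i$ and the number of punctures on $\Sigma_i$ and $\Sigma_{i+1}$ can differ, so $W_i\setminus T$ cannot in general retract onto $\Sigma_i\setminus T$. Your subsequent, more careful description---$(\Sigma_i\setminus\Delta_{i_-})\times[0,1]$ glued to the collision neighborhood $\Omega_i$---is the right one, and is exactly what the paper's 2D retract encodes.
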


\begin{proof}
First, let us consider the case $n=2$.  In this case we can cut
$\Delta_{1_-}$ from $\Sigma_1$ and glue a new disk to obtain a surface
$\Sigma$ that is isotopic to $\Sigma_2$.  Notice that $(\Sigma_1\cup
\Sigma)\setminus T$ is a deformation retract of $\Sigma_g\times\mathbb
R\setminus T$.  Thus by applying Van Kampen Theorem to $(\Sigma_1\cup
\Sigma)\setminus T$ we get the result.

Repeating inductively the argument one gets the general case.
\end{proof}

\begin{cor}
The holonomy of $M$ is determined by $X$.
\end{cor}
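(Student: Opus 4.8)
The plan is to combine Proposition~\ref{vk:prop} with the description of the holonomy of a good spacial slice in terms of its left and right hyperbolic metrics. Fix a path in $X$ from the initial to the final vertex, with vertices $v_1,\dots,v_n$ and associated spacelike surfaces $\Sigma_1,\dots,\Sigma_n$; by Proposition~\ref{vk:prop} the inclusions induce a surjective homomorphism $\pi_1(\Sigma_1)\ast\cdots\ast\pi_1(\Sigma_n)\to\pi_1(M)$, so the holonomy $\rho:\pi_1(M)\to\isomz$ is determined as soon as we know each restriction $\rho_i:=\rho\circ i_*|_{\pi_1(\Sigma_i)}$. (A general graph $X$, which need not be a tree, is handled by choosing a spanning path meeting every vertex; the representation produced below is, by construction, consistent with every gluing edge, so it does not depend on the path chosen.)

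The first step is that each $\rho_i$ is read off from the geometric data at $v_i$. Since $M$ is good, $\Sigma_i$ carries a transverse vector field, and then Definition~\ref{df:610} together with Lemma~\ref{lm:rho} gives, under the identification $\isomz\cong PSL(2,\mathbb R)\times PSL(2,\mathbb R)$, that $\rho_i=(\rho_l^{(i)},\rho_r^{(i)})$, where $\rho_l^{(i)}$ and $\rho_r^{(i)}$ are the holonomies of the hyperbolic cone metrics $\mu_l(v_i)$ and $\mu_r(v_i)$ on $\Sigma_i$ (these metrics are well defined by Lemma~\ref{lm:surface}, and are exactly the geometric datum attached to $v_i$). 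A hyperbolic metric with cone singularities of angle less than $2\pi$ has a developing map, hence a holonomy representation well defined up to conjugation; so $\rho_i$ is determined, up to conjugation in $PSL(2,\mathbb R)\times PSL(2,\mathbb R)$, by the geometric data.

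The second step is to fix these conjugations coherently so that the $\rho_i$ glue to a single representation. For the edge $[v_i,v_{i+1}]$, the relations of Proposition~\ref{vk:prop} are generated by $\alpha_i(\gamma)\gamma^{-1}$ with $\gamma\in\pi_1(\Sigma_i\setminus\Delta_{i_-})$, and admissibility of the geometric data says precisely that $\alpha_i$ is isotopic to an isometry for $\mu_l$ and to an isometry for $\mu_r$ between the two truncated surfaces. Hence there is a pair $(g_l,g_r)\in PSL(2,\mathbb R)^2$ with $\rho_{i+1}\circ(\alpha_i)_*=(g_l,g_r)\,\rho_i\,(g_l,g_r)^{-1}$ on $\pi_1(\Sigma_i\setminus\Delta_{i_-})$. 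Since a hyperbolic surface of negative Euler characteristic with cone angles less than $2\pi$ has non-elementary holonomy, such $(g_l,g_r)$ is unique. Normalising $\rho_1$ to $\rho_l^{(1)}\times\rho_r^{(1)}$ in a fixed frame and transporting along the path by these uniquely determined intertwiners yields a well-defined homomorphism on the free product which kills all the relations of Proposition~\ref{vk:prop}; this is $\rho$, up to a single global conjugation. Thus $\rho$ is determined by $X$ together with its geometric data.

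I expect the delicate point to be the second step: checking that the intertwiners $(g_l,g_r)$ are genuinely unambiguous (the non-elementarity input, together with the fact that an isometry isotopic to $\alpha_i$ is unique at the level of $\pi_1$ up to inner automorphisms), and that transporting them around the cycles of $X$, rather than only along one spanning path, is consistent. The latter is automatic here because the geometric data comes from an honest spacetime $M$; it is exactly the compatibility one must later verify by hand when, in Theorem~\ref{tm:homeo}, the construction is run backwards starting from an abstract admissible geometric datum.
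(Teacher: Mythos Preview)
Your argument is correct and is precisely the one the paper has in mind; the paper states this corollary without proof, treating it as an immediate consequence of Proposition~\ref{vk:prop} together with Lemma~\ref{lm:rho}. Your second step (normalising the conjugations along edges via the admissibility isometries) is really the content of the \emph{next} corollary in the paper, which constructs a representation from an abstract admissible geometric datum; so you have effectively proved both corollaries at once, which is fine but more than is needed here.
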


\begin{cor}
If $Y$ is an admissible geometric data then there is a representation
$\pi_1(M)\rightarrow \isomz=PSL(2,\mathbb R)\times PSL(2,\mathbb
R)$ such that for each $i$ the composition
\[
    \pi_1(\Sigma_i)\rightarrow\pi_1(M)\rightarrow PSL(2,\mathbb
    R)\times PSL(2,\mathbb R)
 \]
 coincides with the holonomy of the left and right metric.
 \end{cor}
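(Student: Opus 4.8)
The plan is to build the desired representation one vertex at a time and then descend it to $\pi_1(M)$ through the presentation of Proposition \ref{vk:prop}. For each vertex $v_i$ on the chosen path the metric $\mu_l(v_i)$ has a holonomy $\rho^i_l\colon\pi_1(\Sigma_i)\to PSL(2,\R)$ and $\mu_r(v_i)$ a holonomy $\rho^i_r\colon\pi_1(\Sigma_i)\to PSL(2,\R)$; combining them gives $h_i=(\rho^i_l,\rho^i_r)\colon\pi_1(\Sigma_i)\to\isomz$. By the universal property of the free product these assemble into a single homomorphism
\[
  h\colon\pi_1(\Sigma_1)*\cdots*\pi_1(\Sigma_n)\longrightarrow\isomz,
\]
and the corollary follows as soon as $h$ factors through the quotient $\pi_1(M)$ described in Proposition \ref{vk:prop}.

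By that proposition it suffices to show that $h$ kills each generator $\alpha_i(\gamma)\gamma^{-1}$ of $\ker i_*$, i.e. that $h_{i+1}\bigl((\alpha_i)_*\gamma\bigr)=h_i(\gamma)$ for every $\gamma\in\pi_1(\Sigma_i\setminus\Delta_{i_-})$. To verify this I would invoke the admissibility conditions (2) and (3) defining $\mathcal D(X)$. Condition (2) provides an isometry $\psi_l\colon(\Sigma_i\setminus\Delta^l_{i_-},\mu_l(v_i))\to(\Sigma_{i+1}\setminus\Delta^l_{i_+},\mu_l(v_{i+1}))$ homotopic to $\alpha_i$; since all disks involved lie in the isotopy class of $D_{e_-}$, after isotopy I may take them equal to $\Delta_{i_-}$, so that $(\psi_l)_*=(\alpha_i)_*$ on $\pi_1(\Sigma_i\setminus\Delta_{i_-})$. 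An isometry of hyperbolic surfaces conjugates holonomies, whence
\[
  \rho^{i+1}_l\circ(\alpha_i)_*=C^i_l\,\rho^i_l\,(C^i_l)^{-1}
\]
for some $C^i_l\in PSL(2,\R)$, and condition (3) gives the analogous identity on the right with some $C^i_r$.

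The main obstacle is exactly that these identities hold only up to the conjugating factors $C^i_l$ and $C^i_r$, so a priori $h$ need not annihilate the relations. The remedy is a normalization carried out inductively along the path: having fixed $h_i$, I would replace $h_{i+1}$ by its conjugate under $(C^i_l,C^i_r)\in\isomz$, which trivializes both factors simultaneously and yields $h_{i+1}\bigl((\alpha_i)_*\gamma\bigr)=h_i(\gamma)$ on the nose. Since holonomy is only defined up to conjugacy, this replacement does not disturb the property that $h_i$ is a holonomy of the left and right metrics at $v_i$. The key structural point making the normalization consistent is that Proposition \ref{vk:prop} presents $\pi_1(M)$ through a single path $v_1,\dots,v_n$, so the relations form a chain rather than a cycle and no monodromy obstruction arises as $i$ runs from $1$ to $n-1$. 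With these choices $h$ descends to $\bar h\colon\pi_1(M)\to\isomz$, and by construction the composition $\pi_1(\Sigma_i)\to\pi_1(M)\to\isomz$ (the second arrow being $\bar h$) equals $h_i=(\rho^i_l,\rho^i_r)$, the combined holonomy of the left and right metrics, which is precisely the asserted restriction property.
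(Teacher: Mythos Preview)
Your proof is correct and follows essentially the same approach as the paper's: take the left/right holonomies $h_i^\pm$ at each vertex, use that $\alpha_i$ is isotopic to an isometry (the admissibility conditions) to see that $h_{i+1}^\pm\circ(\alpha_i)_*$ and $h_i^\pm$ agree up to conjugation, then normalize the $h_{i+1}^\pm$ inductively along the chain so the relations of Proposition~\ref{vk:prop} are killed. Your write-up is in fact more explicit than the paper's, notably in flagging that the chain structure of the presentation prevents any monodromy obstruction.
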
   
\begin{proof}
Let $h_i^\pm:\pi_1(M)\rightarrow PSL(2,\mathbb R)$ be the holonomy of
$\mu_i^\pm$.  Since
\[
\alpha_1: \Sigma_1\setminus\Delta_{1_-}\rightarrow \Sigma_2\setminus\Delta_{2_+}
\]
is isotopic to an isometry, up to changing $h_2$ by conjugation we can suppose
\[
   h_1^\pm(\gamma)=h_2^\pm(\alpha_1(\gamma))
\]
 for every $\gamma\in\pi_1(\Sigma_1\setminus\Delta_{1_-}$.
 
 Inductively one shows that
 \[
   h_i^\pm(\gamma)=h_{i+1}^\pm(\alpha_i(\gamma))
 \]
 and thus by the presentation of the fundamental group of $M$ we get
 that the $h_i^\pm$'s glue together to a representation
 \[
    h^\pm:\pi_1(M)\rightarrow PSL(2,\mathbb R)\,.
\]      
\end{proof}

The following proposition summarizes the results of this section.
\begin{prop}\label{hol:prop}
There is a natural map
\[
   H:\mathcal D(X)\rightarrow \mathcal R
\]
that is an open injection. Moreover if $Y$ is the graph of collisions
of some $m$-spacetime $N$, then $H(Y)$ coincides with the holonomy of
$N$.
\end{prop}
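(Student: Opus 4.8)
Proposition \ref{hol:prop} assembles pieces that have essentially been established already, so the plan is mostly one of careful bookkeeping. The map $H$ is defined by sending an admissible geometric data $Y \in \mathcal D(X)$ to the representation $\rho = (h^+, h^-): \pi_1(M) \to \isomz = \op{PSL}(2,\R)\times\op{PSL}(2,\R)$ constructed in the second corollary of this subsection: the holonomies $h_i^\pm$ of the left and right metrics $\mu_i^\pm$ on the vertex surfaces glue, via the presentation of $\pi_1(M)$ given in Proposition \ref{vk:prop}, into a single representation. First I would check that $H$ is well-defined, i.e.\ that the glued representation does not depend on the choices of conjugating elements used to match $h_i^\pm$ across the edge identifications $\alpha_i$: two such choices differ by elements centralizing the image of each $\pi_1(\Sigma_i \setminus \Delta_{i_-})$, and since these subgroups are non-elementary the centralizer is trivial, so $\rho$ is well-defined up to global conjugacy. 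One must also verify $\rho \in \mathcal R$, which is immediate: a meridian $c_e$ around an edge of $T$ is conjugate into some $\pi_1(\Sigma_i)$ as a small loop around a cone point of angle $\theta_e$, whose $\mu_i^\pm$-holonomy is by construction a rotation of angle $\theta_e$, hence $\rho(c_e)$ is a pair of elliptics of angle $\theta_e$.

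Next I would prove injectivity of $H$. Suppose two admissible geometric data $Y, Y'$ give conjugate representations; after conjugating we may assume $\rho = \rho'$. For each vertex $v_i$ the restriction $\rho|_{\pi_1(\Sigma_i)}$ equals the holonomy of both $(\Sigma_i, \mu_i^\pm)$ and $(\Sigma_i, {\mu'_i}^\pm)$. By Proposition \ref{pr:hypersurface}, a closed hyperbolic surface with cone singularities of angle less than $2\pi$ is determined by its holonomy; hence $\mu_i^\pm$ and ${\mu'_i}^\pm$ are isotopic, and this holds simultaneously at every vertex. It remains to see that the isotopies can be chosen compatibly with the edge data, i.e.\ that $Y$ and $Y'$ represent the same point of $\mathcal D(X)$ — but $\mathcal D(X)$ was defined as a subset of a product of Teichm\"uller spaces (cone points fixed), and the per-vertex isotopy classes are exactly the Teichm\"uller-space coordinates, so equality of all the $[\mu_i^\pm]$ is equality in $\mathcal D(X)$. (The edge identifications $\alpha_e$, being part of the fixed topological data $X$, impose closed conditions cutting out $\mathcal D(X)$ and do not add further moduli.)

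For openness of $H$ the argument is a dimension/submersion count at the level of character varieties. The target $\mathcal R$ is a real-analytic set; the source $\mathcal D(X)$ sits inside a product $\prod_i \cT_{\Sigma_i}\times\cT_{\Sigma_i}$ cut out by the edge-matching isometry conditions (2) and (3) in the definition of admissibility. Along a path in $X$ from the initial to the final vertex, Proposition \ref{vk:prop} presents $\pi_1(M)$ as an iterated amalgam of the $\pi_1(\Sigma_i)$ over the disk-complement subgroups $\pi_1(\Sigma_i \setminus \Delta_{i_-})$; correspondingly a representation $\rho \in \mathcal R$ near $H(Y)$ restricts to each factor giving nearby representations of $\pi_1(\Sigma_i)$, and by the classical fact that the holonomy map from Teichm\"uller space of a surface with cone points (angles $<\pi$, extended to $<2\pi$ via the deformation argument already used in the proof of Proposition \ref{pr:hypersurface}) is a local homeomorphism onto an open subset of the character variety, each such restriction comes from a unique nearby point of $\cT_{\Sigma_i}\times\cT_{\Sigma_i}$. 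These nearby metrics automatically satisfy the edge-matching conditions because those conditions are equalities of holonomies of the subgroups $\pi_1(\Sigma_i\setminus\Delta_{i_-})$, inherited from $\rho$ itself; hence they define a point of $\mathcal D(X)$ mapping to $\rho$, and $H$ is open onto its image. The last sentence of the proposition — that $H$ of the geometric data of an $m$-spacetime $N$ equals the holonomy of $N$ — is precisely the content of Lemma \ref{lm:rho} (the holonomy $\rho$ of $N$ decomposes as $(\rho_l,\rho_r)$) together with the construction of $\mu_l,\mu_r$ from transverse vector fields in Definition \ref{df:610}, so it requires only unwinding definitions.

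The main obstacle will be the openness step: one must make precise that the cone-angle-less-than-$2\pi$ version of ``holonomy is a local homeomorphism of Teichm\"uller space'' holds, using the one-parameter deformation trick of Proposition \ref{pr:hypersurface} to reduce from angles $<2\pi$ to angles $<\pi$ where the statement is classical (\cite{dryden-parlier}, \cite{troyanov}), and then to check that the amalgamation over the $\pi_1(\Sigma_i\setminus\Delta_{i_-})$ is ``transverse'' in the sense that a nearby global representation restricts to nearby, and still admissible, per-vertex data — in other words that no further obstruction to gluing appears. Given the non-elementary nature of all the subgroups involved and the fact that the edge conditions are pure equalities of already-fixed restrictions, I expect this to go through, but it is where the real work lies.
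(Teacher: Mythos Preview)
Your proposal is correct and follows essentially the same approach as the paper. The paper's own proof is terser: it treats the construction of $H$, its injectivity, and the ``Moreover'' clause as already handled by the two corollaries immediately preceding the proposition (together with Proposition~\ref{pr:hypersurface}), and devotes the written proof solely to openness, which it argues exactly as you do --- restrict a nearby $h'\in\mathcal R$ to each $\pi_1(\Sigma_v)$, invoke that holonomies of cone surfaces form an open set to get nearby metrics $\mu'_l(v),\mu'_r(v)$, and observe via Proposition~\ref{vk:prop} that the edge-matching conditions are automatic because they are equalities of restrictions of the single global $h'$.
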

\begin{proof}
The only point that remains to check is that $H$ is open. 
Take an admissible geometric data $\mu=(\mu_l(v),\mu_r(v))$ and denote by
$h=H(\mu)$.

Notice that the composition
\[
\begin{CD}
  \pi_1(\Sigma_v)@>>>\pi_1(M)@>>h> PSL_2(\mathbb
  R)\times PSL_2(\mathbb R)
\end{CD}
\]
produces the pair of holonomies of $\mu_l(v)$ and $\mu_r(v)$.

Since the set of holonomies of surfaces with cone points is open we have that
if $h'$ is sufficently close to $h$ in $\mathcal R$, then
for every vertex $v$ of $X$ the composition
\[
\begin{CD}
  \pi_1(\Sigma_v)@>>>\pi_1(M)@>>h'> PSL_2(\mathbb
  R)\times PSL_2(\mathbb R)
\end{CD}
\]
is a pair of holonomies of two hyperbolic metrics with cone angles 
(say $\mu_l'(v)$ and $\mu_r'(v)$) on $\Sigma_v$.

By Proposition \ref{vk:prop}, it is not difficult to check that metrics
$(\mu_l'(v),\mu_r'(v))$ form a geometric data $\mu'$ and $H(\mu')=h'$.
\end{proof}
\subsection{From the deformation of the holonomy 
to the deformation of the spacetime}

 In this section we prove that
small deformations of the holonomy $h$ of $M$ are holonomies for some
$m$-spacetime.  More precisely we prove the following theorem.
\begin{theorem}\label{struct:prop}
The holonomy map
\[
\Omega(g,T,\theta)\rightarrow \mathcal R
\]
is a local homeomorphism.
\end{theorem}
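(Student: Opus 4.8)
The plan is to prove Theorem~\ref{struct:prop} by the standard strategy for deformation results on $(G,X)$-structures: show that the holonomy map is both open and locally injective, using the decomposition of an $m$-spacetime into a finite collection of globally hyperbolic pieces (one for each vertex of the collision graph $X$) glued along the surgery regions encoded by the edges. Local injectivity of the holonomy for each \emph{non-singular} globally hyperbolic piece is essentially Mess's theorem together with its extension to particles in \cite{cone}; the new content is to check that these local homeomorphisms can be glued coherently along the collision locus, using Proposition~\ref{vk:prop} (the van Kampen description of $\pi_1(M)$) and Proposition~\ref{ext:prop} (the extension of isometries across the singular locus). Openness has already been prepared: by Proposition~\ref{hol:prop} the map $H:\mathcal D(X)\to\mathcal R$ is an open injection, and by the earlier part of Section~8 the composite $\Omega(g,T,\theta)\xrightarrow{\Phi}\mathcal D(X)\xrightarrow{H}\mathcal R$ is the holonomy map, so it suffices to handle $\Phi$ and then combine.

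The key steps, in order, would be as follows. First I would fix an $m$-spacetime $M$ with holonomy $h$ and a path $v_1,\dots,v_n$ in $X$ from the initial to the final vertex, giving spacial slices $\Sigma_1,\dots,\Sigma_n$ and collision neighbourhoods $\Omega_i$ as in Section~8. Second, for a holonomy $h'$ close to $h$, Proposition~\ref{hol:prop} produces a nearby admissible geometric datum $\mu'=(\mu_l'(v),\mu_r'(v))$; from the work in Section~7 (Lemma~\ref{lm:collision}, Proposition~\ref{pr:disk}, Proposition~\ref{pr:area-preserving}) one reconstructs, for each vertex $v$, a good spacial slice $\Omega_v'$ realizing the pair $(\mu_l'(v),\mu_r'(v))$ — this uses the fact that a GHMC AdS manifold with massive particles is determined by its left and right metrics, together with the surgery recipe relating adjacent slices. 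Third, one glues the pieces $\Omega_v'$ along the edges of $X$: the edge datum $\alpha_e$, being close to an isometry for $h'$ as well, matches the boundary surgery disks, and the van Kampen presentation of $\pi_1(M)$ in Proposition~\ref{vk:prop} guarantees that the reassembled holonomy is exactly $h'$. Fourth, one fills in the initial and final MGH pieces $M_i,M_f$ to obtain again an $m$-spacetime, using the maximality statement from the previous subsection. Fifth, for local injectivity, if two structures near $M$ in $\Omega(g,T,\theta)$ have the same holonomy, one applies Lemma~\ref{lm:surface} and Proposition~\ref{pr:hypersurface} slice by slice to get isometries on each good spacial slice, checks these agree on overlaps using Lemma~\ref{le.coincide}-type arguments (isometries of AdS manifolds are determined near a Cauchy surface), and finally invokes Proposition~\ref{ext:prop} to extend the resulting isometry of the regular part across the singular locus $T$.

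The hard part will be the gluing step: one must verify that the locally defined diffeomorphisms/isometries coming from the per-vertex realizations are mutually compatible along the collision regions, so that they assemble into a genuine singular AdS metric on $\Sigma_g\times\RR$ with the prescribed singular locus $T$ and the prescribed collision graph — rather than merely a disjoint union of pieces. This requires knowing that the surgery disks $\Delta^l_{e_\pm},\Delta^r_{e_\pm}$ deform continuously with the holonomy and that the two transverse-vector-field descriptions (before and after each collision) remain compatible; the relevant continuity is exactly what Proposition~\ref{pr:disk} and the explicit warped-product computations in Section~7 provide, but keeping track of the isotopy classes and the marked points through the deformation is delicate. A secondary subtlety is that one only obtains a \emph{local} homeomorphism, since for holonomies far from $h$ the reconstructed pieces may fail to glue to a globally hyperbolic object or the collision graph may change; restricting to a small neighbourhood of $h$ in $\mathcal R$ (small enough that all the openness statements of Section~7 and Proposition~\ref{hol:prop} apply simultaneously for the finitely many vertices and edges of $X$) is what makes the argument go through.
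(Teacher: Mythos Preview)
Your proposal has a genuine gap at the reconstruction step. You want to go from a nearby holonomy $h'$ to a nearby geometric datum $\mu'=(\mu_l'(v),\mu_r'(v))_v$ via Proposition~\ref{hol:prop}, and then ``reconstruct, for each vertex $v$, a good spacial slice $\Omega_v'$ realizing the pair $(\mu_l'(v),\mu_r'(v))$''. But this last arrow --- from a pair of hyperbolic cone metrics to an AdS spacetime with particles --- is \emph{not available} in the paper. The results of \cite{cone} give this correspondence only for cone angles strictly less than $\pi$; for angles in $[\pi,2\pi)$ the existence of a GHMC AdS manifold with prescribed left and right metrics is not established anywhere in Section~7, which only produces the forward map (spacetime $\mapsto$ left/right metrics). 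In fact, the statement that $\Phi:\Omega(g,T,\theta)\to\mathcal D(X)$ is a local homeomorphism is Theorem~\ref{tm:homeo}, and in the paper's logic it is \emph{deduced from} Theorem~\ref{struct:prop} together with Proposition~\ref{hol:prop}, not used to prove it. So your argument is circular: you are invoking (a local version of) the very correspondence that Theorem~\ref{struct:prop} is meant to establish. The same circularity affects your injectivity argument: knowing that two nearby structures have the same left/right metrics on each slice does not by itself produce an isometry between them unless you already know the metrics determine the structure.

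The paper's approach is quite different and avoids the geometric data entirely at this stage. It works directly with the $(G,X)$-structure on the regular part $\Sigma_g\times\RR\setminus T$ and uses the Ehresmann--Thurston principle (stated as Lemma~\ref{ep:lem}) on compact pieces with boundary. The one new ingredient is a local analysis of the collision point: Lemma~\ref{bo:lem} shows that HS-surfaces close to the link $S_0$ of a collision are determined by, and vary continuously with, their holonomy restricted to $G_0=\pi_1(S_0\setminus T)$. With this in hand, the single-collision case is handled by choosing nested tubular neighbourhoods $U_1\supset U_2\supset U_3$ of the singular locus in a compact piece $M_0$ between two spacelike slices, deforming the structure on $U_1$ via Lemma~\ref{bo:lem} (this is where the cone angles and the collision are controlled), deforming the structure on $M_0\setminus U_3$ via Lemma~\ref{ep:lem}, and gluing along $U_2\setminus U_3$; uniqueness comes from the uniqueness clauses of the same two lemmas. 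The multi-collision case is then an induction: cut $M$ along a surface $\Sigma$ with a transverse vector field, apply the inductive hypothesis to the past $M_-$ and future $M_+$, and reglue through the MGH spacetime $N(h')$ with particles containing $\Sigma$. At no point does one need to reconstruct a spacetime from left/right hyperbolic metrics.
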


To prove this proposition we will use the following well-known fact
about $(G,X)$-structures on compact manifolds with boundary
\cite{epstein}.
\begin{lemma}\label{ep:lem}
Let $N$ be a smooth compact manifold with boundary and let $N'\subset
N$ be a submanifold such that $N\setminus N'$ is a collar of $N$.
\begin{itemize}
\item 
Given a $(X,G)$-structure $M$ on $N$ let $hol(M):\pi_1(N)\rightarrow G$
be the corresponding holonomy (that is defined up to conjugacy). 
Then, the holonomy map from the space of $(X,G)$-structures on $N$ to
  the space of representations of $\pi_1(N)$ into $G$ (up to conjugacy)
\[
    M\mapsto hol(M)
\]
is an open map.
\item
Let $M_0$ be an $(X,G)$-structure on $N$ and denote by $M_0'$ the
restriction of $M_0$ on $N'$.

There is a neighbourhood $\mathcal U$ of $M_0$ in the set of
$(X,G)$-structures on $N$ and a neighbourhood $\mathcal V$ of $M'_0$
in the set of $(X,G)$-structures on $N'$ such that for every structure
$M'\in\mathcal V$ there is a unique $M\in\mathcal U$ such that
$hol(M')=hol(M)$. Moreover, there is an embedding as $(X,G)$-manifolds
\[
    M'\hookrightarrow M.
\]
\end{itemize}
\end{lemma}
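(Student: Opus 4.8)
The statement is the Ehresmann--Thurston deformation theorem (the holonomy principle) for $(G,X)$-structures on a compact manifold with boundary, so the plan is to reprove it via developing maps, using the reformulation of a structure as a section of a flat bundle. Recall that a $(G,X)$-structure on $N$ with holonomy $\rho\colon\pi_1(N)\to G$ is the same as a $\rho$-equivariant immersion $\dev\colon\tilde N\to X$, equivalently a section $s$ of the flat bundle $E_\rho=\tilde N\times_\rho X\to N$ whose restriction to each fibre is a local diffeomorphism (a \emph{developing section}); two structures represent the same point of the deformation space $\mathcal D(N)$ when their developing maps are equivariantly isotopic. The topology on $\mathcal D(N)$ is the $C^1$-topology on developing maps over compact sets, and on $\mathrm{Hom}(\pi_1(N),G)/G$ the usual quotient topology; since $N$ is compact, $\pi_1(N)$ is finitely generated and this is well behaved.

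For the openness statement (first bullet) I would fix $M_0$ with developing section $s_0$ of $E_{\rho_0}$ and assemble the family bundle $\mathcal E\to N\times\mathcal O$, where $\mathcal O$ is a neighbourhood of $\rho_0$ in $\mathrm{Hom}(\pi_1(N),G)$, obtained by quotienting $\tilde N\times\mathcal O\times X$ by $\gamma\cdot(\tilde x,\rho,y)=(\gamma\tilde x,\rho,\rho(\gamma)y)$, so that $\mathcal E|_{N\times\{\rho\}}=E_\rho$. Since $N\times\{\rho_0\}$ is a deformation retract of $N\times\mathcal O$, I would extend $s_0$ to a section $\mathbf s$ of $\mathcal E$ over $N\times\mathcal O'$ (parallel transporting $s_0$ along a connection in the $\mathcal O$-direction; compactness of $N$ makes this unproblematic). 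Fibrewise immersiveness is an open condition, so after shrinking $\mathcal O'$ the restriction $s_\rho:=\mathbf s|_{N\times\{\rho\}}$ is a developing section for every $\rho\in\mathcal O'$, defining a structure $M_\rho$ with $\mathrm{hol}(M_\rho)=\rho$ and $M_\rho\to M_0$. This exhibits the holonomy map as open.

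The local uniqueness underlying the second bullet is the delicate point. Given two structures $M_1,M_2$ on $N$ (or on $N'$), both $C^1$-close to $M_0$ and with the \emph{same} holonomy $\rho'$, they correspond to two developing sections $s_1,s_2$ of the single bundle $E_{\rho'}$, both close to the transported $s_0$. Here one cannot interpolate the developing maps directly inside $X$, because in the Lorentzian models ($\dS$, $\HS$) the group $G$ carries no invariant metric; instead I would fix a tubular neighbourhood of $s_0(N)$ in $E_{\rho'}$, use the resulting normal-bundle coordinates to identify nearby sections with small sections of a vector bundle, and linearly interpolate $s_t$. For $M_1,M_2$ close enough to $M_0$ the path $s_t$ stays fibrewise immersive (immersiveness is open and $s_t$ remains in a fixed small neighbourhood, using compactness of $N$), producing an isotopy of $(G,X)$-structures with constant holonomy and hence $M_1\cong M_2$ in $\mathcal D$. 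This is exactly local injectivity of the holonomy map, allowing isotopies that move $\partial N$.

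Finally I would assemble the second bullet. Because $N\setminus N'$ is a collar, $N'\hookrightarrow N$ is a homotopy equivalence, so $\pi_1(N')\cong\pi_1(N)$, the two representation varieties are identified, and $\mathrm{hol}(M_0')=\rho_0$. By continuity of $\mathrm{hol}$ I would choose $\mathcal V\ni M_0'$ small enough that holonomies land in the $\mathcal O'$ of the openness step and within the uniqueness radius of the previous paragraph. For $M'\in\mathcal V$ with $\rho'=\mathrm{hol}(M')$, the openness step provides $M\in\mathcal U$ on $N$ with $\mathrm{hol}(M)=\rho'$, and local injectivity on $N$ makes it the unique such element of $\mathcal U$. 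Then $M|_{N'}$ and $M'$ are two structures on $N'$ near $M_0'$ with the common holonomy $\rho'$, so local injectivity on $N'$ gives an isotopy between them; composing with $N'\hookrightarrow N$ yields the desired embedding $M'\hookrightarrow M$. The hard part throughout is the interpolation in the uniqueness argument, which is where the absence of a $G$-invariant metric on $X$ forces the passage to normal-bundle coordinates rather than a naive convex combination in $X$.
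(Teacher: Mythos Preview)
The paper does not prove this lemma: it is stated as a ``well-known fact about $(G,X)$-structures on compact manifolds with boundary'' and attributed to \cite{epstein} without argument. Your sketch is the standard Ehresmann--Thurston proof one finds in that literature (developing sections of the flat $X$-bundle, openness by extending the section across a family of representations, local injectivity by interpolating developing sections with equal holonomy, then exploiting the collar to identify $\pi_1(N')\cong\pi_1(N)$), and it is essentially correct as written.

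One small comment: your remark that in the Lorentzian models there is no $G$-invariant metric is well taken, but it is really a general point --- the interpolation step never needs a $G$-invariant metric on $X$, only a tubular neighbourhood of the image of the developing section in the total space of $E_{\rho'}$, exactly as you say. So your ``hard part'' is handled correctly and is not special to the $\HS$/$\dS$ setting of the paper.
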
    

First we prove Theorem \ref{struct:prop} assuming just one collision
in $M$.  Let $p_0$ be the collision point of $M$ and $S_0$ be the link
of $p_0$ in $M$ (that is a HS-surface).  Denote by
$G_0<\pi_1(\Sigma_g\times\mathbb R\setminus T)$ the fundamental group
of $S_0\setminus T$.

\begin{lemma}\label{bo:lem}
There is a neighbourhood $\mathcal U_0$ of $S_0$ in the space of
HS-surfaces homeomorphic to $S_0$ such that the holonomy map on
$\mathcal U_0$ is injective.

Moreover, there is a neighbourhood $\mathcal V$ of $h$ in $\mathcal R$ such
that for every $h'\in\mathcal V$ there is an HS-surface in 
$\mathcal U_0$, say $S(h)$, such that the holonomy of $S(h)$ is $h|_{G_0}$.
\end{lemma}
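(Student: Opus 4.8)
The plan is to deduce this lemma from the general deformation theory of $(G,X)$-structures on compact manifolds with boundary, applied to the HS-surface $S_0$ viewed as a (singular) $(\SO_0(1,2),\HS^2)$-structure on the sphere. The first issue is that $S_0$ is not literally a manifold with boundary and carries cone singularities, so I would first replace it by a compact ``model'' surface: remove small disk neighborhoods of each HS-singularity of $S_0$ (the photons, the black-hole singularities, and the elliptic cone points corresponding to the particles $T$), obtaining a compact surface $N_0$ with boundary whose interior carries a genuine $(\SO_0(1,2),\HS^2)$-structure. Near each removed singularity the local model is completely rigid once the conjugacy class of the meridian holonomy is fixed (this is exactly the content of \S\ref{sec.classificationHS} and the earlier analysis of $(\Gamma_p,L_p)$-circles): a neighborhood of an elliptic singularity of angle $\theta_e$ is determined, up to HS-isomorphism, by $\theta_e$; similarly for the other admissible singularity types. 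So an HS-surface near $S_0$ is the same data as a $(\SO_0(1,2),\HS^2)$-structure on $N_0$ near the restricted one, together with the (fixed) gluing of standard singular neighborhoods along $\partial N_0$ — the latter contributing nothing to the deformation.

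Next I would invoke the first bullet of Lemma \ref{ep:lem}: the holonomy map from $(\SO_0(1,2),\HS^2)$-structures on the compact manifold $N_0$ to $\mathrm{Hom}(\pi_1(N_0),\SO_0(1,2))/\SO_0(1,2)$ is open. This directly gives the second assertion of the lemma: for $h'\in\mathcal R$ close enough to $h$, the restriction $h'|_{G_0}$ (which is $h'$ composed with $\pi_1(S_0\setminus T)=\pi_1(N_0)\hookrightarrow\pi_1(\Sigma_g\times\mathbb R\setminus T)$) is realized by an $(\SO_0(1,2),\HS^2)$-structure on $N_0$ close to the reference one; re-gluing the standard singular neighborhoods (using that $h'(c_e)$ is still a pair of elliptic elements of angle $\theta_e$, so the singular model is unchanged) produces the desired HS-surface $S(h')\in\mathcal U_0$ with holonomy $h'|_{G_0}$.

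For the injectivity statement, the point is that an HS-surface homeomorphic to $S_0$ and $C^0$-close to it is, after the above reduction, a $(\SO_0(1,2),\HS^2)$-structure on $N_0$ near the reference structure; and a $(G,X)$-structure near a fixed one on a compact manifold is determined by its holonomy when the holonomy representation is sufficiently rigid — here one uses that the developing map of a structure close to $S_0$ is close to the developing map of $S_0$ (in the compact-open topology on the universal cover), so a conjugacy of holonomies promotes to an isomorphism of developing maps, hence of structures. Concretely I would argue as in the standard proof that the holonomy map is a local homeomorphism onto its image for structures with ``rigid'' holonomy: take the graph of a developing map, use the $h$-equivariance to propagate the identification chart by chart over a compact set, and invoke Lemma \ref{pr:hypersurface} (uniqueness of hyperbolic cone surfaces from their holonomy, with cone angles $<2\pi$) on the hyperbolic regions of $S_0$, which are the only regions that could carry non-rigid moduli. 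I expect the main obstacle to be precisely this last point: controlling the de Sitter regions and the photon circles, where $(G,X)$-structures are not automatically rigid. Here one must use the causality hypothesis and the structure theory of Theorem \ref{tm:thierry} — in the causally regular case the de Sitter region is a single annulus $W/\langle\gamma_0\rangle$ whose geometry is completely pinned down by the holonomy of its core curve, and the photon circles inherit their $\mathbb{RP}^1$-structure from the adjacent hyperbolic side, which is rigid by Proposition \ref{pro.hypdegree0}. Assembling these rigidity statements region by region, and checking they are compatible across the photon circles, gives the injectivity of the holonomy map on a small enough $\mathcal U_0$.
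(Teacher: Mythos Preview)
Your overall strategy---excise small disks around the cone points to get a compact surface $N_0$ with boundary carrying a genuine $(\SO_0(1,2),\HS^2)$-structure, then invoke Lemma~\ref{ep:lem}---is exactly what the paper does, and your existence argument (second assertion) matches the paper's almost verbatim: deform the structure on the complement using openness of holonomy, build standard cone-disks with the new meridian holonomy, and glue via the embedding clause of Lemma~\ref{ep:lem} on the overlap annuli.

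Where you diverge is in the injectivity argument, and there your detour is both unnecessary and gappy. You claim that a $(G,X)$-structure on a compact manifold with boundary is determined by its holonomy near a reference structure; this is false in general---one can always slide the boundary---and is precisely why Lemma~\ref{ep:lem} is phrased as an \emph{embedding} of the smaller structure into the larger, not as an isomorphism. You then try to repair this by a region-by-region rigidity analysis (hyperbolic disks via Lemma~\ref{pr:hypersurface}, the de Sitter annulus via Theorem~\ref{tm:thierry} and Proposition~\ref{pro.hypdegree0}). But Lemma~\ref{pr:hypersurface} is stated for \emph{closed} hyperbolic cone surfaces, whereas the hyperbolic regions of $S_0$ are disks with a complete end along the photon circle, so it does not apply; and you give no argument for why the gluing along the photon circles is determined.

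The paper avoids all of this. Given $S,S'$ close to $S_0$ with the same holonomy, it uses the embedding clause of Lemma~\ref{ep:lem} directly to get an isometric embedding $f:S\setminus\bigcup\Delta_2(i)\hookrightarrow S'$. Separately, each singular disk $\Delta_1(i)\subset S$ embeds into $S'$ because a neighborhood of an elliptic cone point is determined by its meridian holonomy (this is your own observation about rigid local models). The two embeddings agree on the overlap annuli $\Delta_1(i)\setminus\Delta_2(i)$ by uniqueness of an isometric embedding of a hyperbolic annulus into a cone-disk up to rotation, and hence glue to a global isometry $S\to S'$. No decomposition into hyperbolic/de Sitter/photon pieces is needed; the whole argument lives on the regular part plus the local singular models you already identified.
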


\begin{proof}
Around each cone  point $q_i$ of $S_0$ take small disks
\[
   \Delta_1(i)\supset\Delta_2(i)
\]
Let now $S,S'$ be two HS-surfaces close to $S_0$ sharing the same
holonomy.  By Lemma \ref{ep:lem}, up to choosing $\mathcal U_0$
sufficiently small, there is an isometric embedding
\[
  f:(S\setminus\bigcup\Delta_2(i))\rightarrow S'.
\]
 Moreover, $\Delta_1(i)$ equipped with the structure induced by $S$
 embeds in $S'$ (this because the holonomy locally determines the
 singular points of HS surfaces). It is not difficult to see that
 such an inclusion coincides with $f$ on
 $\Delta_1(i)\setminus\Delta_2(i)$ (basically this depends on the fact
 that an isometry of a hyperbolic annulus into a disk containing a
 cone point is unique up to rotations).  Thus gluing such maps we
 obtain an isometry between $S$ and $S'$.

To prove the last part of the statement, let us consider for each cone
point a smaller disk $\Delta_3(i)\subset\Delta_2(i)$.  Let
$U=S_0\setminus\bigcup\Delta_3(i)$. Clearly we can find a
neighbourhood $\mathcal V$ of $h$ such that if $h'\in \mathcal V$ then
there is a structure $U'$ on $U$ close to he original one with holonomy
$h'$. On the other hand it is clear that there exists a structure, say
$\Delta'_1(i)$ on $\Delta_1(i)$ with cone singularity with holonomy
given by $h'$ and close to the original structure.  By Lemma
\ref{ep:lem}, if $h'$ is sufficiently close to $h$, then
$\Delta_2(i)\setminus\Delta_3(i)$ equipped with the structure given by
$U'$ embeds in $\Delta'_1(i)$. Moreover $\partial \Delta_2(i)$ bounds
in $\Delta'_1(i)$ a disk $\Delta(i)$ containing the cone point. Thus
we can glue $\Delta_1(i)$'s to $U'$ obtained the HS surface with
holonomy $h'$.
\end{proof}

Consider now two spacelike surfaces $\Sigma_1,\Sigma_2$ in $M$
orthogonal to the singular locus that are disjoint and such that
$p_0\in I^+(\Sigma_1)\cap I^-(\Sigma_2)$.  Let $M_0= I^+(\Sigma_1)\cap
I^-(\Sigma_2)$. Clearly $\Sigma_1$ is the past boundary of $M$ and
$\Sigma_2$ is the future boundary.

Now let $U_1\supset U_2\supset U_3$ be regular neighborhoods of the
singular locus of $M_0$.  By Lemma~\ref{bo:lem} there is a
neighborhood $\mathcal V$ of $h$ in $\mathcal R$, such that for any
$h'\in\mathcal V$ there is a structure with collision, say $U_1(h')$, on
$U_1$ close to the original one with holonomy $h$.

Moreover, there is a structure on $M_0\setminus U_3$, say $N_0(h')$,
with holonomy $h'$. By Lemma ~\ref{ep:lem}, $U_2\setminus U_3$ with
the metric induced by $N_0(h')$ can be embedded in
$U_1(h')$. Moreover, up to shrinking $N_0(h')$, we can suppose that there
exists an open set in $U_1(h')$, say $U$, such that:
\begin{enumerate}
\item $U$ contains the collision point,
\item the frontier of $U$ is the union of the image of frontier of
  $U_2$ and spacelike disks orthogonal to the singular locus.
\end{enumerate}

The spacetime obtained by gluing $N_0(h')$ to $U$, by identifying
$U_2\setminus U_3$ with its image, is a spacetime with collision with
holonomy $h'$. Its maximal extension, say $M(h')$, is an $m$-spacetime
with holonomy $h'$.

To conclude we have to prove that if $h'$ is sufficiently close to
$h$, then $M(h')$ is unique in a neighborhood of $M_0$.

In fact, it is sufficient to show that given an $m$-spacetime $M$ with
holonomy $h'$ close to $M_0$, there is a smaller spacetime, $N$,
containing the collision that embeds in $M(h')$.  We can
assume $N$ close to some spacetime $N_0$ contained in $M_0$ (this
precisely means that $N$ is obtained by deforming slightly the metric
on $N_0$).

By the uniqueness of the HS-surface with holonomy $h'$, we get that
small neighborhoods of the collision point of $U$ embed in
$M(h')$. Take neighborhoods $U_1,U_2$ of the singular locus as
before.  Notice that $U_1\cap N$ embeds in $M(h')$. On the other hand
$N\setminus U_2$ embeds in $M(h')$.  Moreover the embeddings $U_1\cap
N\hookrightarrow M(h')$ and $N\setminus U_2\hookrightarrow M(h')$
coincides on the intersection. So they be glued to an embedding
$N\hookrightarrow M(h')$.

This concludes the proof of Proposition \ref{struct:prop} when only
one interaction occurs.  The following lemma allows to conclude in the
general case by an inductive argument.

\begin{lemma}
Let $\Sigma$ be a surface of $M$ with a transverse vector field, and let $M_-$, $M_+$ be the past
and the future of $\Sigma$ in $M$.  Suppose that for a small
deformation $h'$ of the holonomy $h$ of $M$ there are two spacetimes with
collisions $M_-'\cong M_-$ and $M_+'\cong M_+$ such that the holonomy
of $M_\pm'$ is equal to $h'|_{\pi_1(M_\pm)}$. Then there is a
spacetime $M'$ close to $M$ containing both $M_-'$ and $M_+'$.
\end{lemma}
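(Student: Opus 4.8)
The plan is to glue $M_-'$ and $M_+'$ along a common neighborhood of a deformed copy of $\Sigma$, mimicking the gluing argument used to establish uniqueness in the one-collision case, and then to pass to the maximal extension to land in $\Omega(g,T,\theta)$. First I would observe that a transverse vector field on $\Sigma$ survives under small deformations: by Proposition~\ref{pr:diffeo} (and the fact that transversality is an open condition on the maps $v\mapsto D^l_v u$, $v\mapsto D^r_v u$ having rank $2$), a slice isotopic to $\Sigma$ with a transverse vector field exists in both $M_-'$ and $M_+'$ for $h'$ close to $h$. Since $\Sigma$ is a spacelike surface orthogonal to the singular locus with no collision point on it, a small tubular neighborhood $W$ of $\Sigma$ in $M$ is a spacetime without collision, homeomorphic to $\Sigma\times(-\varepsilon,\varepsilon)$; its holonomy is $h|_{\pi_1(\Sigma\setminus T)}$, which is contained in both $h'|_{\pi_1(M_-)}$ and $h'|_{\pi_1(M_+)}$ by restriction.

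The core of the argument is then an application of Lemma~\ref{ep:lem} twice. Take $W'\supset W''$ nested tubular neighborhoods of $\Sigma$ with $W\setminus W'$ a collar. By the first bullet of Lemma~\ref{ep:lem} the structure on $W$ deforms with the holonomy, producing a structure $W(h')$ close to the original with holonomy $h'|_{\pi_1(\Sigma\setminus T)}$; and by the second bullet, for $h'$ close enough to $h$ this structure $W(h')$ embeds, as an AdS-manifold with particles, into both $M_-'$ (as a neighborhood of its future boundary copy of $\Sigma$) and $M_+'$ (as a neighborhood of its past boundary copy of $\Sigma$). One then defines $M'$ as the quotient of the disjoint union $M_-'\sqcup M_+'$ by the identification of these two embedded copies of $W(h')$ via the developing-map matching dictated by the common holonomy. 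The verification that $M'$ is Hausdorff, that it carries a well-defined singular AdS metric, and that it is globally hyperbolic with the required singular locus $T$ and cone angles $\theta_e$, is the same kind of reasoning as in the uniqueness proof for the maximal extension (the gluing locus $W(h')$ is causally convex, so no pathologies arise), and as in the proof of Proposition~\ref{struct:prop} in the one-collision case. Finally, passing to the maximal extension $\widehat{M'}$ (via Lemma~\ref{mm:lem} and the following lemma), we obtain an $m$-spacetime close to $M$ with holonomy $h'$ that contains both $M_-'$ and $M_+'$.

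I expect the main obstacle to be the bookkeeping ensuring that the two embeddings of $W(h')$ into $M_-'$ and $M_+'$ are \emph{compatible} — that is, that after conjugating $h'$ appropriately the developing maps actually agree on the overlap, so that the identification is an isometry of AdS-manifolds with particles rather than merely a topological gluing. This is where the uniqueness clause of Lemma~\ref{ep:lem} is essential: it guarantees that the germ of structure near $\Sigma$ is determined by its holonomy, so the two copies of $W(h')$ are canonically isometric. A secondary technical point is checking that $M'$ as glued is still a \emph{good} spacetime with the same graph of collisions $X$ and the same topological data as $M$; this follows because the deformation is small, $\Sigma$ retains a transverse vector field on both sides, and the collisions of $M'$ are exactly those of $M_-'$ together with those of $M_+'$, none of which lie in the gluing collar. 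Once these compatibility and goodness checks are in place, the inductive step over the edges of the collision graph (cutting $M$ along successive slices $\Sigma$ with transverse vector fields, as in the passage after Proposition~\ref{struct:prop}) completes the proof of Theorem~\ref{struct:prop}, hence of Theorem~\ref{tm:homeo}.
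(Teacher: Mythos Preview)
Your overall strategy — glue $M_-'$ and $M_+'$ along a collar of $\Sigma$ using that the germ of the structure near $\Sigma$ is determined by its holonomy — is exactly the paper's idea, and your identification of the ``compatibility of the two embeddings'' as the main obstacle is spot on. But your proposed mechanism for the gluing has a concrete problem, and the paper resolves it by a device you do not invoke.

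The issue is this: your $W(h')$ is a \emph{two-sided} tubular neighborhood of $\Sigma$, homeomorphic to $\Sigma\times(-\varepsilon,\varepsilon)$, whereas $M_-'$ is (up to isotopy) the \emph{past} of $\Sigma$ and $M_+'$ the \emph{future}. So the claim that $W(h')$ embeds into $M_-'$ ``as a neighborhood of its future boundary copy of $\Sigma$'' cannot be literally true — points of $W(h')$ on the future side of $\Sigma$ have nowhere to go in $M_-'$. What you actually have are one-sided collars $U_-\subset M_-'$ and $U_+\subset M_+'$, and you need a common two-sided object into which \emph{both} embed, with overlapping images. Your appeal to the uniqueness clause of Lemma~\ref{ep:lem} does not by itself produce this common object; it only tells you that, once you have a candidate, the embeddings are essentially unique.

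The paper's fix is to use as the common reference the maximal globally hyperbolic spacetime with particles (and \emph{no} collision) $N(h')$ determined by the holonomy $h'|_{\pi_1(\Sigma)}$ — this exists and is unique by the material in Section~5. One embeds the collars $U_\pm$ of $\Sigma$ (sitting inside $M_\pm'$) isometrically into $N(h')$ via maps $i_\pm$; for $h'$ close to $h$ the images $i_-(U_-)$ and $i_+(U_+)$ both contain a fixed slab $\Sigma\times[-\epsilon/2,\epsilon/2]\subset N(h')$, and one glues $M_-'$, $M_+'$ and this slab along the overlaps. The compatibility you worry about is then automatic: both $i_-$ and $i_+$ target the \emph{same} space $N(h')$, and the uniqueness of the embedding of a GH spacetime with particles into its maximal extension forces them to agree where they overlap. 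This also dispenses with the need to apply Lemma~\ref{ep:lem} directly to a manifold with cone singularities. Your extra remarks about transverse vector fields surviving, goodness, and passing to the $m$-extension are reasonable but not part of the paper's proof of this particular lemma; they belong to the surrounding induction.
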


\begin{proof}
Let  $N(h)$ denote the maximal GH structure with
particles on $\Sigma\times\mathbb R$ whose  holonomy is $h|_{\pi_1(\Sigma)}$.
There is a neighborhood of $\Sigma$ in $M$ which embeds in $N(h)$.
We can suppose that $\Sigma\subset M$ is sent to $\Sigma\times\{0\}$ 
through this embedding.

Now let $U_\pm$ be a collar of $\Sigma$ in $M_\pm$ such that the image of
$U_-$ in $N(h)$ is $\Sigma\times [-\epsilon, 0]$ and the image of
$U_+$ is $\Sigma\times[0,\epsilon]$ for some $\epsilon>0$.

 If $h'$ is sufficiently close to $h$, then there is an isometric
 embedding of $U_\pm$ (considered as subset of $M_\pm'$) into $N(h')$
 \[
    i_\pm: U_\pm\hookrightarrow N(h')
 \]   
 such that the image of $U_-$ is contained in $\Sigma\times
 [-2\epsilon, \epsilon/3]$ and contains $\Sigma\times\{-\epsilon/2\}$
 and the image of $U_+$ is contained in
 $\Sigma\times[-\epsilon/3,2\epsilon]$ and contains
 $\Sigma\times\{\epsilon/2\}$. Thus we can glue $M'_\pm $ and
 $\Sigma\times [-\epsilon/2,\epsilon/2]$ by identifying $p\in
 U_\pm\cap i_\pm^{-1}(\Sigma\times[-\epsilon/2,\epsilon/2])$ with its
 image.  The spacetime we obtain, say $M'$, clearly contains $M'_-$ and
 $M'_+$.
 
\end{proof}

\begin{remark}
To prove that there is a unique $m$-spacetime in a neigbourhood of $M$ with holonomy $h'$, we again use an inductive argument. Suppose we can find in any small neighborhood of $M$ two $m$-spacetimes $M'$ and $M''$ with holonomy $h'$. We fix a  surface with a transverse vector field
 $\Sigma$ in $M$ such that 
both the future and the past of $\Sigma$, say $M_\pm$, contain some collision points.
Let $U\subset V$ be regular neighbourhoods of $\Sigma$ in $M$ with spacelike boundaries.
We can consider neighbourhoods $U'\subset V'$ in $M'$ and $U''\subset V''$ in $M''$ such that
\begin{itemize}
\item they are close to $U$ and $V$ respectively,
\item they do not contain any collision,
\item they have spacelike boundary.
\end{itemize}
Applying the inductive hypothesis on the connected regions of the complement of $U'$ in $M'$ and $U''$ in $M'$ we have that for $h'$ sufficiently close to $h$ there is an isometric embedding
\[
    \psi:M'\setminus U'\rightarrow M''
\]    
such that $\psi(\partial U')$ is contained in $V''$.

Now consider the isometric embeddings
\[
   u':V'\rightarrow N(h')\qquad u'':V''\rightarrow N(h')
 \]
 notice that the maps $u'$ and $u''\circ\psi$ provide two isometric embeddings
 \[
     V'\setminus U'\rightarrow N(h')
 \]
 so they must coincide (we are using the fact that the inclusion of a GH spacetime with particles in its maximal extension is uniquely determined).
 
 Eventually we can extend $\psi$ on the whole $M'$ by setting on $V'$
 \[
    \psi= (u'')^{-1}\circ u'\,.
\]        
\end{remark}

\bibliography{bibliocollision}
\bibliographystyle{alpha}

\end{document}